\theoremstyle{plain}
\newtheorem{theorem}{Theorem}[section]
\newtheorem{lemma}[theorem]{Lemma}
\newtheorem{proposition}[theorem]{Proposition}
\theoremstyle{remark}
\newtheorem{definition}[theorem]{Definition}
\newtheorem{remark}[theorem]{Remark}
\newtheorem*{assumption*}{Assumption}
\newcommand{\mcC}{\mathcal{C}}
\newcommand{\mcF}{\mathcal{F}}
\newcommand{\mcM}{\mathcal{M}}
\newcommand{\mcP}{\mathcal{P}}
\newcommand{\mbB}{\mathbb{B}}
\newcommand{\mbE}{\mathbb{E}}
\newcommand{\mbN}{\mathbb{N}}
\newcommand{\mbP}{\mathbb{P}}
\newcommand{\mbR}{\mathbb{R}}
\newcommand{\mfp}{\mathfrak{p}}
\newcommand{\msD}{\mathscr{D}}
\newcommand{\NN}{\mathbb{N}}
\newcommand{\PP}{\mathbb{P}}
\newcommand{\RR}{\mathbb{R}}
\newcommand{\ZZ}{\mathbb{Z}}
\newcommand{\cC}{\mathcal{C}}
\newcommand{\cF}{\mathcal{F}}
\newcommand{\cK}{\mathcal{K}}
\newcommand{\cM}{\mathcal{M}}
\newcommand{\cP}{\mathcal{P}}
\newcommand{\cS}{\mathcal{S}}
\newcommand{\second}{2\textsuperscript{nd} }
\newcommand{\third}{3\textsuperscript{rd} }
\DeclarePairedDelimiter{\norm}{\lVert}{\rVert}
\DeclarePairedDelimiter{\abs}{\lvert}{\rvert}
\newcommand{\opnorm}{\@ifstar\@opnorms\@opnorm}
\newcommand{\@opnorms}[1]{%
  \left|\mkern-1.5mu\left|\mkern-1.5mu\left|
   #1
  \right|\mkern-1.5mu\right|\mkern-1.5mu\right|
}
\newcommand{\@opnorm}[2][]{%
  \mathopen{#1|\mkern-1.5mu#1|\mkern-1.5mu#1|}
  #2
  \mathclose{#1|\mkern-1.5mu#1|\mkern-1.5mu#1|}
}
\DeclarePairedDelimiterX{\inp}[2]{\langle}{\rangle}{#1, #2}
\DeclarePairedDelimiterX\set[1]\lbrace\rbrace{\, #1 \,}
\newcommand{\dequal}{\overset{\operatorname{d}}{=}}
\def \defby{\vcentcolon =}
\newcommand{\dd}{\,\mathrm{d}}
\renewcommand{\epsilon}{\varepsilon}
\newcommand{\indic}{\mathbf{1}}
\newcommand{\conv}{\ast}
\newcommand{\from}{\colon}
\def \N{\mathbb{N}}
\def \R{\mathbb{R}}
\newcommand{\floor}[1]{\lfloor #1 \rfloor}
\DeclareMathOperator*{\esssup}{ess\,sup}
\begin{document}

\title{Pathwise Uniqueness for Multiplicative Young and Rough Differential Equations Driven by Fractional Brownian Motion}

\author[1]{Toyomu Matsuda \orcidlink{0000-0002-2422-0863}}
\affil[1]{Independent.  \protect\\
    \href{mailto:toyomumatsudawork@gmail.com}{toyomumatsudawork@gmail.com}}
\author[2]{Avi Mayorcas \orcidlink{0000-0003-4133-9740}}
\affil[2]{Department of Mathematical Sciences, University of Bath. \protect\\
    \href{mailto:am2735@bath.ac.uk}{am2735@bath.ac.uk}}

\date{}

\maketitle

\begin{abstract}
 We show \emph{pathwise uniqueness} of multiplicative SDEs, in arbitrary dimensions, driven by fractional Brownian motion with Hurst parameter $H\in (1/3,1)$ with volatility coefficient $\sigma$ that is at least $\gamma$-H\"older continuous for $\gamma > \frac{1}{2H} \vee \frac{1-H}{H}$. This improves upon the long-standing results of \cite{lyons94,lyons1998,davie08} which cover the same regime but require $\sigma$ to be at least $\frac{1}{H}$-H\"older continuous. Our central innovation is to combine stochastic averaging estimates with refined versions of the stochastic sewing lemma, due to \cite{le20,gerencser2022,matsuda22}.

\bigskip

\noindent
{\sc Keywords and phrases.}
Stochastic differential equations, fractional Brownian motion, rough paths,  stochastic sewing, regularisation by noise.

\noindent
{\sc MSC 2020}. 60H10, 60G22, 60L20, 60H50.

\end{abstract}

\tableofcontents

\section{Introduction}\label{sec:intro}
This paper is concerned with \emph{pathwise} uniqueness of solutions to multiplicative SDEs driven by fractional Brownian motion (fBm),
\begin{align}\label{eq:main_sde}
	\dd X_t &= \sigma(X_t) \,\dd B_t^H,  \quad X_0 = x \in \mathbb{R}^{d_1},
\end{align}
where $\mbR^{d_1} \ni x\mapsto \sigma(x)$ takes values in the space of $d_1 \times d_2$ matrices and 
$B^H$ is a $d_2$-dimensional fBm with Hurst parameter $H \in (1/3, 1)$. We interpret \eqref{eq:main_sde} as either a Young differential equation, when $H\in (1/2,1)$ or a rough differential equation when $H\in (1/3,1/2]$. The tools developed herein naturally extend to the even rougher case $H\in (1/4,1/3)$, however, for the sake of concision they are not presented, see Rem.~\ref{rem:H_less_than_1/3} for a further discussion. Our main result is to obtain \emph{pathwise} uniqueness of Young (resp. rough) solutions to \eqref{eq:main_sde} for uniformly elliptic, volatility functions $\sigma$ which are $\gamma$-H\"older continuous for all
\begin{equation*}
	\gamma > \max\left\{\frac{1}{2H}, \frac{1-H}{H} \right\} = 
	\begin{cases}
		\frac{1}{2H}, \quad & \text{if } H \in (1/2,1), \\
		\frac{1-H}{H}, &  \text{if } H \in (1/3,1/2].
	\end{cases}
\end{equation*}
The ellipticity condition on $\sigma$ is natural since both the Young and rough interpretation of \eqref{eq:main_sde} are natural extensions of the Stratonovich interpretation, applicable when $H=1/2$, which also requires ellipticity on $\sigma$ in order to generically guarantee well-posedness. See Rem.~\ref{rem:elliptic_cond} for a further discussion of this assumption. We also refer the reader to Rem.~\ref{rem:path_by_path_uniq}, Def.~\ref{def:pathwise_uniq_young} and Def.~\ref{def:pathwise_uniq_rough} for further discussion and precise definition of \emph{pathwise} uniqueness in this context, for $H\neq 1/2$. 
At this stage, we simply note that \emph{pathwise} uniqueness under the above condition is the first major improvement on well-posedness theory for multiplicative SDEs driven by fBm since \cite{lyons94,lyons1998,davie08}, which together required $\gamma\geq 1/H$. Hence, our result significantly reduces the regularity requirement on $\sigma$ in arbitrary dimensions. In fact, our present work improves the recent result \cite[Thm.~5.2]{matsuda22}, which treats the  regime $H \in (1/2, 1)$ and assumes the technical condition $\gamma > \frac{(1-H)(2-H)}{H(3-H)}$.

When $H=1/2$ there are two natural interpretations of \eqref{eq:main_sde}, either as a stochastic, It\^o equation or Stranovich equation. These correspond to two interpretations of the deterministically, ill-posed integration of $\sigma(X_t)$ against $\dd W_t \coloneqq \dd B^{1/2}_t$. Crucially, It\^o solutions are martingales while Stratonovich integration respects the chain rule but does not produce martingale solutions. The notion of \emph{pathwise} uniqueness is equally applicable to both, however, since both interpretations produce solutions adapted to the natural filtration of the Brownian motion. In It\^o's theory it is readily seen that \emph{pathwise} uniqueness holds for \eqref{eq:main_sde} (with $H=1/2$) provided $\sigma$ is Lipschitz continuous, see for example \cite{revuz_99_continuous}. This condition provides a natural sanity check on our condition as they agree at $H=1/2$. We mention here that the result for It\^o solutions does not require any ellipticity assumption on $\sigma$, while the same statement holds for Stratonovich solutions, under the additional requirement of ellipticity on $\sigma$, see Rem.~\ref{rem:elliptic_cond}. As will be remarked below, this structural condition in the Stratonovich case can be relaxed at the price of requiring  $\sigma$ to be twice differentiable and bounded.

When $H\neq 1/2$ It\^o's theory no longer applies to \eqref{eq:main_sde} since the fBm is not a martingale process, see Sec.~\ref{subsec:fbm}. However, the Stratonovich interpretation naturally extends beyond the martingale case. Note that for $H>1/2$ no probabilistic theory  is actually required to make sense of the equation, since, under sufficient regularity assumptions on $\sigma$, one expects $t\mapsto \sigma(X_t)$ to carry the same H\"older regularity as the fBm, in this case larger than $1/2$ and so Young integration theory classically applies; see Sec.~\ref{subsec:young_integration}. In our case the regularity assumption on $\sigma$ is too weak to naively apply this theory, however, our main contribution in this regime is to carefully exploit a combination of deterministic analysis and probabilistic averaging tools to indeed construct a sufficiently regular integral, see Sec.~\ref{subsec:young_int}. For $H <1/2$ the natural extension of the Stratonovich integral is provided by \cite{lyons1998,qian_coutin_02_stochastic}. 

\paragraph*{Applications of our Results}	 Our work fits into the growing literature on the topic of \emph{regularisation by noise}. This is the phenomenon by which rapidly oscillating and typically random data lead to improved existence, uniqueness or other desirable properties of equations which would otherwise be ill-posed or behave pathologically in some manner, \cite{flandoli_11,tzvetkov_16_random,flandoli_17_random,gess_tsatsouli_20_synchro,flandoli_luo_21_high,anzeletti_le_ling_23_pathbypath}. 
In recent years there has been significant progress in the study of SDEs with irregular coefficients and driven by noise lying outside of the Brownian (or even martingale) class, for which we give a non-exhaustive list of references \cite{catelliergubinelli,butkovsky_mytnik_19_she,athreya_butkovsky_mytnik_20_strong_sde,le20,harang_perkowski_21_Cinfinity,matsuda22,gerencser2022,dareiotisgerencser22,galeatigerencser22,anzeletti_richard_tanre_23_regularisation}. More precisely, our work fits within this latter programme and considers an, as yet, less studied regime for regularisation by noise; namely the interaction between noise and irregular multiplicative coefficients.
To the best of the authors' knowledge, the only other work in this direction is \cite{hinz2022variability} in which equations with more general noise are studied but under more stringent dimensional and structural assumptions on the volatility coefficient. Although not as directly related, we also highlight the work \cite{galeati_harang_22_regularisation} which obtained regularisation by noise for SDEs with multiplicative Brownian noise under the inclusion of an additive fBm.

Equations of the type \eqref{eq:main_sde} with fractional Brownian motion are of interest in a number of applications, most notably in the study of stochastic volatility \cite{comte_renault_96_long,comte_coutin_renault_02_affine,friz_salkeld_wagenhofer_22_weak}, turbulent fluids \cite{flandoli_russo_23_reduced,crisan_holm_leahy_nilssen_22_variational,crisan_holm_leahy_nilssen_22_solution,apolinario_chevillard_mourrat_22_dynamical} and more generally stochastic processes with memory \cite{beran_94_statistics,benelli_weiss_21_sub_super,Hairer:2020aa,hairer_li_22_generating,li_panlou_seiber_23_non_stationary}.

More concretely, motivated by works such as \cite{lilly_sykulski_early_ohlede_17_fbm_matern,faranda_pons_falvio_etal_14_modelling,franzke_okane_terence_berner_williams_lucarini_14_stochastic} there has been a concerted effort in recent to study Lagrangian flows for SDEs drive by fBm with applications to intrinsic modelling of turbulence in geophysical fluids, \cite{roveri_triggiano_24_rough,crisan_holm_leahy_nilssen_22_variational,crisan_holm_leahy_nilssen_22_solution}. Focussing, for example on \cite{galeati_leahy_nilssen_25_rough_continuity}, the authors therein consider the dynamics of a typical fluid particle, modelled as a solution the rough SDE
\begin{equation*}
	\dd y_t = u_t(y_t)\dd t + \sum_{k=1}^m \xi_k(y_t) \dd Z^k_t,
\end{equation*}
where $Z$ is any path which can be lifted to a $\mfp$-variation rough path for $\mfp \in (2,3)$. Concretely, see \cite[Thm.~1.1]{galeati_leahy_nilssen_25_rough_continuity} as well as \cite[Thm.~4.3]{roveri_triggiano_24_rough} and \cite{crisan_holm_leahy_nilssen_22_variational,crisan_holm_leahy_nilssen_22_solution}. The fractional Brownian motion with $H\in (\nicefrac{1}{3},1)$ serves as a particularly tractable and well studied example in this case. Since the main focus of \cite{galeati_leahy_nilssen_25_rough_continuity} is on the admissible regularity of $u$ rather than the family $\{\xi\}_{k=1}^m$, the latter are assumed to be three times differentiable and bounded, which allows for the application of standard, general results in the analysis of rough differential equations. However, the results presented herein suggest that, in the particular case of fractional Brownian noise, a similar analysis could be performed for $\xi_k \in \mcC^{\gamma}(\mbR^d;\mbR^{2d})$ uniformly elliptic and such that $\gamma$ satisfies \eqref{eq:sigma_reg_uniq} given below. Since a major problem in the study of geophysical flows is to assimilate often sparse and noisy data into robust models, it is of importance to widen the admissible parameter range for well-posedness of models.

	\subsection{Main Result}
	We state the main result of our paper below in a somewhat informal manner, referring to precise definitions in Sec.~\ref{sec:young} and Sec.~\ref{sec:rough} for the Young and rough cases respectively. We also refer the reader to Sec.~\ref{subsec:notation} for the explanation of any unfamiliar notation.
	\begin{theorem}[Pathwise Uniqueness of \eqref{eq:main_sde}]\label{thm:main}
		Let $d_1, d_2 \in \mathbb{N}$, $B^H$ a $d_2$-dimensional fractional Brownian motion with Hurst parameter $H \in (1/3, 1)$ and $\gamma \in \RR$ satisfy 
		\begin{equation}\label{eq:sigma_reg_uniq}
			\gamma > \max\left\{\frac{1}{2H}, \frac{1-H}{H} \right\} = 
			\begin{cases}
				\frac{1}{2H}, \quad & \text{if } H > 1/2, \\
				\frac{1-H}{H}, &  \text{if } H \leq 1/2.
			\end{cases}
		\end{equation}
		Then, given a uniformly elliptic, $\sigma\in \mcC^\gamma(\mbR^{d_1};\mbR^{d_1\times d_2})$ (i.e. $\sigma \sigma^\top $ is uniformly, positive definite, see \eqref{eq:pos_def_matrix})
		interpreting the SDE \eqref{eq:main_sde} as Young differential equation (resp. rough differential equation with respect to the canonical, geometric, rough path lift of $B^H$) if $H \in (1/2,1)$ (resp. $H\in (1/3,1/2]$), pathwise uniqueness in the sense of Def.~\ref{def:pathwise_uniq_young} (resp. Def.~\ref{def:pathwise_uniq_rough}) of solutions holds for \eqref{eq:main_sde}.
	\end{theorem}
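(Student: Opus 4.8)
\emph{Strategy and reduction.}
The plan is to prove pathwise uniqueness by a stability argument for the difference of two solutions, in which the classical Young/rough Gronwall step---useless, since $\sigma\in\mcC^\gamma$ with $\gamma$ possibly below $1$ only gives $|\sigma(x)-\sigma(y)|\lesssim|x-y|^{\gamma\wedge1}$, so the feedback is sub-linear---is replaced by an estimate extracted from a refined stochastic sewing lemma \cite{le20,gerencser2022,matsuda22}, the decisive gain being the regularising effect of the noise, made quantitative by stochastic averaging under the ellipticity hypothesis. So let $X,Y$ solve \eqref{eq:main_sde} on $[0,T]$ driven by the same data (the same $B^H$ if $H\in(1/2,1)$, the same geometric lift $\mathbf B^H=(B^H,\mathbb B^H)$ if $H\in(1/3,1/2]$), with $X_0=Y_0=x$, set $Z:=X-Y$, so that $Z_t-Z_s=\int_s^t(\sigma(X_r)-\sigma(Y_r))\,\dd\mathbf B^H_r$. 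Localising---stopping at a level $N$ the pathwise Hölder norms of $X,Y$ and the auxiliary conditional-non-degeneracy functionals below, then letting $N\to\infty$---and covering $[0,T]$ by short subintervals, it suffices to prove an a priori bound forcing $Z\equiv0$ on a short interval $[0,\tau]$, uniformly in the base point once the level-$N$ norms are fixed; reapplying it from $\tau$ onwards yields $Z\equiv0$ on $[0,T]$, i.e.\ pathwise uniqueness in the sense of Def.~\ref{def:pathwise_uniq_young} (resp.\ Def.~\ref{def:pathwise_uniq_rough}).

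\emph{The sewing step.}
I would run the stochastic sewing lemma on a germ $A_{s,t}$ approximating $Z_t-Z_s$ to sufficient order: in the Young regime $A_{s,t}$ is, up to lower order, the conditionally centred quantity $\mathbb E_s[Z_t-Z_s]$; in the rough regime it carries in addition the first- and second-level rough-integral terms against $\mathbf B^H$ (built from $\sigma(X)-\sigma(Y)$ and its Gubinelli derivative---both well defined since $\gamma>\frac{1-H}{H}\ge1$ there, so $\sigma\in C^1$), together with the conditional-expectation correction accounting for $B^H$ not being a martingale. The required estimates on the compensator $\delta A_{s,u,t}$ are of two kinds: $L^p(\Omega)$ bounds, supplied by the Young (resp. rough) analysis of $\int(\sigma(X)-\sigma(Y))\,\dd\mathbf B^H$---this is exactly where $\gamma>\frac{1-H}{H}$ is consumed, making the relevant Hölder/controlled-path remainders of order $|t-s|^{1+\varepsilon}$; and bounds on the further conditional expectation $\mathbb E_s[\delta A_{s,u,t}]$, where the regularising effect enters. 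Provided both families are \emph{linear} in $\|Z\|_{\infty;[s,t]}$ with a time exponent $\theta>\frac12$, the uniqueness clause of the sewing lemma identifies $Z_t-Z_s$ with the sewn process and yields $\|Z_t-Z_s\|_{L^p}\lesssim_N\|Z\|_{\infty;[s,t]}\,|t-s|^{\theta}$, now linear in $\|Z\|_\infty$.

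\emph{The averaging estimate.}
Obtaining linearity in $\|Z\|$, and the value of $\theta$, is the heart of the matter, and is where the second constraint $\gamma>\frac1{2H}$ appears. The recurring device is: whenever a difference $\sigma(a_r)-\sigma(b_r)$ (or $(\nabla\sigma)(a_r)-(\nabla\sigma)(b_r)$) occurs in which the gap $a_r-b_r$ is a small, essentially $\|Z\|$-sized quantity---namely $X_s-Y_s=Z_s$ or a trajectory increment $Z_r-Z_s$---rewrite it by the first-order formula $(a_r-b_r)\int_0^1(\nabla\sigma)\big(b_r+\theta(a_r-b_r)\big)\dd\theta$, linear in the gap at the price of involving $\nabla\sigma\in\mcC^{\gamma-1}$ (a genuine distribution when $\gamma<1$; for the second-level terms one iterates, meeting $\nabla^2\sigma\in\mcC^{\gamma-2}$). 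One is then left with terms $\mathbb E_s\!\big[\int_s^t\phi_r\,(\nabla\sigma)(W_r)\,\dd\mathbf B^H_r\big]$ with $\|\phi\|_{\infty;[s,t]}\lesssim\|Z\|_{\infty;[s,t]}$ and $W$ a process---such as $Y+\theta Z$---which, conditionally on $\mcF_s$, is non-degenerate at spatial scale $|r-s|^H$: indeed $W_r-W_s=\sigma(W_s)(B^H_r-B^H_s)$ up to lower order, and combining uniform ellipticity of $\sigma$ with the strong local nondeterminism of $B^H$ (the conditional variance of $B^H_r-B^H_s$ given $\mcF_s$ being $\gtrsim|r-s|^{2H}$) makes the conditional law of $W_r$ smooth at that scale. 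The attendant stochastic-averaging bound, in the sewing framework of \cite{le20,gerencser2022,matsuda22}, is of the form $\big\|\mathbb E_s[\int_s^t\phi_r(\nabla\sigma)(W_r)\,\dd\mathbf B^H_r]\big\|_{L^p}\lesssim_N\|\phi\|_{\infty;[s,t]}\,\|\nabla\sigma\|_{\mcC^{\gamma-1}}\,|t-s|^{H\gamma}$, so that $\theta=H\gamma>\frac12\iff\gamma>\frac1{2H}$; together with the constraint from the sewing step this is precisely \eqref{eq:sigma_reg_uniq}.

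\emph{Conclusion and main obstacle.}
With $\|Z_t-Z_s\|_{L^p}\lesssim_N\|Z\|_{\infty;[s,t]}|t-s|^{H\gamma}$ in hand, buckling $\|Z\|_{\infty;[s,t]}$ into the left-hand side over short intervals (and, if needed, a Gronwall argument with the resulting control) forces $Z\equiv0$ on $[0,\tau]$ because $Z_0=0$, and the localisation/iteration of the first paragraph propagates this to $[0,T]$. I expect the main obstacle to be the averaging estimate itself---establishing the strong-local-nondeterminism-type lower bound on the conditional law, and controlling the error in the approximation $W_r-W_s\approx\sigma(W_s)(B^H_r-B^H_s)$, \emph{for the SDE solution rather than for $B^H$}: since $X$ (hence $W$) solves an equation with only Hölder-elliptic coefficients, only an a priori modulus of continuity is available, so these must be obtained by a bootstrap in which the averaging bound and the regularity of $X,Y,Z$ are upgraded in tandem. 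Threading a negative-regularity integrand $\nabla\sigma\in\mcC^{\gamma-1}$ (and $\nabla^2\sigma\in\mcC^{\gamma-2}$) through the refined sewing lemma, and doing so uniformly across the Young and rough regimes---including the second-level bookkeeping and the non-martingale structure of $B^H$, which is exactly why the conditional-expectation terms do not simply vanish---is where most of the technical labour lies.
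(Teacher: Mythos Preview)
Your proposal identifies the right ingredients---linearisation via the mean value theorem, the role of $\nabla\sigma\in\mcC^{\gamma-1}$, averaging through the conditional smoothing of the fBm under ellipticity, and the exponent count $\gamma H>\tfrac12$---but the paper organises the argument differently. Rather than sewing the difference $Z=X-Y$ directly and closing by a Gronwall/buckling step in $L^p$, the paper uses the averaging estimate (Prop.~\ref{prop:fbm_young} in the Young regime; Prop.~\ref{prop:fbm_rough}, \ref{prop:rough_iterated_integral}, \ref{prop:rough_itr_int_2} in the rough regime) \emph{only} to construct the auxiliary driver $G^k_t=\int_0^t\!\int_0^1\partial_k\sigma(\theta X_r+(1-\theta)Y_r)\,\dd\theta\,\dd B^H_r$---and, when $H\le\tfrac12$, the full joint rough path lift of $(B^H,G)$, including the iterated integrals $\int G^k\otimes\dd B^H$ and $\int G^k\otimes\dd G^l$---as a limit of the smooth approximations $G[\sigma_n]$. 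Kolmogorov then gives $G$ (resp.\ the lift) the required almost-sure H\"older/rough path regularity, after which $Z$ solves the \emph{linear} Young/rough equation $\dd Z=\sum_k Z^k\,\dd G^k$ with $Z_0=0$, and uniqueness follows from the deterministic linear theory applied pathwise. This decoupling sidesteps the issue your direct approach runs into: when $Z_s$ multiplies the random sewing integral, Cauchy--Schwarz costs an $L^{2p}$ on the right against $L^p$ on the left, so the buckling does not close in a fixed $L^p$ without further work.

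There is also a genuine technical point you gloss over. You write that $W=\theta X+(1-\theta)Y$ satisfies $W_r-W_s\approx\sigma(W_s)(B^H_r-B^H_s)$; in fact the Gubinelli derivative of $W$ is $\theta\sigma(X_s)+(1-\theta)\sigma(Y_s)$, \emph{not} $\sigma(W_s)$, and a convex combination of elliptic matrices need not be elliptic, so the conditional non-degeneracy you invoke may fail. The paper addresses this explicitly: it first treats the case where the oscillation of $\sigma$ is small enough that $\theta\sigma(X)+(1-\theta)\sigma(Y)$ inherits a uniform lower bound (Step~1 of the proof of Thm.~\ref{thm:pathwise_uniqueness_young}), and then reduces the general case to this one by a stopping-time argument that localises $\sigma$ near the current value of the solution (Steps~2--3, using a countable family of cut-off coefficients $\sigma^m$). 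Your localisation at a H\"older level $N$ does not substitute for this, since the obstruction is algebraic (degeneracy of a matrix), not analytic (size of norms).
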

	\begin{proof}
		See Thm.~\ref{thm:pathwise_uniqueness_young} for the case $H \in (1/2,1)$ and Thm.~\ref{thm:pathwise_uniqueness_rough} 
		for $H \in (1/3,1/2]$.
	\end{proof}
	The parameter range described by \eqref{eq:sigma_reg_uniq} is explicitly visualised in Figure~\ref{fig:graph_of_H} below, along with the threshold of $\gamma\geq \frac{1}{H}$, which is higher than ours in both cases, as required by the standard Young or rough path approaches, \cite{lyons94,lyons1998,davie08}.
	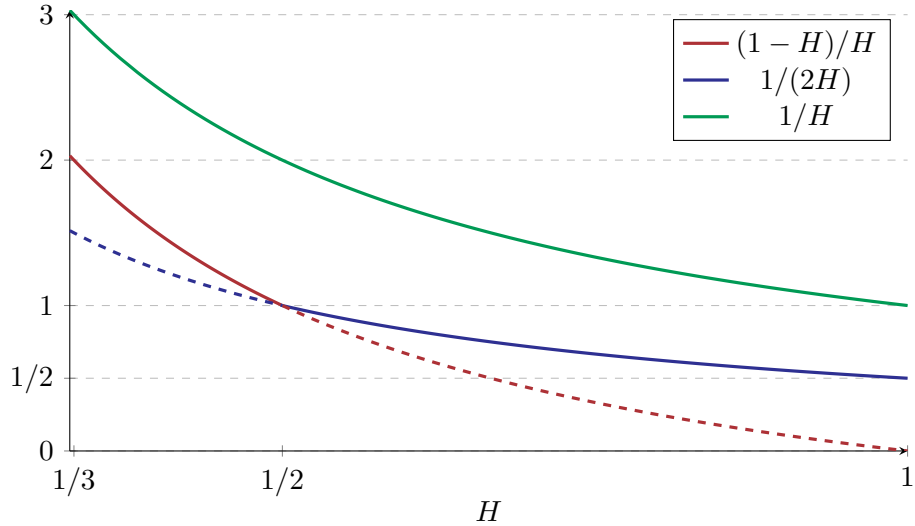
\begin{figure}[b!]
		\centering
		\begin{tikzpicture}
			\begin{axis}[
				height = 0.25\paperheight,
				width = 0.6\paperwidth,
				axis lines = left,
				xlabel = \(H\),
				xtick={1/3, 1/2, 1},
				xticklabels={$1/3$, $1/2$, $1$},
				ytick={0, 0.5, 1, 2, 3},
				yticklabels={$0$, $1/2$, $1$, $2$, $3$},
				ymajorgrids=true,
				grid style=dashed,
				every axis plot/.append style={very thick}
				]
				\addplot [
				domain=0.33:0.5, 
				samples=100, 
				color=Maroon,
				]
				{(1-x)/x};
				\addlegendentry{\((1-H)/H\)}
				\addplot [
				domain=0.5:1, 
				samples=100, 
				color=Blue,
				]
				{1/(2*x)};
				\addlegendentry{\(1/(2H)\)}
				\addplot [
				domain=0.33:1, 
				samples=100, 
				color=ForestGreen,
				]
				{1/x};
				\addlegendentry{\(1/H\)}
				\addplot [
				dashed,
				domain=0.5:1, 
				samples=100, 
				color=Maroon,
				]
				{(1-x)/x};
				\addplot [
				dashed,
				domain=0.33:0.5, 
				samples=100, 
				color=Blue,
				]
				{1/(2*x)};
			\end{axis}
		\end{tikzpicture}
		\caption{Plots of admissible parameter regimes for $\gamma$-H\"older continuous volatility to ensure pathwise uniqueness of \eqref{eq:main_sde}. Pathwise theory of \cite{lyons94,lyons1998,davie08} requires 
			$\gamma \geq 1/H$ (solid green line). Thm~\ref{thm:main} allows for $\gamma > \frac{1}{2H}\vee \frac{1-H}{H}$ (blue and red solid line).}
		\label{fig:graph_of_H}
	\end{figure}
	
	We collect some remarks and ancillary results related to Theorem~\ref{thm:main}. Firstly, as is the case for SDEs driven by Brownian motion, we have the following Yamada--Watanabe theorem, 
	\begin{proposition}\label{prop:yamada_watanabe}
		If weak existence, in the Young or rough senses (see Def.~\ref{def:weak_young_sol} or Def.~\ref{def:weak_rough_sol}) and pathwise uniqueness holds (Def.~\ref{def:pathwise_uniq_young} or \ref{def:pathwise_uniq_rough}) for \eqref{eq:main_sde}, then weak uniqueness and strong existence, in the appropriate Young or rough senses, hold as well.
	\end{proposition}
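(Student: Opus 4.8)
The plan is to run the classical Yamada--Watanabe disintegration argument, in the version valid for driving signals without independent increments, the only new work being the bookkeeping needed for the Young/rough notion of solution. The first point is a reduction: write the fBm via its Volterra (Mandelbrot--van Ness) representation $B^H=\int_0^\cdot K_H(\cdot,r)\,\dd W_r$, where $W$ is a standard $d_2$-dimensional Brownian motion; this map and its inverse are causal, and the canonical geometric rough path lift $\mathbf B$ of $B^H$ is a Borel, adapted functional of $B^H$ (an a.s.\ limit of piecewise-linear lifts), so a weak solution in the sense of Def.~\ref{def:weak_young_sol} (resp.\ Def.~\ref{def:weak_rough_sol}) may equally be viewed as driven by the Wiener process $W$, and its filtration then makes $W$ an honest Brownian motion. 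This is what restores the independent-increment structure the classical proof exploits, even though $B^H$ itself has none.

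Next I would fix a weak solution and let $\Theta$ be the law of $(X,W)$ on the Polish space $\cC([0,T];\R^{d_1})\times\cC([0,T];\R^{d_2})$, with second marginal Wiener measure $\mbP^W$; disintegrate $\Theta(\dd x,\dd w)=Q_w(\dd x)\,\mbP^W(\dd w)$. On $\widetilde\Omega:=\cC([0,T];\R^{d_1})^2\times\cC([0,T];\R^{d_2})$ put $\widetilde{\mbP}(\dd x,\dd x',\dd w):=Q_w(\dd x)\,Q_w(\dd x')\,\mbP^W(\dd w)$ together with the augmentation $(\widetilde{\mathcal F}_t)$ of the filtration generated by the two coordinate paths and by $W$. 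Using that $W$ is a Brownian motion for its own filtration, that the coordinates $X$ and $X'$ are $\widetilde{\mbP}$-conditionally independent given $W$, and that each of them is non-anticipating with respect to $W$ (inherited from the original solution), one checks in the usual way that $W$ --- hence $B^H=\int K_H\,\dd W$ and its lift $\mathbf B$ --- still satisfies the structural requirements of Def.~\ref{def:weak_young_sol} (resp.\ Def.~\ref{def:weak_rough_sol}) relative to $(\widetilde{\mathcal F}_t)$, so that both $(\widetilde\Omega,\widetilde{\mathcal F}_t,\widetilde{\mbP},X,W)$ and $(\widetilde\Omega,\widetilde{\mathcal F}_t,\widetilde{\mbP},X',W)$ are weak solutions of \eqref{eq:main_sde} started from $x$. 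The one point requiring care is that ``being a solution'' must be read as a property of the joint law of $(X,\mathbf B)$ together with adaptedness: the probabilistically constructed integral $\int_0^\cdot\sigma(X_r)\,\dd\mathbf B_r$ is, once it exists, a measurable functional of $(X,\mathbf B)$ alone, and the identity $X=x+\int_0^\cdot\sigma(X_r)\,\dd\mathbf B_r$ is a $\Theta$-almost-sure statement, hence it transfers verbatim to $\widetilde{\mbP}$, which was built to have the same $(X,W)$- and $(X',W)$-marginals.

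Now pathwise uniqueness (Def.~\ref{def:pathwise_uniq_young}, resp.\ Def.~\ref{def:pathwise_uniq_rough}) applied on $\widetilde\Omega$ forces $X=X'$ $\widetilde{\mbP}$-a.s., i.e.\ $\int (Q_w\otimes Q_w)(\{x\neq x'\})\,\mbP^W(\dd w)=0$, so for $\mbP^W$-a.e.\ $w$ the measure $Q_w$ is a Dirac mass $\delta_{F(w)}$. A measurable-selection/Doob--Dynkin argument (using regularity of $w\mapsto Q_w$) produces a Borel map $F\colon\cC([0,T];\R^{d_2})\to\cC([0,T];\R^{d_1})$, and the non-anticipativity of $X$ with respect to $W$ shows that $F$ is adapted, i.e.\ $w\mapsto F(w)|_{[0,t]}$ is $\sigma(w|_{[0,t]})$-measurable up to $\mbP^W$-null sets. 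Strong existence follows: on any filtered space carrying a fBm $B^H=\int K_H\,\dd W$ with the filtration demanded by the definition, the process $X:=F(W)$ is adapted, and since the law of $(X,\mathbf B)=(F(W),\mathbf B(W))$ is the canonical one under which the solution identity holds almost surely, $X$ solves \eqref{eq:main_sde} in the appropriate sense.

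Finally, weak uniqueness comes from the same doubling applied to two, a priori different, weak solutions: if $Q^1,Q^2$ are the disintegration kernels of their $(X,W)$-laws, then $Q^1_w(\dd x)\,Q^2_w(\dd x')\,\mbP^W(\dd w)$ again carries two weak solutions driven by the common $W$, pathwise uniqueness yields $X=X'$ a.s., hence $Q^1_w=Q^2_w=\delta_{F(w)}$ for $\mbP^W$-a.e.\ $w$, and both $(X,W)$-laws coincide with the pushforward of $\mbP^W$ under $w\mapsto(F(w),w)$; in particular the law of $X$ is unique. I expect the second step --- verifying that the two glued coordinate processes are genuinely weak solutions, i.e.\ that the probabilistically defined integral and the solution identity survive the disintegration-and-recombination --- to be the only delicate point; the reduction to the underlying Brownian motion $W$ in the first step is precisely what makes the classical independence inputs available, and the remaining manipulations are the standard Yamada--Watanabe bookkeeping.
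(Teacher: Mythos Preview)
Your sketch is correct and follows the classical Yamada--Watanabe disintegration argument, adapted to the non-Markovian driver by passing to the underlying Brownian motion $W=\mathcal K_H^{-1}B^H$ to recover independent increments. The identification of the only delicate step --- that the pathwise (Young or rough) solution identity, being a Borel-measurable statement about the joint law of $(X,\mathbf B)$, transfers through the disintegration-and-recoupling --- is accurate, and your observation that the rough/Young integral is a deterministic functional of its inputs is exactly what makes this go through.

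The paper takes a different route: it simply invokes Kurtz's abstract Yamada--Watanabe theorem \cite{kurtz07}, which is formulated for general ``compatible'' solutions to constraint problems and does not rely on any independent-increment structure of the driver. That framework swallows the bookkeeping you carry out by hand (adaptedness, measurable selection, the coupling construction) into its hypotheses, at the cost of requiring the reader to verify that weak Young/rough solutions fit Kurtz's setup; the paper defers that verification to \cite[App.~B]{matsuda22}. Your approach is more self-contained and makes explicit where the Volterra representation and the pathwise nature of the integral are used, while the paper's citation-based proof is shorter but less transparent about these points. Both are valid; yours would serve well as an expanded appendix.
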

	\begin{proof}
		This follows from a genelarised Yamada-Watanabe theorem of Kurtz~\cite{kurtz07}.
		One can consult \cite[Appendix~B]{matsuda22} for more detailed discussion.
	\end{proof}
	Note that a standard compactness argument (see e.g. \cite[App.~B]{matsuda22}) 
	allows one to construct weak solutions to \eqref{eq:main_sde} (in both the Young and rough senses) for $\sigma \in \mathcal{C}^{\gamma}(\mbR^{d_1};\mbR^{d_1\times d_2})$ with $\gamma > (1-H)/H$, without any assumption of ellipticity. Hence, due to Prop.~\ref{prop:yamada_watanabe} weak uniqueness and strong existence for \eqref{eq:main_sde}, under the additional ellipticity assumption follow under the assumptions of Thm.~\ref{thm:main}. See Rem.~\ref{rem:weak_young_existence_compactness} and Rem.~\ref{rem:weak_rough_existence_compactness} for further discussion of this fact.
	\begin{remark}[Ellipticity Assumption]\label{rem:elliptic_cond}
		In Itô's theory, applicable in the case $H=1/2$, if $\sigma$ is at least Lipschitz continuous, then no ellipticity is required in order to obtain existence and uniqueness of solutions. However, if one treats the SDE \eqref{eq:main_sde} as a Stratonovich equation, the elliptic condition becomes necessary to obtain existence and uniqueness under the same regularity assumption. Alternatively, one can remove ellipticity at the price of increased regularity on $\sigma$, this corresponds to the $\gamma>\frac{1}{H}$ threshold discussed earlier. Since our treatment, beyond the Brownian case, relies on either Young or rough integration both of which generalise the Stratonovich integral and aims for improved regularity assumptions,  we also rely on an ellipticity assumption for $\sigma$.
		
		For simplicity of presentation, in Thm.~\ref{thm:main} (similarly  Thm.~\ref{thm:pathwise_uniqueness_young} and Thm.~\ref{thm:pathwise_uniqueness_rough}) this is stated as uniform ellipticity and our proofs only explicitly treat this case. However, a simple stopping time argument easily allows one to relax this assumption to obtain well-posedness on any fixed time interval to simple ellipticity. We refer to the proof of \cite[Thm.~5.2]{matsuda22} for an exposition of this idea in a similar setting. Since our main focus is the regularity of $\sigma$ we do not repeat the argument in our paper.
	\end{remark}
	\begin{remark}[Rougher and Smoother Noise Regimes]\label{rem:H_less_than_1/3}
		It is natural to expect that Thm~\ref{thm:main} can be extended to the case $H \in (1/4, 1/3]$. However, since this would require the treatment of \third  order iterated integrals (as opposed to \second order as required in Sec.~\ref{sec:rough}) it would significantly increase the length  technicality of our proofs. Therefore, in the interests of keeping the paper somewhat concise we do not present this further extension. It is worth noting that $1/4$ is likely to be a significant barrier, since even the canonical lift of an fBm to a rough path is not defined below this threshold, see \cite{qian_coutin_02_stochastic}.   
		
		Similarly, one could consider the extended Young regime, i.e. $H\in [1,\infty) \setminus \mbN$ (see for example \cite{dareiotisgerencser22,galeatigerencser22}). Again, in order to clarify presentation and avoid additional technicalities we have chosen not to explicitly consider this case here. However, we do not expect significant challenge, in this regime the fBm is classically differentiable so that some arguments may even simplify. We conjecture that in this regime, $H>1$, the threshold, coming from our tools, would become $\gamma>1-\frac{1}{2H}$.
	\end{remark}
	\begin{remark}\label{rem:existence_vs_uniqueness}
		The condition of $\gamma$-H\"older regularity for $\sigma$ with $\gamma > \frac{1-H}{H}$ is necessary to interpret 
		the SDE \eqref{eq:main_sde} as a Young differential equation in the case $H > 1/2$ or as rough differential equation when $H \leq 1/2$. 
		This condition is sufficient for weak existence when $H \in (1/3, 1)$, and also for pathwise uniqueness when $H \in (1/3, 1/2]$. 
		When $H > 1/2$, there is a gap between the condition of weak existence and that of pathwise uniqueness. 
		We conjecture that pathwise uniqueness does not hold in general for $\gamma < \frac{1}{2H}$, although
		we are not currently able to establish a proof.
		
		Due to comparison with Brownian case, we expect that pathwise uniqueness also holds for $\gamma = \frac{1}{2H}$ when $H > 1/2$, however, 
		our current methods cannot cover this case either.
	\end{remark}
	\begin{remark}[Time Dependent $\sigma$]
		As in \cite{galeatigerencser22}, one could naturally consider time dependent $\sigma$, for which we expect that the shifted stochastic sewing lemma (Lem.~\ref{lem:ssl}) with controls would be necessary.
	\end{remark}
	\begin{remark}[\emph{Path-by-Path} and \emph{Weak} Uniqueness]\label{rem:path_by_path_uniq}
		Despite its name the notion of \emph{pathwise} uniqueness is a purely probabilistic one, which in this work we combine with truly pathwise notions of existence (see Def.~\ref{def:pathwise_uniq_young} and Def.~\ref{def:pathwise_uniq_rough}). A stronger and more genuinely pathwise notion of uniqueness was coined by Davie, \cite{davie08}, as \emph{path-by-path} uniqueness. In our case, this property would hold if, on a given probability space $(\Omega,\mcF,\mbP)$ carrying an fBm $B^H$, there exists a $\mbP$-null set in $\Omega$, which may depend on $\sigma$ and the initial data such that the Young differential equation (resp. rough differential equation) for $H > 1/2$ (resp. $H \leq 1/2$),
		\begin{align*}
			\mathrm{d} y_t = \sigma (y_t) \, \mathrm{d} B_t^H(\omega), \quad y_0 = x \in \mbR^{d_1}
		\end{align*}
		has a unique solution. In particular, these solutions are not required to be adapted to any given filtration. We refer the reader to  \cite{shaposhnikov_wresch_22_path} and the references therein for a more thorough discussion of the distinction between pathwise and path-by-path uniqueness.
		
		It is not currently clear to the authors what the natural assumptions on $\sigma$ are to ensure \emph{path-by-path} uniqueness for solutions to \eqref{eq:main_sde}. In \cite[Thm.~4.8]{davie08} an example of \eqref{eq:main_sde} is studied for $H=1/2$ with $\sigma$ that is $2-\epsilon$-H\"older continuous, for any $0<\epsilon\ll 1 $ which has infinitely many solutions. However, this is not a counter example to \emph{path-by-path} uniqueness since the $\sigma$ here is in fact allowed to be random (almost surely of the required H\"older continuity) and may depend on the realisation of the Brownian motion.
		
		Similarly, it is not clear to the authors which assumptions should be required of $\sigma$ in order to prove weak uniqueness of solutions to \eqref{eq:main_sde} (see Def.~\ref{def:weak_young_sol} and Def.~\ref{def:weak_rough_sol}).
	\end{remark}
	\begin{remark}[SDEs with Drift]\label{rem:with_drift}
		A natural question following from Thm.~\ref{thm:main} is to obtain similar results for fBm driven SDEs with drift and multiplicative noise, of the form
		\begin{align}\label{eq:drift_mult_sde}
			\mathrm{d} X_t = b(X_t) \, \mathrm{d} t + \sigma (X_t) \, \mathrm{d} B^H_t.
		\end{align}
		If $b$ is sufficiently regular, for example Lipschitz continuous, it seems clear that the arguments of the current paper should adapt without difficulty to prove pathwise uniqueness, under the assumptions of Thm.~\ref{thm:main}. 
		Conversely, in \cite{dareiotisgerencser22} the authors consider \eqref{eq:drift_mult_sde} with regular $\sigma$ but irregular drift term, $b$.
		Building on the current work, it would be of interest to study the case when both $\sigma$ and $b$ are irregular. See also the more recent additions to this line of research \cite{dareiotis_gerencser_le_ling_24_regularisation_gaussian,catellier2024regularizationnoiseroughdifferential}. The analogous situation has been studied in the Brownian case, e.g. \cite{krylov_23_strong}.
	\end{remark}
	\begin{remark}
		It seems reasonable to expect that using similar arguments as ours (see Sec.~\ref{subsec:proof_overview} below) one could hope to show the existence of a continuous, adapted, semi-flow $\Phi : [0,T]^{2}_{\leq} \times \mbR^{d_1}\to \mbR^{d_1}$ defined such that
		\begin{equation}
			\Phi_{s,t}(x) \coloneqq X^{s;x}_t = x + \int_s^t \sigma(X^{s;x}_r)\dd B^H_r.
		\end{equation}
		Continuity here means weak convergence (as stochastic processes) of $\Phi_{s,\,\cdot\,}(x)- \Phi_{s,\,\cdot\,}(y)$ to zero as $x\to y$. One challenge here would be to obtain uniqueness of solutions for all initial data simultaneously, while relevant null sets in our arguments may (in principle) depend on the initial data $x$. A common resolution to this issue is to first obtain stability of the solution in the initial data, see for example \cite{Cass:2013aa,galeatigerencser22}. It is not currently clear if our tools allow us to obtain similar stability estimates. This question is also related to stability of the solution in the coefficient $\sigma$, which in turn would directly allow one to consider McKean--Vlasov SDEs of the form,
		\begin{equation}
			\dd X_t = (\sigma\ast \mu_t)(X_t) \dd B^H_t,\quad \mu_t = \text{Law}(X_t),
		\end{equation}
		under similar conditions on $\sigma$ as Thm.~\ref{thm:main}. See for example \cite{galeati_harang_mayorcas_22_distribution_fbm,galeatigerencser22} for treatments of McKean--Vlasov equations with additive noise and singular drifts as well as \cite{bailleul_catellier_delarue_20_solving,delarue_salkeld_23_example,delarue_salkeld_22_prob_rp_II} for treatments of multiplicative, rough, McKean--Vlasov equations in the case of smooth volatilities. Stability estimates with respect to $\sigma$ are also related to convergence rates of numerical schemes for these and similar equations, see \cite{friz_salkeld_wagenhofer_22_weak,leling21,galeati_ling_23_stability}.
	\end{remark}
	\subsection{Overview of the Proof}\label{subsec:proof_overview}
	We give a brief and formal description of our proof strategy for Thm.~\ref{thm:main}. In both cases for $H\in (1/3,1/2]$ and $H\in (1/2,1)$ our argument is derived from a strategy given in \cite{le20}, which relates pathwise uniqueness to construction of an irregular stochastic integral. In our case, let us consider two (weak) solutions $X,\,Y$ to \eqref{eq:main_sde} defined on the same, filtered, probability space $(\Omega,\mcF,(\mcF_t)_{t\geq 0},\mbP)$ carrying an fBm $B^H$ with Hurst parameter $H\in (1/4,1)$. We refer the reader to Def.~\ref{def:pathwise_uniq_young} and Def.~\ref{def:pathwise_uniq_rough} for the precise notions of pathwise uniqueness that we work with. Then, letting $d_1,\,d_2 \in \mbN$ and assuming $\sigma:\mbR^{d_1}\to \mbR^{d_1\times d_2}$ is smooth for now, the difference $X-Y$ satisfies,
	\begin{equation}
		X_t-Y_t = \int_0^t \left(\sigma(X_r)-\sigma(Y_r) \right)\dd B_r^H.
	\end{equation}
	Applying the mean value theorem we see that $\sigma(X_r)-\sigma(Y_r)$ is a $d_1\times d_2$ matrix whose components are given by
	\begin{equation*}
		\sigma_{ij}(X_r)-\sigma_{ij}(Y_r) = (X_r-Y_r) \cdot \int_0^1 \nabla \sigma_{ij} (\theta X_r +(1-\theta)Y_r) \dd \theta,\quad i =1,\ldots d_1,\,j=1,\ldots,d_2.
	\end{equation*}
	Thus, we can formally write,
	\begin{equation}\label{eq:diff_linearisation}
		X_t-Y_t = \sum_{k=1}^{d_1} \int_0^t (X_r^k-Y_r^k) \dd G^k_r,
	\end{equation}
	where the components $(G^k)_{k=1}^{d_1}$ are defined by setting
	\begin{equation}\label{eq:g_lin_def}
		G^k_t \coloneqq \int_0^t \int_0^1 \partial_k \sigma(\theta X_r + (1-\theta)Y_r) \dd \theta \dd B^H_r.
	\end{equation}
	Here the product between $\partial_k \sigma$ and $\dd B^H$ is understood in the usual manner as a $d_1\times d_2$ vector acting on a vector of length $d_2$. Thus, we aim to view the difference $X-Y$ as a solution to the linear, integral equation
	\begin{equation}\label{eq:z_eq_linear}
		Z_t = \int_0^t Z_t \cdot \dd G_t.
	\end{equation}
	Since \eqref{eq:z_eq_linear} clearly has unique solutions (if they exist), the question of pathwise uniqueness of solutions to \eqref{eq:main_sde} boils down to rigorous construction of the integral on the right hand side of \eqref{eq:diff_linearisation}. The challenge is to do so for irregular $\sigma$ and this requires careful use of a priori properties which we obtain for solutions to \eqref{eq:main_sde}. The key result we make use of is a stochastic averaging lemma for stochastic integrals of singular integrands evaluated along a rapidly oscillating component of the fBm path, see Lem.~\ref{lem:gaussian_integral}. This highlights why our assumption of ellipticity of $\sigma$ is crucial since it ensures that the weak solutions can be robustly expanded in terms of the noise process. Beyond this key tool, the technical steps of our proof differ in the regimes $H\in (1/2,1)$ and $H\in (1/3,1/2]$, however, the spirit is the same in both cases.
	\subsubsection*{Young case: $H\in (1/2,1)$}
	In this regime we expect to have solutions $X,\,Y$ which are $\alpha$-H\"older continuous for any $\alpha \in (1/2,H)$ and in turn we expect the map $t\mapsto G_t^k$ (as defined by \eqref{eq:g_lin_def}) to be at least $1/2$-H\"older continuous. Thus we expect to be able to make sense of the integral on the right hand side of \eqref{eq:diff_linearisation} as a Young integral, see Sec.~\ref{subsec:young_int}  below. However, the limiting factor is the regularity of the composition $\partial_k \sigma(\theta X_r +(1-\theta)Y_r)$ inside $G$. If we allow $\sigma$ to be $\gamma$-H\"older continuous then one expects $r\mapsto \partial_k \sigma(\theta X_r +(1-\theta)Y_r)$ to be $\alpha(\gamma-1)$-H\"older continuous. Thus, in order to naively define $G$ we would require $\alpha (\gamma-1)+\alpha >1 \,\, \iff\,\, \gamma > \frac{1}{\alpha}>\frac{1}{H}$, (recall this is the condition of \cite{lyons94,davie08}). To go below this threshold we make more detailed use of the fact that one expects $\theta X +(1-\theta)Y$ to be \emph{controlled} in a suitable probabilistic sense by $B^H$ (see Def.~\ref{def:control_prob}). 
	This allows us to exploit averaging properties of the fBm in order to obtain sharper stochastic estimates on the composition $\partial_k \sigma(\theta X_r +(1-\theta)Y_r)$. These estimates are obtained by Prop.~\ref{prop:fbm_young} giving control (essentially) of the form
	\begin{equation}\label{eq:intro_young_fbm}
		\Big\| \int_s^t f (X_r) \mathrm{d} B_r^H \Big\|_{L^p_{\omega}} 
		\lesssim_{\beta, l, p, \gamma, X,T} \| f \|_{\mathcal{C}_x^{\gamma-1}} \big((t - s)^{\gamma H}
		+  (t-s)^{\left(1+\frac{1}{2H} +(\gamma-1)\right)H}\big),
	\end{equation}
	for $f:\mbR^{d_1}\to \mbR^{d_1\times d_2}$ that is at least $(\gamma-1)$-H\"older continuous and any $X$ \emph{probabilistically controlled} (see Def.~\ref{def:control_prob}) by $B^H$ with remainder that is at least $1/2$-H\"older continuous. In order to apply the sewing lemma (see Lem.~\ref{lem:det_sewing})   we therefore require $\gamma H \wedge \left(\frac{1}{2H}+\gamma\right)H \geq \frac{1}{2} \iff \gamma >\frac{1}{2H}$. The proof of this estimate is carried out by considering the germ,
	\begin{equation*}
		A_{s, t} \defby \mathbb{E}\left[ \int_s^t f(X_{s-(t-s)} + X'_{s-(t-s)} B_{s-(t-s), r}^H) \, \mathrm{d} B^H_r 
		\Big \vert \mathcal{F}_{s - (t-s)} \right],
	\end{equation*}
	where $X'$ denotes the Gubinelli derivative of $X$ and applying Gerencsér's shifted, stochastic, sewing lemma \cite[Lem.~2.2]{gerencser2022} (see Sec.~\ref{subsec:sewing}). The significant challenge here is to treat the stochastic integral inside the germ $A$, this is handled by somewhat involved Gaussian computations, carried out in App.~\ref{sec:gaussian}, which result in the stochastic averaging given  by Lem.~\ref{lem:gaussian_integral, crucial to our approach}. The remainder of Sec.~\ref{sec:young} essentially consists of applying this abstract control to construct $G$ as defined by \eqref{eq:g_lin_def} by considering a smooth approximation $\sigma_n\to \sigma$ converging in any $\gamma'$-H\"older topology for $\frac{1}{2H}<\gamma'<\gamma$, see the proof of Thm.~\ref{thm:pathwise_uniqueness_young} in Sec.~\ref{subsec:pw_uniq}. Prior to this final step we also show that all weak solutions to \eqref{eq:main_sde} are \emph{probabilistically controlled} by $B^H$ in a suitable sense, see Lem.~\ref{lem:sol_is_controlled}, which allows us to apply Prop.~\ref{prop:fbm_young}.
	\subsubsection*{Rough case: $H\in (1/3,1/2]$}
	In the rough case, our strategy is essentially the same, in that we reduce our problem to constructing the integral \eqref{eq:g_lin_def} and solving the linearised equation \eqref{eq:diff_linearisation}. However, in this case we can no longer appeal to Young integration theory for either task. Concerning the second, since we cannot expect $X,\,Y$ to be more regular than the noise, i.e. $\alpha$-H\"older continuous for any $\alpha \in (1/3,H)$, even if we assume $\sigma$ to be smooth, we cannot expect $r \mapsto G_r$ (with $G$ as in \eqref{eq:g_lin_def}) to be more regular than the noise either and so the integral of $X-Y$ against $\dd G$ is not defined as a Young integral since we no longer have $2\alpha>1$. This difficult is relatively easily circumvented by instead appealing to rough integration. Assuming for now that we can construct $G$, the natural solution is to define the joint lift of $(B^H,G)$ to a rough path 
	\begin{equation}\label{eq:B_G_lift}
		\left(B^H, G^k, \int B^H \otimes \, \mathrm{d} B^H, \int B^H \otimes \, \mathrm{d} G^k, \int G^k \otimes \, \mathrm{d} B^H, \int G^k \otimes \, \mathrm{d} G^l\right)_{k,\,l\, =1}^{d_1}.
	\end{equation} 
	Since $X,\,Y$ can both be shown to be \emph{controlled} by $B^H$ (in the sense of Gubinelli's controlled rough paths, see \cite{gubinelli04} and Def.~\ref{def:controlled_path}) the integration of $X-Y$ against this joint rough path follows from standard results. The construction of the joint lift for smooth $\sigma$ is also relatively standard, the central challenge is to obtain this construction for $\sigma$ satisfying the assumptions of Thm.~\ref{thm:main}.
	
	It turns out that this construction relies on the same estimates that allow us to construct $G$ in the first place. As in the Young case the core of the proof is to obtain precise, probabilistic estimates on integrals of singular functions against fBm, in this case these are of the form
	\begin{equation*}
		\int_{s}^t f(X_r)\dd B^H_r,\quad \int_s^t f(X_r) B^H_{s,r}\otimes \dd B^H_r\quad \text{and}\quad \int_s^t \int_{s}^{r_2} g(Y_{r_1}) \dd B^{H}_{r_1}\otimes f(X_{r_2})\dd B^H_{r_2}.
	\end{equation*}
	As before, we take $f,\,g$ to be $\gamma-1$-H\"older continuous, playing the role of $\nabla \sigma$, this time, however, we assume $X,\,Y$ are deterministically \emph{controlled} by $B^H$, see Def.~\ref{def:controlled_path}. Control on these integrals, similar in spirit to \eqref{eq:intro_young_fbm} are obtained progressively through Prop.~\ref{prop:fbm_rough}, Prop.~\ref{prop:rough_iterated_integral} and Prop.~\ref{prop:rough_itr_int_2}. These estimates are also more technical than in the Young case and we are required to apply the stochastic sewing lemma twice, first in the fully shifted sense of \cite{matsuda22} and then in the original form given by \cite{le20}. We refer the reader to the proofs of the above propositions for details, simply remarking here that the main innovation is to use the knowledge that $X$ and $Y$ are controlled by $B^H$ to again obtain improved regularising effects through averaging.
	
	Once sufficient control on the above integrals is obtained we are again able to pass along a smooth approximation $\sigma_n$ to $\sigma$, converging in any $\gamma'$-H\"older topology for $\frac{1-H}{H}<\gamma' <\gamma$. As before, we are then in position to appeal to uniqueness of solutions to the linear equation \eqref{eq:z_eq_linear} and conclude.
	\subsection{Notation}\label{subsec:notation}
	Throughout the paper we use the following notations and conventions.
	\begin{itemize}
		\item When stating an inequality we either write $A \lesssim _{\alpha,\,\beta,\,\ldots} B$ to indicate that the inequality holds up to a constant depending on the parameters $\alpha,\,\beta,\,\ldots$ or write that there exists a $C\coloneqq C(\alpha,\beta,\ldots)>0$ such that $A\leq CB$. If we write $A\lesssim B$ without explicit dependents we mean that the inequality holds up to a constant depending on unimportant parameters that we do not keep track of.
		\item We  $\NN \coloneqq \{1,\,2,\,\ldots\}$ for the strictly positive integers and set $\NN_0 \coloneqq \NN \cup \{0\}$. As usual we use $\RR^d$, for $d\in \mbN$, to denote Cartesian products of the real numbers. 
		Given a positive integer $d$, we write $l \in \NN^d$ (resp. $l\in \NN_0^d$) to denote a multi-index made up of positive (resp. non-negative) integers of length $d$ and set $|l| \coloneqq \sum_{i=1}^d l_i$. As is standard, we write $\partial_x$ (resp. $\partial_t$) for the derivatives on $\RR^d$ (resp. $[0,\infty)$) and given $l \in \NN_0^{d}$ we let $\partial_x^l \coloneqq (\partial^{l_1}_{x_1},\ldots,\partial^{l_d}_{x_d})$. Where it will not use confusion we sometimes write $\dot{f}(t,x)$ for the derivative in time of a function on $\mbR_+\times \mbR^d$.
		\item We fix two positive integers $d_1,\,d_2 \in \NN$ which will always denote, respectively, the dimension of the solution process $t\mapsto X_t\in \RR^{d_1}$ and the dimension of the noise $t\mapsto B^H_t \in \RR^{d_2}$. Note that all results hold even in the case $d_2<d_1$.
		\item Given $S,\, T\in \RR$ such that $S<T$, $d\in \NN$ and a function $f:[S,T]\to \RR^d$, we define the increment,
		$f_{s,t} \coloneqq f_t-f_s$.
		Furthermore, defining the sets
		\begin{align*}
			[S,T]^n_{\leq} &:= \{ (s_1,\ldots,s_n) \in [S,T]^n \,:\, s_1\leq s_2\leq \cdots \leq  s_n\}, \quad \text{for}\,\, n\in \mbN
		\end{align*}
		we specify a map $\delta : C([S,T]^2_{\leq};\RR^d)\rightarrow C([S,T]^3_{\leq};\RR^d)$ such that
		$\delta A_{s,u,t} = A_{s,t}-A_{s,u}-A_{u,t}.$
		\item Given, $d_1,\,d_2 \in \mbN$, we write $\cM(d_1,d_2)$ for the space of $d_1 \times d_2$ matrices and for $d_1=d_2\eqqcolon d$ we define the set of positive definite, square matrices,
		\begin{equation}\label{eq:pos_def_matrix}
			\cM_+(d) \coloneqq \left\{ \Gamma \in \cM(d,d) \,:\, z^{\top} \Gamma z >0,\, \text{for all } z \in \RR^{d} \setminus \{0\}\right\}.
		\end{equation}
		Letting $I_{d} \in \cM(d,d)$ denote the identity matrix, given $\Gamma \in \cM(d,d)$ and $c\in \RR$ we say $\Gamma> c$
		if and only if $ \Gamma - cI_{d} \in \cM_+(d)$ and $\Gamma< c$ if and only if $cI_{d}-\Gamma\in \cM_+(d)$.
		By identifying $\cM(d_1,d_2)$ with $\R^{d_1\times d_2}$, we equip $\cM(d_1,d_2)$ with the usual Euclidean norm 
		$\abs{\,\cdot\,}$. Finally, given $\Gamma \in \cM(d_1,d_2)$ we use $\Gamma^{\top}\in \mcM(d_2,d_1)$ to denote the transposed matrix.
		\item Given $\Gamma \in \cM_+(d)$ we define the generalised heat kernel $p_\Gamma :\RR^{d}\to \RR$,
		\begin{equation*}
			p_{\Gamma}(x) \defby \frac{1}{(2 \pi)^{d/2} (\det \Gamma)^{1/2}} \exp \Big(-\frac{\inp{x}{\Gamma^{-1} x}}{2}\Big)
		\end{equation*}
		and for a function $f:\RR^{d} \to \RR$ we set $\cP_{\Gamma} f \defby p_{\Gamma} \conv f$.
		When $f$ is vector-valued, $\mathcal{P}_{\Gamma}$ acts component-wise.
		If $\Gamma = t I_{d}$ for some $t \in (0, \infty)$ we simply write
		$\cP_t \defby \cP_{t I_{d}}$.
		\item Given $k \in \NN_0$ we write $C^k(\RR^{d_1};\RR^{d_2})$ (resp. $C^k(\RR^{d_1};\cM(d_1,d_2))$) for the space of $k$-times, continuously differentiable mappings from $\RR^{d_1}$ to $\RR^{d_2}$ (resp. $\RR^{d_1}$ to $\cM(d_1,d_2)$). From now on, we typically only specify the domain of a given function space, leaving the range unstated where it is clear from context, or does not affect the statement.  For $\gamma \in \RR\setminus \NN_0$ we let $\cC^{\gamma}(\RR^{d_1})$ denote the set of functions finite under the norm,
		\begin{equation*}
			\norm{f}_{\cC^{\gamma}_x} \defby \sum_{l \in \N_0^{d_1},\, \abs{l} < \floor{\gamma}} \norm{\partial^l_x f}_{L^{\infty}_x}
			+ \sum_{l \in \N_0^{d_1},\, \abs{l}= \lfloor \gamma \rfloor} \sup_{x\,\neq y\, \in \R^d} \frac{\abs{\partial^l_x f(x) - \partial^l_x f(y)}}{\abs{x- y}^{\gamma - \lfloor \gamma \rfloor}}.
		\end{equation*}
		Similarly, given $0\leq S\leq T<\infty$ and $d\in \NN$ we write $\cC^{\gamma}([S,T];\RR^{d})$ for the paths $f:[S,T]\to \RR^{d}$ finite under the norm,
		\begin{equation*}
			\norm{f}_{\cC^{\gamma}_{[S,T]}} \defby \sum_{l=0}^{\floor{\gamma}-1 } \norm{\partial^l_t f}_{L^{\infty}_{[S,T]}}
			+  \sup_{(s,t) \in [S,T]^2_{\leq}} \frac{\abs{\partial^l_t f(t) - \partial^l_t f(s)}}{\abs{t-s}^{\gamma - \lfloor \gamma \rfloor}}.
		\end{equation*}
		When $S=0$ we use the abbreviations $\cC^\gamma_T \coloneqq \cC^\gamma([0,T];\RR^d)$ and $\|f\|_{\cC^\gamma_T}$.
		As also introduce the family of seminorms
		\begin{align}\label{eq:holder_seminorm}
			\llbracket f \rrbracket_{\mathcal{C}^{\gamma}_{[S, T]}} \defby \sup_{(s, t) \in [S, T]^2_{\leq}} \frac{\abs{f_t - f_s}}{\abs{t - s}^{\gamma}}.
		\end{align}
		For $k\in \NN_0$ we let $\cC^{k}(\RR^{d_1};\RR^{d_1})$ (in turn $\cC^k(\RR^{d_1};\cM)$ and $\cC^k([S,T];\RR^d)$) denote the set of functions whose first $k$, weak, derivatives are bounded.
		
		For, $\gamma <0$ we define the norm
		\begin{equation}\label{eq:negative_gamma_norm}
			\|f\|_{\cC^{\gamma}_x} \coloneqq \sup_{t \in (0,1]} t^{-\frac{\gamma}{2}} \|\cP_t f \|_{L^\infty}
		\end{equation}
		and declare $\cC^{\gamma}(\RR^{d_1};\RR^{d_2})$ (resp. $\cC^{\gamma}(\RR^{d_1};\cM(d_1,d_2))$) to be the set of distributions $f \in \cS'(\RR^{d_1};\RR^{d_2})$ (resp. $f\in \cS'(\RR^{d_1};\cM(d_1,d_2))$) which are finite under the norm of \eqref{eq:negative_gamma_norm}. Note that for $\gamma \in \RR\setminus \ZZ$ the space $\cC^\gamma(\RR^{d_1})$ coincides with the standard H\"older--Besov spaces defined via Littlewood--Payley  blocks, see \cite[Sec.~2.6.4]{triebel92}.
		\item Given a probability space $(\Omega,\mcF,\mbP)$, a Banach space valued random variable $X :\Omega \to (E,\|\,\cdot\,\|_E)$ and $p\in [1,+\infty] $ we define the norms
		\begin{equation*}
			\|X\|_{L^p_\omega} \,\coloneqq \, \begin{cases}
				\mbE[\|X\|_{E}^p]^{\frac{1}{p}}, & p\in [1,+\infty),\\
				\esssup_{\omega \in \Omega} \|X(\omega)\|_E,
			\end{cases}
		\end{equation*}
		and set
		\begin{equation*}
			L^p(\Omega;E) \coloneqq \{X:\Omega \to E\,:\, \|X\|_{L^p_\omega} <\infty\}.
		\end{equation*}
		In these definitions we leave the probability measure implicit and where appropriate, the Banach space $E$.
	\end{itemize}
	\subsubsection*{Acknowledgement}
	\noindent Both authors wish to thank K. Dareiotis, L. Galeati, M. Geresc\'er and N. Perkowski for illuminating discussions during the course of this project.
	The main portion of this research was conducted when TM was a PhD student at Freie Universität Berlin with financial support by the German Science Foundation (DFG) via the IRTG 2544. 
	AM gratefully acknowledges financial support from DFG Research Unit FOR2402. 
	%
	%
	\section{Preliminaries}\label{sec:preliminaries}
	\noindent We recall some useful facts concerning the heat semi-group, Young and rough integration (stochastic) sewing as well as the fractional Brownian motion.
	\subsection{Properties of the Heat Semi-Group}
	Let $\alpha,\,\beta \in \RR$ with $\beta \geq 0 \vee \alpha$, $t > 0$, $d\in \mbN$ and $c>1$. Then the following hold;
	\begin{itemize}
		\item given $\Gamma\in \cM_+(d)$ such that $c^{-1}< \Gamma < c$
		\begin{equation}\label{eq:semigroup_estimate}
			\|\cP_{t \Gamma} \|_{\cC^\beta_x} \lesssim_{c,\alpha,\beta} (t\wedge 1)^{-\frac{\beta-\alpha}{2}} \|f\|_{\cC^\alpha_x}.
		\end{equation}
		\item given $\Gamma_1,\,\Gamma_2\in \cM_+(d)$ such $c^{-1} < \Gamma_i < c$ for $i=1,\,2$
		\begin{equation}\label{eq:semigroup_difference}
			\norm{(\cP_{t\Gamma_1} - \cP_{t\Gamma_2 }) f}_{\cC^{\beta}_x}
			\lesssim_{c, \alpha, \beta} (t\wedge 1)^{-\frac{\beta-\alpha}{2}} \abs{\Gamma_1 - \Gamma_2}
			\norm{f}_{\cC^{\alpha}_x}.
		\end{equation}
	\end{itemize}
	Inequality \eqref{eq:semigroup_estimate} is formally stated and proved as Lem.~\ref{lem:semigroup_regularisation} while \eqref{eq:semigroup_difference} is formally stated and proved as Lem.~\ref{lem:semigroup_difference}.
	\subsection{Young Integration}\label{subsec:young_integration}
	\noindent We briefly review the core aspects of \emph{Young integration} \cite{young}, which we appeal to for $H \in( 1/2,1)$.
	We refer the reader to \cite{galeati_23_nonlinear,friz_victoir_2010} for modern accounts of the theory. Let $T > 0$, $\alpha , \beta \in (0, 1)$, and let $g \in \mathcal{C}^{\beta}_T(\mathbb{R}^{d_2})$. 
	Suppose that $f \in \mathcal{C}^{\alpha}_T(\mbR^{d_1};\mcM(d_1,d_2))$. If $\alpha + \beta > 1$, then, as shown for example by \cite[Thm.~6.8]{friz_victoir_2010}, the limit
	\begin{align*}
		\int_0^T f_r \, \mathrm{d} g_r \defby \lim_{\substack{\pi \text{ is a partition of }[0, T]; \\ \abs{\pi} \to 0} }
		\sum_{[s, t] \in \pi} f_s g_{s, t}
	\end{align*}
	exists, and furthermore, one has the following estimate,
	\begin{align}\label{eq:young_fund_est}
		\abs[\Big]{\int_s^t f_r \, \mathrm{d} g_r - f_s g_{s, t}} \lesssim_{\alpha, \beta} 
		\llbracket f \rrbracket_{\mathcal{C}^{\alpha}_{[s, t]}} \llbracket g \rrbracket_{\mathcal{C}^{\beta}_{[s, t]}}
		(t-s)^{\alpha + \beta},
	\end{align}
	where the seminorms $\llbracket \,\cdot\, \rrbracket_{\mcC^{\gamma}_{[s,t]}}$ are defined by \eqref{eq:holder_seminorm} (for $\gamma \in \{\alpha,\beta\}$ here).
	\subsection{Rough Integration}\label{sec:rough_integration}
	When the regularity of the driver $g$ is below this threshold, one may still construct a suitable notion of integral, provided a suitable enhancement to the path $g$, and the additional constraint that the integrand $f$ must \emph{look like} $g$, in a suitable sense, at small scales. These notions are made precise in the following two definitions.
	\begin{definition}[Rough Paths and Geometric Rough Paths -  {\cite{lyons1998}}]\label{def:rough_path}
		Let $d\in \mbN$, $\alpha \in (1/3, 1/2]$, $g \in \mathcal{C}_T^{\alpha}(\mathbb{R}^{d})$ and $\mathbb{G} \from [0, T]_{\leq}^2 \to \mathbb{R}^{d} \otimes \mathbb{R}^{d}$. 
		The pair $(g, \mathbb{G})$ is called an $\alpha$-Hölder \emph{rough path} if 
		\begin{itemize}
			\item the pair satisfies the following H\"older regularity estimate
			\begin{align*}
				\norm{(g, \mathbb{G})}_{\alpha} \defby 
				\sup_{[s, t] \in [0, T]_{\leq}} \frac{\abs{g_t - g_s}}{\abs{t-s}^{\alpha}} + \frac{\abs{\mathbb{G}_{s, t}}}{\abs{t-s}^{2 \alpha}} < \infty,
			\end{align*}
			\item and the following algebraic relationship; for $(s, u, t) \in [0, T]_{\leq}^3$,
			\begin{align}\label{eq:chen_relation}
				\mathbb{G}_{s, t} = \mathbb{G}_{s, u} + \mathbb{G}_{u, t} + 
				(g_s-g_u) \otimes (g_t -g_u).      
			\end{align}
		\end{itemize}
		We say that a rough path is \emph{geometric} if there exists a sequence of smooth functions $(g^n)_{n=1}^{\infty}$
		such that 
		\begin{align*}
			\lim_{n \to \infty} \norm{g - g^n}_{\mathcal{C}^{\alpha}_T} = 0 \quad \text{and} \quad 
			\lim_{n \to \infty} \sup_{[s, t] \in [0, T]_{\leq}^2} \frac{\abs{\mathbb{G}_{s, t} - \mathbb{G}^n_{s, t}}}{\abs{t-s}^{2 \alpha}} = 0,
		\end{align*}
		where $\mathbb{G}^n_{s, t} \defby \int_s^t g^n_{s, r} \otimes \dot{g}^n_r \, \mathrm{d}r$. 
	\end{definition}
	\begin{definition}[Space of Controlled Paths  - \cite{gubinelli04}]\label{def:controlled_path}
		Let $d_1,\,d_2 \in \mbN$, $\alpha \in (0, 1)$, $\beta \in (\alpha \vee (1-\alpha), 1)$ and $g \in \mathcal{C}^{\alpha}_T(\mathbb{R}^{d_2})$.
		A path $f \from [0, T] \to \mathbb{R}^{d_1}$ is said to be \emph{controlled} by $g$, if 
		there exists an $f' \from [0, T] \to \mcM(d_1,d_2)$ such that 
		\begin{align}\label{eq:cont_path_norm}
			\norm{(f,f')}_{\mathscr{D}_g^{\beta}} \defby \norm{ f' }_{\mathcal{C}^{\beta - \alpha}_T}
			+ \sup_{(s, t) \in [0, T]_{\leq}^2} \frac{\abs{f_{s, t} - f'_s g_{s, t}}}{\abs{t-s}^{\beta}} < \infty.
		\end{align}
		The map $f'$ is called the \emph{Gubinelli derivative} of $f$. 
		We write $\mathscr{D}^{\beta}_g$ for the space of $\beta$-regular paths controlled by $g$. Note that for $\beta'>\beta$ one has $\msD^{\beta'}_g \subset \msD^{\beta}_g$.
	\end{definition}
	The definition of the space of controlled rough paths allows us to give meaning to the rough integral against a given path. For $\alpha \in (1/3, 1)$ and $\beta \in (1-\alpha, 1)$ let $(g, \mathbb{G})$ be an $\alpha$-Hölder rough path and $f \in \mathscr{D}^{\beta}_g$. Then, referring for example to \cite[Thm.~4.10]{FH20}, one sees that the limit 
	\begin{equation*}
		\int_0^T f_r \, \mathrm{d} g_r \defby \lim_{\substack{\pi \text{ is a partition of }[0, T]; \\ \abs{\pi} \to 0} }
		\sum_{[s, t] \in \pi} f_s g_{s, t} + f'_s \mathbb{G}_{s, t}
	\end{equation*}
	exists and defines the rough integral of $f$ against $g$. Moreover, similarly to \eqref{eq:young_fund_est}, we have 
	\begin{equation}\label{eq:rough_fund_est}
		\abs[\Big]{\int_s^t f_r \, \mathrm{d} g_r - f_s g_{s, t} - f'_s \mathbb{G}_{s, t}} \lesssim_{\alpha, \beta} 
		\norm{(f, f')}_{\mathscr{D}_g^{\beta}} \norm{(g, \mathbb{G})}_{\alpha} 
		(t-s)^{\alpha + \beta} .
	\end{equation}
	\subsection{Sewing}\label{subsec:sewing}
	\noindent We require a number of versions of the sewing lemma, first introduced by \cite{gubinelli04} and \cite{feyel06}. 
	\begin{lemma}[Sewing Lemma -  \cite{gubinelli04,feyel06}]\label{lem:det_sewing}
		Let $E$ be a Banach space, $T > 0$ and \mbox{$A: [0, T]_{\leq}^2 \to E$} be an $E$-valued, two parameter process. Then, suppose that for some $\varepsilon, C \in (0, \infty)$ it holds that 
		\begin{align*}
			\norm{\delta A_{s, u, t}}_E \leq C \abs{t-s}^{1+\varepsilon}, \quad \forall \,(s, u, t) \in [0, T]^3_{\leq},
		\end{align*}
		then there exists a unique map $\mathcal{A} \from [0, T] \to E$ such that 
		\begin{align}\label{eq:cA_uniq}
			\norm{\mathcal{A}_t - \mathcal{A}_s - A_{s, t}}_E \lesssim_{\varepsilon} C \abs{t-s}^{1 + \varepsilon}, 
			\quad \forall \,(s, t) \in [0, T]_{\leq}^2.
		\end{align}
		Furthermore, it holds that
		\begin{align*}
			\mathcal{A}_t - \mathcal{A}_s = 
			\lim_{\substack{\pi \text{ is a partition of }[s, t], \\ \abs{\pi} \to 0}} 
			\sum_{[s', t'] \in \pi} A_{s', t'}.
		\end{align*}
	\end{lemma}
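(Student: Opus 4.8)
The plan is to construct $\mathcal{A}$ as the limit of compensated Riemann sums. For $0\le s\le t\le T$ and a partition $\pi=\{s=t_0<\dots<t_N=t\}$ of $[s,t]$, I set $A^{\pi}_{s,t}:=\sum_{i=0}^{N-1}A_{t_i,t_{i+1}}$, and I aim to show that $A^{\pi}_{s,t}$ converges, as $\abs{\pi}\to 0$, to a limit $\mathcal{A}_{s,t}$ which is additive, meaning $\mathcal{A}_{s,t}=\mathcal{A}_{s,u}+\mathcal{A}_{u,t}$ for $s\le u\le t$; the desired path is then $\mathcal{A}_t:=\mathcal{A}_{0,t}$. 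Uniqueness will be treated separately and is elementary.

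The quantitative heart is the bound $\norm{A^{\pi}_{s,t}-A_{s,t}}_E \le C'(\varepsilon)\, C\, \abs{t-s}^{1+\varepsilon}$, holding uniformly over \emph{all} partitions $\pi$ of $[s,t]$. I would prove this by deleting partition points one at a time: if $N\ge 2$, a pigeonhole argument produces an interior index $i$ with $t_{i+1}-t_{i-1}\le 2(t-s)/(N-1)$; removing $t_i$ alters the sum by exactly $-\delta A_{t_{i-1},t_i,t_{i+1}}$, whose norm is at most $C\bigl(2(t-s)/(N-1)\bigr)^{1+\varepsilon}$. Iterating down to the trivial partition $\{s,t\}$ and summing the convergent series $\sum_{k\ge 1}k^{-(1+\varepsilon)}$ yields the claim.

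Granting this, for two partitions $\pi,\tilde\pi$ with common refinement $\pi\vee\tilde\pi$, applying the bound cell-by-cell on $\pi$ gives $\norm{A^{\pi}_{s,t}-A^{\pi\vee\tilde\pi}_{s,t}}_E \lesssim_{\varepsilon} C\,(t-s)\,\abs{\pi}^{\varepsilon}$, and symmetrically for $\tilde\pi$, so $(A^{\pi}_{s,t})_{\pi}$ is a Cauchy net as $\abs{\pi}\to 0$; since $E$ is complete this produces the limit $\mathcal{A}_{s,t}$, which inherits $\norm{\mathcal{A}_{s,t}-A_{s,t}}_E\lesssim_{\varepsilon} C\abs{t-s}^{1+\varepsilon}$ and the Riemann-sum characterisation. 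Concatenating a partition of $[s,u]$ with one of $[u,t]$ and passing to the limit shows additivity, whence $\mathcal{A}_t-\mathcal{A}_s=\mathcal{A}_{s,t}$, so that $\mathcal{A}$ satisfies \eqref{eq:cA_uniq} and the stated convergence of partition sums.

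For uniqueness, if $\mathcal{A}$ and $\bar{\mathcal{A}}$ both satisfy \eqref{eq:cA_uniq}, then $\phi:=\mathcal{A}-\bar{\mathcal{A}}$ has increments bounded by a constant times $\abs{t-s}^{1+\varepsilon}$; splitting $[s,t]$ into $N$ equal subintervals and telescoping gives $\norm{\phi_t-\phi_s}_E\lesssim \abs{t-s}^{1+\varepsilon}N^{-\varepsilon}\to 0$, so $\phi$ is constant and the increments $\mathcal{A}_t-\mathcal{A}_s$ are uniquely determined. The result is classical and the argument robust; the only step requiring genuine care is upgrading the bound from dyadic partitions to arbitrary partitions (the pigeonhole/telescoping estimate), which is precisely what makes the limit over arbitrary meshes — and hence the Riemann-sum characterisation — available.
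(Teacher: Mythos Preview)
Your argument is correct and is precisely the classical point-deletion/pigeonhole proof that the paper defers to by citing \cite{gubinelli04} and \cite[Lem.~4.2]{FH20}; the paper itself gives no independent proof. The only minor clarification is that uniqueness holds for the increments (equivalently, for $\mathcal{A}$ up to an additive constant), which you state correctly in your final paragraph.
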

	A proof of Lem.~\ref{lem:det_sewing} can be found in \cite{gubinelli04} as well as the proof of \cite[Lem.~4.2]{FH20}.

	\noindent As is now standard in the field of regularisation by noise, 
	we will need the stochastic version of Lem.~\ref{lem:det_sewing}, obtained first 
	by Lê \cite[Thm.~2.1]{le20}. For our purpose, we will require the following \emph{shifted} version.
	\begin{lemma}[Shifted Stochastic Sewing Lemma - {\cite[Thm.~1.1]{matsuda22}}]\label{lem:ssl}
		Let $T \in (0, \infty)$, $(\Omega,\mcF,(\mcF_t)_{t\in [0,T]},\mbP)$ be a given filtered probability space and $(A_{s, t})_{(s, t) \in [0, T]_{\leq}^2}$ be 
		an $\mathbb{R}^d$-valued two-parameter stochastic process such that for all $(s,t)\in [0,T]^2_\leq$
		$A_{s, t}$ is $\cF_t$-measurable and there exist constants $\Gamma_1,\,\Gamma_2 \geq 0$, $\epsilon_1,\,\epsilon_2 \in (0,1)$ and $\alpha \in [0,1/2+\epsilon_2)$ such that for all $(v,s,u,t)\in [0,T]^4_\leq$ (with $t-s<s-v$) and $p\in [2,\infty)$, it holds that
		\begin{equation}\label{eq:ssl_assump}
			\begin{split}
				\norm{\delta A_{s, u, t}}_{L^p_{\omega}} &\leq \Gamma_1 
				(t-s)^{\frac{1}{2} + \epsilon_1}, \\
				\norm{\mathbb{E}[\delta A_{s, u, t} \vert \mathcal{F}_{v}]}_{L^p_{\omega}} 
				&\leq \Gamma_2 (s-v)^{-\alpha} (t-s)^{1 + \epsilon_2}.
			\end{split}
		\end{equation}
		Then, there exists a unique (up to modification) stochastic process $\mathcal{A}$ 
		such that $\mathcal{A}_0 = 0$ and 
		\begin{align*}
			\norm{\mathcal{A}_t - \mathcal{A}_s - A_{s, t}}_{L^p_{\omega}} &\lesssim_{d, p, \epsilon_1, \epsilon_2, \alpha} 
			\Gamma_1  (t-s)^{\frac{1}{2} + \epsilon_1} 
			+ \Gamma_2 (t-s)^{1 + \epsilon_2 - \alpha}, \\
			\norm{\mathbb{E}[\mathcal{A}_{t} - \mathcal{A}_s - A_{s, t} \vert 
				\mathcal{F}_{v}]}_{L^p_{\omega}} 
			&\lesssim_{d, p, \epsilon_1, \epsilon_2, \alpha} \Gamma_2 (s-v)^{-\alpha} (t-s)^{1 + \epsilon_2} 
		\end{align*}
		for every $(v,s,t)\in [0,T]^3_\leq$ (with $t-s \leq s-v$ assumed in the second estimate).
		Furthermore, there exists an $\epsilon \in (0, 1)$ such that,
		for all partitions $\pi$ of $[s, t] \subseteq [0, T]$,
		\begin{align*}
			\norm[\Big]{\mathcal{A}_t - \mathcal{A}_s - 
				\sum_{[s', t'] \in \pi} A_{s', t'}}_{L^p_{\omega}} 
			\lesssim_{d, p, \epsilon_1, \epsilon_2, \alpha, T} (\Gamma_1 + \Gamma_2) \abs{\pi}^{\epsilon}.
		\end{align*}
	\end{lemma}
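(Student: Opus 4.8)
The plan is to follow the now-standard route for (stochastic) sewing: build $\mathcal{A}$ as the $L^p_\omega$-limit of Riemann-type sums along dyadic partitions, prove these sums are Cauchy, identify the limit, read off the two quantitative bounds and the partition estimate, and finally prove uniqueness. Fix $(s,t)\in[0,T]^2_{\leq}$, let $\pi^n$ be the $n$-th dyadic subdivision of $[s,t]$ into $2^n$ intervals of length $\ell_n=2^{-n}(t-s)$, and put $A^n_{s,t}:=\sum_{[u,w]\in\pi^n}A_{u,w}$, so that $A^{n+1}_{s,t}-A^n_{s,t}=-\sum_{[u,w]\in\pi^n}\delta A_{u,m,w}$ with $m$ the midpoint of $[u,w]$. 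Everything reduces to bounding $\sum_n\|A^{n+1}_{s,t}-A^n_{s,t}\|_{L^p_\omega}$ with the stated dependence on $\Gamma_1,\Gamma_2,t-s$; measurability of the limit, independence of the approximating sequence, and the partition estimate (telescoping an arbitrary partition against its dyadic refinements) are then routine.

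For the Cauchy bound I would split each increment against a \emph{backward-shifted} filtration: set $v_u:=(u-2\ell_n)\vee 0$ and write $\delta A_{u,m,w}=\big(\delta A_{u,m,w}-\mathbb{E}[\delta A_{u,m,w}\mid\mathcal{F}_{v_u}]\big)+\mathbb{E}[\delta A_{u,m,w}\mid\mathcal{F}_{v_u}]$. For the centered term the hypotheses give $\|\delta A_{u,m,w}\|_{L^p_\omega}\le\Gamma_1\ell_n^{1/2+\epsilon_1}$ and $\|\mathbb{E}[\delta A_{u,m,w}\mid\mathcal{F}_{v_u}]\|_{L^p_\omega}\lesssim\Gamma_2\ell_n^{1+\epsilon_2-\alpha}$, and $1+\epsilon_2-\alpha>1/2$ exactly because $\alpha<\frac12+\epsilon_2$, so each centered term has size $\lesssim(\Gamma_1+\Gamma_2)\ell_n^{1/2+\epsilon}$ for a genuine $\epsilon>0$. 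These centered terms are \emph{not} martingale increments for the natural left-endpoint filtration — this is the essential new feature of the shifted setting, so discrete BDG cannot be applied directly. I would instead colour the $2^n$ dyadic intervals of generation $n$ into a fixed number (three) of classes by index modulo $3$, so that within a class two consecutive intervals $[u,w],[u',w']$ satisfy $w=v_{u'}$; by the tower property the centered terms then form genuine $(\mathcal{F}_w)$-martingale differences along that class. Applying discrete BDG to each of the three classes and adding yields $\|\sum_{\pi^n}(\text{centered})\|_{L^p_\omega}\lesssim_p(\Gamma_1 2^{-n\epsilon_1}+\Gamma_2 2^{-n(1/2+\epsilon_2-\alpha)})(t-s)^{1/2+\epsilon_1}$-type terms, summable in $n$ for the full range $\alpha<\frac12+\epsilon_2$.

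The main obstacle is the remaining ``drift'' term $\Psi^n_{s,t}:=\sum_{[u,w]\in\pi^n}\mathbb{E}[\delta A_{u,m,w}\mid\mathcal{F}_{v_u}]$. The triangle inequality only gives $\|\Psi^n_{s,t}\|_{L^p_\omega}\lesssim 2^n\Gamma_2\ell_n^{-\alpha}\ell_n^{1+\epsilon_2}=\Gamma_2 2^{-n(\epsilon_2-\alpha)}(t-s)^{1+\epsilon_2-\alpha}$, summable only when $\alpha<\epsilon_2$; this settles the restricted range and is the base case. To reach $\epsilon_2\le\alpha<\frac12+\epsilon_2$ I would treat $\Psi^n_{s,t}$ itself as a sewing object: group the $\pi^n$-intervals into super-blocks of intermediate length $L=2^{-m}(t-s)$ for a suitable $m<n$, and on each super-block $J=[a,a+L]$ apply the same centered/drift split with shift point $b_J:=(a-2L)\vee 0$. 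The centered super-block contributions are handled by BDG and the colour trick as before; for the conditional super-block contributions the tower property (using $b_J\le v_u$ whenever $[u,w]\subseteq J$) collapses $\mathbb{E}\big[\sum_{[u,w]\subseteq J}\mathbb{E}[\delta A_{u,m,w}\mid\mathcal{F}_{v_u}]\mid\mathcal{F}_{b_J}\big]$ to $\sum_{[u,w]\subseteq J}\mathbb{E}[\delta A_{u,m,w}\mid\mathcal{F}_{b_J}]$, and the hypothesis now applies with the \emph{common, far-back} point $b_J$, producing a sum of the form $\sum_k(k\ell_n+2L)^{-\alpha}\ell_n^{1+\epsilon_2}$ in which the singular factor is summed against the $\ell_n$-grid and turns into harmless powers of $L$ and $\ell_n$. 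Optimising $m$ in $n$ balances the two resulting terms and strictly enlarges the admissible range of $\alpha$; feeding the enlarged range back in as induction hypothesis and iterating, a short computation shows that for any given $\alpha<\frac12+\epsilon_2$ finitely many rounds suffice. Throughout, the (per generation, $O(1)$ or slowly growing number of) ``boundary'' intervals for which the backward shift would leave $[0,T]$ are handled separately using only $\|\delta A_{u,m,w}\|_{L^p_\omega}\le\Gamma_1\ell_n^{1/2+\epsilon_1}$, whose contribution is summable since $\frac12+\epsilon_1>0$.

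The conditional estimate for the remainder $R_{s,t}:=\mathcal{A}_t-\mathcal{A}_s-A_{s,t}=\sum_{n\ge0}(A^{n+1}_{s,t}-A^n_{s,t})$ is comparatively painless: when $t-s\le s-v$ one has $v\le u$ for every dyadic endpoint, hence $(u-v)^{-\alpha}\le(s-v)^{-\alpha}$ uniformly, and applying the hypothesis to each $\mathbb{E}[\delta A_{u,m,w}\mid\mathcal{F}_v]$ and summing the geometric series in $n$ gives $\|\mathbb{E}[R_{s,t}\mid\mathcal{F}_v]\|_{L^p_\omega}\lesssim\Gamma_2(s-v)^{-\alpha}(t-s)^{1+\epsilon_2}$. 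Uniqueness follows the usual pattern: if $\mathcal{A},\tilde{\mathcal{A}}$ both satisfy the conclusions then $D_{s,t}:=(\mathcal{A}_t-\tilde{\mathcal{A}}_t)-(\mathcal{A}_s-\tilde{\mathcal{A}}_s)$ is additive ($\delta D\equiv0$), satisfies $\|D_{s,t}\|_{L^p_\omega}\lesssim(t-s)^{1/2+\epsilon}$ and the shifted conditional bound, and feeding these into exactly the decomposition above — the centered part vanishing in the limit by BDG, the drift part by the same super-block bookkeeping — forces $D\equiv0$. I expect the super-block/bootstrap analysis of the drift term in the regime $\alpha\ge\epsilon_2$ to be by far the most delicate point; the rest is an adaptation of Lê's argument together with the colouring device needed to recover a martingale structure under the shift.
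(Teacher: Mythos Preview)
The paper does not give its own proof of this lemma: it is quoted verbatim as \cite[Thm.~1.1]{matsuda22} and used as a black box, with the surrounding remarks only explaining how it specialises to L\^e's and Gerencs\'er's versions. So there is no in-paper argument to compare against; the relevant benchmark is the proof in \cite{matsuda22}.

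Your outline matches that proof closely. The three ingredients you isolate --- the colouring-by-residues device to restore a martingale-difference structure under the backward shift (this is Gerencs\'er's idea), the observation that the naive triangle bound on the drift $\Psi^n$ only closes for $\alpha<\epsilon_2$, and the super-block / bootstrap mechanism that iteratively enlarges the admissible $\alpha$-range up to $\tfrac12+\epsilon_2$ --- are exactly the skeleton of \cite{matsuda22}. Your treatment of the conditional remainder estimate (using $v\le u$ for every dyadic node when $t-s\le s-v$, so $(u-v)^{-\alpha}\le (s-v)^{-\alpha}$ uniformly) and of uniqueness is also the standard route.

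Two small points worth tightening when you write it out. First, the claim that ``finitely many rounds suffice'' in the bootstrap is correct but is the genuine content of the argument; in \cite{matsuda22} this is made precise by tracking how each pass transforms the admissible exponent, and you should be explicit about the recursion (roughly, one gains a factor $\tfrac12$ of slack each time because of the BDG square-root). Second, in the super-block step you need the centred super-block pieces themselves to satisfy an $L^p$ bound of order $L^{1/2+\epsilon}$ so that BDG at the super-block scale closes; this requires feeding the already-established bound on $\Psi^n$ restricted to a single super-block back in, not just the raw hypothesis on $\delta A$. That circularity is exactly what the induction resolves, but make sure the base case and the inductive step are cleanly separated.
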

	We make a number of remarks related to the stochastic sewing lemma.
	\begin{remark}\label{rem:shifted_ssl_1}
		The original version of the stochastic sewing lemma of \cite{le20} is stated for $v = s$ and $\alpha = 0$.
		The shifted stochastic sewing lemma was first obtained by \cite[Lem.~2.2]{gerencser2022}, 
		where the case $v = s - M (t-s)$ for some $M > 0$ and $\alpha = 0$ are treated. Therefore, Lem.~\ref{lem:ssl} generalises both the sewing lemmas of \cite{le20} and \cite{gerencser2022}, since it allows for arbitrary shifts.
	\end{remark}
	\begin{remark}\label{rem:shifted_ssl_2}
		We note that in the setting of \cite{le20,gerencser2022}, as remarked in the paragraph preceding  \cite[Lem.~2.2]{gerencser2022}, 
		it is in fact enough to check that the condition analogous to \eqref{eq:ssl_assump}, in that setting, holds for $s,u,t \in [0,T]^3_{\leq}$ also satisfying
		\begin{align*}
			\min\{u-s, t-u\} \geq (t-s)/3. 
		\end{align*}
		This assumption on $u$ is necessary in the proof of Prop.~\ref{prop:fbm_young} and Prop.~\ref{prop:fbm_rough}
		\footnote{It is natural to expect that such assumption on $u$ is also valid to impose for the fully shifted version \cite[Thm.~1.1]{matsuda22}; however, this fact is not explicitly shown therein and furthermore, we do not require the restriction in the cases where we apply the version of \cite{matsuda22}.}.
	\end{remark}
	\begin{remark}\label{rem:sewing_uniq}
		We repeatedly make use of the uniqueness statement of the (stochastic) sewing lemma in the following manner.  Given $A^1:[0,T]^{2}_\leq \to E$ and $A^2:[0,T]^2_\leq \to E$ such that, for some $\varepsilon > 0$, 
		\begin{align}\label{eq:SSL_uniq_check_1}
			\norm{A^1_{s, t} - A^2_{s, t}}_E \lesssim \abs{t-s}^{1+\varepsilon} \quad \forall (s, t) \in [0, T]_{\leq}^2.
		\end{align}
		Then, we may set $A_{s, t} \defby A^1_{s, t} - A^2_{s, t}$ which satisfies 
		\begin{equation*}
			\norm{\delta A_{s, u, t}}_E \leq \norm{A_{s, t}^1 - A^2_{s, t}}_E + \norm{A^1_{s, u} - A^2_{s, u}}_E 
			+ \norm{A^1_{u, t} - A^2_{u, t}}_E 
			\lesssim \abs{t-s}^{1 + \varepsilon}.
		\end{equation*}
		Hence, the assumptions of Lem.~\ref{lem:det_sewing} are satisfied by $A:[0,T]^2_\leq \to E$. However, since the constant $\mathcal{A} \equiv 0$ obviously satisfies the condition \eqref{eq:cA_uniq}, it must hold that
		\begin{equation}\label{eq:A_1_A_2_equal}
			\lim_{\substack{\pi \text{ is a partition of }[s, t], \\ \abs{\pi} \to 0}} 
			\sum_{[s', t'] \in \pi} A^1_{s', t'}
			= 
			\lim_{\substack{\pi \text{ is a partition of }[s, t], \\ \abs{\pi} \to 0}} 
			\sum_{[s', t'] \in \pi} A^2_{s', t'}
		\end{equation}
		under the additional condition that at least one of the Riemann sum approximations converges. 
		
		Similarly, in the setting of the stochastic sewing lemma, if  
		\begin{equation*}
			\norm{A^1_{s, t} - A^2_{s, t}}_{L^p_{\omega}} \lesssim (t-s)^{\frac{1}{2} + \varepsilon_1}, 
			\quad \norm{\mathbb{E}[A^1_{s, t} - A^2_{s, t} \vert \mathcal{F}_v]}_{L^p_{\omega}} \lesssim (s-v)^{-\alpha} (t-s)^{1 + \varepsilon_2}
		\end{equation*}
		with $\alpha \in [0, 1/2 + \varepsilon_2)$, then the identity \eqref{eq:A_1_A_2_equal} holds in $L^p(\Omega;E)$.
		If the latter estimate also holds with $\alpha = 0$, then one only needs to check the bound for $v = s$ or $v = s - (t-s)$.
	\end{remark}
	We will apply an appropriate version of the result depending on the problem at hand. 
	In our paper, we use the three versions of the stochastic sewing lemma: in Sec.~\ref{sec:young} we use Gerencsér's version, \cite[Lem.~2.2]{gerencser2022} (see proof of Prop.~\ref{prop:fbm_young}), 
	while in Sec.~\ref{sec:rough} we use both Lê's version \cite[Thm.~2.1]{le20} (see proof of Prop.~\ref{prop:fbm_rough}) 
	and the fully shifted version (see proof of Lem.~\ref{lem:integral_along_tilde_B}).
	
	\subsection{Fractional Brownian Motion}\label{subsec:fbm}
	We collect some necessary background on the fractional Brownian motion, referring to \cite[Sec.~5]{nualart06} and \cite[App.~A.1]{galeati_22_thesis} for more details. As specified in the notation sections we fix a $d_2\in \NN$ and will consider noise taking values in $\RR^{d_2}$. We also fix a probability space $(\Omega,\cF,\PP)$.
	\begin{definition}\label{def:fbm}
		Let $H \in (0, 1)$. The \emph{fractional Brownian motion} (fBm) with Hurst parameter $H$, started at $0$, is a centred Gaussian process 
		$$B^H = (B^{H, i})_{i=1}^{d_2}: [0, \infty)\times \Omega\to \RR^{d_2}$$
		such that $B^H_0 = 0$ and for all $i,\,j = 1,\ldots, d_2$, it holds that 
		\begin{align*}
			\mathbb{E}[B^{H, i}_t  B^{H, j}_s ] = \frac{\delta_{ij}}{2} (s^{2H} + t^{2H} - (t-s)^{2H}), 
			\quad \forall \, 0\leq s<t <\infty,
		\end{align*}
		where $\delta_{i j}$ is the Kronecker delta.
	\end{definition}
	It will be useful to have a more concrete representation for the fBm. 
	Given a Brownian motion $W$, the formally written process 
	\begin{align}\label{eq:fbm_volterra}
		B^H_t \defby	(\cK_H \, \mathrm{d} W)_t = \int_0^t K_H(t, s) \, \mathrm{d} W_s
	\end{align}
	defines an fBm with Hurst parameter $H$. We refer the reader to \cite[Sec.~5.1.3]{nualart06} for more details. In the case $H>1/2$ the kernel $K_H$ is explicitly given by 
	\begin{align}\label{eq:K_H_gtr_12}
		K_H(t, s) \defby c_H s^{\frac{1}{2} - H} \int_s^t (u-s)^{H-\frac{3}{2}} u^{H-\frac{1}{2}} \, \mathrm{d} u,
	\end{align}
	while for $H<1/2$ one has
	\begin{equation}\label{eq:K_H_less_12}
		K_H (t, s) \defby c_H \left( \Big( \frac{t}{s} \Big)^{H-\frac{1}{2}} 
		(t-s)^{H-\frac{1}{2}} 	- 
		\Big( H-\frac{1}{2} \Big) s^{\frac{1}{2} - H} 
		\int_s^t u^{H-\frac{3}{2}} (u-s)^{H-\frac{1}{2}} \, \mathrm{d} u
		\right).
	\end{equation}
	In both cases, the constant $c_H$ ensures that $B^H$ has the correct covariance.
	
	The main benefit of this representation is that since the operator $\cK_H$ admits an inverse onto its domain of definition, given an fBm $B^H$ we can recover the associated Brownian motion $W_t \defby (\cK_H^{-1} B^H)_t$. An explicit formula for the inverse is given, for example, by \cite[Eq.~A.7]{galeati_22_thesis}.
	Recall that a Brownian motion $W$ is called an $(\mathcal{F}_t)$-Brownian motion if for every $s < t$ the random variable $W_t - W_s$ is 
	independent of $\mathcal{F}_s$.
	\begin{definition}\label{def:adapted_fBm}
		Given a complete filtered space $(\Omega, \cF,(\mathcal{F}_t)_{t \geq 0}, \PP)$, an adapted process $(B^H_t)_{t \geq 0}$ is called
		an $(\mathcal{F}_t)$-\emph{fractional Brownian motion} if the process $\cK_H^{-1} B^H$ is 
		an $(\mathcal{F}_t)$-Brownian motion.
	\end{definition}
	When $H \in (1/3, 1/2]$, the process $B^H$ can be lifted to a second order geometric rough path, we state this fact without proof below, for details see \cite[Ch.~10]{FH20}.
	\begin{lemma}[Canonical Lift of fBm, {\cite[Ch.~10]{FH20}}]\label{lem:canonical_lift}
		Let $H \in (1/3, 1/2]$
		and $(B^{H, (n)})_{n=1}^{\infty}$ be a piecewise linear approximation of the fBm $B^H$. Then
		the canonical lift of $B^{H, (n)}$
		\begin{equation*}
			\left(B^{H, (n)}, \mathbb{B}^{H, (n)}\right) \defby \left(B^{H, (n)}, \left(\int_{s}^t B^{H, (n)}_{s, r} \otimes   
			\dot{B}^{H, (n)}_r \, \mathrm{d} r \right)_{0\leq s<t<\infty} \right)
		\end{equation*}
		converges in the space of
		$\alpha$-Hölder rough paths for any $\alpha \in (1/3, H)$
		to some geometric rough path $(B^H, \mathbb{B}^H)$, called the \emph{canonical lift of the fBm $B^H$}.
	\end{lemma}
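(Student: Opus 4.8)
The plan is to prove the lemma via the Kolmogorov-type continuity criterion for rough paths (see \cite[Ch.~3 and Ch.~10]{FH20}, \cite{friz_victoir_2010}), applied to the sequence of random two-parameter objects $\big(B^{H,(n)}_{s,t},\mathbb{B}^{H,(n)}_{s,t}\big)_{(s,t)\in[0,T]^2_{\leq}}$. Each $B^{H,(n)}$ is piecewise linear, hence smooth, so its canonical lift satisfies Chen's relation \eqref{eq:chen_relation} exactly, and this algebraic identity is stable under the limits taken below. Thus it suffices to prove: (i) the uniform-in-$n$ moment bounds $\big\|B^{H,(n)}_{s,t}\big\|_{L^p_{\omega}}\lesssim_{p}|t-s|^{H}$ and $\big\|\mathbb{B}^{H,(n)}_{s,t}\big\|_{L^p_{\omega}}\lesssim_{p}|t-s|^{2H}$ for every $p\in[2,\infty)$; and (ii) the Cauchy-type bounds $\big\|B^{H,(n)}_{s,t}-B^{H,(m)}_{s,t}\big\|_{L^p_{\omega}}\lesssim_{p}a_{n,m}\,|t-s|^{H'}$ and $\big\|\mathbb{B}^{H,(n)}_{s,t}-\mathbb{B}^{H,(m)}_{s,t}\big\|_{L^p_{\omega}}\lesssim_{p}a_{n,m}\,|t-s|^{2H'}$ for some $H'<H$, which may be chosen arbitrarily close to $H$, and some deterministic sequence $a_{n,m}\to 0$ as $n,m\to\infty$. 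Since $B^{H,(n)}_{s,t}$ lies in the first, and $\mathbb{B}^{H,(n)}_{s,t}$ in a fixed finite sum of Wiener chaoses (of order at most two) of the underlying Brownian motion $W=\cK_H^{-1}B^H$, Gaussian hypercontractivity reduces all of these $L^p_{\omega}$ estimates to their $L^2_{\omega}$ analogues, with constant depending only on $p$.

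The first-level estimate $\mathbb{E}\big|B^{H,(n)}_{s,t}\big|^2\lesssim|t-s|^{2H}$, uniformly in $n$, follows directly from the covariance formula in Def.~\ref{def:fbm} and the elementary fact that piecewise-linear interpolation does not increase the relevant variation; the associated Cauchy estimate is equally classical, and by sacrificing an arbitrarily small amount of regularity it produces a rate $a_{n,m}$ at the exponent $H'$. The crux of the proof is the second-level bound. Write $R(s,t)\coloneqq\mathbb{E}\big[B^{H,i}_sB^{H,i}_t\big]=\tfrac12\big(s^{2H}+t^{2H}-(t-s)^{2H}\big)$ and let $R^{(n)}$ be the covariance of $B^{H,(n)}$. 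The decisive analytic input is that for $H>1/3$ the covariance $R$ has finite two-dimensional $\rho$-variation with $\rho=\tfrac{1}{2H}<\tfrac{3}{2}$, bounded on every square $[s,t]^2$ by $|t-s|^{1/\rho}=|t-s|^{2H}$, and $R^{(n)}$ obeys the same bound uniformly in $n$; this is precisely the threshold that brings the second iterated integral within the scope of two-dimensional Young (sewing) estimates. Using it, the only components of $\mathbb{B}^{H,(n)}_{s,t}$ requiring genuine work are the antisymmetric (Lévy-area) ones for index pairs $i\neq j$; these lie in the second chaos and, by the Wiener isometry, can be written as two-dimensional Young integrals of $R^{(n)}$ against $R^{(n)}$ over $[s,t]^2$, hence are bounded by $\|R^{(n)}\|_{\rho\text{-var};[s,t]^2}^2\lesssim|t-s|^{4H}$. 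The symmetric part of $\mathbb{B}^{H,(n)}_{s,t}$ equals $\tfrac12\,B^{H,(n)}_{s,t}\otimes B^{H,(n)}_{s,t}$ by geometricity of the smooth lift, and is controlled directly by Gaussian moments and the first-level bound. The second-level Cauchy estimate is obtained by the same mechanism applied to the bilinear expression in $R^{(n)}-R^{(m)}$, using the convergence $R^{(n)}\to R$ in $\rho'$-variation for any $\rho'\in(\rho,3/2)$ and interpolating against the uniform bound.

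Granting (i) and (ii), the Kolmogorov criterion for rough paths yields, for every $\alpha\in(1/3,H)$, a random $\alpha$-Hölder rough path $(B^H,\mathbb{B}^H)$ — independent of the approximating sequence, and, by uniqueness of limits in weaker topologies, the same object for all such $\alpha$ — together with $\sup_{n}\mathbb{E}\big\|(B^{H,(n)},\mathbb{B}^{H,(n)})\big\|_{\alpha}^{p}<\infty$ and $\mathbb{E}\Big[\|B^H-B^{H,(n)}\|_{\mcC^{\alpha}_T}^{p}+\sup_{(s,t)\in[0,T]^2_{\leq}}|t-s|^{-2\alpha p}\big|\mathbb{B}^{H,(n)}_{s,t}-\mathbb{B}^{H}_{s,t}\big|^{p}\Big]\to 0$ as $n\to\infty$. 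In particular the convergence holds in probability, hence almost surely along a subsequence in the $\alpha$-Hölder rough path metric, and then for the full sequence by uniqueness of the limit. Finally $(B^H,\mathbb{B}^H)$ is geometric: for each $n$ one has the pointwise identity $\mathrm{Sym}\big(\mathbb{B}^{H,(n)}_{s,t}\big)=\tfrac12\,B^{H,(n)}_{s,t}\otimes B^{H,(n)}_{s,t}$, valid for the canonical lift of a smooth path, and this relation passes to the limit in the $\alpha$-Hölder rough path topology, which is exactly the defining property of a geometric rough path in Def.~\ref{def:rough_path}. The step I expect to be the main obstacle is the uniform second-level variance estimate: the entire argument hinges on the finite $\rho$-variation of the fBm covariance with $\rho<3/2$, which both forces the restriction $H>1/3$ and necessitates the two-dimensional Young/sewing machinery, whereas the first-level bounds and the limiting and geometricity arguments are comparatively routine.
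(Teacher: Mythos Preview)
The paper does not provide a proof of this lemma; it is explicitly stated without proof, with a pointer to \cite[Ch.~10]{FH20} for details. Your proposal is a correct sketch of exactly the argument found in that reference: Kolmogorov's criterion for rough paths combined with hypercontractivity to reduce to second moments, and then the key analytic input that the covariance $R$ of $B^H$ has finite two-dimensional $\rho$-variation with $\rho=\tfrac{1}{2H}<\tfrac{3}{2}$, which brings the L\'evy-area components under the scope of 2D Young/sewing estimates. So there is nothing to compare; you have supplied what the paper deliberately omits, and your identification of the second-level variance bound (and its reliance on $\rho<3/2$, forcing $H>1/3$) as the substantive step is accurate.
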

	As the stochastic sewing lemma is a key tool in our proofs, we will need to understand the behaviour of the fBm under conditioning.
	Let $B^H$ be an $(\mathcal{F}_t)$-fBm.
	Recalling the Volterra representation \eqref{eq:fbm_volterra}, for $s < t$ we observe the decomposition 
	\begin{equation*}
		B^H_t = \mathbb{E}\left[B^H_t \vert \mathcal{F}_s\right] + \left(B^H_t - \mathbb{E}\left[B^H_t \vert \mathcal{F}_s\right]\right) 
		= \int_0^s K_H(t, r) \, \mathrm{d} W_r + \int_s^t K_H(t, r) \, \mathrm{d} W_r,
	\end{equation*}
	where the first term is measurable with respect to $\mathcal{F}_s$ and the second term is independent of $\mathcal{F}_s$. 
	In view of this decomposition,  
	we define the processes $Y^{H, s} = (Y^{H, s, i})_{i=1}^{d_2}$ and  $\tilde{B}^{H, s} = (\tilde{B}^{H, s, i})_{i=1}^{d_2}$ by 
	\begin{align}
		Y^{H, s}_t &\defby \int_0^s K_H(t, r) \, \mathrm{d} W_r \label{eq:def_Y},\\
		\tilde{B}^{H, s}_t &\defby \int_s^t K_H(t, r) \, \mathrm{d} W_r \label{eq:def_tilde_B}.
	\end{align}
	Informally, we refer to $Y$ as the \emph{history process} and $\tilde{B}$ as the \emph{update process}.  This decomposition is motivated by the requirement to have a rapidly fluctuating update process $\tilde{B}^{H,s}$ which is independent of $\mcF_s$ and provides regularisation through stochastic averaging, see Lem.~\ref{lem:gaussian_integral} while keeping a history process which can be suitably controlled.
	When, $H = 1/2$, by convention we set $Y^{H, s} \defby 0$ and $\tilde{B}^{H, s} \defby B^{H}$. 
	We additionally set
	\begin{align}\label{eq:def_rho}
		\rho^2_H(s, t) \defby \mathbb{E}[(\tilde{B}^{H, s, i}_t)^2] = \int_s^t K_H(t, r)^2 \, \mathrm{d} r
	\end{align}
	and have the following, two sided bound, for all $H\in (0,1)$
	\begin{align}\label{eq:rho_lower_bd}
		(t-s)^{2H} \lesssim \rho^2_H(s, t) \leq (t-s)^{2H}.
	\end{align}
	The upper bound of \eqref{eq:rho_lower_bd} follows trivially from \eqref{eq:K_H_gtr_12} and \eqref{eq:K_H_less_12}. In particular one readily checks
	\begin{equation*}
		\int_s^t K_H(t, r)^2 \, \mathrm{d} r \leq \int_0^s \left(K_H(t, r) - K_H(s, r)\right)^2  \, \mathrm{d} r + \int_s^t K_H(t, r)^2 \, \mathrm{d} r 
		= (t-s)^{2H}.
	\end{equation*}
	The lower bound of \eqref{eq:rho_lower_bd}, whose simple proof can be found for example as \cite[App.~A.1]{galeati_22_thesis}, expresses the so-called \emph{local nondeterminism} property of fBm, see also \cite{Berman:1973aa}. 
	
	The function $t \mapsto \rho_H(s, t)$ is increasing, whence we can consider the Riemann-Stieltjes integral $\int_a^b f(r) \rho_H(s, \mathrm{d} r)$. 
	Since $\rho_H(s, \cdot)$ is differentiable, one actually has $\rho_H(s, \mathrm{d} r) = \frac{\mathrm{d} \rho_H(s, r)}{\mathrm{d} r} \, \mathrm{d} r$. In particular, 
	\begin{equation}\label{eq:rho_RS}
		\int_u^t \rho_H^{\gamma}(s, r) \rho_H(s, \mathrm{d} r) = \frac{1}{\gamma+1}\int_u^t \frac{\mathrm{d} \rho_H^{\gamma+1}(s, r)}{\mathrm{d} r} \, \mathrm{d} r
		\leq \frac{\max\{\rho_H^{\gamma+1}(s, t), \rho_H^{\gamma+1}(s, u)\}}{\abs{\gamma+1}}.
	\end{equation}
	A similar remark holds for $\rho^2_H(s, \mathrm{d} r)$.

	We record the following lemmas on $Y^{H, s}$ and $\tilde{B}^{H, s}$, whose proofs will be given in App.~\ref{sec:gaussian}.
	\begin{lemma}\label{lem:Y_dot}
		Let $H \in (1/3, 1)$ and for $s\geq 0$ we define $Y^{H, s}$ by \eqref{eq:def_Y}.
		The process $Y^{H, s}$ is smooth in $(s, \infty)$. Furthermore, for every $p \in [1, \infty)$ and $0\leq s<u <t <\infty$, we have 
		\begin{align*}
			\norm{\dot{Y}^{H, s}_t}_{L^p_{\omega}} \lesssim_{p} (t-s)^{H-1},
			\quad \norm{Y^{H, s}_{u, t}}_{L^p_{\omega}} \lesssim_p (t-s)^H - (u-s)^H.
		\end{align*}
	\end{lemma}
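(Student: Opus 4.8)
\emph{Overview.} For fixed $s$ the process $t\mapsto Y^{H,s}_t$ is a Gaussian Wiener integral, so all $L^p_\omega$-norms below are comparable, up to $p$-dependent constants, to the corresponding $L^2_\omega$-norms; these are variances and hence reduce to $L^2([0,s])$-integrals of (derivatives of) the Volterra kernel $K_H(t,\cdot)$. Since $Y^{1/2,s}\equiv 0$ by convention the claim is trivial for $H=1/2$, so I would assume $H\neq 1/2$. Differentiating the explicit formulae \eqref{eq:K_H_gtr_12}--\eqref{eq:K_H_less_12} in $t$, the terms containing $\int_r^{t}$ cancel and one is left, for $t>r$, with
\[
\partial_t K_H(t,r)=\tilde c_H\,\Big(\tfrac{t}{r}\Big)^{H-\frac12}(t-r)^{H-\frac32},
\]
where $\tilde c_H=c_H$ for $H>1/2$ and $\tilde c_H=c_H(H-\tfrac12)$ for $H<1/2$; more generally $t\mapsto K_H(t,r)$ is $C^\infty$ on $(r,\infty)$.

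\emph{Smoothness.} For $r\le s<t$ every $\partial_t^k K_H(t,r)$, viewed as a function of $r$ on $[0,s]$, is continuous away from $r=0$ and behaves like $r^{1/2-H}$ near $0$; since $H<1$ we have $r^{1-2H}\in L^1([0,s])$, hence $\partial_t^k K_H(t,\cdot)\in L^2([0,s])$ and the Wiener integrals $\int_0^s\partial_t^k K_H(t,r)\,\dd W_r$ are well defined. A dominated-convergence argument applied to the difference quotients $h^{-1}\int_t^{t+h}\partial_v K_H(v,r)\,\dd v$ (dominating $|\partial_v K_H(v,r)|$ uniformly for $v$ near $t$ by a function of $r$ that is integrable on $[0,s]$, using $r\le s<t$ and $H<1$) shows that $t\mapsto Y^{H,s}_t$ is mean-square differentiable on $(s,\infty)$ with $\dot Y^{H,s}_t=\int_0^s\partial_t K_H(t,r)\,\dd W_r$; applying Kolmogorov's continuity criterion to the Gaussian processes $t\mapsto\int_0^s\partial_t^k K_H(t,r)\,\dd W_r$ upgrades this to a modification of $Y^{H,s}$ that is a.s.\ $C^\infty$ on $(s,\infty)$.

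\emph{The derivative estimate (the main point).} By Gaussianity and the formula above,
\[
\|\dot Y^{H,s}_t\|_{L^p_\omega}\lesssim_p\|\dot Y^{H,s}_t\|_{L^2_\omega},\qquad
\|\dot Y^{H,s}_t\|_{L^2_\omega}^2\lesssim_H t^{2H-1}\int_0^s r^{1-2H}(t-r)^{2H-3}\,\dd r.
\]
I would split the $r$-integral at $r=t/2$. On $[0,t/2]\cap[0,s]$ one bounds $(t-r)^{2H-3}\lesssim t^{2H-3}$ and $\int_0^{t/2}r^{1-2H}\,\dd r\lesssim t^{2-2H}$ (finite precisely because $H<1$), yielding a contribution $\lesssim t^{2H-1}\cdot t^{-1}=t^{2H-2}\le(t-s)^{2H-2}$. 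On $[t/2,s]$ (non-empty only when $s>t/2$) one has $r^{1-2H}\lesssim t^{1-2H}$ in both cases $H>1/2$ and $H<1/2$, while $\int_{t/2}^s(t-r)^{2H-3}\,\dd r\lesssim(t-s)^{2H-2}$ since $2H-3<-1$, yielding a contribution $\lesssim t^{2H-1}\cdot t^{1-2H}(t-s)^{2H-2}=(t-s)^{2H-2}$. Hence $\|\dot Y^{H,s}_t\|_{L^2_\omega}^2\lesssim(t-s)^{2H-2}$, i.e.\ $\|\dot Y^{H,s}_t\|_{L^p_\omega}\lesssim_p(t-s)^{H-1}$.

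\emph{The increment estimate.} This follows by integrating the derivative bound: writing $Y^{H,s}_{u,t}=\int_u^t\dot Y^{H,s}_v\,\dd v$ (for $u=s$ interpreted as the $L^2_\omega$-limit of $\int_{s+\varepsilon}^t$ as $\varepsilon\downarrow 0$, which converges because $\int_s^t(v-s)^{H-1}\,\dd v<\infty$), Minkowski's integral inequality and the previous step give
\[
\|Y^{H,s}_{u,t}\|_{L^p_\omega}\le\int_u^t\|\dot Y^{H,s}_v\|_{L^p_\omega}\,\dd v\lesssim_p\int_u^t(v-s)^{H-1}\,\dd v=\tfrac1H\big((t-s)^H-(u-s)^H\big).
\]
The only genuinely delicate step is the variance bound of the third paragraph, and within it the behaviour near $r=0$ — where the integrability of $r^{1-2H}$, equivalently $H<1$, is exactly what is used — together with the uniform treatment of the regimes $H>1/2$ and $H<1/2$, both handled by cutting the $r$-integral at $r=t/2$. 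The kernel-derivative identity, the passage to an a.s.\ smooth modification, and the increment bound are all routine.
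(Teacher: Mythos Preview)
Your argument is correct. It differs from the paper's proof in route: the paper does \emph{not} estimate the Volterra kernel directly. Instead it invokes the Mandelbrot--Van~Ness representation $B^H_t=\int_{-\infty}^t \bar K_H(t,r)\,\mathrm d\bar W_r$ with $\bar K_H(t,r)=\bar c_H\big((t-r)^{H-1/2}-(-r)_+^{H-1/2}\big)$, sets $\bar Y^s_t\defby\mathbb E[B^H_t\mid\bar{\mathcal F}_s]$, observes that $Y^{H,s}_t=\mathbb E[\bar Y^s_t\mid\mathcal F_s]$ (tower property, since $\mathcal F_s\subset\bar{\mathcal F}_s$), and then applies conditional Jensen to reduce to bounding $\|\dot{\bar Y}^s_t\|_{L^2_\omega}$. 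For the Mandelbrot--Van~Ness kernel one has simply $\partial_t\bar K_H(t,r)\propto(t-r)^{H-3/2}$, so $\int_{-\infty}^s(t-r)^{2H-3}\,\mathrm dr\lesssim(t-s)^{2H-2}$ is immediate, with no splitting needed and no singularity at $r=0$ to track. Your direct approach trades this slick reduction for an explicit split of the Volterra integral at $r=t/2$; it is more self-contained (no second representation, no filtration comparison) but requires handling the $r^{1-2H}$ factor and checking both sign regimes of $1-2H$ separately, which you do correctly. Either way the increment bound follows from the derivative bound by integration, as both you and the paper note.
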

	\begin{proof}
		The proof is given in App,~\ref{subsec:gaussian_sec2}.
	\end{proof}
	The following crucial lemma, shows that for $H\in (1/3,1/2)$, the \emph{update process} of the Volterra, fBm can be lifted to a second order rough path.
	\begin{lemma}\label{lem:tilde_B_rp}
		Given $H\in (1/3,1/2)$, then the pair $(\tilde{B}^{H, s}, \tilde{\mathbb{B}}^{H, s})$ is a geometric rough path of order $\alpha$ for any $\alpha < H$, where
		\begin{equation}\label{eq:tilde_bold_B}
			\tilde{\mathbb{B}}^{H, s}_{u, t} 
			\defby \mathbb{B}^H_{u, t} - \int_u^t Y^{H, s}_{u, r} \otimes \, \mathrm{d} B^H_r 
			-  \int_u^t B^{H}_{u, r} \otimes \dot{Y}^{H, s}_r \, \mathrm{d} r + \int_u^t Y^{H, s}_{u, r} \otimes \dot{Y}^{H, s}_r \, \mathrm{d} r.
		\end{equation}
		The second integral is well-defined as Riemann--Stieltjes integral, since $Y^{H, s}$ is smooth in $(s, \infty)$.
	\end{lemma}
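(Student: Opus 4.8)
The formula \eqref{eq:tilde_bold_B} is nothing but the \emph{change-of-variables identity for second order iterated integrals} under the decomposition $B^H = Y^{H,s} + \tilde{B}^{H,s}$ on $[s,\infty)$: formally one wants $\tilde{\mathbb{B}}^{H,s}_{u,t} = \int_u^t \tilde{B}^{H,s}_{u,r}\otimes\dd\tilde{B}^{H,s}_r = \int_u^t(B^H_{u,r}-Y^{H,s}_{u,r})\otimes(\dd B^H_r - \dot{Y}^{H,s}_r\,\dd r)$, and expanding the product, using $\mathbb{B}^H_{u,t}=\int_u^t B^H_{u,r}\otimes\dd B^H_r$, produces exactly the right-hand side of \eqref{eq:tilde_bold_B}. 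Since $H<1/2$ the term $\int_u^t Y^{H,s}_{u,r}\otimes\dd B^H_r$ cannot be treated by Young's theory, so I would first record that it is well defined through integration by parts, $\int_u^t Y^{H,s}_{u,r}\otimes\dd B^H_r = Y^{H,s}_{u,t}\otimes B^H_{u,t} - \int_u^t \dot{Y}^{H,s}_r\otimes B^H_{u,r}\,\dd r$, the last integral being a genuine Lebesgue integral because, by Lemma~\ref{lem:Y_dot}, $Y^{H,s}$ is a.s. absolutely continuous on $[s,T]$ with $\dot{Y}^{H,s}\in L^1_{\mathrm{loc}}([s,\infty))$; the remaining three terms of \eqref{eq:tilde_bold_B} are then manifestly well defined pathwise.

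Write $J^{(2)}_{u,t},J^{(3)}_{u,t},J^{(4)}_{u,t}$ for the three correction integrals in \eqref{eq:tilde_bold_B}. The first step of the proof proper is the estimate $\norm{(\tilde{B}^{H,s},\tilde{\mathbb{B}}^{H,s})}_\alpha<\infty$ a.s. for every $\alpha<H$. Here $\tilde{B}^{H,s}=B^H-Y^{H,s}\in\mathcal{C}^\alpha_T$ a.s. for every $\alpha<H$, while for the second level I use Lemma~\ref{lem:canonical_lift} to bound $\abs{\mathbb{B}^H_{u,t}}\lesssim(t-u)^{2\alpha}$ a.s., and Lemma~\ref{lem:Y_dot}, upgraded from $L^p_\omega$ to almost sure bounds by a Borel--Cantelli/chaining argument for the Gaussian processes $Y^{H,s}$ and $\dot{Y}^{H,s}$ (dyadically decomposing $(s,T]$ near the endpoint $s$), to get a.s. $\abs{Y^{H,s}_{u,r}}\lesssim(r-u)^H$ and $\abs{\dot{Y}^{H,s}_r}\lesssim(r-s)^{H-1-\varepsilon}$ for arbitrarily small $\varepsilon>0$. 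Combining these with $\abs{B^H_{u,r}}\lesssim(r-u)^\alpha$ and $(r-s)^{H-1-\varepsilon}\le(r-u)^{H-1-\varepsilon}$ for $s\le u\le r$, direct integration gives $\abs{J^{(i)}_{u,t}}\lesssim(t-u)^{H+\alpha-\varepsilon}$ for $i=2,3,4$, hence $\lesssim(t-u)^{2\alpha}$ once $\varepsilon<H-\alpha$; the left endpoint $u=s$ is the same computation with $Y^{H,s}_{s,r}=Y^{H,s}_r-B^H_s$.

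For Chen's relation I use the elementary splitting $J^{(i)}_{u,t}=J^{(i)}_{u,v}+J^{(i)}_{v,t}+(\text{cross term})$ for $i=2,3,4$, giving $\delta J^{(2)}_{u,v,t}=Y^{H,s}_{u,v}\otimes B^H_{v,t}$, $\delta J^{(3)}_{u,v,t}=B^H_{u,v}\otimes Y^{H,s}_{v,t}$, $\delta J^{(4)}_{u,v,t}=Y^{H,s}_{u,v}\otimes Y^{H,s}_{v,t}$, while $\delta\mathbb{B}^H_{u,v,t}=B^H_{u,v}\otimes B^H_{v,t}$ by \eqref{eq:chen_relation}; plugging these into \eqref{eq:tilde_bold_B} and factoring yields $\delta\tilde{\mathbb{B}}^{H,s}_{u,v,t}=(B^H_{u,v}-Y^{H,s}_{u,v})\otimes(B^H_{v,t}-Y^{H,s}_{v,t})=\tilde{B}^{H,s}_{u,v}\otimes\tilde{B}^{H,s}_{v,t}$, which is \eqref{eq:chen_relation} for $(\tilde{B}^{H,s},\tilde{\mathbb{B}}^{H,s})$. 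For the geometric property I verify the symmetric-part identity: integration by parts gives $J^{(2)}_{u,t}+(J^{(3)}_{u,t})^\top=Y^{H,s}_{u,t}\otimes B^H_{u,t}$ and $J^{(4)}_{u,t}+(J^{(4)}_{u,t})^\top=Y^{H,s}_{u,t}\otimes Y^{H,s}_{u,t}$, so that, using $\mathrm{Sym}(\mathbb{B}^H_{u,t})=\tfrac12 B^H_{u,t}\otimes B^H_{u,t}$ (as $(B^H,\mathbb{B}^H)$ is geometric), one obtains $\mathrm{Sym}(\tilde{\mathbb{B}}^{H,s}_{u,t})=\tfrac12\tilde{B}^{H,s}_{u,t}\otimes\tilde{B}^{H,s}_{u,t}$; hence $(\tilde{B}^{H,s},\tilde{\mathbb{B}}^{H,s})$ is a weakly geometric $\alpha$-Hölder rough path for every $\alpha<H$, and therefore, being a level-$2$ rough path with $\alpha>1/3$, a geometric rough path of order $\alpha'$ for every $\alpha'<\alpha$, see e.g. \cite{FH20,friz_victoir_2010}. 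Alternatively one can mollify the driving Brownian motion $W$, note that \eqref{eq:tilde_bold_B} holds exactly for the resulting smooth $\tilde{B}^{H,s,(n)}=\int_s^\cdot K_H(\cdot,r)\,\dd W^{(n)}_r$, and pass to the limit.

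The only genuinely delicate point is the passage from the $L^p_\omega$ bounds of Lemma~\ref{lem:Y_dot} to almost sure bounds with essentially sharp rates, in particular controlling $\sup_{r\in(s,T]}(r-s)^{1-H+\varepsilon}\abs{\dot{Y}^{H,s}_r}$ despite the blow-up of $\dot{Y}^{H,s}$ at $r=s$, and keeping careful track of the left endpoint $u=s$ in the estimates for $J^{(2)},J^{(3)},J^{(4)}$; everything else — the algebraic identity \eqref{eq:tilde_bold_B}, Chen's relation, and the symmetric-part computation — is routine bookkeeping.
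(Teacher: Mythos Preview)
Your proof is correct and matches the paper's for Chen's relation and the weak-geometric/symmetric-part identity. The one genuine difference is in how you obtain the analytic bound $\norm{(\tilde{B}^{H,s},\tilde{\mathbb{B}}^{H,s})}_\alpha<\infty$. You upgrade the $L^p_\omega$ estimates of Lemma~\ref{lem:Y_dot} to almost-sure pathwise bounds via a Borel--Cantelli/chaining argument, and you rightly flag the control of $\sup_{r\in(s,T]}(r-s)^{1-H+\varepsilon}\abs{\dot{Y}^{H,s}_r}$ as the only delicate step. The paper sidesteps this entirely: it stays in $L^p_\omega$, shows $\norm{\tilde{\mathbb{B}}^{H,s}_{u,t}}_{L^p_\omega}\lesssim_p(t-u)^{2H}$ directly from the $L^p$ bounds on $\mathbb{B}^H$, $B^H$ and $\dot{Y}^{H,s}$ (using $(r-s)^{H-1}\le(r-u)^{H-1}$ inside the integral, exactly as you do, but under the $L^p$ norm), and then invokes Kolmogorov's continuity criterion for rough paths \cite[Thm.~3.1]{FH20} to pass to almost-sure $\alpha$-H\"older regularity for any $\alpha<H$. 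This buys a shorter argument with no delicate pathwise step; your route is more hands-on and makes the almost-sure regularity explicit, at the cost of the extra chaining work.
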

	\begin{proof}
		The proof is given in App.~\ref{subsec:gaussian_sec2}.
	\end{proof}
	Below and throughout the text, the integral with respect to $\tilde{B}^{H, s}$ is understood as 
	Young integral when $H > 1/2$ and as
	rough integral with respect to the geometric rough path $(\tilde{B}^{H,s}, \tilde{\mathbb{B}}^{H, s})$ when $H \leq 1/2$. The following lemma can be understood as a Clark--Ocone type formula and we expect that a similar result should hold for a more general class of locally non-deterministic Gaussian processes.

	\begin{lemma}\label{lem:gaussian_integral}
		Let $H \in (1/3, 1)$, $0 \leq s \leq u < t$, $g \in C^3(\mathbb{R}^{d_1}, \mathbb{R})$ and $a \in \mcM(d_1,d_2)$. 
		Then, given $\tilde{B}^{H, s}$ and $\rho_H(s, r)$ as defined by \eqref{eq:def_tilde_B} and \eqref{eq:def_rho} respectively, one has the identity,
		\begin{equation*}
			\mathbb{E} \left[ \int_u^t g (a \tilde{B}^{H, s}_r)
			\, \mathrm{d} \tilde{B}_r^{H, s, i} \right]
			= \frac{1}{2} \sum_{j=1}^{d_1} a^{j i} \int_u^t [\partial_j \mathcal{P}_{\rho^2_H(s, r) a a^{\top}} g](0)  
			\, \rho^2_H(s,\mathrm{d} r),
		\end{equation*}
		where the integral on the left-hand side is understood as Young integral when $H > 1/2$ and as
		rough integral with respect to the geometric rough path $(\tilde{B}^{H,s}, \tilde{\mathbb{B}}^{H, s})$ 
		(see Lem.~\ref{lem:tilde_B_rp}) when $H \leq 1/2$.
		The integral in the right-hand side is understood as Riemann--Stieltjes integral with respect to the increasing function 
		$r \mapsto \rho^2_H(s, r)$.
	\end{lemma}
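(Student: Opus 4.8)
The plan is to compute the expectation by first reducing to the Gaussian structure of the update process $\tilde B^{H,s}$ and then applying a Gaussian integration-by-parts (Stein-type) identity. The key observation is that $\tilde B^{H,s}_r$ is, for each fixed $r$, a centred Gaussian vector with covariance $\rho^2_H(s,r) I_{d_2}$, so that $a\tilde B^{H,s}_r$ is centred Gaussian with covariance $\rho^2_H(s,r)\, a a^\top$; in particular $g(a\tilde B^{H,s}_r)$ has finite expectation since $g\in C^3$ grows at most polynomially along with its derivatives (this follows from the $C^3$ hypothesis combined with boundedness of the relevant derivatives, which is how $\mcC^k$ is defined in the notation section). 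The subtlety is that the integral on the left is a Young integral (for $H>1/2$) or a rough integral against $(\tilde B^{H,s},\tilde{\mathbb B}^{H,s})$ (for $H\le 1/2$), not a pathwise Riemann--Stieltjes or It\^o integral, so one cannot naively differentiate under the integral sign.

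First I would reduce to smooth, compactly supported approximations: take $g_n\in C_c^\infty$ with $g_n\to g$ together with three derivatives, uniformly on compacts and with uniform polynomial bounds, so that both sides pass to the limit (the left side by the fundamental estimates \eqref{eq:young_fund_est} and \eqref{eq:rough_fund_est} applied to the controlled path $r\mapsto g_n(a\tilde B^{H,s}_r)$, whose Gubinelli derivative is $\nabla g_n(a\tilde B^{H,s}_r)a$, together with dominated convergence in $\omega$; the right side directly). Hence it suffices to prove the identity for $g$ smooth and bounded with bounded derivatives. Second, for such $g$ I would establish that the Young/rough integral against $\tilde B^{H,s}$ agrees, in expectation, with the integral obtained by the \emph{shifted} stochastic sewing lemma (Lem.~\ref{lem:ssl}) or equivalently that one may interchange $\mathbb E$ with the limit of Riemann sums; the Riemann-sum characterisation at the end of Lem.~\ref{lem:det_sewing}/Lem.~\ref{lem:ssl} gives $\mathbb E\int_u^t g(a\tilde B^{H,s}_r)\,\mathrm d\tilde B^{H,s,i}_r=\lim_{|\pi|\to 0}\sum_{[r_1,r_2]\in\pi}\mathbb E\big[g(a\tilde B^{H,s}_{r_1})\,\tilde B^{H,s,i}_{r_1,r_2}\big]+\big(\text{correction terms}\big)$, where in the rough case the correction involves $\mathbb E[\nabla g(a\tilde B^{H,s}_{r_1})a\,\tilde{\mathbb B}^{H,s}_{r_1,r_2}]$; these corrections should be shown to be lower order using the rough-path regularity of $(\tilde B^{H,s},\tilde{\mathbb B}^{H,s})$ from Lem.~\ref{lem:tilde_B_rp}.

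Third, and this is the heart of the matter, I would evaluate the summand $\mathbb E\big[g(a\tilde B^{H,s}_{r_1})\,\tilde B^{H,s,i}_{r_1,r_2}\big]$ using Gaussian integration by parts. Writing $\tilde B^{H,s}_{r_2}=\tilde B^{H,s}_{r_1}+\tilde B^{H,s}_{r_1,r_2}$ and using that both are jointly Gaussian, Stein's identity gives, for each $j$, $\mathbb E[\partial_j(g\circ(a\,\cdot))(\tilde B^{H,s}_{r_1})\cdot \tilde B^{H,s,i}_{r_1,r_2}]$ against the covariance $\mathbb E[\tilde B^{H,s,k}_{r_1}\tilde B^{H,s,i}_{r_1,r_2}]$; since $\tilde B^{H,s}$ has independent, identically distributed components and $t\mapsto\tilde B^{H,s}_t=\int_s^t K_H(t,r)\,\mathrm dW_r$, the relevant covariance is $\tfrac12\big(\rho^2_H(s,r_2)-\rho^2_H(s,r_1)\big)\delta_{ki}=\tfrac12\,\rho^2_H(s,\mathrm d r)\,\delta_{ki}$ (heuristically; the factor $\tfrac12$ and the increment structure come from $\mathbb E[X(X'-X)]=\tfrac12(\mathbb E[(X')^2]-\mathbb E[X^2])$ when $X'-X$ is \emph{not} independent of $X$, which is precisely the Volterra situation). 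Combined with the identity $\mathbb E[\partial_j g(a\tilde B^{H,s}_{r_1})]=[\partial_j\mcP_{\rho^2_H(s,r_1)aa^\top}g](0)$ — which is just the statement that convolving with the heat kernel $p_{\Gamma}$ computes the expectation of a $g$ evaluated at a centred Gaussian with covariance $\Gamma$ — and summing over the partition, the Riemann sums converge (by the Riemann--Stieltjes remark \eqref{eq:rho_RS} and continuity of $r\mapsto[\partial_j\mcP_{\rho^2_H(s,r)aa^\top}g](0)$, which follows from \eqref{eq:semigroup_estimate}--\eqref{eq:semigroup_difference}) to $\tfrac12\sum_j a^{ji}\int_u^t[\partial_j\mcP_{\rho^2_H(s,r)aa^\top}g](0)\,\rho^2_H(s,\mathrm d r)$, as claimed.

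The main obstacle I anticipate is making the interchange of expectation and the rough integral rigorous in the regime $H\le 1/2$: one must control the expectation of the compensator term $\nabla g(a\tilde B^{H,s}_{r_1})a\,\tilde{\mathbb B}^{H,s}_{r_1,r_2}$ and show it contributes nothing in the limit, which requires knowing not just that $\tilde{\mathbb B}^{H,s}$ is $2\alpha$-H\"older but also controlling $\mathbb E[\tilde{\mathbb B}^{H,s}_{r_1,r_2}\,|\,\mathcal F_{r_1}]$ — and here the explicit formula \eqref{eq:tilde_bold_B} for $\tilde{\mathbb B}^{H,s}$ in terms of $\mathbb B^H$ and the smooth history process $Y^{H,s}$, together with Lem.~\ref{lem:Y_dot}, is what one needs. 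A cleaner route, which I would try first, is to bypass rough integration entirely by invoking Lem.~\ref{lem:ssl}: define the germ $A^{(1)}_{r_1,r_2}:=g(a\tilde B^{H,s}_{r_1})\,\tilde B^{H,s,i}_{r_1,r_2}+\nabla g(a\tilde B^{H,s}_{r_1})a\,\tilde{\mathbb B}^{H,s,\cdot i}_{r_1,r_2}$ and the germ $A^{(2)}_{r_1,r_2}:=\tfrac12\sum_j a^{ji}\int_{r_1}^{r_2}[\partial_j\mcP_{\rho^2_H(s,r)aa^\top}g](0)\,\rho^2_H(s,\mathrm d r)$, check that both sew (the first to the rough integral, the second trivially), and verify $\mathbb E[A^{(1)}_{r_1,r_2}]=A^{(2)}_{r_1,r_2}+O(|r_2-r_1|^{1+\varepsilon})$ by the Gaussian computation above at the level of a single increment; then Rem.~\ref{rem:sewing_uniq} forces the sewn objects to agree in expectation. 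This localises the entire difficulty to a one-step Gaussian identity plus bookkeeping of Hölder exponents, which is exactly the kind of computation deferred to App.~\ref{sec:gaussian}.
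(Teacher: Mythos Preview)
Your plan has a real gap in the rough regime $H\le 1/2$, and it stems from two linked errors.

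First, the covariance identity you invoke is incorrect: for jointly Gaussian $X,X'$ one has
\[
\mathbb{E}\bigl[X(X'-X)\bigr]=\tfrac12\bigl(\mathbb{E}[(X')^2]-\mathbb{E}[X^2]\bigr)-\tfrac12\,\mathbb{E}\bigl[(X'-X)^2\bigr],
\]
so $\mathbb{E}\bigl[\tilde B^{H,s,i}_{r_1}\tilde B^{H,s,i}_{r_1,r_2}\bigr]=\tfrac12\bigl(\rho^2_H(s,r_2)-\rho^2_H(s,r_1)\bigr)-\tfrac12\,\mathbb{E}\bigl[(\tilde B^{H,s,i}_{r_1,r_2})^2\bigr]$. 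The correction term is of order $(r_2-r_1)^{2H}$, hence \emph{not} negligible in the Riemann sum when $H\le 1/2$. Second, and directly related, the compensator $\nabla g(a\tilde B^{H,s}_{r_1})a\,\tilde{\mathbb B}^{H,s,\cdot,i}_{r_1,r_2}$ does \emph{not} vanish in expectation: using the Wiener-chaos identity $\mathbb{E}\bigl[h(\tilde B^{H,s}_{r_1})\tilde{\mathbb B}^{H,s,k,i}_{r_1,r_2}\bigr]=\tfrac12\,\mathbb{E}\bigl[h(\tilde B^{H,s}_{r_1})\tilde B^{H,s,k}_{r_1,r_2}\tilde B^{H,s,i}_{r_1,r_2}\bigr]$ (proved in the appendix as Lem.~\ref{lem:g_times_iterated_integral}), one finds it contributes precisely $+\tfrac12\sum_j a^{ji}\,\mathbb{E}[\partial_j g(a\tilde B^{H,s}_{r_1})]\,\mathbb{E}\bigl[(\tilde B^{H,s,i}_{r_1,r_2})^2\bigr]$ at leading order, cancelling the missing term above. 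So your ``cleaner route'' is salvageable, but only after computing the compensator contribution rather than discarding it.

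The paper avoids this delicate cancellation by a different, more elegant device: it first shows (Lem.~\ref{lem:rough_vs_strat_2}, proved via the shifted stochastic sewing lemma and Lem.~\ref{lem:g_times_iterated_integral}) that the rough integral against $\tilde B^{H,s}$ is the $L^p$-limit of \emph{trapezoidal} Riemann sums $\tfrac12\bigl(g(a\tilde B^{H,s}_{s'})+g(a\tilde B^{H,s}_{t'})\bigr)\tilde B^{H,s,i}_{s',t'}$. After projecting $g(a\tilde B^{H,s}_r)$ onto $\tilde B^{H,s,i}_r$ in $L^2_\omega$ (so that $g(a\tilde B^{H,s}_r)-c_r\tilde B^{H,s,i}_r\perp\tilde B^{H,s,i}$ with $c_r=\rho_H^{-2}(s,r)\,\mathbb{E}[g(a\tilde B^{H,s}_r)\tilde B^{H,s,i}_r]$), the algebraic identity $\tfrac{a+b}{2}(b-a)=\tfrac12(b^2-a^2)$ turns the trapezoidal sum into a Riemann--Stieltjes sum against $\mathrm d(\tilde B^{H,s,i})^2$, whose expectation is $\tfrac12\int c_r\,\rho^2_H(s,\mathrm dr)$. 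A single Gaussian integration by parts then identifies $c_r$. Thus the real analytic content of the lemma is the Stratonovich-type approximation of the rough integral, not the subsequent Gaussian bookkeeping.
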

	\begin{proof}
		The proof is given in App.~\ref{subsec:int_tilde_B}.
	\end{proof}
	%
	%
	%
	%
	%
	%
	%
	\section{Young Case: $H\in (1/2,1)$}\label{sec:young}
	In this section we prove Thm.~\ref{thm:main} for the regime $H\in (1/2,1)$. In this parameter range we are able to appeal to Young integration theory in order to give a pathwise definition of solutions to the equation.
	\begin{equation}\label{eq:main_young}
		\dd X_t = \sigma(X_t) \, \mathrm{d} B^H_t, \quad X_0 = x \in \mbR^{d_1}
	\end{equation}
	To make this precise we recall the following definition. 
	\begin{definition}\label{def:young_de_sol}
		Given $T>0$, $d_1,\,d_2 \in \mbN$ a path $g\in \mcC^{\alpha}([0,T];\mbR^{d_2})$ and a map $\sigma :\mbR^{d_1} \to \mcM(d_1,d_2)$. We say that a path $y :[0,T]\to \mbR^{d_1}$ is a \emph{solution to the Young differential equation}
		\begin{align}\label{eq:young_de_pw}
			\mathrm{d} y_t = \sigma(y_t) \, \mathrm{d} g_t, \quad y_0 = x \in \mbR^{d_1}
		\end{align}
		on $[0,T]$ if $y$ is such that $\sigma(y) \in \mcC^{\beta}([0,T];\mcM(d_1,d_2))$ for some $\beta>0$ such that $\beta>1-\alpha$ and for all $t\in [0,T]$ one has
		\begin{align*}
			y_t = x + \int_0^t \sigma (y_r) \, \mathrm{d} g_r,
		\end{align*}
		where the integral on the right hand side is understood as a Young integral as described in Sec.~\ref{subsec:young_integration}.
	\end{definition}
	Since we are interested in probabilistic notions of uniqueness we develop the following series of probabilistic definitions for solutions to \eqref{eq:main_young}.
	\begin{definition}[Existence and Uniqueness of Weak-Young Solutions]\label{def:weak_young_sol}
		We say that a tuple
		\begin{equation*}
			(\Omega,\mcF,(\mcF_t)_{t\geq 0}, \mbP,B^H,X),
		\end{equation*}
		of a filtered probability space, an fBm of Hurst parameter $H$ and an $(\mathcal{F}_t)$-adapted process $X$ is a weak-Young solution to \eqref{eq:main_young} if $B^H$ is an $(\mcF_t)_{t\geq 0}$ fBm (in the sense of Def.~\ref{def:adapted_fBm}) and for $\mbP$-a.e $\omega \in \Omega$,  the process $X(\omega)$ solves the differential equation \eqref{eq:main_young} in the sense of Young (Def.~\ref{def:young_de_sol}). 
		
		We say that weak uniqueness holds for \eqref{eq:main_young} in the Young sense, if for any two weak solutions $(\Omega,\mcF,(\mcF_t)_{t\geq 0}, \mbP,B^H,X)$ and $(\tilde{\Omega},\tilde{\mcF},(\tilde{\mcF_t})_{t\geq 0}, \tilde{\mbP},\tilde{B}^H,\tilde{X})$ as above, the processes $X,\, \tilde{X}$ are equal in law, i.e.
		\begin{equation*}
			X_{\#} \mbP = \tilde{X}_{\#} \tilde{\mbP} \in \mcP(C([0,+\infty);\mbR^{})).
		\end{equation*}
	\end{definition}
	\begin{remark}\label{rem:weak_young_existence_compactness}
		As mentioned in the text following Prop.~\ref{prop:yamada_watanabe}, a standard compactness argument (given in \cite[App.~B]{matsuda22}) allows for the construction of weak solutions in the case $H\in (1/2,1)$ and $\sigma\in \mcC^{\gamma}(\mbR^{d_1};\mcM(d_1,d_2))$ for $\gamma>\frac{1-H}{H}$.
		
		Note that since a posteriori one has $y\in \mcC^{\alpha}([0,T];\mbR^{d_1})$ for any $\alpha \in (0,H)$, we have $\sigma(y) \in \mcC^{\gamma \alpha}([0,T];\mcM(d_1,d_2))$. Hence, the conditions $\alpha<H$ and $\gamma>\frac{1-H}{H}$ imply that $\alpha \gamma +\alpha = (1+\gamma)\alpha >\frac{\alpha}{H}>1$, hence the Young integral in \eqref{eq:young_de_pw} is well-defined.

		In the regime $H\in (1/2,1)$ our main result (Thm.~\ref{thm:main}) obtains pathwise uniqueness for $\sigma$  that is $\gamma$-H\"older continuous for $\gamma >\frac{1}{2H}$ and are uniformly elliptic; in this case note that $\frac{1}{2H}>\frac{1-H}{H}$, so that a fortiori we obtain pathwise uniqueness (under the assumption of uniform ellipticity) for these weak solutions and hence strong existence due to Prop.~\ref{prop:yamada_watanabe}.
	\end{remark}
	\begin{definition}[Existence of Strong-Young Solutions]\label{def:strong_young_sol}
		Given a probability space, $(\Omega,\mcF,\mbP)$ carrying an fBm $B^H$, we say that $X$ is a strong-Young solution to \eqref{eq:main_young} if for $\mbP$-a.e. $\omega\in \Omega$, the process $X(\omega)$ solves \eqref{eq:main_young} in the sense of Young (Def.~\ref{def:young_de_sol}) and $X$ is adapted to the natural filtration generated by $B^H$.
	\end{definition}
	\begin{definition}[Pathwise Uniqueness of Young Solutions]\label{def:pathwise_uniq_young}
		We say that \emph{pathwise-uniqueness} holds for \eqref{eq:main_young}, in the Young sense, if for any two, weak-Young solutions, $X$ and $\tilde{X}$ (in the sense of Def~\ref{def:weak_young_sol}), on the same filtered probability space $(\Omega,\mcF,(\mcF_t)_{t\in [0,T]},\mbP)$, it holds that $X(\omega)=\tilde{X}(\omega)$ for $\mbP$-a.e. $\omega \in \Omega$.
	\end{definition}
	Pathwise uniqueness for \eqref{eq:main_young} is proven in Sec.~\ref{subsec:pw_uniq}.

	\subsection{Young Integral with Irregular Integrand}\label{subsec:young_int}
	As explained in Sec.~\ref{subsec:proof_overview}, the key to obtaining pathwise uniqueness is to obtain suitable, stochastic estimates on integrals of the form, 
	\begin{align}\label{eq:int_f_young}
		\int_s^t f(X_r) \, \mathrm{d} B^H_r
	\end{align}
	for an irregular $f$ and a path $X$ in a suitable class. 	Since we will eventually take $X$ to be any weak solution to the SDE \eqref{eq:main_young}, a natural class to take are those paths which are stochastically controlled by $B^H$. It turns out that this class is also sufficient for our purposes. The following definition is given in the full range $H\in (0,1)$ since we also make use of it in Sec.~\ref{sec:rough} where we treat the case $H\in (1/3,1/2]$.
	\begin{definition}\label{def:control_prob}[Probabilistically Controlled Paths]
		Let $T>0$, $d_1,\,d_2\in \mbN$, $(\Omega,\mcF,(\mcF_t)_{t\in [0,T]},\mbP)$ be a filtered probability space and $B^H$ be an $\mbR^{d_2}$ dimensional, $(\mathcal{F}_t)$-fBm with Hurst parameter $H\in (1/3,1)$.
		Then, for $\beta \in (0, 1]$, $l \in (1, \infty)$ and a pair of ($\cF_t$)-adapted processes $(X, X'):[0,T]\to \mbR^{d_1}\times \mcM(d_1,d_2)$, we say that $X$ is \emph{probabilistically controlled} by $B^H$ and write $(X, X') \in
		\mathfrak{D}^{\beta, l}_{B^H}$ if 
		\begin{itemize}
			\item the \emph{derivative process}, $X'$ satisfies the analytic bound,
			\begin{equation*}
				\sup_{0 \leq s < t \leq T} \frac{| | X'_{s, t} | |_{L^p
						_{\omega}}}{(t - s)^{\beta H}} < \infty \quad \text{for all }\, p \in [1,\infty); 
			\end{equation*}
			\item the remainder  
			\begin{align}\label{eq:def_R}
				R_{s, t} \defby X_{s, t} - X'_s B^H_{s, t},
			\end{align}
			defined for all $0\leq s< t \leq T$, satisfies the bound
			\begin{equation*}
				\sup_{0 \leq s < t \leq T} \frac{| | R_{s, t} | |_{L^p
						_{\omega}}}{(t - s)^{(1 + \beta) H}} < \infty, \quad \text{for all }\, p \in [1,\infty); 
			\end{equation*}
			\item and for all $t\geq 0$, $X'(X')^\top \in \mcM_+(d_2)$, i.e. there exists an $l>0$ such that
			$l^{-1} \leq X'_t (X'_t)^{\top} \leq l$.
		\end{itemize}
		Given $(X,X')$ satisfying these conditions and $p\in [1,\infty)$, we set
		\begin{equation}\label{eq:def_X_norm}
			\opnorm{X}_{p,\beta} \defby \sup_{0 \leq t \leq T} \| X'_t
			\|_{L^p_{\omega}} + \sup_{0 \leq s < t \leq T} \frac{| | X'_{s, t} |
				|_{L^p_{\omega}}}{(t - s)^{\beta H}} + \sup_{0 \leq s < t \leq T} \frac{| |
				R_{s, t} | |_{L^p_{\omega}}}{(t - s)^{(1 + \beta) H}} . 
		\end{equation}
	\end{definition}
	The main result of this subsection is the following proposition, which gives an analogue of the fundamental estimate of Young integration for integrals of the form \eqref{eq:int_f_young}.
	\begin{proposition}\label{prop:fbm_young}
		Let $T>0$, $d_1,\,d_2 \in \mbN$, $B^H$ be an $\mbR^{d_2}$-fBm, with Hurst parameter $H \in ( 1/2, 1)$, adapted to a given filtration $(\mcF_t)_{t\in [0,T]}$, $\beta \in (\frac{1}{2H}, 1]$ and $l \in [1, \infty)$. 
		Then, given $(X, X') \in \mathfrak{D}^{\beta, l}_{B^H}$, 
		$f \in C^1(\R^{d_1}, \R)$, $p \in [2, \infty)$, $\tilde{\gamma} \in (\frac{1}{2H} - 1, 0)$ and $(s,t)\in [0,T]^2_{\leq}$,  it holds that
		\begin{equation*}
			\Big\| \int_s^t f (X_r) \mathrm{d} B_r^H \Big\|_{L^p_{\omega}} 
			\lesssim_{\beta, l, p, \tilde{\gamma}, T} \| f \|_{\mathcal{C}_x^{\tilde{\gamma}}} \big((t - s)^{(1+\tilde{\gamma})H}
			+ \opnorm{X}_{2p,\beta} (t-s)^{(1+ \beta +\tilde{\gamma})H}\big).
		\end{equation*}
	\end{proposition}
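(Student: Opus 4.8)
The plan is to realise the integral $\int_s^t f(X_r)\,\mathrm dB^H_r$ --- which exists almost surely, since $f\in C^1$ and, by the controlled structure, $X$ has a modification in $\mathcal C^{\alpha}([0,T];\mathbb R^{d_1})$ for every $\alpha<H$, so that $2\alpha>1$ --- as the output of the shifted stochastic sewing lemma applied to a germ that already incorporates the smoothing effect of the noise. For $(s,t)\in[0,T]^2_{\leq}$ I set $v=v_{s,t}\defby(s-(t-s))\vee 0$ and take
\begin{equation*}
	A_{s,t}\defby \mathbb E\Big[\int_s^t f\big(X_{v}+X'_{v}B^H_{v,r}\big)\,\mathrm dB^H_r\,\Big\vert\,\mathcal F_{v}\Big],
\end{equation*}
the inner integral being an ordinary Young integral (a.s.\ well defined, as $r\mapsto f(X_v+X'_vB^H_{v,r})$ is a.s.\ $\alpha$-Hölder with $2\alpha>1$). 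Note $A_{s,t}$ is $\mathcal F_v$- hence $\mathcal F_t$-measurable. The minor nuisance near $t=0$, where $s-(t-s)<0$, I would dispatch by the usual localisation and suppress below.

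First I would compute $A_{s,t}$ through the Volterra decomposition $B^H_{v,r}=Y^{H,v}_{v,r}+\tilde B^{H,v}_r$, $\mathrm dB^H_r=\dot Y^{H,v}_r\,\mathrm dr+\mathrm d\tilde B^{H,v}_r$ of Sec.~\ref{subsec:fbm}, the independence of $\tilde B^{H,v}$ from $\mathcal F_v$, and Lem.~\ref{lem:gaussian_integral} (in a mild extension permitting the $\mathcal F_v$-measurable, $r$-dependent base point $X_v+X'_vY^{H,v}_{v,r}$). Writing $\Gamma\defby X'_v(X'_v)^{\top}$, for which $l^{-1}\leq\Gamma\leq l$ by Def.~\ref{def:control_prob}, this gives
\begin{align*}
	A_{s,t}&=\int_s^t\big(\mathcal P_{\rho^2_H(v,r)\Gamma}f\big)\big(X_v+X'_vY^{H,v}_{v,r}\big)\,\dot Y^{H,v}_r\,\mathrm dr\\
	&\qquad+\tfrac12\int_s^t(X'_v)^{\top}\,\nabla\!\big(\mathcal P_{\rho^2_H(v,r)\Gamma}f\big)\big(X_v+X'_vY^{H,v}_{v,r}\big)\,\rho^2_H(v,\mathrm dr).
\end{align*}
Since $r-v\asymp t-s$ on $[s,t]$, the heat-kernel bounds \eqref{eq:semigroup_estimate} (applied with $\alpha=\tilde\gamma<0$ and $\beta\in\{0,1\}$), namely $\|\mathcal P_{\rho^2_H(v,r)\Gamma}f\|_{L^\infty_x}\lesssim_{l,T}(r-v)^{H\tilde\gamma}\|f\|_{\mathcal C^{\tilde\gamma}_x}$ and $\|\nabla\mathcal P_{\rho^2_H(v,r)\Gamma}f\|_{L^\infty_x}\lesssim_{l,T}(r-v)^{H(\tilde\gamma-1)}\|f\|_{\mathcal C^{\tilde\gamma}_x}$, together with $\|\dot Y^{H,v}_r\|_{L^q_\omega}\lesssim_q(r-v)^{H-1}$ (Lem.~\ref{lem:Y_dot}), $|X'_v|\lesssim l^{1/2}$, and the Riemann--Stieltjes estimate \eqref{eq:rho_RS} for $\int_s^t\rho_H^{\tilde\gamma-1}(v,r)\,\rho^2_H(v,\mathrm dr)$, yield after Minkowski's and Hölder's inequalities
\begin{equation*}
	\|A_{s,t}\|_{L^p_\omega}\lesssim_{l,p,\tilde\gamma,T}\|f\|_{\mathcal C^{\tilde\gamma}_x}(t-s)^{(1+\tilde\gamma)H},
\end{equation*}
where $\tilde\gamma>\tfrac1{2H}-1$ (so $(1+\tilde\gamma)H>\tfrac12$ and $\tilde\gamma+1>0$) guarantees finiteness of the $\rho_H$-integral. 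This is the first term of the asserted bound.

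Next I would estimate $\delta A_{s,u,t}$, which by Rem.~\ref{rem:shifted_ssl_2} need only be controlled for $u$ with $\min\{u-s,t-u\}\geq(t-s)/3$. Two discrepancies enter: (i) the change of freezing point from $v_{s,t}$ to $v_{s,u}$ and $v_{u,t}$, absorbed via the $L^p_\omega$-Hölder regularity of $r\mapsto Y^{H,v}_{v,r}$ and of $X'$, the kernel comparison \eqref{eq:semigroup_difference}, and the regularity of $r\mapsto\rho_H(v,r)$; and (ii) the mismatch of the local models $X_{v}+X'_{v}B^H_{v,\cdot}$ based at different points $v',v''$, which over a sub-interval equals $X'_{v',v''}B^H_{v'',\cdot}-R_{v',v''}$ (the first-order terms cancelling) and is hence of size $\opnorm{X}_{2p,\beta}(t-s)^{(1+\beta)H}$ by the bounds $\|X'_{\cdot,\cdot}\|_{L^{2p}_\omega}\lesssim\opnorm{X}_{2p,\beta}(\cdot)^{\beta H}$, $\|R_{\cdot,\cdot}\|_{L^{2p}_\omega}\lesssim\opnorm{X}_{2p,\beta}(\cdot)^{(1+\beta)H}$ of Def.~\ref{def:control_prob}. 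Taylor-expanding the (bounded) smoothed integrand against these differences, and splitting the $L^p_\omega$-norm by Hölder's inequality into an $L^\infty_x$-factor (a smoothed $f$, or its gradient, at a random point) and an $L^{2p}_\omega$-factor carrying the controlled-path data --- the $\dot Y^{H,v}$ and $\rho_H$ factors contributing $L^q_\omega$-norms with $X$-independent constants --- produces the index $2p$, each discrepancy costing one power $(t-s)^{\beta H}$ relative to $\|A_{s,t}\|_{L^p_\omega}$, so that
\begin{equation*}
	\|\delta A_{s,u,t}\|_{L^p_\omega}\lesssim_{l,p,\tilde\gamma,T}\|f\|_{\mathcal C^{\tilde\gamma}_x}\,\opnorm{X}_{2p,\beta}\,(t-s)^{(1+\beta+\tilde\gamma)H}.
\end{equation*}
Because $\beta>\tfrac1{2H}$ and $\tilde\gamma>\tfrac1{2H}-1$ force $(1+\beta+\tilde\gamma)H>1$, this single estimate verifies both hypotheses in \eqref{eq:ssl_assump} (with $\alpha=0$; conditioning only lowers the norm) for Gerencsér's shifted stochastic sewing lemma (the $\alpha=0$ instance of Lem.~\ref{lem:ssl}, cf.\ \cite{gerencser2022}), with $\Gamma_1,\Gamma_2\lesssim_{l,p,\tilde\gamma,T}\|f\|_{\mathcal C^{\tilde\gamma}_x}\opnorm{X}_{2p,\beta}$ and $\epsilon_i>0$. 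The lemma yields $\mathcal A$ with $\|\mathcal A_t-\mathcal A_s-A_{s,t}\|_{L^p_\omega}\lesssim_{l,p,\tilde\gamma,T}\|f\|_{\mathcal C^{\tilde\gamma}_x}\opnorm{X}_{2p,\beta}(t-s)^{(1+\beta+\tilde\gamma)H}$; a routine check of the hypotheses of Rem.~\ref{rem:sewing_uniq} with the choice $v=s-(t-s)$ --- valid since $f\in C^1$, so $\|A_{s,t}-f(X_s)B^H_{s,t}\|_{L^p_\omega}\lesssim_f(t-s)^H$ with $H>\tfrac12$, while $A_{s,t}$ is $\mathcal F_{s-(t-s)}$-measurable and approximates $\mathbb E[f(X_s)B^H_{s,t}\mid\mathcal F_{s-(t-s)}]$ up to order $(t-s)^{1+\epsilon_2}$ --- identifies $\mathcal A_t-\mathcal A_s=\int_s^tf(X_r)\,\mathrm dB^H_r$. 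Adding the two displayed estimates gives the claim.

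The principal difficulty is the estimate of $\delta A_{s,u,t}$ just sketched: one must simultaneously carry out the Gaussian and heat-kernel computations underlying Lem.~\ref{lem:gaussian_integral} and its $r$-dependent variant (the substance of App.~\ref{sec:gaussian}), quantify the shift of the freezing point while keeping every error term at the order $(t-s)^{(1+\beta+\tilde\gamma)H}$, and deploy the controlled-path remainder and derivative bounds exactly where they are needed so that only $\opnorm{X}_{2p,\beta}$ --- and no stronger norm of $X$ --- enters. Subsidiary technical points are the extension of Lem.~\ref{lem:gaussian_integral} to $\mathcal F_v$-measurable, $r$-dependent base points, and the localisation near $t=0$ forced by the negativity of $s-(t-s)$ there.
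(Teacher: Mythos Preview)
Your overall architecture --- the germ $A_{s,t}=\mathbb E_{s-(t-s)}[\int_s^t f(X_v+X'_vB^H_{v,r})\,\mathrm dB^H_r]$, the Volterra computation yielding $\|A_{s,t}\|_{L^p_\omega}\lesssim\|f\|_{\mathcal C^{\tilde\gamma}_x}(t-s)^{(1+\tilde\gamma)H}$, and the identification of the sewn process with the Young integral --- matches the paper exactly. The gap is in the treatment of $\delta A_{s,u,t}$.

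You claim $\|\delta A_{s,u,t}\|_{L^p_\omega}\lesssim\|f\|_{\mathcal C^{\tilde\gamma}_x}\opnorm{X}_{2p,\beta}(t-s)^{(1+\beta+\tilde\gamma)H}$ directly, with exponent $>1$, and then note that conditioning only decreases the norm. This bound is not attainable from your discrepancy (i). The three pieces of $\delta A_{s,u,t}$ are conditional expectations at \emph{different} levels $v_1=s-(t-s)$, $v_2=s-(u-s)$, $v_3=u-(t-u)$, so their explicit formulas involve different $\rho_H^2(v_j,\cdot)$, $Y^{H,v_j}$, $\dot Y^{H,v_j}$. Since $v_2-v_1\asymp t-s$ while also $r-v_j\asymp t-s$ on the relevant intervals, the change in smoothing variance satisfies $\rho_H^2(v_1,r)-\rho_H^2(v_2,r)\asymp\rho_H^2(v_2,r)$ (relative change of order one), and similarly $\|\dot Y^{H,v_1}_r-\dot Y^{H,v_2}_r\|_{L^p_\omega}\asymp\|\dot Y^{H,v_2}_r\|_{L^p_\omega}$. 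These discrepancies carry no factor of $\opnorm{X}_{2p,\beta}$ and no extra $(t-s)^{\beta H}$; they leave a residual of size $\|f\|_{\mathcal C^{\tilde\gamma}_x}(t-s)^{(1+\tilde\gamma)H}$ only. Neither the kernel comparison \eqref{eq:semigroup_difference} (which needs a common time parameter) nor the ``regularity of $r\mapsto\rho_H(v,r)$'' (regularity in $r$, not in $v$) addresses this.

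The paper's remedy is precisely the point you elide: one does \emph{not} bound $\|\delta A_{s,u,t}\|_{L^p_\omega}$ at level $>1$. Instead, the first sewing hypothesis is checked at the weaker level $(1+\tilde\gamma)H>\tfrac12$ directly from $\|A_{s,t}\|_{L^p_\omega}$, and the work goes into the second hypothesis $\|\mathbb E_{v_1}[\delta A_{s,u,t}]\|_{L^p_\omega}\lesssim\|f\|_{\mathcal C^{\tilde\gamma}_x}\opnorm{X}_{2p,\beta}(t-s)^{(1+\beta+\tilde\gamma)H}$. The outer conditioning at $v_1$ followed by the tower property aligns both terms over $[s,u]$ to the \emph{same} inner conditioning level $v_2$ (and over $[u,t]$ to $v_3$); after this, both terms share the same $\rho_H^2(v_2,\cdot)$, $Y^{H,v_2}$, $\dot Y^{H,v_2}$, and only the frozen data $(X_{v_1},X'_{v_1})$ versus $(X_{v_2},X'_{v_2})$ differ --- which is exactly your discrepancy (ii) and does yield the gain $(t-s)^{\beta H}$. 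The two-term structure of the final bound reflects the two distinct sewing hypotheses; your single-estimate shortcut collapses them and loses the mechanism that produces the $\opnorm{X}_{2p,\beta}$ term.
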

	\begin{proof}
		For simplicity, in the remainder of the proof we write $B \defby B^H$ and do not keep track of dependencies on $\beta, l, p, \tilde{\gamma}, T$.
		Our strategy of proof is to apply Gerencsér's shifted stochastic sewing, \cite[Lem.~2.2]{gerencser2022}  (equivalently Lem.~\ref{lem:ssl} with $\alpha = 0$ and $v = s - (t-s)$) to the germ
		\begin{equation}\label{eq:germ_young}
			A_{s, t} \defby \mathbb{E}_{s - (t-s)} \left[\int_s^t f (X_{s-(t-s)} + X_{s-(t-s)}' B_{s-(t-s), r}) \, \mathrm{d} B_r\right] .
		\end{equation}
		As we demonstrate below, the shifted conditional, expectation and coefficients, along with the shifted stochastic sewing lemma, allow us to obtain the claimed estimate. We note that the shift is necessary, for instance to bound \eqref{eq:sing_int} below.
		We decompose the proof into six steps. First demonstrate that the integral $\int_s^t f(X_r)\dd B^H_r$ can be realised as the (shifted) stochastic sewing of \eqref{eq:germ_young}. 
		
		\textbf{Step 1.} 
		This is equivalent to the claim that the following limit holds in $L^p(\Omega;\mbR)$,
		\begin{align}\label{eq:integral_by_A}
			\int_s^t f(X_r) \, \mathrm{d} B_r = \lim_{\substack{\pi \text{ is a partition of }[s,t],\\\abs{\pi} \to 0}}
			\sum_{[s', t'] \in \pi} A_{s', t'}.
		\end{align}
		Namely, this shows that the germ $(A_{s, t})$ provides a correct approximation to the integral, \eqref{eq:int_f_young}. 
		To prove this identity, we first set $v \defby s - (t-s)$ and define the auxiliary germ
		\begin{align}\label{eq:germ_young_a0}
			A^0_{s, t} \defby
			\int_s^t f (X_{v} + X_{v}' B_{v, r}) \,\mathrm{d} B_r.
		\end{align}
		It is directly clear that the Riemann sums of $A^0$ and $A$ coincide. To wit, appealing to the uniqueness element of the stochastic sewing lemma, as in Rem.~\ref{rem:sewing_uniq}, it suffices to check that 
		\begin{align}\label{eq:A_A_0_one_half}
			\norm{A_{s, t} - A^0_{s, t}}_{L^p_\omega} \lesssim \abs{t-s}^{\frac{1}{2}+\varepsilon_1} \quad \forall (s, t) \in [0, T]_{\leq}^2,
		\end{align}
		and
		\begin{equation*}
			\left\|\mbE_{s-(t-s)}\left[A_{s,t}-A^0_{s,t}\right]\right\|_{L^p_\omega} \lesssim |t-s|^{1+\varepsilon_2},\quad \forall (s, t) \in [0, T]_{\leq}^2.
		\end{equation*}
		The latter is immediate since $\mbE_{s-(t-s)} \left[A_{s,t}-A^0_{s,t}\right] =0$. The former also follows rather straightforwardly, we apply the triangle inequality, the fundamental estimate of Young integration \eqref{eq:young_fund_est} and the assumptions on $f,\, (X,X')$ to find,
		\begin{equation}
			\|A_{s,t}-A^0_{s,t}\|_{L^p_\omega} \leq \|A_{s,t}\|_{L^p_\omega} +\|A^0_{s,t}\|_{L^p_\omega} \lesssim_{p, f, (X, X'), T} |t-s|^H.
		\end{equation}
		Hence, \eqref{eq:A_A_0_one_half} follows since here we only consider $H>1/2$. Therefore, in order to obtain \eqref{eq:integral_by_A} it is enough to show that
		\begin{align}\label{eq:integral_by_A_0}
			\int_s^t f(X_r) \, \mathrm{d} B_r = \lim_{\substack{\pi \text{ is a partition of }[s,t],\\\abs{\pi} \to 0}}
			\sum_{[s', t'] \in \pi} A^0_{s', t'}.
		\end{align}
		We achieve this by applying the vanilla sewing lemma, (Lem.~\ref{lem:det_sewing}) with $E=L^p(\Omega;\mbR^{d_2})$.
		Indeed, with $v = s - (t-s)$, we have
		\begin{multline*}
			\abs[\Big]{\int_s^t f(X_r) \, \mathrm{d} B_r- A^0_{s, t}}
			\leq \abs[\Big]{\int_s^t f(X_r) \, \mathrm{d} B_r- f(X_s) B_{s, t}} 
			+\abs{A^0_{s,t} - f(X_{v} + X'_{v}B_{v, s}) B_{s,t}} \\
			+\abs{f(X_s) B_{s, t} - f(X_{v} + X'_{v}B_{v, s}) B_{s,t}}.
		\end{multline*}
		By the remainder estimate of Young integration \eqref{eq:young_fund_est},
		we obtain
		\begin{align*}
			\norm[\Big]{\int_s^t f(X_r) \, \mathrm{d} B_r- f(X_s) B_{s, t}}_{L^p_{\omega}}
			&\lesssim \norm{f}_{C^1} \opnorm{X}_{2p, \beta} (t-s)^{(1+\beta) H}, \\
			\norm{A^0_{s,t} - f(X_{v} + X'_{v}B_{v, s}) B_{s,t}}_{L^p_{\omega}}
			&\lesssim \norm{f}_{C^1} \opnorm{X}_{2p, \beta} (t-s)^{(1+\beta) H}.
		\end{align*}
		In addition, since
		\begin{align*}
			\abs{f(X_s)  - f(X_{v} + X'_{v}B_{v, s})}\leq
			\norm{f}_{C^1} \abs{R_{v, s}},
		\end{align*}
		where $R$ is defined by \eqref{eq:def_R},
		we get
		\begin{align*}
			\norm{f(X_s) B_{s, t} - f(X_{v} + X'_{v}B_{v, s}) B_{s,t}}_{L^p_{\omega}}
			\lesssim \norm{f}_{C^1} \opnorm{X}_{2p, \beta} (t-s)^{(2+\beta) H}.
		\end{align*}
		Combining these estimates, we obtain 
		\begin{align*}
			\norm[\Big]{\int_s^t f(X_r) \, \mathrm{d} B_r- A^0_{s, t}}_{L^p_{\omega}}
			\lesssim \norm{f}_{C^1} \opnorm{X}_{2p, \beta} (t-s)^{(1+\beta) H}.
		\end{align*}
		As $(1+\beta) H > 1$, appealing this time to the uniqueness element of the deterministic sewing lemma (see Rem.~\ref{rem:sewing_uniq}), the identity \eqref{eq:integral_by_A_0} is established.

		We are now in a position to exploit our choice of germ, given by \eqref{eq:germ_young} to obtain the claimed estimate. We first recall the Volterra representation of the fBm, given by \eqref{eq:fbm_volterra}; i.e. for $v \geq 0$ we set 
		$Y^v \defby Y^{H, v}$ and $\tilde{B}^v \defby \tilde{B}^{H, v}$ where the right hand sides are given by \eqref{eq:def_Y} and \eqref{eq:def_tilde_B} 
		respectively. 
		%
		By Lem.~\ref{lem:gaussian_integral}, given any matrix $a \in \mcM(d_1,d_2)$ and $v \leq s \leq t$,  $g \in C^1(\R^{d_2}, \R)$
		it holds that
		\begin{equation}
			\mathbb{E} \left[ \int_s^t g (a \tilde{B}_r^v)
			\, \mathrm{d} \tilde{B}_r^{v, i} \right]
			=
			\frac{1}{2} \sum_{j=1}^{d_1} a^{j i} \int_s^t 
			\left(\partial_j \cP_{\rho^2(v, r) a a^{\top}} g\right) (0) \, \rho^2(v, \mathrm{d} r),
			\label{eq:g_along_B_expectation}
		\end{equation}
		with $\rho^2(v,r) \defby \rho_H^2(v, r)$ defined by \eqref{eq:def_rho}. We also note that 
		\begin{align}\label{eq:Y_tilde_B_prop}
			\text{$Y^v$ is measurable with respect to $\mathcal{F}_v$ while $\tilde{B}^v$ is independent of $\mathcal{F}_v$}.
		\end{align}
		Recall the germ $A_{s,t}$ from \eqref{eq:germ_young} we are now in a position to check the analytic bounds \eqref{eq:ssl_assump}. The first is checked by showing that
		\begin{align}\label{eq:young_ssl_cond_1}
			\norm{A_{s,t}}_{L^p_{\omega}} \lesssim \norm{f}_{\mathcal{C}_x^{\tilde{\gamma}}}  (t-s)^{(1+\tilde{\gamma}) H},
		\end{align}
		which is done in Step 2 below. The second estimate of \eqref{eq:ssl_assump} is shown through Steps 2 - 5.
		
		\textbf{Step 2.} 
		For $v \defby s - (t-s)$, by \eqref{eq:Y_tilde_B_prop} and the trivial fact that $X_{v},\,X'_v$ are measurable with respect to $\mcF_v$ we observe that
		\begin{align*}
			A_{s,t} &= \int_s^t \mathbb{E}\left[f(x + x'(y_{v, r} + \tilde{B}^v_r))\right]\Big\vert_{x=X_{v},
				x' = X'_{v}, y = Y^{v}}  \dot{Y}_r^v \, \mathrm{d} r \\
			&\phantom{=} + \mathbb{E}\left[\int_s^t f(x + x'(y_{v, r} + \tilde{B}^v_r) )\, \mathrm{d} \tilde{B}^v_r\right] \Big\vert_{x=X_{v},
				x' = X'_{v}, y = Y^{v}}\\
			&= \vcentcolon I_1 + I_2.
		\end{align*}
		Since $x' \tilde{B}^v_r$ is a centred Gaussian random variable with variance $\rho^2(v, r) x' (x')^{\mathrm{T}}$, we find
		\begin{align*}
			I_1 = \int_s^t \cP_{\rho^2(v, r) X'_v(X'_v)^{\top}} f(X_v + X'_v Y_{v, r})
			\dot{Y}^v_r \, \mathrm{d} r,
		\end{align*}
		and by Lem.~\ref{lem:semigroup_regularisation} and \eqref{eq:rho_lower_bd} 
		\begin{align*}
			\norm{\cP_{\rho^2(v, r) X'_v(X'_v)^{\top}} f}_{L^{\infty}}
			\lesssim \norm{f}_{\mathcal{C}_x^{\tilde{\gamma}}} \rho^{\tilde{\gamma}}(v, r)
			\lesssim \norm{f}_{\mathcal{C}_x^{\tilde{\gamma}}} (r-v)^{\tilde{\gamma} H}.
		\end{align*}
		By Lem.~\ref{lem:Y_dot} we have
		$\norm{\dot{Y}^v_r}_{L^p_{\omega}} \lesssim_p  (r-v)^{H-1}$.
		Therefore, 
		\begin{align*}
			\norm{I_1}_{L^p_{\omega}}
			\lesssim \norm{f}_{\mathcal{C}_x^{\tilde{\gamma}}} \int_s^t (r-v)^{\tilde{\gamma} H + H - 1} \, \mathrm{d} r
			\lesssim \norm{f}_{\mathcal{C}_x^{\tilde{\gamma}}} (t-s)^{(1+\tilde{\gamma})H}.
		\end{align*}
		To estimate $I_2$, we note that by \eqref{eq:g_along_B_expectation} which is obtained from our averaging result Lem.~\ref{lem:gaussian_integral}, applied with $g(z) = f(x + x'y_{v, r} + x' z)$, we find 
		\begin{align*}
			I_2 &= \mathbb{E}\left[\int_s^t f(x + x'(y_{v, r} + \tilde{B}^v_r) )\, \mathrm{d} \tilde{B}^v_r\right] \Big\vert_{x=X_{v},
				x' = X'_{v}, y = Y^{v}}\\
			&	=
			\frac{1}{2} \sum_{j=1}^{d_1} (X'_v)^{j i} \int_s^t 
			\left(\partial_j \cP_{\rho^2(v, r) X'_v (X'_v)^{\top}}  f\right)(X_v + X_v' Y^v_{v, r}) 
			\, \rho^2(v, \mathrm{d} r).
		\end{align*}
		Then, appealing to Lem.~\ref{lem:semigroup_regularisation} we obtain a regularising effect by the estimate,
		\begin{align*}
			\norm{\cP_{\rho^2(v, r) X'_v (X'_v)^{\top}}  f(X_v + X_v' Y^v_{v, r})}_{C^1}
			\lesssim (r-v)^{H(\tilde{\gamma}-1)} \norm{f}_{\mathcal{C}_x^{\tilde{\gamma}}},
		\end{align*}
		so that making use of \eqref{eq:rho_RS} and \eqref{eq:rho_lower_bd}, we obtain
		\begin{align*}
			\norm{I_2}_{L^p_{\omega}} 
			&\lesssim 
			\norm{f}_{\mathcal{C}_x^{\tilde{\gamma}}} \int_s^t (r-v)^{H (\tilde{\gamma}-1)}  \, \rho^2(v, \mathrm{d} r) 
			\leq
			\norm{f}_{\mathcal{C}_x^{\tilde{\gamma}}} \int_s^t \rho(v, r)^{\tilde{\gamma} -1}  \, \rho^2(v, \mathrm{d} r)  \\
			&\lesssim \norm{f}_{\mathcal{C}_x^{\tilde{\gamma}}} \rho(v, t)^{\tilde{\gamma} + 1} 
			\lesssim \norm{f}_{\mathcal{C}_x^{\tilde{\gamma}}} (t-s)^{(\tilde{\gamma}+1)H}.
		\end{align*}
		This concludes Step 2 and the proof of \eqref{eq:young_ssl_cond_2}. 
		
		In the remaining steps we show that for $u \in (s, t)$ with 
		\begin{align}\label{eq:u_unif}
			\min\{t-u, u-s\} \geq \frac{t-s}{3},
		\end{align}
		we have
		\begin{align}\label{eq:young_ssl_cond_2}
			\norm{\mathbb{E}_{s-(t-s)} \left[\delta A_{s, u, t}\right]}_{L^p_{\omega}}
			\lesssim \norm{f}_{\mathcal{C}_x^{\tilde{\gamma}}} \opnorm{X}_{2p, \beta} (t-s)^{(1+\beta+\tilde{\gamma})H},
		\end{align}
		which is the final estimate needed to show that $A_{s,t}$ satisfies the analytic bounds of the stochastic sewing lemma, \eqref{eq:ssl_assump}. We recall that imposing the additional condition of \eqref{eq:u_unif} does not alter the applicability of the stochastic sewing lemma, see Rem.~\ref{rem:shifted_ssl_2}.

		\textbf{Step 3.}
		We first perform some simple algebraic manipulations. Let us set
		\begin{align}\label{eq:v_s}
			v_1 \defby s - (t-s), \quad v_2 \defby s- (u-s), \quad v_3 \defby u - (t-u).
		\end{align}
		It is clear that, 
		\begin{align*}
			v_1 < v_2 < \min\{v_3, s\} \leq \max\{v_3, s\} < u < t.
		\end{align*}
		One may then check that
		$\mathbb{E}_{v_1} \left[\delta A_{s, u, t}\right] = \mathbb{E}_{v_1}[I_3 + I_4]$,
		where
		\begin{align*}
			I_3 &\defby \mathbb{E}_{v_2}\left[ \int_s^u \left(f(X_{v_1}+X'_{v_1} B_{v_1, r}) - f(X_{v_2}+X'_{v_2} B_{v_2, r}) \right)
			\, \mathrm{d} B_r\right], \\
			I_4 &\defby
			\mathbb{E}_{v_3}\left[ \int_u^t \left(f(X_{v_1}+X'_{v_1} B_{v_1, r}) - f(X_{v_3}+X'_{v_3} B_{v_3, r}) \right)
			\, \mathrm{d} B_r\right].
		\end{align*}
		Since the estimate for $I_3$ and $I_4$ are similar (the latter requires exchanging $(s, u)$ for $(u, t)$), we only present the estimates for $I_3$.
		Given $j=1,2$,  we observe
		\begin{align*}
			\mathbb{E}_{v_2}\left[ \int_s^u f(X_{v_j}+X'_{v_j} B_{v_j, r})
			\, \mathrm{d} B_r\right]
			= &\,\mathbb{E}_{v_2}\left[\int_s^u f(X_{v_j}+X'_{v_j} B_{v_j, r}) \dot{Y}_r^{v_2} \, \mathrm{d} r \right]\\
			&+ \mathbb{E}_{v_2}\left[\int_s^u f(X_{v_j}+X'_{v_j} B_{v_j, r}) \, \mathrm{d} \tilde{B}_r^{v_2}\right].
		\end{align*}
		By \eqref{eq:Y_tilde_B_prop}, the first term is equal to
		\begin{align*}
			\int_s^u \left(\cP_{\rho^2(v_2, r) X'_{v_j}(X'_{v_j})^{\top}} f\right)
			(X_{v_j} + X'_{v_j}(B_{v_j, v_2} + Y^{v_2}_{v_2, r})) \dot{Y}^{v_2}_r \, \mathrm{d} r
		\end{align*}
		and by \eqref{eq:g_along_B_expectation} the second term is equal to
		\begin{align*}
			\frac{1}{2} X'_{v_j} \int_s^u 
			\left(\nabla \cP_{\rho^2(v_2, r) X'_{v_j}(X'_{v_j})^{\top}} f\right)
			(X_{v_j} + X'_{v_j}(B_{v_j, v_2} + Y^{v_2}_{v_2, r})) \, \rho^2(v_2, \mathrm{d} r).
		\end{align*}
		Hence, we write $I_3 = I_{3,1} + I_{3,2}$, where 
		\begin{align*}
			I_{3,1} \defby
			\int_s^u &\bigg( \cP_{\rho^2(v_2, r) X'_{v_1}(X'_{v_1})^{\top}} f
			\left(X_{v_1} + X'_{v_1}(B_{v_1, v_2} + Y^{v_2}_{v_2, r})\right) \\
			&- \cP_{\rho^2(v_2, r) X'_{v_2}(X'_{v_2})^{\top}} f
			\left(X_{v_2} + X'_{v_2}Y^{v_2}_{v_2, r}\right) 
			\dot{Y}^{v_2}_r \bigg)\, \mathrm{d} r,
		\end{align*}
		\begin{align*}
			I_{3,2} \defby 
			\frac{1}{2}  \int_s^u &
			\bigg( X'_{v_1} \nabla \cP_{\rho^2(v_2, r) X'_{v_1}(X'_{v_1})^{\top}} f
			\left(X_{v_1} + X'_{v_1}(B_{v_1, v_2} + Y^{v_2}_{v_2, r})\right) \\
			&- X'_{v_2} \nabla \cP_{\rho^2(v_2, r) X'_{v_2}(X'_{v_2})^{\top}} f
			\left(X_{v_2} + X'_{v_2} Y^{v_2}_{v_2, r}\right) \bigg)
			\, \rho^2(v_2, \mathrm{d} r).
		\end{align*}

		\textbf{Step 4.} In view of Step 3, in order to prove \eqref{eq:young_ssl_cond_2}, it remains to show 
		\begin{align}\label{eq:est_I_3_1}
			\norm{I_{3,1}}_{L^p_{\omega}} &\lesssim \norm{f}_{\mathcal{C}_x^{\tilde{\gamma}}} \opnorm{X}_{2p, \beta} (t-s)^{(1+\beta+\tilde{\gamma})H}, \\
			\label{eq:est_I_3_2}
			\norm{I_{3,2}}_{L^p_{\omega}} &\lesssim \norm{f}_{\mathcal{C}_x^{\tilde{\gamma}}} \opnorm{X}_{2p, \beta} (t-s)^{(1+\beta+\tilde{\gamma})H},
		\end{align}
		along with similar arguments for $I_4$ which we do not present.
		In this step, we estimate $I_{3,1}$.
		
		We decompose $I_{3,1} = I_{3,1,1} + I_{3,1,2}$, where
		\begin{align*}
			I_{3,1,1} \defby &
			\int_s^u \big( \cP_{\rho^2(v_2, r) X'_{v_1}(X'_{v_1})^{\top}} f
			(X_{v_1} + X'_{v_1}(B_{v_1, v_2} + Y^{v_2}_{v_2, r})) \\
			&- \cP_{\rho^2(v_2, r) X'_{v_1}(X'_{v_1})^{\top}} f
			(X_{v_2} + X'_{v_2}Y^{v_2}_{v_2, r}) \big)
			\dot{Y}^{v_2}_r \, \mathrm{d} r
		\end{align*}
		and
		\begin{equation*}
			I_{3,1,2} \defby
			\int_s^u \big( \cP_{\rho^2(v_2, r) X'_{v_1}(X'_{v_1})^{\top}}
			- \cP_{\rho^2(v_2, r) X'_{v_2}(X'_{v_2})^{\top}} \big)
			f(X_{v_2} + X'_{v_2}Y^{v_2}_{v_2, r})
			\dot{Y}^{v_2}_r \, \mathrm{d} r.
		\end{equation*}
		To estimate $I_{3,1,1}$, we apply Lem.~\ref{lem:semigroup_regularisation} to obtain the Lipschitz estimate
		\begin{align}\label{eq:P_C_1}
			\norm{\cP_{\rho^2(v_2, r) X'_{v_1}(X'_{v_1})^{\top}} f}_{C^1}
			\lesssim \rho^{\tilde{\gamma} - 1}(v_2, r) \norm{f}_{\mathcal{C}_x^{\tilde{\gamma}}},
		\end{align}
		which yields
		\begin{equation*}
			\abs{I_{3,1,1}}
			\lesssim 
			\norm{f}_{\mathcal{C}_x^{\tilde{\gamma}}}
			\int_s^u \rho^{\tilde{\gamma} - 1}(v_2, r)  
			\times \abs{X_{v_1} + X'_{v_1}(B_{v_1, v_2} + Y^{v_2}_{v_2, r}) - (X_{v_2} + X'_{v_2}Y^{v_2}_{v_2, r})} \abs{\dot{Y}^{v_2}_r}
			\, \mathrm{d} r.
		\end{equation*}
		Recalling the definition of the remainder $R$ of a controlled path from \eqref{eq:def_R}, we observe that $X_{v_1} + X'_{v_1}B_{v_1, v_2} - X_{v_2} = - R_{v_1, v_2}$. Therefore, 
		\begin{equation*}
			\norm{I_{3,1,1}}_{L^p_{\omega}} 
			\lesssim \norm{f}_{\mathcal{C}^{\tilde{\gamma}}_x}\int_s^u \rho^{\tilde{\gamma} - 1}(v_2, r)  
			\times \left(\norm{R_{v_1, v_2} \dot{Y}^{v_2}_r}_{L^p_{\omega}} + 
			\norm{X_{v_1, v_2}' Y_{v_2, r}^{v_2} \dot{Y}^{v_2}_r}_{L^p_{\omega}} \right) \, \mathrm{d} r.
		\end{equation*}
		By Hölder's inequality and recalling the norm $\opnorm{X}_{p, \beta}$, defined by \eqref{eq:def_X_norm}, we find 
		\begin{align*}
			\norm{I_{3,1,1}}_{L^p_{\omega}} 
			\lesssim &\norm{f}_{\mathcal{C}_x^{\tilde{\gamma}}} \opnorm{X}_{2p, \beta} \int_s^u \rho^{\tilde{\gamma} - 1}(v_2, r)\Big(\norm{\dot{Y}^{v_2}_r}_{L^{2p}_{\omega}} (v_2-v_1)^{(1+\beta)H}  \\
			&\hspace{15em}  + \norm{Y_{v_2, r}^{v_2}}_{L^{4p}_{\omega}} \norm{\dot{Y}^{v_2}_r}_{L^{4p}_{\omega}} (v_2 - v_1)^{\beta H} \Big) \, \mathrm{d} r.
		\end{align*}
		By Lem.~\ref{lem:Y_dot} we have $\norm{\dot{Y}^{v_2}_r}_{L^{4p}_{\omega}} \lesssim (r-v_2)^{H-1}$ and 
		$\norm{Y_{v_2, r}^{v_2}}_{L^{4p}_{\omega}}
		\lesssim (r-v_2)^H$.
		Hence, 
		also using the bound \eqref{eq:rho_lower_bd} to estimate $\rho(v_2, r)$, we have 
		\begin{align}
			\int_s^u \rho^{\tilde{\gamma} - 1}(v_2, r) \norm{\dot{Y}^{v_2}_r}_{L^{2p}_{\omega}} (v_2-v_1)^{(1+\beta)H} \, \mathrm{d} r  &\lesssim (v_2 - v_1)^{(1+\beta) H} \int_s^u (r-v_2)^{\tilde{\gamma} H - 1} \, \mathrm{d} r \label{eq:sing_int}\\
			&\lesssim (v_2 - v_1)^{(1+\beta) H} (s-v_2)^{\tilde{\gamma} H} \notag
		\end{align}
		and 
		\begin{align*}
			\int_s^u \rho^{\tilde{\gamma} - 1}(v_2, r) \norm{Y_{v_2, r}^{v_2}}_{L^{4p}_{\omega}} \norm{\dot{Y}^{v_2}_r}_{L^{4p}_{\omega}} (v_2 - v_1)^{\beta H} \, \mathrm{d} r &\lesssim (v_2 - v_1)^{\beta H} \int_s^u (r-v_2)^{\tilde{\gamma} H + H - 1} \, \mathrm{d} r \\
			&\lesssim (v_2 - v_1)^{\beta H} (u-v_2)^{(1+\tilde{\gamma}) H}. 
		\end{align*}
		Recalling $v_1$, $v_2$ from \eqref{eq:v_s} and the condition \eqref{eq:u_unif} on $u$, we obtain 
		\begin{align*}
			\norm{I_{3,1,1}}_{L^p_{\omega}} \lesssim \norm{f}_{\mathcal{C}_x^{\tilde{\gamma}}} \opnorm{X}_{2p, \beta} (t-s)^{(1+\beta+\tilde{\gamma})H},
		\end{align*}
		an estimate compatible with \eqref{eq:est_I_3_2}.
		
		We move to estimate $I_{3,1,2}$. 
		By Lem.~\ref{lem:semigroup_difference},
		\begin{multline*}
			\left\|\left( \cP_{\rho^2(v_2, r) X'_{v_1}(X'_{v_1})^{\top}}
			- \cP_{\rho^2(v_2, r) X'_{v_2}(X'_{v_2})^{\top}} \right)
			f\,\right\|_{L^{\infty}} \\
			\lesssim \left|X'_{v_1}(X'_{v_1})^{\top} - X'_{v_2}(X'_{v_2})^{\top}\right|\,
			\rho^{\tilde{\gamma}}(v_2, r) \norm{f}_{\mathcal{C}_x^{\tilde{\gamma}}}.
		\end{multline*}
		Thus, by Hölder's inequality,  
		\begin{align*}
			\norm{I_{3,1,2}}_{L^p_{\omega}} 
			\lesssim \norm{f}_{\mathcal{C}_x^{\tilde{\gamma}}} \int_s^u \rho^{\tilde{\gamma}}(v_2, r) \norm{X'_{v_1, v_2}}_{L^{2p}_{\omega}} \norm{\dot{Y}^{v_2}_r}_{L^{2p}_{\omega}} 
			\, \mathrm{d} r.
		\end{align*}
		Recalling the definition of $\opnorm{X}_{2p, \beta}$ from \eqref{eq:def_X_norm}, applying Lem.~\ref{lem:Y_dot} and \eqref{eq:rho_lower_bd} we then find
		\begin{align*}
			\norm{I_{3,1,2}}_{L^p_{\omega}} &\lesssim \norm{f}_{\mathcal{C}_x^{\tilde{\gamma}}} \opnorm{X}_{2p, \beta} (v_2 - v_1)^{\beta H} \int_s^u (r-v_2)^{\tilde{\gamma} H + H - 1} \, \mathrm{d} r \\
			&\lesssim \norm{f}_{\mathcal{C}_x^{\tilde{\gamma}}} \opnorm{X}_{2p, \beta} (t-s)^{(1+\beta + \tilde{\gamma})H},
		\end{align*}
		which, combined with the estimate of $I_{3,1,1}$ in the previous paragraph, yields \eqref{eq:est_I_3_2}.
		
		\textbf{Step 5.} In this last step, we estimate $I_{3,2}$ to obtain \eqref{eq:est_I_3_2}. 
		By adding and subtracting, we write $I_{3,2} = I_{3,2,1} + I_{3,2,2} + I_{3,2,3}$, where 
		\begin{align*}
			I_{3,2,1} \defby &\frac{1}{2}  \int_s^u 
			X'_{v_1} \big( \nabla \cP_{\rho^2(v_2, r) X'_{v_1}(X'_{v_1})^{\top}} f
			(X_{v_1} + X'_{v_1}(B_{v_1, v_2} + Y^{v_2}_{v_2, r})) \\
			&- \nabla \cP_{\rho^2(v_2, r) X'_{v_1}(X'_{v_1})^{\top}} f
			(X_{v_2} + X'_{v_2}Y^{v_2}_{v_2, r}) \big)
			\, \rho^2(v_2, \mathrm{d} r),
		\end{align*}
		\begin{multline*}
			I_{3,2,2} \defby
			\int_s^u X_{v_1}' \left( \nabla \cP_{\rho^2(v_2, r) X'_{v_1}(X'_{v_1})^{\top}}  - \nabla \cP_{\rho^2(v_2, r) X'_{v_2}(X'_{v_2})^{\top}} \right) \\
			\times f(X_{v_2} + X'_{v_2}Y^{v_2}_{v_2, r})
			\,  \rho^2(v_2, \mathrm{d} r),
		\end{multline*}
		and
		\begin{equation*}
			I_{3,2,3} \defby
			\frac{1}{2}  \int_s^u 
			(X'_{v_1} - X'_{v_2}) \nabla \cP_{\rho^2(v_2, r) X'_{v_2}(X'_{v_2})^{\top}} f
			(X_{v_2} + X'_{v_2} Y^{v_2}_{v_2, r}) 
			\, \rho^2(v_2, \mathrm{d} r).
		\end{equation*}
		The estimate of $I_{3,2,1}$ and that of $I_{3,2,2}$ are similar to that of $I_{3,1,1}$ and that of $I_{3,1,2}$ respectively; 
		we only demonstrate how to estimate $I_{3,2,1}$.
		By Lem.~\ref{lem:semigroup_regularisation}, 
		\begin{align*}
			\norm{\cP_{\rho^2(v_2, r) X'_{v_1}(X'_{v_1})^{\top}} f}_{C^2} 
			\lesssim \rho^{\tilde{\gamma} - 2}(v_2, r) \norm{f}_{\mathcal{C}_x^{\tilde{\gamma}}}.
		\end{align*}
		Therefore, 
		\begin{align*}
			\abs{I_{3,2,1}} \lesssim \norm{f}_{\mathcal{C}^{\tilde{\gamma}}_x}\int_s^u \rho^{\tilde{\gamma} - 1}(v_2, r) \left(\abs{R_{v_1, v_2}} + \abs{X_{v_1, v_2}' Y^{v_2}_{v_2, r}}\right) \, \rho(v_2, \mathrm{d} r ). 
		\end{align*}
		As above, recalling the definition of $\opnorm{X}_{p, \beta}$, the estimate on $Y^{v_2}$ from Lem.~\ref{lem:Y_dot} 
		and the bound \eqref{eq:rho_RS}, we get 
		\begin{align*}
			\norm{I_{3,2,1}} &\lesssim \norm{f}_{\mathcal{C}_x^{\tilde{\gamma}}} \opnorm{X}_{p, \beta} 
			\Big( (v_2 - v_1)^{(1+\beta)H} \int_s^u \rho^{\tilde{\gamma} - 1}(v_2, r) \, \rho(v_2, \mathrm{d} r)  \\
			&\phantom{\lesssim \norm{f}_{\mathcal{C}_x^{\tilde{\gamma}}} \opnorm{X}_{p, \beta} \Big(}\,\, +(v_2 - v_1)^{\beta H} \int_s^u \rho^{\tilde{\gamma}}(v_2, r) \, \rho(v_2, \mathrm{d} r)\Big) \\
			&\lesssim \norm{f}_{\mathcal{C}_x^{\tilde{\gamma}}} \opnorm{X}_{p, \beta} 
			\left[(v_2 - v_1)^{(1+\beta) H} \rho^{\tilde{\gamma}}(v_2, s) + (v_2 - v_1)^{\beta H} \rho^{\tilde{\gamma}+1}(v_2, u)\right]
		\end{align*}
		Applying the estimate \eqref{eq:rho_lower_bd} for $\rho(v_2, r)$, we get 
		\begin{align*}
			\norm{I_{3,2,1}} \lesssim \norm{f}_{\mathcal{C}_x^{\tilde{\gamma}}} \opnorm{X}_{p, \beta} (t-s)^{(1+\beta + \tilde{\gamma})H}.
		\end{align*}

		We finally estimate $I_{3,2,3}$. Using the bounds \eqref{eq:P_C_1} and \eqref{eq:rho_RS}, 
		\begin{align*}
			\abs{I_{3,2,3}} \lesssim \norm{f}_{\mathcal{C}_x^{\tilde{\gamma}}}\left|X_{v_1, v_2}'\right| \int_s^u \rho^{\tilde{\gamma}}(v_2, r) \, \rho(v_2, \mathrm{d} r)
			\lesssim \norm{f}_{\mathcal{C}_x^{\tilde{\gamma}}}\left|X_{v_1, v_2}'\right| \rho^{\tilde{\gamma} + 1}(v_2, u).
		\end{align*}
		Therefore, 
		\begin{align*}
			\norm{I_{3,2,3}}_{L^p_{\omega}} \lesssim  \norm{f}_{\mathcal{C}_x^{\tilde{\gamma}}} \opnorm{X}_{p, \beta} (t-s)^{(1+\beta+\tilde{\gamma})H},
		\end{align*}
		yielding the estimate \eqref{eq:est_I_3_2}, which in turn gives us \eqref{eq:young_ssl_cond_2}, completing the proof.
	\end{proof}
	\subsection{Pathwise Uniqueness}\label{subsec:pw_uniq}
	Thanks to Prop.~\ref{prop:fbm_young}, we are ready to prove our main result in the case $H \in (1/2,1)$.
	\begin{theorem}[Pathwise Uniqueness in the Young regime]\label{thm:pathwise_uniqueness_young}
		Let $T>0$, $d_1,\,d_2 \in \mbN$, $B^H$ be an fBm with Hurst parameter $H \in (1/2, 1)$, $\gamma \in (\frac{1}{2H},1)$ and $\sigma \in \mcC^{\gamma}(\mbR^{d_1};\mcM(d_1,d_2))$ be such that $\sigma \sigma^{\top}$ is uniformly elliptic, i.e. there exists an $l \in [1, \infty)$ such that 
		$l^{-1} \leq \sigma(x) \sigma^{\top}(x) \leq l$ for all $x \in \mathbb{R}^{d_1}$.
		Then, pathwise uniqueness holds 
		for \eqref{eq:main_young} in the sense of Def.~\ref{def:pathwise_uniq_young}.
	\end{theorem}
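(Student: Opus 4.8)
The plan is to carry out the Lê-type reduction sketched in Sec.~\ref{subsec:proof_overview}. Fix two weak-Young solutions $X,Y$ on a common $(\Omega,\mcF,(\mcF_t),\mbP)$ driven by the same $(\mcF_t)$-fBm $B^H$, and set $Z\defby X-Y$. Using the elementary identity $\sigma_{ij}(a)-\sigma_{ij}(b)=(a-b)\cdot\int_0^1\nabla\sigma_{ij}(\theta a+(1-\theta)b)\,\dd\theta$ one formally rewrites $Z_t=\sum_{k=1}^{d_1}\int_0^t Z^k_r\,\dd G^k_r$ with $G^k$ as in \eqref{eq:g_lin_def}, so the whole problem splits into (i) giving a rigorous meaning to the $\mbR^{d_1}$-valued paths $G^k$, and (ii) invoking uniqueness of solutions of the linear Young equation $Z_t=\sum_k\int_0^t Z^k_r\,\dd G^k_r$, $Z_0=0$. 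Before doing so I would make a standard reduction: by a stopping-time/localisation argument exactly as in the proof of \cite[Thm.~5.2]{matsuda22} (cf. Rem.~\ref{rem:elliptic_cond}), it is enough to treat the case in which, along the two fixed solutions, all the convex combinations $X^\theta\defby\theta X+(1-\theta)Y$, $\theta\in[0,1]$, are probabilistically controlled by $B^H$ with one common ellipticity constant $l'$; indeed continuity of $X,Y$ and $\sigma\in\mcC^\gamma$ keep $(X^\theta)'=\theta\sigma(X)+(1-\theta)\sigma(Y)$ near $\sigma(X)$ on short time intervals following a coincidence time, and $\sigma\sigma^\top\ge l^{-1}$ then forces $(X^\theta)'((X^\theta)')^\top$ to stay uniformly positive definite there. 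I will suppress these routine details.

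First I would record that every weak-Young solution is probabilistically controlled by $B^H$ in the sense of Def.~\ref{def:control_prob}: with the natural choice $X'\defby\sigma(X)$ the remainder is $R_{s,t}=\int_s^t(\sigma(X_r)-\sigma(X_s))\,\dd B^H_r$, and the fundamental Young estimate \eqref{eq:young_fund_est} together with $\sigma\in\mcC^\gamma$ gives $|R_{s,t}|\lesssim(t-s)^{(1+\gamma)\alpha}$ with $\alpha<H$ the H\"older exponent of $X$; since $\gamma>\tfrac1{2H}$ one can choose $\alpha$ near $H$ and $\beta\in(\tfrac1{2H},1]$ near $\tfrac1{2H}$ with $(1+\gamma)\alpha>(1+\beta)H$, which is exactly the exponent demanded in Def.~\ref{def:control_prob}, while the corresponding bound on $X'_{s,t}$ and the ellipticity clause $l^{-1}\le\sigma(X_t)\sigma(X_t)^\top\le l$ are immediate. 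The requisite $L^p_\omega$ bounds on $\opnorm{X}_{p,\beta}$ follow from the usual Gaussian moment estimates (Garsia--Rodemich--Rumsey applied to the Young integral against $B^H$); this is recorded as Lem.~\ref{lem:sol_is_controlled}. By linearity the same holds for each $X^\theta$, with the ellipticity constant $l'$ supplied by the reduction above, so $(X^\theta,(X^\theta)')\in\mathfrak{D}^{\beta,l'}_{B^H}$ for all $\theta\in[0,1]$.

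Next I would construct $G=(G^k)$. Pick $\gamma'\in(\tfrac1{2H},\gamma)$ and a mollification $\sigma_n\to\sigma$ in $\mcC^{\gamma'}_x$; for smooth $\sigma_n$ the path $G^{k,n}_t\defby\int_0^t\big(\int_0^1\partial_k\sigma_n(X^\theta_r)\,\dd\theta\big)\dd B^H_r$ is an honest Young integral. Writing $G^{k,n}_{s,t}-G^{k,m}_{s,t}=\int_0^1\big(\int_s^t(\partial_k\sigma_n-\partial_k\sigma_m)(X^\theta_r)\,\dd B^H_r\big)\dd\theta$ by Fubini and applying Prop.~\ref{prop:fbm_young} componentwise to each controlled path $X^\theta$, with $f$ a component of $\partial_k\sigma_n-\partial_k\sigma_m\in\mcC^{\gamma'-1}_x$ and $\tilde\gamma=\gamma'-1\in(\tfrac1{2H}-1,0)$, yields, for every $p\in[2,\infty)$,
\begin{equation*}
\big\|G^{k,n}_{s,t}-G^{k,m}_{s,t}\big\|_{L^p_\omega}\lesssim_p\|\sigma_n-\sigma_m\|_{\mcC^{\gamma'}_x}\,(t-s)^{\gamma'H},\qquad \gamma'H>\tfrac12,
\end{equation*}
the $\opnorm{X^\theta}_{2p,\beta}$-terms being finite constants independent of $n,m$. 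Kolmogorov's continuity criterion with $p$ large then shows $(G^{k,n})_n$ is Cauchy, hence convergent, in $\mcC^\eta([0,T];\mbR^{d_1})$ for some $\eta>\tfrac12$, almost surely; denote the limit by $G^k$.

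Finally I would transfer the linearisation to the limit and conclude. For smooth $\sigma_n$, the fundamental theorem of calculus and associativity of the Young integral give $\int_0^t(\sigma_n(X_r)-\sigma_n(Y_r))\,\dd B^H_r=\sum_k\int_0^t Z^k_r\,\dd G^{k,n}_r$. On the left, $\sigma_n(X)-\sigma_n(Y)\to\sigma(X)-\sigma(Y)$ in $\mcC^{\gamma'\alpha}([0,T])$ and, because $H>\tfrac12$, one may pick $\alpha<H$ with $\gamma'\alpha>1-H$, so these Young integrals converge to $\int_0^t(\sigma(X_r)-\sigma(Y_r))\,\dd B^H_r=Z_t$; on the right, $G^{k,n}\to G^k$ in $\mcC^\eta$ with $\eta>\tfrac12$ and $Z^k\in\mcC^{\alpha'}([0,T])$ for $\alpha'<H$ with $\alpha'+\eta>1$, so the Young estimate lets these integrals converge to $\sum_k\int_0^t Z^k_r\,\dd G^k_r$. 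Hence $Z$ solves the linear Young equation $Z_t=\sum_k\int_0^t Z^k_r\,\dd G^k_r$ with $Z_0=0$ and driver $G\in\mcC^\eta$, $\eta>\tfrac12$; the standard Picard contraction for linear Young equations forces this solution to be identically zero, i.e. $X=Y$ $\mbP$-a.s. The hard part is the $n$-uniform bound on $G^{k,n}$ in the third step — a naive Young estimate would cost $\|\partial_k\sigma_n\|_{L^\infty_x}\to\infty$, and it is precisely here that Prop.~\ref{prop:fbm_young}, i.e. the stochastic-averaging plus shifted-stochastic-sewing input, is indispensable — together with the ellipticity of the convex combinations $X^\theta$, which is why the localisation reduction of the first paragraph is needed; everything else (controlledness of solutions, Fubini, Kolmogorov, uniqueness of the linear equation) is routine.
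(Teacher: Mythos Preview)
Your proposal is correct and follows essentially the same route as the paper: linearise via \eqref{eq:diff_linearisation}, use Lem.~\ref{lem:sol_is_controlled} to see that the convex combinations $X^\theta$ are probabilistically controlled, apply Prop.~\ref{prop:fbm_young} to obtain uniform $\mcC^\eta$-bounds (with $\eta>\tfrac12$) on the approximating paths $G^{k,n}$ along a mollification $\sigma_n\to\sigma$ in $\mcC^{\gamma'}_x$, and conclude by uniqueness of the linear Young equation. The only point where the paper is more explicit than your sketch is the localisation step: rather than just citing \cite[Thm.~5.2]{matsuda22}, the paper carries out the stopping-time argument in full (Steps~2--3), introducing an iterative sequence of stopping times $T^{(i)}$ and, at each step, replacing $\sigma$ by a modified coefficient $\sigma^{(i)}$ (or, after the first step, by a countable family $\sigma^m$ indexed by a dense set of base points) that agrees with $\sigma$ in a small ball and satisfies the low-oscillation condition \eqref{eq:sigma_low_oscillation}; this guarantees the ellipticity of the Gubinelli derivative of $\theta X+(1-\theta)Y$ globally on $[0,T]$ and circumvents the issue that $X_{T^{(i)}}$ is random.
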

	Before giving the proof of Thm.~\ref{thm:pathwise_uniqueness_young}, we 
	prepare the following lemma.
	\begin{lemma}\label{lem:sol_is_controlled}
		In the setting of Thm.~\ref{thm:pathwise_uniqueness_young}, 
		let $(\Omega,\mcF,(\mcF_t)_{t\in [0,T]},\mbP,X, B^H)$ be a weak-Young solution to \eqref{eq:main_young} (see Def.~\ref{def:weak_young_sol}). Then, we have 
		$(X, \sigma(X)) \in \mathfrak{D}^{\gamma, l}_{B^H}$ as given by Def.~\ref{def:controlled_path}. 
	\end{lemma}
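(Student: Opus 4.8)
The plan is to verify directly the three defining conditions of $\mfD^{\gamma,l}_{B^H}$ in Def.~\ref{def:control_prob} for the pair $(X,\sigma(X))$, taking $\beta=\gamma$ and $l$ equal to the ellipticity constant of Thm.~\ref{thm:pathwise_uniqueness_young}. The third condition, $l^{-1}\le\sigma(X_t)\sigma(X_t)^\top\le l$, is immediate from the hypothesis on $\sigma$ (and $\sigma(X)$ is clearly $(\mcF_t)$-adapted). So the real content is the two analytic estimates: the derivative-process bound $\|\sigma(X)_{s,t}\|_{L^p_\omega}\lesssim_{p,T}(t-s)^{\gamma H}$ and the remainder bound $\|R_{s,t}\|_{L^p_\omega}\lesssim_{p,T}(t-s)^{(1+\gamma)H}$, uniformly in $p\in[1,\infty)$ and $(s,t)\in[0,T]^2_{\leq}$, where $R_{s,t}=X_{s,t}-\sigma(X_s)B^H_{s,t}$.

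First I would fix $\alpha\in(\tfrac{1}{1+\gamma},H)$; this interval is nonempty since $\gamma>\tfrac{1}{2H}>\tfrac{1-H}{H}$ (as $H>1/2$) forces $(1+\gamma)H>1$, and moreover $\alpha>\tfrac{1}{1+\gamma}>\tfrac12$ because $\gamma<1$. Since $X$ is a weak-Young solution, for a.e.\ $\omega$ one has $B^H(\omega)\in\mcC^\alpha_T$ and, by the standard bootstrap for Young equations with bounded coefficient (recall $\sigma$ is bounded as $\gamma<1$, and use \eqref{eq:young_fund_est}), also $X(\omega)\in\mcC^\alpha_T$. Plugging $X_{s,t}=\int_s^t\sigma(X_r)\,\dd B^H_r$ into \eqref{eq:young_fund_est}, using $|\sigma(X_s)|\le\|\sigma\|_{L^\infty_x}$ and $\llbracket\sigma(X)\rrbracket_{\mcC^{\gamma\alpha}_{[s,t]}}\le\|\sigma\|_{\mcC^\gamma_x}\llbracket X\rrbracket_{\mcC^\alpha_{[s,t]}}^\gamma$, dividing by $(t-s)^\alpha$ and taking the supremum over $(s,t)$ gives a self-improving inequality of the shape $\llbracket X\rrbracket_{\mcC^\alpha_{[0,T]}}\lesssim_{T,\sigma}\llbracket B^H\rrbracket_{\mcC^\alpha_{[0,T]}}+\llbracket B^H\rrbracket_{\mcC^\alpha_{[0,T]}}\llbracket X\rrbracket_{\mcC^\alpha_{[0,T]}}^\gamma$. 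Since $\gamma<1$, Young's inequality absorbs the last term, leaving $\llbracket X\rrbracket_{\mcC^\alpha_{[0,T]}}\lesssim_{T,\sigma}\llbracket B^H\rrbracket_{\mcC^\alpha_{[0,T]}}+\llbracket B^H\rrbracket_{\mcC^\alpha_{[0,T]}}^{1/(1-\gamma)}$. As the Hölder seminorm of fBm has finite moments of every order (Kolmogorov/GRR applied to the Gaussian increments), this yields $\llbracket X\rrbracket_{\mcC^\alpha_{[0,T]}}\in L^q_\omega$ for all $q<\infty$. I would then upgrade the increments to the \emph{full} exponent $H$ in $L^p_\omega$: splitting $X_{s,t}=\sigma(X_s)B^H_{s,t}+R_{s,t}$, the first term is $\lesssim_p(t-s)^H$ while \eqref{eq:young_fund_est} and the moment bound just obtained give $\|R_{s,t}\|_{L^p_\omega}\lesssim_p(t-s)^{(1+\gamma)\alpha}$; since $(1+\gamma)\alpha>1>H$, this is $\lesssim_{p,T}(t-s)^H$, so $\|X_{s,t}\|_{L^p_\omega}\lesssim_{p,T}(t-s)^H$ for all $p$. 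The $\gamma$-Hölder continuity of $\sigma$ then gives $\|\sigma(X)_{s,t}\|_{L^p_\omega}\le\|\sigma\|_{\mcC^\gamma_x}\|X_{s,t}\|_{L^{p\gamma}_\omega}^\gamma\lesssim_{p,T}(t-s)^{\gamma H}$, which is the first analytic bound.

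The hard part will be the remainder bound with the \emph{sharp} exponent $(1+\gamma)H$: the naive pathwise Young estimate only produces exponent $(1+\gamma)\alpha$, which is strictly worse as $t-s\to0$, precisely because the pathwise Hölder seminorm of $X$ degenerates as $\alpha\uparrow H$. The plan is to circumvent this by working in the Banach space $E=L^p(\Omega;\mbR^{d_1})$ — where, crucially, the increment bounds keep the full exponent — and applying the deterministic sewing lemma (Lem.~\ref{lem:det_sewing}) to the germ $A_{u,v}\defby\sigma(X_u)B^H_{u,v}$. One computes $\delta A_{u,w,v}=-\sigma(X)_{u,w}B^H_{w,v}$, whence by Cauchy--Schwarz and the previous step $\|\delta A_{u,w,v}\|_E\le\|\sigma(X)_{u,w}\|_{L^{2p}_\omega}\|B^H_{w,v}\|_{L^{2p}_\omega}\lesssim(w-u)^{\gamma H}(v-w)^H\le(v-u)^{(1+\gamma)H}$, with $(1+\gamma)H>1$. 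The sewing lemma then produces $\mcA$ with $\|\mcA_t-\mcA_s-A_{s,t}\|_E\lesssim(t-s)^{(1+\gamma)H}$, and $\mcA_t-\mcA_s$ is the $L^p_\omega$-limit of the compensated Riemann sums of $A$; since these same sums converge a.s.\ to the Young integral $\int_s^t\sigma(X_r)\,\dd B^H_r=X_{s,t}$, the two limits agree, so $\mcA_t-\mcA_s=X_{s,t}$ and hence $\|R_{s,t}\|_{L^p_\omega}=\|\mcA_t-\mcA_s-A_{s,t}\|_{L^p_\omega}\lesssim(t-s)^{(1+\gamma)H}$. Together with the ellipticity bound on $\sigma(X)\sigma(X)^\top$, this establishes all three conditions and shows $(X,\sigma(X))\in\mfD^{\gamma,l}_{B^H}$.

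The only points requiring mild care, beyond the main obstacle above, are: (i) the identification of the a.s.\ Young-integral limit with the $L^p_\omega$ sewing limit, which is routine since a sequence converging both a.s.\ and in $L^p$ has a common limit; and (ii) the preliminary assertion that the solution $X$ lies a.s.\ in $\mcC^\alpha_T$ for the chosen $\alpha$, which follows from Def.~\ref{def:young_de_sol} together with the fact that a Young integral against an $\alpha$-Hölder driver is itself $\alpha$-Hölder. Everything else is a direct application of \eqref{eq:young_fund_est}, Hölder's inequality in $\omega$, and the Gaussian moment bounds for fBm Hölder seminorms.
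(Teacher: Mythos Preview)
Your proposal is correct and follows essentially the same route as the paper: obtain an a priori pathwise H\"older bound on $X$ (the paper cites \cite[Lem.~10.7]{friz_victoir_2010}, you derive it directly via the sublinear self-improving inequality, which is the content of that lemma when $\gamma<1$), upgrade to $\|X_{s,t}\|_{L^p_\omega}\lesssim(t-s)^H$, deduce $\|\sigma(X)_{s,t}\|_{L^p_\omega}\lesssim(t-s)^{\gamma H}$, and then run the deterministic sewing lemma in $L^p_\omega$ on the germ $A_{s,t}=\sigma(X_s)B^H_{s,t}$ to get the sharp remainder exponent $(1+\gamma)H$. The only cosmetic difference is that you spell out the identification of the $L^p_\omega$-sewing limit with the a.s.\ Young integral, which the paper leaves implicit.
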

	\begin{proof}
		Let $\alpha < H$, but sufficiently close to $H$.
		We recall the seminorm $\llbracket \,\cdot\, \rrbracket_{\mathcal{C}^{\alpha}_{[s, t]}}$ from \eqref{eq:holder_seminorm}.
		Since 
		\begin{align*}
			X_{s, t} = \int_s^t \sigma(X_r) \, \mathrm{d} B^H_r,
		\end{align*}
		the remainder estimate of Young integration, estimate \eqref{eq:young_fund_est}, gives 
		\begin{align}\label{eq:X_remainder_estimate}
			\abs{X_{s,t} - \sigma(X_s) B^H_{s, t}}
			\lesssim_{\alpha, \gamma} \norm{\sigma}_{\mathcal{C}^{\gamma}_x} \llbracket X \rrbracket_{\mathcal{C}^{\alpha}_{[s, t]}}^{\gamma} \left\|B^H\right\|_{\mathcal{C}^{\alpha}_{[s, t]}} 
			(t-s)^{(1 + \gamma) \alpha}.
		\end{align}
		By \cite[Lem.~10.7]{friz_victoir_2010}, we have
		\begin{align}\label{eq:young_sol_a_priori_bd}
			\llbracket X \rrbracket_{\mathcal{C}^{\alpha}_T}
			\lesssim_{\alpha, \gamma, T} (1 + \norm{\sigma}_{\mathcal{C}^{\gamma}_x} \norm{B^H}_{\mathcal{C}^{\alpha}_T})^{\frac{1}{\alpha}}. 
		\end{align}
		Now, due to Gaussianity of $B^H$ and as a consequence of the Kolmogorov continuity criterion, for all $p < \infty$, 
		there exists a $C\coloneqq C(p, \alpha, T, \norm{\sigma}_{\mathcal{C}_x^{\gamma}})>0$ such that
		\begin{align}\label{eq:X_holder_moment}
			\left\|\llbracket X \rrbracket_{\mathcal{C}_T^{\alpha}}\right\|_{L^p_{\omega}} \leq C.
		\end{align}
		Combining \eqref{eq:X_remainder_estimate} and \eqref{eq:X_holder_moment}, we get 
		$\norm{X_{s,t}}_{L^p_\omega} \lesssim (t-s)^H$.
		In particular, we have $\norm{\sigma(X)_{s, t}}_{L^p_\omega} \lesssim (t-s)^{\gamma H}$.
		To get an estimate of the remainder 
		$R_{s,t} = X_{s, t} - \sigma(X_s) B_{s, t}^H$,
		it suffices to apply the sewing lemma in $L^p_{\omega}$ with germ 
		$A_{s, t} \defby \sigma(X_s) B_{s, t}^H$,
		since 
		\begin{equation*}
			\norm{\delta A_{s, u, t}}_{L^p_{\omega}} = \left\|\sigma(X)_{s, u} B^H_{u, t}\right\|_{L^p_{\omega}}
			\lesssim (t-s)^{(1 + \gamma) H}. 
		\end{equation*}
	\end{proof}
	We are now ready to give the proof of Thm.~\ref{thm:pathwise_uniqueness_young}.
	\begin{proof}[Proof of Thm.~\ref{thm:pathwise_uniqueness_young}]
		Let $\alpha \in (1/2,H)$ be arbitrarily close to $H$ and 
		$(X, B^H)$ and $(Y, B^H)$ be two weak solutions defined on the same filtered probability space $(\Omega,\mcF,(\mcF_t),\mbP)$  and driven by the same $(\mathcal{F}_t)$-fBm, $B^H$. 
		Since the requirement on $\gamma \in (\frac{1}{2H},1)$ is given by an open interval it is always possible to find a $\gamma' \in (\frac{1}{2H},\gamma)$. We then let $(\sigma_n)_{n=1}^{\infty}$ be a smooth approximation to $\sigma$ converging in $\mcC^{\gamma'}(\mbR^{d_1};\mcM(d_1,d_2))$ (which exists by the compact embedding between H\"older--Besov spaces).
		The remainder estimate of Young integration \eqref{eq:young_fund_est} gives 
		\begin{align}\label{eq:young_fundamental}
			\abs[\Big]{\int_s^t \sigma(Z_r) \, \mathrm{d} B^H_r - \sigma(Z_s) B^H_{s,t}}
			\lesssim_{\alpha, \gamma'} \norm{\sigma}_{\mathcal{C}_x^{\gamma'}} \norm{Z}_{\mathcal{C}_T^{\alpha}}^{\gamma'} 
			\left\|B^H\right\|_{\mathcal{C}^{\alpha}_T} (t-s)^{(1+\gamma') \alpha}
		\end{align}
		for $Z \in \{X, Y\}$.
		
		From the estimate \eqref{eq:young_fundamental} and convergence of the $\sigma_n$ in $\mcC^{\gamma'}(\mbR^{d_1};\mcM(d_1,d_2))$, it follows that
		\begin{align*}
			\lim_{n \to \infty} \int_0^{\cdot} \sigma_n(Z_r) \, \mathrm{d} B^H_r 
			= \int_0^{\cdot} \sigma(Z_r) \, \mathrm{d} B^H_r  
		\end{align*}
		uniformly in $[0, T]$. As the computation \eqref{eq:diff_linearisation} shows, 
		we have 
		\begin{align*}
			\int_0^t \left(\sigma_n(X_r) - \sigma_n(Y_r) \right) \, \mathrm{d} B^H_r 
			= \sum_k \int_0^t (X^{k}_r - Y^{k}_r) \, \mathrm{d} V^k_n(r),
		\end{align*}
		where 
		\begin{align*}
			V^k_n(r) \defby 
			\int_0^1 \int_0^r \partial_k \sigma_n(\theta X_u + (1-\theta) Y_u) \, \mathrm{d} B^H_u \, \mathrm{d} \theta.
		\end{align*}
		Hence, we obtain 
		\begin{align}\label{eq:X_V_n}
			X_t - Y_t = \lim_{n \to \infty}
			\sum_k \int_0^t (X^{ k}_r - Y^{k}_r) \, \mathrm{d} V^k_n(r).
		\end{align}
		Now our task is to prove the convergence of $V_n$. 
		By Lem.~\ref{lem:sol_is_controlled}, for $(s,t) \in [0,T ]^2_{\leq}$ we have
		\begin{align*}
			\theta X_{s, t} + (1-\theta) Y_{s,t} 
			= (\theta \sigma(X_s) + (1-\theta) \sigma(Y_s)) B_{s, t} 
			+ R^{\theta}_{s, t}
		\end{align*}
		with 
		\begin{align*}
			\norm{R_{s, t}^{\theta}}_{L^p_{\omega}} \lesssim_{p, \gamma, x, \sigma} (t-s)^{(1 + \gamma) H}.
		\end{align*}
		One technical problem is that 
		the Gubinelli derivative $\theta \sigma(X_s) + (1-\theta) \sigma(Y_s)$ might be degenerate, that is,  
		the matrix  
		\begin{align*}
			\left[\theta \sigma(X_s) + (1-\theta) \sigma(Y_s)\right] \left[\theta \sigma(X_s) + (1-\theta) \sigma(Y_s)\right]^{\top}
		\end{align*}
		could have an eigenvalue $0$. 
		(In the end, we will show that $X = Y$, hence it must be non-degenerate; but a priori we do not know this.)
		Because of this technical problem, we proceed as follows.
		
		{\bfseries Step 1.} We first assume that 
		\begin{align}\label{eq:sigma_low_oscillation}
			\sup_{x, \,y \in \mathbb{R}^d} \abs{\sigma(x) - \sigma(y)} \leq \frac{l^{-2}}{4},
		\end{align}
		where $l$ is the assumed ellipticity constant of $\sigma$. This condition implies that 
		for all $x,y\in \mbR^{d_1}$ and $\theta \in (0,1)$ we have
		\begin{align*}
			\left[\theta \sigma(x) + (1-\theta) \sigma(y)\right]& \left[\theta \sigma(x) + (1-\theta) \sigma(y)\right]^{\top}  \\
			&= [\sigma(x) + (1-\theta) (\sigma(y) - \sigma(x))] [\sigma(x) + (1-\theta) (\sigma(y) - \sigma(x))]^{\top} \\
			&\geq l^{-1} - \frac{l^{-1}}{2} - \frac{l^{-2}}{4} 
			\geq \frac{l^{-1}}{4}.
		\end{align*}
		Therefore, in combination of Lem.~\ref{lem:sol_is_controlled}, we see that
		\begin{align*}
			(\theta X + (1- \theta) Y, \theta \sigma(X) + (1-\theta) \sigma(Y)) \in \mathfrak{D}^{\gamma, l/4}_{B^H}.
		\end{align*}
		Prop.~\ref{prop:fbm_young} and standard embedding of H\"older--Besov spaces, yield
		\begin{equation*}
			\norm[\Big]{\int_s^t \partial_k (\sigma_n -\sigma_m)(\theta X_r^1 + (1-\theta) X_r^2) 
				\, \mathrm{d} B^H_r}_{L^p_{\omega}} \\
			\lesssim_{p, \gamma', x, \sigma} 
			\norm{\sigma_n -\sigma_m}_{\mathcal{C}_x^{\gamma'}} (t-s)^{\gamma' H}.
		\end{equation*}
		By Kolmogorov's continuity theorem, we see that 
		there exists a process $V^k$ such that for any $\beta < \gamma H$, 
		\begin{align*}
			\lim_{n \to \infty} \norm{\norm{V^k - V^k_n}_{\mathcal{C}^{\beta}_T}}_{L^p_{\omega}} = 0.
		\end{align*}
		Since $\gamma' H > 1/2$, we may take $\beta > 1/2$. Therefore, recalling \eqref{eq:X_V_n} we observe 
		\begin{align*}
			X_t - Y_t = \sum_k \int_0^t (X^{k}_r - Y^{k}_r) \, \mathrm{d} V^k_r \quad \text{a.s.}
		\end{align*}
		As this shows that $X - Y$ solves the linear Young differential equation 
		\begin{align*}
			\mathrm{d} x_t = \sum_k x^k_t \, \mathrm{d} V^k_t, \quad x_0 = 0,
		\end{align*}
		we see that $X - Y = 0$ a.s.
		
		{\bfseries Step 2.} Now we do not assume \eqref{eq:sigma_low_oscillation}. The new ingredient is a stopping time argument.
		We choose $\epsilon > 0$ so that 
		\begin{align*}
			\sup_{x, y: \abs{x-y} \leq \epsilon} \abs{\sigma(x) - \sigma(y)} \leq \frac{l^{-2}}{5},
		\end{align*}
		and we set $T^{(0)} \defby 0$ and inductively
		\begin{align*}
			T^{(i)} \defby\inf \set{t \geq T^{(i-1)} : \abs{X_t - X_{T^{(i-1)}}} \geq \epsilon/2 \text{ or } \abs{Y_t - Y_{T^{(i-1)}}} \geq \epsilon/2}.
		\end{align*}
		Using the notation $\llbracket X \rrbracket_{\cC^{\alpha}_T}$ from \eqref{eq:holder_seminorm}, if $T^{(i)} \leq T$, 
		\begin{equation*}
			\frac{\epsilon}{2} = \max\left\{\abs{X_{T^{(i)}} - X_{T^{(i-1)}}}, \abs{Y_{T^{(i)}} - Y_{T^{(i-1)}}}\right\} 
			\leq \max\left\{ \llbracket X \rrbracket_{\mathcal{C}_T^{\alpha}}, \llbracket Y \rrbracket_{\mathcal{C}_T^{\alpha}} \right\} 
			(T^{(i)} - T^{(i-1)})^{\alpha}.
		\end{equation*}
		Hence, the a priori estimate \eqref{eq:young_sol_a_priori_bd} implies that 
		\begin{align}\label{eq:T_i_unif_est}
			T^{(i)} - T^{(i-1)} \gtrsim_{\alpha, \gamma, T} \varepsilon^{-\frac{1}{\alpha}}  (1 + \norm{\sigma}_{\mathcal{C}^{\gamma}_x} \|B^H\|_{\mathcal{C}^{\alpha}_T})^{\frac{1}{\alpha^2}}			
		\end{align}
		uniformly over $i$, as long as $T^{(i)} \leq T$.
		
		To see that $X = Y$ a.s. up to time $T^{(1)}$, 
		let $\sigma^{(1)}$ be a $\gamma$-Hölder map such that $\sigma^{(1)} = \sigma$ in an $\epsilon$-neighbourhood of 
		the initial condition $x$ and such that
		\begin{align*}
			\sup_{x, y \in \mathbb{R}^{d_1}} \abs{\sigma^{(1)}(x) - \sigma^{(1)}(y)} \leq \frac{l^{-2}}{4}.
		\end{align*}  
		We set 
		\begin{align*}
			X^{(1)}_t \defby x + \int_0^t \sigma^{(1)}(X_r) \, \mathrm{d} r,
			\quad 
			Y^{(1)}_t \defby x + \int_0^t \sigma^{(1)}(Y_r) \, \mathrm{d} r
		\end{align*}
		and 
		\begin{align*}
			V^{(1), k}_n(t) \defby \int_0^t \int_0^1 \partial_k \sigma_n(\theta X^{(1)}_r + (1-\theta) Y^{(1)}_r) \, \mathrm{d} \theta \, \mathrm{d} B^H_r.
		\end{align*}
		Up to time $T^{(1)}$, we have 
		\begin{align*}
			X_t - Y_t = \lim_{n \to \infty}
			\sum_k \int_0^t (X^{ k}_r - Y^{k}_r) \, \mathrm{d} V^{(1), k}_n(r).
		\end{align*}
		Due to our choice of $\sigma^{(1)}$, the argument of Step 1 shows that $V^{(1), k}_n$ converges to some $V^{(1), k}$ 
		in $\mathcal{C}_T^{\beta}$, and that $X = Y$ up to time $T^{(1)}$.
		
		{\bfseries Step 3.}
		Obviously we want to repeat the operation of Step 2, but now there is a small problem that $X_{T^{(1)}}$ is random. 
		For this sake, let $(x_m)_{m \in \mathbb{N}}$ be a countable dense set of $\mathbb{R}^{d_1}$, and 
		let $\sigma^m$ be a $\gamma$-Hölder map such that 
		$\sigma^m = \sigma$ in an $\epsilon$-neighbourhood of $x_m$ and such that 
		\begin{align}\label{eq:sigma_m_low_osc}
			\sup_{x, y \in \mathbb{R}^{d_1}} \abs{\sigma^m(x) - \sigma^m(y)} \leq \frac{l^{-2}}{4}.
		\end{align}  
		For each $m$ and $Z \in \{X, Y\}$, 
		we define the approximation,
		\begin{equation*}
			\Sigma^{Z, m}(t)  \defby 
			\begin{cases}
				\sigma(Z_t) \indic_{\{t \leq T^{(1)}\}} +\sigma^m(Z_t) \indic_{\{t > T^{(1)}\}}, & \text{if } \abs{x_m - X_{T^{(1)}}} =  \abs{x_m - Y_{T^{(1)}}}\leq \epsilon/2,\\
				\sigma(Z_{\min\{t, T^{(1)}\}}), & \text{otherwise },
			\end{cases}
		\end{equation*}
		along with the cut-off equations,
		\begin{align*}
			X^{(2, m)}_t \defby x + \int_0^t \Sigma^{X, m}(r) \, \mathrm{d} B^H_r,\qquad Y^{(2, m)}_t \defby x + \int_0^t \Sigma^{Y, m}(r) \, \mathrm{d} B^H_r
		\end{align*}
		and approximate linearisation,
		\begin{align*}
			V^{(2, m), k}_n (t) 
			\defby \int_0^t \int_0^1 \partial_k \sigma_n(\theta X^{(2, m)}_r + (1-\theta) Y^{(2, m)}_r) \, \mathrm{d} \theta \, \mathrm{d} B^H_r.
		\end{align*}
		Notice that $X^{(2, m)}$, $Y^{(2, m)}$ are both adapted.
		We claim that, for $Z \in \{X, Y\}$, 
		\begin{align}\label{eq:Z_remainder_estimate}
			\abs{Z_{s, t}^{(2, m)} - \Sigma^{Z, m}(s) B^H_{s, t}} \lesssim 
			(\norm{\sigma}_{\mathcal{C}_x^{\gamma}} + \norm{\sigma^m}_{\mathcal{C}_x^{\gamma}}) 
			\norm{Z}_{\mathcal{C}_T^{\alpha}}^{\gamma} \left\| B^H \right\|_{\mathcal{C}_T^{\alpha}} (t-s)^{(1+ \gamma) \alpha}.
		\end{align}
		Indeed, by the remainder estimate of Young's integral \eqref{eq:young_fund_est}, 
		the estimate \eqref{eq:Z_remainder_estimate} is obvious if $t \leq T^{(1)}$ or if $s \geq T^{(1)}$. Suppose that $s < T^{(1)} < t$.
		If $\abs{x_m - X_{T^{(1)}}} \leq \epsilon/2$, then 
		\begin{align*}
			\abs{Z_{s, T^{(1)}}^{(2, m)} - \sigma(Z_s) B^H_{s, T^{(1)}}} &\lesssim 
			\norm{\sigma}_{\mathcal{C}_x^{\gamma}} 
			\norm{Z}_{\mathcal{C}_T^{\alpha}}^{\gamma} \left\| B^H \right\|_{\mathcal{C}_T^{\alpha}} (t-s)^{(1+ \gamma) \alpha}, \\
			\abs{Z_{T^{(1)}, t}^{(2, m)} - \sigma^m(Z_{T^{(1)}}) B^H_{T^{(1)}, t}} &\lesssim 
			\norm{\sigma^m}_{\mathcal{C}_x^{\gamma}} 
			\norm{Z}_{\mathcal{C}_T^{\alpha}}^{\gamma} \left\| B^H \right\|_{\mathcal{C}_T^{\alpha}} (t-s)^{(1+ \gamma) \alpha}.
		\end{align*}
		Since $\abs{x_m - X_{T^{(1)}}} = \abs{x_m - Y_{T^{(1)}}} \leq \epsilon/2$, we have 
		\begin{align*}
			\abs{\sigma^m(Z_{T^{(1)}}) - \sigma(Z_s)} = \abs{\sigma(Z_{T^{(1)}}) - \sigma(Z_s)} 
			\leq \norm{\sigma}_{\mathcal{C}_x^{\gamma}} \norm{Z}_{\mathcal{C}_T^{\alpha}}^{\gamma} (T^{(1)} - s)^{\gamma \alpha},
		\end{align*} 
		and the estimate \eqref{eq:Z_remainder_estimate} follows.
		The case where $\abs{x_m - X_{T^{(1)}}} > \epsilon/2$ is similar.
		
		Now we check that the Gubinelli derivative $\Sigma^{\theta, m} \defby \theta \Sigma^{X,m} + (1-\theta) \Sigma^{Y, m}$ is non-degenerate. Indeed, 
		for $t \leq T^{(1)}$ we have 
		\begin{align*}
			\Sigma^{\theta, m}(t) \Sigma^{\theta, m}(t)^{\top} = \sigma(X_t) \sigma(X_t)^{\top} \geq l^{-1},
		\end{align*}
		and for $t > T^{(1)}$ the condition \eqref{eq:sigma_m_low_osc} implies that 
		$\Sigma^{\theta, m}(t) \Sigma^{\theta, m}(t)^{\top} \geq l^{-1}/4$. 
		Therefore, Prop.~\ref{prop:fbm_young} and the Kolmogorov continuity theorem show that 
		$V^{(2, m)}_n$ converges to some $V^{(2, m)}$ in $L^p_{\omega}$. By the diagonalization argument, we may suppose that 
		almost surely for every $m \in \mathbb{N}$ and $\delta \in (0, \gamma H)$ we have 
		\begin{align*}
			\lim_{n \to \infty} \norm{V^{(2, m)} - V^{(2, m)}_n}_{\mathcal{C}_T^{\gamma H - \delta}} = 0.
		\end{align*}

		Since $(x_m)$ is dense, we can find a random $m = m(\omega)$ so that $\abs{x_m - X_{T^{(1)}}} < \epsilon/2$. We then have 
		\begin{align*}
			X_t - Y_t = \sum_k \int_0^t (X^k_r - Y^k_r) \, \mathrm{d} V^{(2, m), k}(r)
		\end{align*}
		up to $t \leq T^{(2)}$, hence $X = Y$ on $[0, T^{(2)}]$. 
		It is now clear that this algorithm can be continued, and at some point we must have $T^{(i)} \geq T$ due to \eqref{eq:T_i_unif_est}.
	\end{proof}
	\section{Rough Case: $H \in (1/3, 1/2]$}\label{sec:rough}
	In this section we prove Thm.~\ref{thm:main} for the regime $H\in (1/3,1/2]$. As discussed at the opening of Sec.~\ref{sec:rough_integration}, for Hurst parameters below $1/2$ we can no longer appeal to Young integration theory to make sense of  $\int_0^t \sigma(X_s)\dd B^H_s$ in \eqref{eq:main_sde}. Here we recall the fact that for $H\in (1/3,1/2]$ we can $\mbP$-a.s. canonically lift $B^H$ to a geometric rough path $(B^H,\mbB^H)$ (see Lem.~\ref{lem:canonical_lift}) as well as the definition of $\msD^\beta_{B^H}$; the space of $\beta$-regular, rough paths, controlled by $B^H$ (see Def.~\ref{def:controlled_path}). These preparations are enough to give a pathwise definition of solutions to rough differential equations (RDEs).
	\begin{definition}[Solutions to RDEs]\label{def:rough_de_sol}
		Given $T>0$, $d_1,\,d_2 \in \mbN$, $H \in (1/3,1/2]$, $(B^H(\omega),\mbB^H(\omega))$ an almost sure realisation of the canonical, geometric rough path lift for the fBm with Hurst parameter $H$ (see Lem.~\ref{lem:canonical_lift}) and a map $\sigma :\mbR^{d_1}\to \mcM(d_1,d_2)$, we say that a path $(y,y')$ controlled by $(B^H(\omega),\, \mbB^H(\omega))$ is a \emph{solution to the rough differential equation}
		\begin{equation}\label{eq:rough_eq_pw}
			\mathrm{d} y_t = \sigma(y_t) \, \mathrm{d} B_t^H(\omega), \quad y_0 = x,
		\end{equation}
		on $[0,T]$, if the path $(\sigma(y),\nabla \sigma(y)y') \in \msD^{\gamma \alpha}_{B^H(\omega)}$ for some $\alpha \in (1/3,H)$ and $\gamma >\frac{1-\alpha}{\alpha}$ and if for every $t\in [0,T]$ one has
		\begin{align*}
			y_t = x + \int_0^t \sigma(y_r) \, \mathrm{d} B_r^H(\omega),
		\end{align*}
		where the integral is understood as a rough integral as described in Sec.~\ref{sec:rough_integration}.
	\end{definition}
	Note that the assumption $(\sigma(y),\nabla \sigma(y)y') \in \msD^{\gamma\alpha}_{B^H(\omega)}$ for $\gamma>\frac{1-\alpha}{\alpha}>1$ is the minimum required to make sense of the integral in \eqref{eq:rough_eq_pw} as a rough integral. 
	If $(y, y')$ is a solution to \eqref{eq:rough_eq_pw}, the remainder estimate of rough integrals \eqref{eq:rough_fund_est} shows that 
	the map $t \mapsto \int_0^t \sigma(y_r) \, \mathrm{d} B^H_r(\omega)$ is again controlled by $B^H(\omega)$ 
	with Gubinelli derivative $\sigma(y)$.
	Due to this observation, it in fact holds that
	$(y,\sigma(y))\in \msD^{2\alpha}_{B^H(\omega)}$, 
	hence appealing to \cite[Prop.~6.4]{FH20} and \emph{true roughness} of $B^H$ one sees that for all solutions in the sense of Def.~\ref{def:rough_de_sol}, it holds that $y'=\sigma(y)$.
	
	As in the Young case, for an fBm on a given probability space $(\Omega,\mcF,\mbP)$, Def.~\ref{def:rough_de_sol} is an $\omega$-wise definition. We therefore give the following probabilistic definitions, which are analogues of Def.~\ref{def:weak_young_sol}, \ref{def:strong_young_sol} and \ref{def:pathwise_uniq_young} for solutions to
	\begin{align}\label{eq:main_sde_rough}
		\mathrm{d} X_t = \sigma(X_t) \, \mathrm{d} B^H_t, \quad X_0 = x.
	\end{align}
	\begin{definition}[Weak-Rough Solutions]\label{def:weak_rough_sol}
		Given $T>0$, say that a tuple
		\begin{equation*}
			(\Omega,\mcF,(\mcF_t)_{t\in [0,T]}, \mbP,B^H,X,X'),
		\end{equation*}
		of a filtered probability space, an fBm of Hurst parameter $H$ and an $(\mathcal{F}_t)$-adapted, controlled rough path, $(X,X')$ is a weak-rough solution to \eqref{eq:main_sde_rough}, if $B^H$ is an $(\mcF_t)_{t\geq 0}$ fBm (in the sense of Def.~\ref{def:adapted_fBm}) and for $\mbP$-a.e $\omega \in \Omega$,  the process $(X(\omega),X'(\omega))$ is a rough solution to \eqref{eq:main_sde_rough}in the sense of Def.~\ref{def:rough_de_sol}.
		
		We say that weak uniqueness holds for \eqref{eq:main_sde_rough} in the rough sense, if for any two weak solutions $(\Omega,\mcF,(\mcF_t)_{t\geq 0}, \mbP,B^H,X,X')$ and $(\tilde{\Omega},\tilde{\mcF},(\tilde{\mcF_t})_{t\geq 0}, \tilde{\mbP},\tilde{B}^H,\tilde{X},\tilde{X}')$ as above, the processes $(X,X'),\, (\tilde{X},\tilde{X}')$ are equal in law, i.e. one has 
		\begin{equation*}
			(X,X')_{\#} \mbP = (\tilde{X},\tilde{X}')_{\#}\tilde{\mbP}  \quad \text{as elements of}\quad  \mcP\left(C([0,T];\mbR^{d_1})\times C\left([0,T];\mcM(d_1,d_2)\right)\right)
		\end{equation*}
	\end{definition}
	\begin{remark}\label{rem:weak_rough_existence_compactness}
		As mentioned in the Young case (Rem.~\ref{rem:weak_young_existence_compactness}) a standard compactness argument (a la \cite[Appendix~B]{matsuda22}) allows one to construct weak-rough solutions in the case $\sigma\in \mcC^{\gamma}(\mbR^{d_1};\mcM(d_1,d_2))$ for $\gamma>\frac{1-H}{H}$. Note that in contrast to Def.~\ref{def:rough_de_sol} this is explicitly an assumption on the coefficient $\sigma$ which in turn ensures the required regularity of the controlled path $(\sigma(X),\nabla \sigma(X),X')$ for some $\alpha \in (1/3,H)$ sufficiently close to $H$.
		
		In the regime $H\in (1/3,1/2)$ our main result (Thm.~\ref{thm:main}) obtains pathwise uniqueness for $\sigma$  that is $\gamma$-H\"older continuous for $\gamma >\frac{1-H}{H}$ and uniformly elliptic so that we obtain pathwise uniqueness for these weak solutions (under the assumption of uniform ellipticity) and hence strong existence due to Prop.~\ref{prop:yamada_watanabe}.
	\end{remark}
	\begin{definition}[Strong-Rough Solutions]\label{def:strong_rough_sol}
		Given $T>0$, a probability space, $(\Omega,\mcF,\mbP)$ carrying an fBm $B^H$, we say that $(X,X')$ is a strong-rough solution to \eqref{eq:main_sde_rough} on $[0,T]$ if for $\mbP$-a.e. $\omega\in \Omega$, the process $(X(\omega),X'(\omega)$ is a rough solution to \eqref{eq:main_sde_rough} in the sense of Def.~\ref{def:rough_de_sol}, on $[0,T]$ and $X$ is adapted to the natural filtration generated by $B^H$.
	\end{definition}
	\begin{definition}[Pathwise Uniqueness of Rough Solutions]\label{def:pathwise_uniq_rough}
		We say that \emph{pathwise-uniqueness} holds for \eqref{eq:main_sde_rough} in the rough sense, if for any two, weak-rough solutions, $(X,X')$ and $(\tilde{X},\tilde{X}')$ (in the sense of Def.~\ref{def:weak_rough_sol}), on the same filtered probability space $(\Omega,\mcF,(\mcF_t)_{t\in [0,T]},\mbP)$, it holds that $(X(\omega),X'(\omega))=(\tilde{X}(\omega),\tilde{X}'(\omega))$ for $\mbP$-a.e. $\omega \in \Omega$.
	\end{definition}
	The goal of this section is to prove pathwise uniqueness for \eqref{eq:main_sde_rough} under the assumption that $\sigma$ is uniformly elliptic and 
	$\gamma > (1-H)/H$. 
	Namely, we prove Thm.~\ref{thm:main} in the rough case, $H \in (1/3, 1/2]$, which is restated as Thm.~\ref{thm:pathwise_uniqueness_rough} below.
	%
	\subsection{Rough Integral with Irregular Integrand}\label{subsec:rough_int_irreg}
	As in Sec.~\ref{sec:young}, the key to prove pathwise uniqueness is to obtain a sharp stochastic estimate on 
	stochastic integral of the form \eqref{eq:int_f_young}.
	Differently from the Young case, however, we also require suitable estimates on iterated integrals of the form \eqref{eq:B_G_lift}. The iterated integrals are treated in Sec.~\ref{subsec:rough_iterated} below.
	
	To treat the first order integrals we apply the stochastic sewing lemma (Lem.~\ref{lem:ssl}). 
	We will need the following technical estimate, whose proof will be given in App.~\ref{sec:gaussian}.
	\begin{lemma}\label{lem:integral_along_tilde_B}
		Let $T>0$, $H \in (1/3, 1/2]$ and $B^H$ be an fBm adapted to a given filtration $(\mathcal{F}_t)_{t\in [0,T]}$. 
		We also fix a $\tau \in [0, T]$, define $\tilde{B}^{H, \tau}$ by  \eqref{eq:def_tilde_B} and let $\xi$ be an $\mathcal{F}_{\tau}$-measurable random variable valued in a measurable space $E$. Then, given a map $F \from E \times \mathbb{R}^{d_2} \to \mathbb{R}$ such that  
		$\sup_{x} \norm{F(x, \cdot)}_{C^{2}_y} < \infty$, 
		one has, for any $\gamma \in (\frac{1}{2H} - 1, 1)$ and $(s, t) \in [\tau, T]_{\leq}^2$ that
		\begin{multline*}
			\Big\Vert \int_s^t F(\xi, B^{H}_{\tau, r}) \, \mathrm{d} \tilde{B}^{H, \tau}_r 
			- \frac{F(\xi,  B_{\tau, s}^H) + F(\xi, B_{\tau, t}^H)}{2} \tilde{B}^{H, \tau}_{s, t} \Big\Vert_{L^p_{\omega}} \\
			\lesssim_{p, \gamma} \norm{\norm{F(\xi, \cdot)}_{\mathcal{C}^{\gamma}_y}}_{L^{2p}_{\omega}} (t-s)^{(1+\gamma)H}.
		\end{multline*}
		The integral with respect to $\tilde{B}^{H, \tau}$ is understood as rough integral with respect to the geometric rough path 
		constructed in Lem.~\ref{lem:tilde_B_rp}.
	\end{lemma}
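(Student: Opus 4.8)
The plan is to apply the stochastic sewing lemma (Lem.~\ref{lem:ssl}), in the fully shifted form, to the germ that captures the trapezoidal Riemann sum approximating the integral. Specifically, I would set, for $(s,t)\in[\tau,T]^2_{\leq}$,
\begin{equation*}
	A_{s,t} \defby \mathbb{E}_{\tau \vee (s-(t-s))}\left[ \frac{F(\xi, B^H_{\tau,s}) + F(\xi, B^H_{\tau,t})}{2}\, \tilde{B}^{H,\tau}_{s,t} \right],
\end{equation*}
or, more convenient for the Gaussian computations, a slightly different germ in which the coefficient is frozen and the conditional expectation is taken at a shifted time point $v \defby \tau \vee (s-(t-s))$, e.g. $A^0_{s,t} \defby \frac{1}{2}(F(\xi,B^H_{\tau,s})+F(\xi,B^H_{\tau,t}))\tilde B^{H,\tau}_{s,t}$. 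As in the proof of Prop.~\ref{prop:fbm_young}, the first task is to verify that the Riemann sums of the true integral $\int_s^t F(\xi,B^H_{\tau,r})\,\mathrm{d}\tilde B^{H,\tau}_r$ and of $A$ (equivalently $A^0$) coincide in $L^p_\omega$; this uses the rough-path fundamental estimate \eqref{eq:rough_fund_est} together with the uniqueness part of the (stochastic) sewing lemma as recorded in Rem.~\ref{rem:sewing_uniq}, noting that $F(\xi,B^H_{\tau,\cdot})$ is controlled by $\tilde B^{H,\tau}$ (up to the $Y^{H,\tau}$ part which is smooth) with Gubinelli derivative $\nabla_y F(\xi, B^H_{\tau,\cdot})$.

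Next I would check the two hypotheses \eqref{eq:ssl_assump} of Lem.~\ref{lem:ssl} for the chosen germ. For the $\delta A$ bound one computes $\delta A_{s,u,t}$: the trapezoidal structure makes $\delta A_{s,u,t}$ a combination of second-difference terms of $F$ against increments of $\tilde B^{H,\tau}$, which by Hölder continuity of $F$ in $y$, the bound $\|F(\xi,\cdot)\|_{C^2_y}$, and $\|\tilde B^{H,\tau}_{s,t}\|_{L^p_\omega}\lesssim (t-s)^H$ (via \eqref{eq:rho_lower_bd}) gives an estimate of order $(t-s)^{(1+\gamma)H}$ — but we only need $\tfrac12+\epsilon_1$, and since $(1+\gamma)H > \tfrac12$ exactly when $\gamma > \tfrac{1}{2H}-1$, this is where the hypothesis on $\gamma$ enters. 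The crucial and more delicate bound is the conditional one: $\mathbb{E}[\delta A_{s,u,t}\vert \mathcal{F}_v]$ with $v = \tau\vee(s-(t-s))$. Here one uses that, conditionally on $\mathcal{F}_v$, the increment $\tilde B^{H,\tau}_{s,t}$ decomposes into an $\mathcal{F}_v$-measurable drift part coming from $Y$-type contributions and a centred Gaussian part independent of $\mathcal{F}_v$; the key algebraic cancellation is that the trapezoidal rule kills the first-order term of $F$, so what survives, after taking conditional expectation, is a second-order Gaussian correction which by Lem.~\ref{lem:gaussian_integral}-type identities (or direct Gaussian computation, along the lines of App.~\ref{sec:gaussian}) is governed by $\mathcal{P}_{\rho_H^2(\tau,r)\,\cdot}\,\partial^2_y F$ and hence regularises to order $\rho_H(\tau,r)^{\gamma-2}$; integrating against $\rho_H^2(\tau,\mathrm{d} r)$ over $[s,t]$ and using \eqref{eq:rho_RS}, \eqref{eq:rho_lower_bd} yields the required $(s-v)^{-\alpha}(t-s)^{1+\epsilon_2}$ bound with some admissible shift exponent $\alpha < \tfrac12+\epsilon_2$. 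The shift is genuinely needed because the integrand $\partial_y^2 F$ is only $(\gamma-2)$-regular and the singularity $(r-v)^{\gamma H}$ with $\gamma H \in (-1,0)$ near $r=s$ is only integrable after the time-shift produces the compensating factor $(s-v)^{\gamma H}$ — exactly the mechanism used in Prop.~\ref{prop:fbm_young}, cf. \eqref{eq:sing_int}.

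Having verified \eqref{eq:ssl_assump}, Lem.~\ref{lem:ssl} produces the sewn process $\mathcal{A}$ with $\|\mathcal{A}_t-\mathcal{A}_s - A_{s,t}\|_{L^p_\omega} \lesssim (t-s)^{\frac12+\epsilon_1} + (t-s)^{1+\epsilon_2-\alpha}$, and since $\mathcal{A}_t - \mathcal{A}_s$ is identified (by Step~1) with $\int_s^t F(\xi,B^H_{\tau,r})\,\mathrm{d}\tilde B^{H,\tau}_r$ and $A_{s,t}$ differs from $\tfrac12(F(\xi,B^H_{\tau,s})+F(\xi,B^H_{\tau,t}))\tilde B^{H,\tau}_{s,t}$ only by a conditional-expectation remainder that is itself $O((t-s)^{(1+\gamma)H})$ (controlled by the conditional part of \eqref{eq:ssl_assump} at $v=s$, using $\gamma H$ in place of the generic exponent), the desired inequality follows after tracking that all constants depend only on $p,\gamma$ and $\|\,\|F(\xi,\cdot)\|_{\mathcal{C}^\gamma_y}\,\|_{L^{2p}_\omega}$; the $L^{2p}$ rather than $L^p$ appears because Hölder's inequality is used to separate the random coefficient $\|F(\xi,\cdot)\|_{\mathcal{C}^\gamma_y}$ from the Gaussian factors in the germ estimates. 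The main obstacle I anticipate is the conditional second-moment estimate: getting the Gaussian computation clean enough that the trapezoidal cancellation is visible and produces precisely a $\partial^2_y F$ (and not merely $\partial_y F$) contribution, so that the heat-kernel regularisation gains two derivatives — this is what makes the exponent $1+\gamma$ appear rather than $\gamma$, and it is the whole point of using the trapezoidal rather than left-point germ. Everything else is bookkeeping with \eqref{eq:rho_lower_bd}, \eqref{eq:rho_RS}, and the semigroup estimates \eqref{eq:semigroup_estimate}.
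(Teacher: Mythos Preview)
Your overall strategy is the paper's: apply the fully shifted stochastic sewing lemma to the trapezoidal germ $A_{s,t}=\tfrac12\bigl(F(\xi,B^H_{\tau,s})+F(\xi,B^H_{\tau,t})\bigr)\tilde B^{H,\tau}_{s,t}$, identify the sewn process with the rough integral via Rem.~\ref{rem:sewing_uniq}, and read off the claimed bound from the sewing remainder. Your treatment of the first hypothesis is also fine: $\delta A_{s,u,t}=F(\xi,B)_{u,t}\tilde B^{\tau}_{s,u}-F(\xi,B)_{s,u}\tilde B^{\tau}_{u,t}$ immediately gives $\|\delta A_{s,u,t}\|_{L^p_\omega}\lesssim\|M(\xi)\|_{L^{2p}_\omega}(t-s)^{(1+\gamma)H}$ with $M(\xi)=\|F(\xi,\cdot)\|_{\mathcal C^\gamma_y}$.

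Where your plan goes wrong is the conditional estimate. You propose to obtain the $(s-v)^{-\alpha}(t-s)^{1+\varepsilon_2}$ bound via heat-kernel smoothing of $\partial_y^2 F$ and ``integrating against $\rho_H^2(\tau,\mathrm d r)$ over $[s,t]$'', invoking the mechanism of Prop.~\ref{prop:fbm_young}. But the germ here contains no time integral at all --- it is a product of point evaluations --- so there is nothing to integrate against $\rho_H^2(\cdot,\mathrm d r)$, and the factor $\rho_H(\tau,r)$ you write down does not even involve $v$, hence cannot produce the required $(s-v)^{-\alpha}$. The paper's actual argument (Lem.~\ref{lem:integral_stratonovich}, following \cite[Prop.~3.5]{matsuda22}) is structurally different: one projects $F(\xi,B_r)$ onto $\mathrm{span}\{1,\tilde B^{v,i}_r\}$ in $L^2$, writing $a_0(r)$ and $a_i(r)$ for the coefficients, so that $\mathbb E[\delta A_{s,u,t}\mid\mathcal F_v]$ splits into a piece $D^0$ driven by $Y^{\tau,v}_{s,t}$ and a piece $D^i$ driven by $\mathbb E[\tilde B^{v,i}_s\tilde B^{v,i}_{s,t}]$. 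The gain $(t-s)/(s-v)$ then comes not from semigroup regularisation but from the covariance estimates of Lem.~\ref{lem:rho_and_tilde_B} (in turn Lem.~\ref{lem:K_derivative_K}) for these objects, together with H\"older continuity of $r\mapsto a_i(r)$. The resulting bound is $\|\mathbb E[\delta A_{s,u,t}\mid\mathcal F_v]\|_{L^p_\omega}\lesssim\|M(\xi)\|_{L^{2p}_\omega}\bigl(\tfrac{t-s}{s-v}\bigr)^{(3-\gamma)H}(t-s)^{(1+\gamma)H}$, and the admissibility condition $\alpha<\tfrac12+\varepsilon_2$ of Lem.~\ref{lem:ssl} reduces exactly to $\gamma>\tfrac{1}{2H}-1$. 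In short: right scaffold, but the engine for the conditional bound is first-chaos projection plus fBm covariance increments, not the heat-kernel/$\rho_H^2(\cdot,\mathrm d r)$ machinery you borrowed from the Young section.
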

	\begin{proof}
		See App.~\ref{subsec:proof_lem:integral_along_tilde_B}.
	\end{proof}
	It is worth mentioning that the proof of Lem.~\ref{lem:integral_along_tilde_B} is based on the fully shifted stochastic sewing lemma, 
	in the spirit of \cite[Prop.~3.5]{matsuda22}.
	The rest of this subsection is devoted to proving the following proposition, which is the analogue of Prop.~\ref{prop:fbm_young}.
	We recall, again, the definition of the stochastically controlled, path space, $\mathfrak{D}^{1, l}_{B^H}$, and associated norm, $\opnorm{\,\cdot\,}_{p, \beta}$, from Def.~\ref{def:control_prob}.
	\begin{proposition}\label{prop:fbm_rough}
		Let $T>0$, $d_1,\,d_2 \in \mbN$, $B^H$ be an $\mbR^{d_2}$-fBm with Hurst parameter $H \in (1/3, 1/2]$, adapted to a given filtration $(\mcF_t)_{t\in [0,T]}$, $p \in [2, \infty)$, $l \in [1, \infty)$, $(X, X') \in \mathfrak{D}^{1, l}_{B^H}$, 
		$f \in C^3(\mathbb{R}^{d_1}, \mathbb{R})$ and $\tilde{\gamma} \in \left( \frac{1-H}{H}-1, 1\right)$. 
		Then, for $(s, t) \in [0, T\wedge 1]_{\leq}^2$ it holds that
		\begin{equation*}
			\norm[\Big]{\int_s^t f(X_r) \, \mathrm{d} B_r^H - f(X_s) B_{s, t}^H}_{L^p_{\omega}} 
			\lesssim_{p, l, \tilde{\gamma}, T}  
			\norm{f}_{\mathcal{C}_x^{\tilde{\gamma}}} (1 + \opnorm{X}_{2p, 1})  (t-s)^{(1+\tilde{\gamma}) H}.
		\end{equation*}
	\end{proposition}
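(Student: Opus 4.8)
The plan is to realise the rough integral $\int_s^t f(X_r)\,\mathrm{d} B^H_r$ as the stochastic sewing of a ``pre-averaged'' germ and then to read off the sharp dependence on $\|f\|_{\mathcal{C}^{\tilde\gamma}_x}$ from the Gaussian smoothing supplied by Lem.~\ref{lem:gaussian_integral} together with the semigroup bounds \eqref{eq:semigroup_estimate}--\eqref{eq:semigroup_difference}. The scheme mirrors Prop.~\ref{prop:fbm_young}, except that I would apply L\^e's \emph{un-shifted} stochastic sewing lemma, i.e. Lem.~\ref{lem:ssl} with $\alpha = 0$ and $v = s$; this is admissible here because $\tilde\gamma > 0$ (indeed $\tilde\gamma > \frac{1-2H}{H}\ge 0$), so the singular $r$-integrals that forced a shift in the Young regime are now integrable. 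Throughout write $B := B^H$, let $\Phi_{s,t} := \int_s^t f(X_r)\,\mathrm{d} B_r$ (the rough integral against $(B,\mathbb{B}^H)$, which is additive in $(s,t)$), set $\hat A_{s,t} := \int_s^t f(X_s + X'_s B_{s,r})\,\mathrm{d} B_r$ (the $\omega$-wise rough integral of the controlled path $r\mapsto f(X_s + X'_s B_{s,r})\in\mathscr{D}^{2\alpha}_{B}$, $\alpha\in(1/3,H)$, well defined since $f\in C^3$ and $X_s,X'_s$ are finite), and recall the history/update splitting $B_{s,r} = Y^{s}_{s,r} + \tilde B^{s}_r$ of \eqref{eq:def_Y}--\eqref{eq:def_tilde_B}, where $Y^{s}:=Y^{H,s}$ is $\mathcal{F}_s$-measurable and $\tilde B^{s}:=\tilde B^{H,s}$ is independent of $\mathcal{F}_s$.

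As germ I would take
\[
	A_{s,t} := f(X_s)\,B_{s,t} + \mathbb{E}_s\Big[\int_s^t \big(f(X_s + X'_s B_{s,r}) - f(X_s)\big)\,\mathrm{d} B_r\Big] = f(X_s)\,\tilde B^{s}_{s,t} + \mathbb{E}_s\big[\hat A_{s,t}\big],
\]
which is $\mathcal{F}_t$-measurable. Splitting $\mathrm{d} B_r = \dot Y^{s}_r\,\mathrm{d} r + \mathrm{d}\tilde B^{s}_r$ inside $\mathbb{E}_s[\hat A_{s,t}]$ and conditioning on $\mathcal{F}_s$ (so $X'_s\tilde B^{s}_r$ is centred Gaussian with variance $\rho^2_H(s,r)X'_s(X'_s)^\top$), the $\mathrm{d} r$-part equals $\int_s^t \mathcal{P}_{\rho^2_H(s,r)X'_s(X'_s)^\top}f\big(X_s + X'_s Y^{s}_{s,r}\big)\dot Y^{s}_r\,\mathrm{d} r$, while Lem.~\ref{lem:gaussian_integral} turns the $\tilde B^{s}$-part into $\tfrac12\, X'_s\!\int_s^t \big(\nabla\mathcal{P}_{\rho^2_H(s,r)X'_s(X'_s)^\top}f\big)\big(X_s + X'_s Y^{s}_{s,r}\big)\,\rho^2_H(s,\mathrm{d} r)$.

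I would then carry out four steps. \textbf{Step 1 (the sewing of $A$ equals $\Phi$).} By the rough-integration remainder estimate \eqref{eq:rough_fund_est}, both $\Phi_{s,t}$ and $\hat A_{s,t}$ admit the same second-order expansion $f(X_s)B_{s,t} + \nabla f(X_s)X'_s\,\mathbb{B}^H_{s,t} + O(|t-s|^{3\alpha})$ with random constants having all moments, so $\|\Phi_{s,t} - \hat A_{s,t}\|_{L^p_\omega}\lesssim|t-s|^{3\alpha}$ and $\|\hat A_{s,t} - f(X_s)B_{s,t}\|_{L^p_\omega}\lesssim|t-s|^{2\alpha}$; since $A_{s,t} - \Phi_{s,t} = (\hat A_{s,t}-\Phi_{s,t}) - \big[(\hat A_{s,t}-f(X_s)B_{s,t}) - \mathbb{E}_s(\hat A_{s,t}-f(X_s)B_{s,t})\big]$, whose $\mathbb{E}_s$-part is just $\mathbb{E}_s[\hat A_{s,t}-\Phi_{s,t}]$, the uniqueness part of the stochastic sewing lemma (Rem.~\ref{rem:sewing_uniq}, using $2\alpha>1/2$ and $3\alpha>1$) identifies $\Phi_{0,\cdot}$ with the sewing of $A$. \textbf{Step 2 ($\Gamma_1$-bound and $\delta A$).} Using $\|\mathcal{P}_{\rho^2_H(s,r)X'_s(X'_s)^\top}f\|_{C^k}\lesssim_l \rho_H^{\tilde\gamma-k}(s,r)\|f\|_{\mathcal{C}^{\tilde\gamma}_x}$ (from \eqref{eq:semigroup_estimate} and the ellipticity $l^{-1}\le X'_s(X'_s)^\top\le l$), the elementary Hölder–heat-kernel bound $\|\mathcal{P}_{\rho^2_H(s,r)X'_s(X'_s)^\top}g - g\|_{L^\infty}\lesssim_l \rho_H^{\tilde\gamma}(s,r)\|g\|_{\mathcal{C}^{\tilde\gamma}_x}$ ($\tilde\gamma\in(0,1)$), the estimates $\|\dot Y^{s}_r\|_{L^q_\omega}\lesssim (r-s)^{H-1}$, $\|Y^{s}_{s,r}\|_{L^q_\omega}\lesssim (r-s)^H$ of Lem.~\ref{lem:Y_dot}, and \eqref{eq:rho_lower_bd}, \eqref{eq:rho_RS}, one gets $\|A_{s,t} - f(X_s)B_{s,t}\|_{L^p_\omega}\lesssim \|f\|_{\mathcal{C}^{\tilde\gamma}_x}(1+\opnorm{X}_{2p,1})|t-s|^{(1+\tilde\gamma)H}$; combined with $\|(f(X_s)-f(X_u))B_{u,t}\|_{L^p_\omega}\lesssim \|f\|_{\mathcal{C}^{\tilde\gamma}_x}|t-s|^{(1+\tilde\gamma)H}$ this gives $\|\delta A_{s,u,t}\|_{L^p_\omega}\lesssim \|f\|_{\mathcal{C}^{\tilde\gamma}_x}(1+\opnorm{X}_{2p,1})|t-s|^{(1+\tilde\gamma)H}$ with $(1+\tilde\gamma)H > 1-H\ge 1/2$. \textbf{Step 3 ($\Gamma_2$-bound).} Restricting (by Rem.~\ref{rem:shifted_ssl_2}) to $u$ with $\min\{u-s,t-u\}\ge(t-s)/3$, one expands $\mathbb{E}_s[\delta A_{s,u,t}]$ as $\mathbb{E}_s[(f(X_s)-f(X_u))B_{u,t}]$ plus the difference between the ``base-point-$s$'' and ``base-point-$u$'' representations of $\mathbb{E}_s\!\int_u^t(\cdots)\,\mathrm{d} B_r$, and estimates each term using the $C^1$/$C^2$ and difference bounds on $\mathcal{P}_{\rho^2_H(\cdot,r)X'_\cdot(X'_\cdot)^\top}f$ (\eqref{eq:semigroup_estimate}, \eqref{eq:semigroup_difference}), the controlled structure $X_{s,u}=X'_sB_{s,u}+R_{s,u}$ with $\|R_{s,u}\|_{L^q_\omega}\lesssim(u-s)^{2H}$, $\|X'_{s,u}\|_{L^q_\omega}\lesssim(u-s)^{H}$ (valid since $\beta=1$), the Gaussian estimates for $Y^{s},Y^{u}$ and for $\rho^2_H(s,\cdot)-\rho^2_H(u,\cdot)$ from Lem.~\ref{lem:Y_dot} and App.~\ref{sec:gaussian}, Lem.~\ref{lem:integral_along_tilde_B} (proved via the fully shifted sewing lemma) for the update contributions that remain outside an $\mathcal{F}_s$-conditional expectation, and \eqref{eq:rho_RS}. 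The crucial point is that the first-order $\tilde B^{s}_u$-contributions average out under $\mathbb{E}_s$, so argument-differences effectively cost $(u-s)^{2H}$, and every term then carries the exponent $(1+\beta+\tilde\gamma)H = (2+\tilde\gamma)H > \tfrac1H\cdot H = 1$, yielding $\|\mathbb{E}_s[\delta A_{s,u,t}]\|_{L^p_\omega}\lesssim\|f\|_{\mathcal{C}^{\tilde\gamma}_x}(1+\opnorm{X}_{2p,1})|t-s|^{(2+\tilde\gamma)H}$. \textbf{Step 4 (conclusion).} Lem.~\ref{lem:ssl} (with $\alpha=0$, $v=s$, $\Gamma_1,\Gamma_2\lesssim\|f\|_{\mathcal{C}^{\tilde\gamma}_x}(1+\opnorm{X}_{2p,1})$) and Step 1 give $\|\Phi_{s,t} - A_{s,t}\|_{L^p_\omega}\lesssim\Gamma_1|t-s|^{(1+\tilde\gamma)H}+\Gamma_2|t-s|^{(2+\tilde\gamma)H}$; adding the Step 2 bound on $\|A_{s,t}-f(X_s)B_{s,t}\|_{L^p_\omega}$ and absorbing the higher power of $|t-s|$ into a $T$-dependent constant yields the claimed estimate.

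The main obstacle is Step 3: exactly as in Steps 3--5 of Prop.~\ref{prop:fbm_young}, one must decompose $\mathbb{E}_s[\delta A_{s,u,t}]$ with care, track which error terms the conditioning annihilates or promotes to second order, and — unlike the Young case — control the second-order $(\mathbb{B}^H)$ layer of the rough-path lift, which is precisely what forces the additional use of Lem.~\ref{lem:integral_along_tilde_B} on top of Lem.~\ref{lem:gaussian_integral}.
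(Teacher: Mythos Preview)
Your approach is correct but takes a genuinely different route from the paper, and in one respect it is cleaner than the paper's. The paper uses the \emph{un-conditioned} germ $\hat A_{s,t}=\int_s^t f(X_s+X'_sB_{s,r})\,\mathrm{d}B_r$; its $\Gamma_1$-bound (paper's Step~1) then requires Lem.~\ref{lem:integral_along_tilde_B} to control $\|\delta\hat A_{s,u,t}\|_{L^p_\omega}$ in terms of $\|f\|_{\mathcal{C}^{\tilde\gamma}_x}$, and Lem.~\ref{lem:integral_along_tilde_B} is invoked once more at the end to pass from $\hat A_{s,t}$ to $f(X_s)B_{s,t}$. Your pre-averaged germ $A_{s,t}=f(X_s)B_{s,t}+\mathbb{E}_s[\hat A_{s,t}-f(X_s)B_{s,t}]$ makes the $\Gamma_1$-bound elementary: since $A_{s,t}-f(X_s)B_{s,t}=\mathbb{E}_s[\hat A_{s,t}-f(X_s)B_{s,t}]$ is computable by Lem.~\ref{lem:gaussian_integral} and the semigroup bounds alone, and since $\tilde\gamma>0$ in this regime, the $r$-integrals converge without any shift, exactly as you note.

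For Step~3, however, your description is muddled and your invocation of Lem.~\ref{lem:integral_along_tilde_B} is unnecessary. Observe the clean identity
\[
\mathbb{E}_s[\delta A_{s,u,t}]
= \mathbb{E}_s[\hat A_{s,t}] - \mathbb{E}_s[\hat A_{s,u}] - \mathbb{E}_s\mathbb{E}_u[\hat A_{u,t}]
= \mathbb{E}_s[\delta\hat A_{s,u,t}],
\]
which follows from the tower property and $\mathbb{E}_s[f(X_s)\tilde B^{s}_{s,t}]=0$; the $(f(X_s)-f(X_u))B_{u,t}$ piece you single out actually cancels. Hence your $\Gamma_2$-bound is \emph{literally} the paper's Step~2: one applies $\mathbb{E}_s=\mathbb{E}_s\mathbb{E}_u$, splits at time $u$ via $Y^u,\tilde B^u$, uses Lem.~\ref{lem:gaussian_integral} for the $\tilde B^u$-part, and repeats the $I_{3,1}/I_{3,2}$ estimates of Prop.~\ref{prop:fbm_young} (which go through unshifted because $\tilde\gamma>0$). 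There are no ``update contributions outside an $\mathcal{F}_s$-conditional expectation'', no need for $\rho_H^2(s,\cdot)-\rho_H^2(u,\cdot)$, and no second place where Lem.~\ref{lem:integral_along_tilde_B} enters. In short: with your germ, Lem.~\ref{lem:integral_along_tilde_B} is not used at all, whereas the paper's choice of germ needs it twice. Your last paragraph therefore misidentifies the obstacle.
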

	\begin{proof}
		For concision, throughout the proof we simply write $B \defby B^H$. Let us set
		\begin{align}\label{eq:germ_rough_integral}
			A_{s,t} \defby \int_s^t f(X_s + X_s' B_{s, r}) \, \mathrm{d} B_r.
		\end{align}
		The integral is understood as rough integral: the path
		$ r \mapsto f(X_s + X_s' B_{s, r})$
		is controlled by $B$ with Gubinelli derivative $\nabla f(X_s + X_s' B_{s, r}) X'_s$.
		In particular, the estimate \eqref{eq:rough_fund_est} yields
		\begin{align*}
			\abs{A_{s, t} - f(X_s) B_{s, t} - \nabla f(X_s) X'_s \mathbb{B}_{s, t}}
			\lesssim (t-s)^{3 \alpha}
		\end{align*}
		for $\alpha \in ( 1/3, H)$. Hence, the uniqueness part of the sewing lemma (see Rem.~\ref{rem:sewing_uniq}) implies that
		\begin{align*}
			\int_s^t f(X_r) \, \mathrm{d} B_r = 
			\lim_{\abs{\pi} \to 0 } \sum_{[s', t'] \in \pi} A_{s', t'},
		\end{align*}
		where $\pi$ is a partition of $[s, t]$.
		As in Prop.~\ref{prop:fbm_young}, we apply the stochastic sewing to 
		$A_{s, t}$. This time, Lê's original version \cite{le20} is sufficient (that is $\alpha = 0$ and $v = s$ 
		in Lem.~\ref{lem:ssl}).
		Precisely, we prove the estimates 
		\begin{align*}
			\norm{\delta A_{s, u, t}}_{L^p_{\omega}} \leq \Gamma_1 (t-s)^{\frac{1}{2} + \varepsilon_1}, 
			\quad \norm{\mathbb{E}[\delta A_{s, u, t} \vert \mathcal{F}_s]}_{L^p_{\omega}} \leq \Gamma_2 (t-s)^{1 + \varepsilon_2}
		\end{align*}
		for $s < u < t$ with appropriate $\varepsilon_1, \varepsilon_2 > 0$ and proportional constants $\Gamma_1, \Gamma_2$.
		We split the rest of the proof into three steps.
		
		{\bfseries Step 1.} We estimate $\norm{\delta A_{s, u, t}}_{L^p_{\omega}}$. 
		Writing $Y \defby Y^{H, u}$ and $\tilde{B} \defby \tilde{B}^{H, u}$ as in \eqref{eq:def_Y} and \eqref{eq:def_tilde_B} respectively, 
		we observe 
		\begin{align}
			\delta A_{s, u, t} \notag
			=& \int_u^t \left( f(X_s + X_s' B_{s, r}) - f(X_u + X_u' B_{u, r})\right) \, \mathrm{d} B_r \\
			=& \int_u^t \left( f(X_s + X_s' B_{s, r}) - f(X_u + X_u' B_{u, r})\right) \dot{Y_r} \, \mathrm{d}r 
			\label{eq:f_integral_along_Y}\\
			&+ \int_u^t \left( f(X_s + X_s' B_{s, r}) - f(X_u + X_u' B_{u, r})\right) \, \mathrm{d} \tilde{B}_r,
			\label{eq:f_integral_along_tilde_B}
		\end{align}
		The integral \eqref{eq:f_integral_along_Y}, by the triangle inequality, is bounded by 
		\begin{align*}
			\norm{f}_{\mathcal{C}_x^{\tilde{\gamma}}} \int_u^t \abs{X_s + X_s' B_{s, r} - (X_u + X_u' B_{u, r})}^{\tilde{\gamma}} 
			\abs{\dot{Y}_r} \, \mathrm{d} r.
		\end{align*}
		Defining the remainder $R$ as in Def.~\ref{def:control_prob}, we have 
		\begin{align*}
			X_s + X_s' B_{s, r} - (X_u + X_u' B_{u, r}) 
			= -R_{s, u} - X'_{s, u} B_{u, r},
		\end{align*}
		and since, due to Lem.~\ref{lem:Y_dot} we have $\norm{\dot{Y}_r}_{L^{2p}_{\omega}} \lesssim_{p} (r - u)^{H-1}$, it follows that
		\begin{multline*}
			\Bigg\|\int_u^t \left( f(X_s + X_s' B_{s, r}) - f(X_u + X_u' B_{s, r})\right) \dot{Y_r} \, \mathrm{d}r\,\Bigg\|_{L^p_{\omega}} \\
			\leq \norm{f}_{\mathcal{C}_x^{\tilde{\gamma}}} \int_u^t \norm{\abs{R_{s, u} + X'_{s, u} B_{u, r}}^{\tilde{\gamma}}}_{L^{2p}_{\omega}} 
			\norm{\dot{Y}_r}_{L^{2p}_{\omega}} \, \mathrm{d}r 
			\lesssim_p \norm{f}_{\mathcal{C}_x^{\tilde{\gamma}}}
			\opnorm{X}_{2p, 1}^{\tilde{\gamma}}  (t-s)^{(1 + 2\tilde{\gamma})H}.
		\end{multline*}
		To estimate \eqref{eq:f_integral_along_tilde_B},  
		for $v \in \{s, u\}$ we apply Lem.~\ref{lem:integral_along_tilde_B} with 
		\begin{align*}
			\tau = u, \quad \xi = (X_v + X_v' B_{v, u}, X_v'), \quad F((x, x'), y) = f(x + x' y)
		\end{align*}
		to obtain the estimate
		\begin{multline*}
			\norm[\Big]{\int_u^t  f(X_v + X_v' B_{v, r})  \, \mathrm{d} \tilde{B}_r - 
				\frac{f(X_v + X'_v B_{v, u}) + f(X_v + X'_v B_{v, t})}{2} \tilde{B}_{u, t}}_{L^p_{\omega}}  \\
			\lesssim_{p, l, \tilde{\gamma}} \norm{f}_{\mathcal{C}_x^{\tilde{\gamma}}} (t-s)^{(1 + \tilde{\gamma}) H}.
		\end{multline*}
		Since 
		\begin{multline*}
			\abs[\Big]{\frac{f(X_s + X'_s B_{s, u}) + f(X_s + X'_s B_{s, t})}{2} 
				- \frac{f(X_u) + f(X_u + X'_u B_{u, t})}{2} } \\
			\lesssim \norm{f}_{\mathcal{C}_x^{\tilde{\gamma}}}
			\big( \abs{X_{s,u} - X_s' B_{s, u}}^{\tilde{\gamma}} + \abs{X'_{s,u} B_{u, t}}^{\tilde{\gamma}}\big),
		\end{multline*}
		we see that the $L^p_{\omega}$-norm of the integral \eqref{eq:f_integral_along_tilde_B} 
		is bounded by (up to constant) 
		\begin{align*}
			\norm{f}_{\mathcal{C}_x^{\tilde{\gamma}}} (t-s)^{(1+\tilde{\gamma})H}
			+\norm{f}_{\mathcal{C}_x^{\tilde{\gamma}}} \opnorm{X}_{2p, 1}^{\tilde{\gamma}}  (t-s)^{(1+2 \tilde{\gamma})H}.
		\end{align*}
		Hence, 
		\begin{align}\label{eq:fbm_rough_int_ssl_1}
			\norm{\delta A_{s, u, t}}_{L^p_{\omega}} 
			\lesssim_{p, l, \tilde{\gamma}} 
			\norm{f}_{\mathcal{C}_x^{\tilde{\gamma}}} (t-s)^{(1+\tilde{\gamma})H}
			+\norm{f}_{\mathcal{C}_x^{\tilde{\gamma}}} \opnorm{X}_{2p, 1}^{\tilde{\gamma}}  (t-s)^{(1+2 \tilde{\gamma})H}.
		\end{align}

		{\bfseries Step 2.} Next we estimate $\mathbb{E}[\delta A_{s, u, t} \vert \mathcal{F}_s]$, 
		which is essentially done in Prop.~\ref{prop:fbm_young}.
		We define $Y$ and $\tilde{B}$ as in Step 1, and we write $\mathbb{E}_u[\,\cdot\,] \defby \mathbb{E}[\,\cdot\, \vert \mathcal{F}_u]$. We have
		\begin{align*}
			\mathbb{E}_{u} \left[\delta A_{s, u, t} \right]=& \mathbb{E}_{u}\left[ \int_u^t \left( f(X_s + X_s' B_{s, r}) - f(X_u + X_u' B_{s, r})\right) \dot{Y_r} \, \mathrm{d}r\right] \\
			&+ \mathbb{E}_u \left[\int_u^t \left( f(X_s + X_s' B_{s, r}) - f(X_u + X_u' B_{s, r})\right) \, \mathrm{d} \tilde{B}_r\right]. 
		\end{align*}
		For $v \in \{s, u\}$, we observe
		\begin{equation*}
			\mathbb{E}_{u} \left[\int_u^t  f(X_v + X_v' B_{v, r})  \dot{Y_r} \, \mathrm{d}r \right]\\
			= \int_u^t \cP_{\rho^2(u, r) X'_{v}(X'_{v})^{\top}} f
			(X_{v} + X'_{v}(B_{v, u} + Y_{u, r})) \dot{Y}_r \, \mathrm{d} r
		\end{equation*}
		with $\rho^2(u, r) \defby \rho^2_H(u, r)$ defined by \eqref{eq:def_rho}.
		As in the proof of Prop.~\ref{prop:fbm_young} (especially \eqref{eq:g_along_B_expectation}), Lem.~\ref{lem:gaussian_integral} implies that 
		\begin{multline*}
			\mathbb{E}_u \left[\int_u^t f(X_v + X_v' B_{v, r})  \, \mathrm{d} \tilde{B}_r \right]\\
			=
			\frac{1}{2} X'_{v} \int_u^t 
			\nabla \cP_{\rho^2(u, r) X'_{v}(X'_{v})^{\top}} f
			(X_{v} + X'_{v}(B_{v, u} + Y_{u, r})) \, \rho^2(u, \mathrm{d} r).
		\end{multline*}
		Hence, $\mathbb{E}_u \left[\delta A_{s, u, t}\right] = J_1 + J_2$, where
		\begin{multline*}
			J_1 \defby \int_u^t \big( \cP_{\rho^2(u, r) X'_{s}(X'_{s})^{\top}} 
			f(X_{s} + X'_{s}(B_{s, u} + Y_{u, r})) \\
			- \cP_{\rho^2(u, r) X'_{u}(X'_{u})^{\top}}f(X_{u} + X'_{u}Y_{u, r}) \big) \dot{Y}_r \, \mathrm{d} r,
		\end{multline*}
		\begin{multline*}
			J_2 \defby 
			\frac{1}{2}  \int_u^t   
			\big (
			X'_{s} \nabla \cP_{\rho^2(u, r) X'_{s}(X'_{s})^{\top}} f
			(X_{s} + X'_{s}(B_{s, u} + Y_{u, r})) \\ 
			- X'_{u} \nabla \cP_{\rho^2(u, r) X'_{u}(X'_{u})^{\top}} f
			(X_{u} + X'_{u} Y_{u, r}) \big) \, \rho^2(u, \mathrm{d} r).
		\end{multline*}
		Then the rest of the argument is identical to the proof of Prop.~\ref{prop:fbm_young}. ($J_1$ corresponds to $I_{3,1}$ and $J_2$ to $I_{3,2}$.) 
		In particular, we obtain 
		\begin{align}\label{eq:fbm_rough_int_ssl_2}
			\norm{\mathbb{E}_s[ \delta A_{s, u, t}]}_{L^p_{\omega}}
			\lesssim_{p, l, \tilde{\gamma}, T} \norm{f}_{\mathcal{C}^{\tilde{\gamma}}} \opnorm{X}_{2p, 1} (t-s)^{(2 + \tilde{\gamma}) H}.
		\end{align}

		{\bfseries Step 3.} Thanks to the estimates \eqref{eq:fbm_rough_int_ssl_1} and \eqref{eq:fbm_rough_int_ssl_2},  
		we are in the setting of Lem.~\ref{lem:ssl} with $\alpha = 0$ and $v=s$. 
		Therefore, the remainder estimate of Lem.~\ref{lem:ssl} yields
		\begin{equation}\label{eq:fbm_rough_int_vs_A}
			\norm[\Big]{\int_s^t f(X_r) \, \mathrm{d} B_r - A_{s, t}}_{L^p_{\omega}} 
			\lesssim_{p, l, \tilde{\gamma}, T} \norm{f}_{\mathcal{C}_x^{\tilde{\gamma}}} (1 + \opnorm{X}_{2p,1}) (t-s)^{(1 + \tilde{\gamma}) H}.   
		\end{equation}
		On the other hand, we have 
		\begin{multline*}
			A_{s, t} - \frac{f(X_s) + f(X_s + X_s' B_{s, t})}{2} B_{s, t} \\ 
			= \int_s^t \left( f(X_s + X_s' B_{s, r}) - \frac{f(X_s) + f\left(X_s + X_s' B_{s, t}\right)}{2} \right) \dot{Y}_r^{s} \, \mathrm{d} r \\
			+ \int_s^t  f(X_s + X_s' B_{s, r}) \, \mathrm{d} \tilde{B}_r^s -  \frac{f(X_s) + f(X_s + X_s' B_{s, r})}{2} \tilde{B}_{s, t}^s.
		\end{multline*}
		By Lem.~\ref{lem:Y_dot}, the first term is bounded by $\norm{f}_{\mathcal{C}^{\tilde{\gamma}}_x} (t-s)^{2H}$ up to constant, while 
		by Lem.~\ref{lem:integral_along_tilde_B} the second term is bounded by  $\norm{f}_{\mathcal{C}^{\tilde{\gamma}}_x} (t-s)^{(1+\tilde{\gamma})H}$ 
		up to constant. Therefore, since $\tilde{\gamma}<1$ and $|t-s|\leq 1$,
		\begin{align}\label{eq:A_s_t_str}
			\norm[\Big]{A_{s, t} - \frac{f(X_s) + f(X_s + X_s' B_{s, t})}{2} B_{s, t}}_{L^p_{\omega}} \lesssim_{p, l, \tilde{\gamma}, T} 
			\norm{f}_{\mathcal{C}_x^{\tilde{\gamma}}} (t-s)^{(1+\tilde{\gamma})H}.
		\end{align}
		Finally, we observe 
		\begin{equation}\label{eq:str_vs_ito}
			\norm[\Big]{\frac{f(X_s) + f(X_s + X_s' B_{s, t})}{2} B_{s, t} - f(X_s) B_{s, t}}_{L^p_{\omega}}
			\lesssim_{p, l, \tilde{\gamma}, T} \norm{f}_{\mathcal{C}_x^{\tilde{\gamma}}} (t-s)^{(1 + \tilde{\gamma})H}. 
		\end{equation}
		Combining \eqref{eq:fbm_rough_int_vs_A}, \eqref{eq:A_s_t_str} and \eqref{eq:str_vs_ito}, we obtain the desired estimate.
	\end{proof}
	\subsection{Iterated Integrals with Rough Integrands}\label{subsec:rough_iterated}
	Referring back to the strategy described for the rough case in Sec.~\ref{subsec:proof_overview}, as compared to the Young case, we also require sufficient control on iterated integrals of the form described by \eqref{eq:B_G_lift}. In this section (over the following two propositions) we treat the case when the $(G^k)_{k=1}^{d_2}$ invoked therein is a generic path of the form
	\begin{equation*}
		G^k_{t} \coloneqq \int_0^t f(X_r)\dd B^H_r,\quad k =1 ,\ldots, d_2,
	\end{equation*}
	for $f$ as in Prop.~\ref{prop:fbm_rough}. In Prop.~\ref{prop:rough_iterated_integral}, we treat iterated integrals of the form $\int G^k \otimes \, \mathrm{d} B^H$ 
	(which in turn gives the estimate on $\int B^H \otimes \, \mathrm{d} G^k$ by integration by parts), 
	and in Prop.~\ref{prop:rough_itr_int_2} we treat iterated integrals of the form $\int G^k \otimes \, \mathrm{d} G^l$.
	These two propositions as well as Prop.~\ref{prop:fbm_rough} will be used in Sec.~\ref{subsec:pw_uniq_rough} below 
	to construct the rough path lift \eqref{eq:B_G_lift} by proving the convergence of rough paths lifts associated to smooth approximations of $\sigma$.

	Since we already have access to Prop.~\ref{prop:fbm_rough}, the estimates on iterated integrals 
	follows from the deterministic sewing lemma (Lem.~\ref{lem:det_sewing}).
	We recall again the notations $\mathfrak{D}^{1, l}_{B^H}$ and $\opnorm{\,\cdot\,}_{p, 1}$ from Def.~\ref{def:control_prob}.
	\begin{proposition}\label{prop:rough_iterated_integral}
		Let $H \in (1/3, 1/2]$, $p \in [2, \infty)$, $l \in [1, \infty)$, $(X, X') \in \mathfrak{D}^{1, l}_{B^H}$, 
		$f \in C^3(\mathbb{R}^{d_1})$ and $\tilde{\gamma} \in\left(\frac{1-H}{H} -1, 1\right)$. 
		Then, for $(s, t) \in [0, T]_{\leq}^2$ and $i,j \in \{1, 2, \ldots, d_2\}$ we have 
		\begin{equation*}
			\norm[\Big]{\int_s^t f(X_r) B^{H, i}_{s, r} \, \mathrm{d} B^{H, j}_r - f(X_s) \mathbb{B}^{H, i, j}_{s, t}}_{L^p_{\omega}} 
			\lesssim_{p, l, \tilde{\gamma}, T}
			\norm{f}_{\mathcal{C}_x^{\tilde{\gamma}}} (1 + \opnorm{X}_{4p, 1}) (t-s)^{(2+\tilde{\gamma})H}.
		\end{equation*}
	\end{proposition}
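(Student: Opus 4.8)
The plan is to realise the two-parameter object
\begin{equation*}
A_{s,t} \defby \int_s^t f(X_r) B^{H,i}_{s,r}\,\mathrm{d}B^{H,j}_r - f(X_s)\,\mathbb{B}^{H,i,j}_{s,t}
\end{equation*}
as the output of the \emph{deterministic} sewing lemma (Lem.~\ref{lem:det_sewing}) in the Banach space $E=L^p(\Omega)$, and then to bootstrap. Write $B\defby B^H$, $\mathbb B\defby\mathbb B^H$ and $G^j_{s,t}\defby\int_s^t f(X_r)\,\mathrm{d}B^{H,j}_r$. Since $f\in C^3$ and $(X,X')\in\mathfrak D^{1,l}_{B}$, the path $r\mapsto f(X_r)B^{H,i}_{s,r}$ is controlled by $B$, vanishes at $r=s$, and has Gubinelli derivative $f(X_s)$ in the $i$-th slot there; hence the rough integral defining $A_{s,t}$ exists, and by the fundamental estimate \eqref{eq:rough_fund_est} one gets the crude pathwise bound $|A_{s,t}|\lesssim \mathcal K_\omega (t-s)^{3\alpha}$ for any $\alpha\in(1/3,H)$, with $\mathcal K_\omega$ a.s. finite and uniform in $(s,t)$, and $3\alpha>1$. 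The claim to be proved is the much better estimate $\|A_{s,t}\|_{L^p_{\omega}}\lesssim_{p,l,\tilde\gamma,T}\|f\|_{\mathcal C^{\tilde\gamma}_x}(1+\opnorm{X}_{4p,1})(t-s)^{(2+\tilde\gamma)H}$, i.e.\ with the constant depending on $f$ only through its $\mathcal C^{\tilde\gamma}_x$-norm.

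First I would compute $\delta A_{s,u,t}$. Using additivity of the rough integral together with $B^{H,i}_{s,r}-B^{H,i}_{u,r}=B^{H,i}_{s,u}$ one finds that $\delta$ of the first term equals $B^{H,i}_{s,u}G^j_{u,t}$, while Chen's relation \eqref{eq:chen_relation} gives $\delta\big(f(X_\cdot)\mathbb B^{H,i,j}\big)_{s,u,t}=(f(X_s)-f(X_u))\mathbb B^{H,i,j}_{u,t}+f(X_s)B^{H,i}_{s,u}B^{H,j}_{u,t}$. Subtracting and rearranging,
\begin{equation*}
\delta A_{s,u,t}=B^{H,i}_{s,u}\big(G^j_{u,t}-f(X_u)B^{H,j}_{u,t}\big)+\big(f(X_u)-f(X_s)\big)B^{H,i}_{s,u}B^{H,j}_{u,t}-\big(f(X_s)-f(X_u)\big)\mathbb B^{H,i,j}_{u,t}.
\end{equation*}
Then I would bound the three terms in $L^p_\omega$ by Hölder's inequality. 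For the first term, combine $\|B^{H,i}_{s,u}\|_{L^{2p}_\omega}\lesssim(u-s)^H$ (Gaussianity) with Prop.~\ref{prop:fbm_rough} at integrability $2p$, which bounds the bracket by $\|f\|_{\mathcal C^{\tilde\gamma}_x}(1+\opnorm{X}_{4p,1})(t-u)^{(1+\tilde\gamma)H}$. For the other two, use $|f(X_u)-f(X_s)|\le\|f\|_{\mathcal C^{\tilde\gamma}_x}|X_{s,u}|^{\tilde\gamma}$ together with $\|X_{s,u}\|_{L^q_\omega}\lesssim_{l,T}(1+\opnorm{X}_{q,1})(u-s)^H$ (from $X_{s,u}=X'_sB_{s,u}+R_{s,u}$ and the definition of $\opnorm{\cdot}_{q,1}$), and the moment bounds $\|B^{H,j}_{u,t}\|_{L^q_\omega}\lesssim(t-u)^H$, $\|\mathbb B^{H,i,j}_{u,t}\|_{L^q_\omega}\lesssim(t-u)^{2H}$. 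After absorbing surplus powers of $(t-s)$ into $T$, each term is $\lesssim_{p,l,\tilde\gamma,T}\|f\|_{\mathcal C^{\tilde\gamma}_x}(1+\opnorm{X}_{4p,1})(t-s)^{(2+\tilde\gamma)H}$, and $(2+\tilde\gamma)H>1$ \emph{precisely because} $\tilde\gamma>\frac{1-H}{H}-1$.

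With the estimate on $\delta A$ in hand, Lem.~\ref{lem:det_sewing} in $E=L^p(\Omega)$ produces a process $\mathcal A$ with $\|\mathcal A_t-\mathcal A_s-A_{s,t}\|_{L^p_\omega}\lesssim C(t-s)^{(2+\tilde\gamma)H}$, where $C=\|f\|_{\mathcal C^{\tilde\gamma}_x}(1+\opnorm{X}_{4p,1})$, and $\mathcal A_t-\mathcal A_s=\lim_{|\pi|\to0}\sum_{[s',t']\in\pi}A_{s',t'}$ in $L^p_\omega$. On the other hand the crude pathwise bound of the first paragraph gives $\big|\sum_{[s',t']\in\pi}A_{s',t'}\big|\le\mathcal K_\omega|\pi|^{3\alpha-1}(t-s)\to0$ a.s., so the $L^p$-limit $\mathcal A_t-\mathcal A_s$ must vanish. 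Hence $\|A_{s,t}\|_{L^p_\omega}=\|\mathcal A_t-\mathcal A_s-A_{s,t}\|_{L^p_\omega}\lesssim C(t-s)^{(2+\tilde\gamma)H}$, which is the assertion. The main obstacle is the $\delta A$ estimate of Step~2, and specifically the requirement that it depend on $f$ only through $\|f\|_{\mathcal C^{\tilde\gamma}_x}$: this is exactly what Prop.~\ref{prop:fbm_rough} buys us for the term $B^{H,i}_{s,u}\big(G^j_{u,t}-f(X_u)B^{H,j}_{u,t}\big)$, where a naive Taylor expansion of $f$ would cost a derivative and only yield a $\|f\|_{C^1}$-bound. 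The only other delicate point is the bootstrap: without the a priori bound from \eqref{eq:rough_fund_est} one cannot conclude that the sewn process is constant, and the argument would be circular.
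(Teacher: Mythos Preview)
Your proof is correct and follows essentially the same route as the paper's: the decisive $\delta A$ computation produces the same three terms, and each is bounded identically---in particular, the key term $B^{H,i}_{s,u}(G^j_{u,t}-f(X_u)B^{H,j}_{u,t})$ is controlled via Prop.~\ref{prop:fbm_rough} at integrability $2p$, which is exactly where the $\mathcal C^{\tilde\gamma}_x$-dependence and the $\opnorm{X}_{4p,1}$ factor enter---and deterministic sewing in $L^p_\omega$ then gives the claim.

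The only presentational difference is in how the sewn object is identified. The paper introduces an auxiliary base-point $\tau\le s$, sews the additive germ $\bar A_{s,t}=B^i_{\tau,s}\int_s^t f(X_r)\,\mathrm d B^j_r+f(X_s)\mathbb B^{i,j}_{s,t}$, shows (via the uniqueness trick of Rem.~\ref{rem:sewing_uniq}) that the sewn object coincides with $\int_s^t f(X_r)B^i_{\tau,r}\,\mathrm d B^j_r$, and then sets $\tau=s$. You instead sew the difference $A_{s,t}$ directly and use the crude pathwise bound $|A_{s,t}|\lesssim\mathcal K_\omega(t-s)^{3\alpha}$ (from \eqref{eq:rough_fund_est}, since the integrand vanishes at $r=s$ with Gubinelli derivative $f(X_s)e_i$) to force the Riemann sums, hence the sewn process, to vanish. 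Both are clean; your version is slightly more direct in that it avoids the auxiliary parameter $\tau$ and the second germ $\bar A'$.
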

	\begin{proof}
		For concision, we write $B \defby B^H$ and $\mathbb{B} \defby \mathbb{B}^H$.
		We fix a $\tau \leq s$, and we set 
		\begin{equation*}
			\bar{A}_{s, t} \defby B^i_{\tau, s} \int_s^t f(X_r) \, \mathrm{d} B^j_r + f(X_s) \mathbb{B}_{s, t}^{i, j}.
		\end{equation*}
		{\bfseries Step 1.} We show that $\bar{A}_{s, t}$ is a correct approximation to the iterated integral 
		$\int_s^t f(X_r) B_{\tau, r}^i \, \mathrm{d} B_r^j$.
		For this sake, we set 
		\begin{equation*}
			\bar{A}'_{s, t} \defby f(X_s) B^i_{\tau, s} B^j_{s, t} \\
			+ f(X_s) \mathbb{B}^{i,j}_{s, t}
			+ \sum_{k, l}  B^i_{\tau, s} \partial_k f (X_s) (X'_s)^{k l} \mathbb{B}_{s, t}^{l j}.
		\end{equation*}
		By definition of the rough integral, we have 
		\begin{align*}
			\int_s^t  f(X_r) B^i_{\tau, r} \, \mathrm{d} B^j_{r}
			= \lim_{\substack{\pi \text{ is a partition of } [s, t], \\ \abs{\pi} \to 0}} \sum_{[u, v] \in \pi} \bar{A}'_{u, v}.
		\end{align*}
		In view of the uniqueness part of the sewing lemma (see Rem.~\ref{rem:sewing_uniq}), 
		our goal is to prove $\abs{\bar{A}_{s, t} - \bar{A}'_{s, t}} \lesssim_{f, X, X', \omega} (t-s)^{3H}$.
		We compute 
		\begin{align*}
			\bar{A}_{s, t} - \bar{A}'_{s, t}
			= B^i_{\tau, s} \Big( \int_s^t f(X_r) \, \mathrm{d} B_r -  f(X_s)  B^j_{s, t} - \sum_{k, l} \partial_k f (X_s) (X'_s)^{k l} \mathbb{B}_{s, t}^{l j}\Big)
		\end{align*}
		By the remainder estimate of the rough integral \eqref{eq:rough_fund_est}, we have 
		\begin{align*}
			\abs[\Big]{\int_s^t f(X_r) \, \mathrm{d} B_r -  f(X_s)  B^j_{s, t} - \sum_{k, l} \partial_k f (X_s) (X'_s)^{k l} \mathbb{B}_{s, t}^{l j}}
			\lesssim_{f, X, X', \omega} (t-s)^{3H},
		\end{align*}
		whence $\bar{A}$ is a correct approximation.
		
		{\bfseries Step 2.} This time we do not need the stochastic sewing; the deterministic sewing (Lem.~\ref{lem:det_sewing}) 
		in $L^p_{\omega}$ suffices. We compute
		\begin{align*}
			\delta \bar{A}_{s, u, t}  = - B^i_{s, u} \int_u^t f(X_r) \, \mathrm{d} B^j_r + f(X_s) (\mathbb{B}_{s, t}^{i, j} - \mathbb{B}_{s, u}^{i, j}) - f(X_u) \mathbb{B}_{u, t}^{i, j}.
		\end{align*}
		Using Chen's relation 
		$\mathbb{B}^{i, j}_{u, t} = \mathbb{B}^{i, j}_{s, t} - \mathbb{B}^{i, j}_{s, u} - B^i_{s, u} B^j_{u, t}$,
		we obtain
		\begin{align*}
			\delta A_{s, u, t} = - B^i_{s, u} \Big( \int_u^t f(X_r) \, \mathrm{d} B_r^j - f(X_u) B^j_{u, t} \Big)
			- f(X)_{s, u} (\mathbb{B}^{i, j}_{s, t} - \mathbb{B}^{i, j}_{s, u}).
		\end{align*}
		Since 
		\begin{equation*}
			\norm{B^i_{s, u}}_{L^p_{\omega}} \lesssim_p (t-s)^H, \,\, \norm{\mathbb{B}^{i, j}_{s, t}}_{L^p_{\omega}} \lesssim_p (t-s)^{2H}, 
			\,\,
			\norm{f(X)_{s, u}}_{L^p_{\omega}} \lesssim_{p, \tilde{\gamma}} \norm{f}_{\mathcal{C}_x^{\tilde{\gamma}}} \opnorm{X}_{p \tilde{\gamma}, 1}^{\tilde{\gamma}} (t-s)^{\tilde{\gamma} H},
		\end{equation*}
		and by Prop.~\ref{prop:fbm_rough} 
		\begin{align*}
			\norm[\Big]{\int_u^t f(X_r) \, \mathrm{d} B_r^j - f(X_u) B^j_{u, t}}_{L^{2p}_{\omega}}
			\lesssim_{p, l, \tilde{\gamma}, T} \norm{f}_{\mathcal{C}_x^{\tilde{\gamma}}} (1 + \opnorm{X}_{4p, 1}) (t-s)^{(1+\tilde{\gamma})H},
		\end{align*}
		using Cauchy--Schwarz inequality we obtain 
		\begin{align*}
			\norm{\delta A_{s, u, t}}_{L^p_{\omega}} \lesssim_{p, l, \tilde{\gamma}, T}  
			\norm{f}_{\mathcal{C}_x^{\tilde{\gamma}}} (1 + \opnorm{X}_{4p, 1}) (t-s)^{(2+\tilde{\gamma})H}.
		\end{align*}
		As $(2 + \tilde{\gamma}) H > 1$, the sewing lemma in $L^p_{\omega}$ gives  
		\begin{equation*}
			\norm[\Big]{\int_s^t f(X_r) B_{\tau, r}^i \, \mathrm{d} B_r^j - \bar{A}_{s, t}}_{L^p_{\omega}}  
			\lesssim_{p, l, \tilde{\gamma}, T}
			\norm{f}_{\mathcal{C}_x^{\tilde{\gamma}}} (1 + \opnorm{X}_{4p, 1}) (t-s)^{(2+\tilde{\gamma})H}.
		\end{equation*}
		It remains to set $\tau = s$.
	\end{proof}
	\begin{proposition}\label{prop:rough_itr_int_2}
		Let $H \in (1/3, 1/2]$, $p \in [2, \infty)$, $l \in [1, \infty)$, $(X, X'), (Y, Y') \in \mathfrak{D}^{1, l}_{B^H}$, 
		$f, g\in C^3(\mathbb{R}^d)$ and $\tilde{\gamma} \in \left(\frac{1-H}{H} - 1, 1\right)$. 
		Then, for every $(s, t) \in [0, T]_{\leq}^2$ and $i, j \in \{1, 2, \ldots, d_2\}$ 
		we have 
		\begin{multline*}
			\norm[\Big]{\int_s^t \Big(\int_s^{r_2} g(Y_{r_1}) \, \mathrm{d} B_{r_1}^{H, i} \Big) f(X_{r_2}) \, \mathrm{d} B^{H, j}_{r_2} 
				- g(Y_s) f(X_s) \mathbb{B}_{s, t}^{H, i, j}}_{L^p_{\omega}}  \\
			\lesssim_{p, l, \tilde{\gamma}, T}\norm{f}_{\mathcal{C}_x^{\tilde{\gamma}}}
			\norm{g}_{\mathcal{C}_x^{\tilde{\gamma}}}  
			(1 + \opnorm{X}_{8p, 1})(1 + \opnorm{Y}_{8p, 1}) (t-s)^{(2+\tilde{\gamma})H}.
		\end{multline*}
	\end{proposition}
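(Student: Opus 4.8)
The plan is to follow closely the proof of Prop.~\ref{prop:rough_iterated_integral}: once Prop.~\ref{prop:fbm_rough} is available, the bound on this iterated integral follows from the deterministic sewing lemma (Lem.~\ref{lem:det_sewing}) applied in $E = L^p(\Omega)$. Write $B \defby B^H$, $\mathbb{B}\defby \mathbb{B}^H$, and set $G^i_t \defby \int_0^t g(Y_r)\, \dd B^i_r$, so that $\int_s^{r_2}g(Y_{r_1})\, \dd B^i_{r_1} = G^i_{s,r_2}$. As in Prop.~\ref{prop:rough_iterated_integral} I would fix an arbitrary $\tau \leq s$ and work with the germ
\[
\bar A_{s,t} \defby G^i_{\tau,s}\int_s^t f(X_r)\, \dd B^j_r + g(Y_s)f(X_s)\,\mathbb{B}^{i,j}_{s,t},
\]
so that at the very end one may take $\tau = s$, which kills the factor $G^i_{\tau,s}$ and leaves exactly $g(Y_s)f(X_s)\mathbb{B}^{i,j}_{s,t}$.

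\textbf{Step 1: $\bar A$ is a valid germ.} By Kolmogorov's criterion $X$ and $Y$ are $\mathbb{P}$-a.s. controlled by $B$ and $\alpha$-H\"older for every $\alpha < H$; applying Prop.~\ref{prop:fbm_rough} with $(g,Y)$ in place of $(f,X)$, together with Kolmogorov, shows that $G^i$ is an $\omega$-wise controlled path with Gubinelli derivative $g(Y)$ and remainder of H\"older exponent $(1+\tilde\gamma)H - \epsilon$ for any $\epsilon > 0$. Since the hypothesis $\tilde\gamma > \frac{1-H}{H}-1$ is equivalent to $(2+\tilde\gamma)H > 1$, this exponent exceeds $1-\alpha$ for $\alpha$ close enough to $H$, so $r\mapsto G^i_{\tau,r}f(X_r)$ is a controlled path of sufficient regularity and its rough integral against $B^j$ is well defined $\omega$-wise. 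Writing $\bar A'$ for the canonical germ of that rough integral, a direct computation gives
\[
\bar A_{s,t} - \bar A'_{s,t} = G^i_{\tau,s}\Big(\int_s^t f(X_r)\, \dd B^j_r - f(X_s)B^j_{s,t} - \sum_{k,m}\partial_m f(X_s)(X'_s)^{mk}\mathbb{B}^{k,j}_{s,t}\Big),
\]
and the parenthesis is $O((t-s)^{3\alpha})$ by \eqref{eq:rough_fund_est} while $G^i_{\tau,s}$ is bounded; as $3\alpha > 1$, the uniqueness part of the sewing lemma (Rem.~\ref{rem:sewing_uniq}) shows that the Riemann sums of $\bar A$ converge to $\int_s^t G^i_{\tau,r}f(X_r)\, \dd B^j_r$.

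\textbf{Step 2: sewing in $L^p_\omega$.} Using $\int_s^t - \int_s^u = \int_u^t$, the identity $G^i_{\tau,s} - G^i_{\tau,u} = -G^i_{s,u}$ and Chen's relation \eqref{eq:chen_relation} in the form $\mathbb{B}^{i,j}_{s,t} - \mathbb{B}^{i,j}_{s,u} = \mathbb{B}^{i,j}_{u,t} + B^i_{s,u}B^j_{u,t}$, one finds
\[
\delta\bar A_{s,u,t} = -G^i_{s,u}\int_u^t f(X_r)\, \dd B^j_r - \big(g(Y)f(X)\big)_{s,u}\,\mathbb{B}^{i,j}_{u,t} + g(Y_s)f(X_s)\,B^i_{s,u}B^j_{u,t}.
\]
Now I would insert $G^i_{s,u} = g(Y_s)B^i_{s,u} + \widehat R_{s,u}$ and $\int_u^t f(X_r)\, \dd B^j_r = f(X_u)B^j_{u,t} + \widetilde R_{u,t}$, where Prop.~\ref{prop:fbm_rough} gives $\|\widehat R_{s,u}\|_{L^{2p}_\omega}\lesssim \|g\|_{\mathcal{C}^{\tilde\gamma}_x}(1+\opnorm{Y}_{4p,1})(t-s)^{(1+\tilde\gamma)H}$ and the analogous bound for $\widetilde R_{u,t}$ with $(f,X)$ in place of $(g,Y)$. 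Expanding the product $G^i_{s,u}\int_u^t f(X)\, \dd B^j$ and recombining the two pure $B$-increment terms, $\delta\bar A_{s,u,t}$ becomes a finite sum of terms, each consisting of a bounded factor, an increment of $f(X)$, $g(Y)$ or $g(Y)f(X)$ (each of $L^q_\omega$-size $O((t-s)^{\tilde\gamma H})$ with constant controlled by $\|f\|_{\mathcal{C}^{\tilde\gamma}_x}$, $\|g\|_{\mathcal{C}^{\tilde\gamma}_x}$ and powers of $\opnorm{X}_{\,\cdot\,,1}$, $\opnorm{Y}_{\,\cdot\,,1}$), and then either one of $\widehat R,\widetilde R$, or two increments of $B$, or one increment of $\mathbb{B}$. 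Estimating each by H\"older's inequality together with $\|B_{s,t}\|_{L^q_\omega}\lesssim (t-s)^H$ and $\|\mathbb{B}_{s,t}\|_{L^q_\omega}\lesssim (t-s)^{2H}$ (the cross term $\widehat R\,\widetilde R$ even producing $(t-s)^{2(1+\tilde\gamma)H}$), one arrives at
\[
\|\delta\bar A_{s,u,t}\|_{L^p_\omega}\lesssim_{p,l,\tilde\gamma,T} \|f\|_{\mathcal{C}^{\tilde\gamma}_x}\|g\|_{\mathcal{C}^{\tilde\gamma}_x}(1+\opnorm{X}_{8p,1})(1+\opnorm{Y}_{8p,1})(t-s)^{(2+\tilde\gamma)H}.
\]
Since $(2+\tilde\gamma)H > 1$, Lem.~\ref{lem:det_sewing} in $L^p(\Omega)$ yields the same bound for $\big\|\int_s^t G^i_{\tau,r}f(X_r)\, \dd B^j_r - \bar A_{s,t}\big\|_{L^p_\omega}$, and since this is uniform in $\tau$ we may set $\tau = s$, which finishes the proof.

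\textbf{Main obstacle.} The delicate part is the bookkeeping of Step 2: choosing, at each use of H\"older's inequality, moment exponents that are absorbed into $\opnorm{X}_{8p,1}$ and $\opnorm{Y}_{8p,1}$, and verifying that every resulting term carries at least the factor $(t-s)^{(2+\tilde\gamma)H}$. Conceptually, the point to get right is that the assumption $\tilde\gamma > \frac{1-H}{H}-1$ is used twice in disguise: it is exactly the condition $(2+\tilde\gamma)H > 1$ needed to close the sewing in Step 2, and it is also what makes $G^i$ (hence $G^i f(X)$) a controlled rough path regular enough for the $\omega$-wise rough integral in Step 1 to exist.
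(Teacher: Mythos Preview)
Your proposal is correct and the overall architecture (deterministic sewing in $L^p_\omega$ with a $\tau$-parametrised germ, then specialising $\tau=s$) is exactly the paper's. However, you take a genuinely different route in the choice of germ. The paper uses
\[
\tilde A_{s,t}=G^i_{\tau,s}\int_s^t f(X_r)\,\mathrm{d}B^j_r+g(Y_s)\int_s^t B^i_{s,r}f(X_r)\,\mathrm{d}B^j_r,
\]
so that the second term already encodes the iterated structure; its $\delta$ collapses to just two pieces,
\[
\delta\tilde A_{s,u,t}=-\big(G^i_{s,u}-g(Y_s)B^i_{s,u}\big)\int_u^t f(X_r)\,\mathrm{d}B^j_r-g(Y)_{s,u}\int_u^t f(X_r)B^i_{u,r}\,\mathrm{d}B^j_r,
\]
each of which is estimated directly by one application of Prop.~\ref{prop:fbm_rough} or Prop.~\ref{prop:rough_iterated_integral}. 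Your germ replaces the second term by the simpler $g(Y_s)f(X_s)\mathbb{B}^{i,j}_{s,t}$; the resulting $\delta\bar A$ then has to be expanded into five elementary pieces, but each is handled using only Prop.~\ref{prop:fbm_rough} together with moment bounds on $B$ and $\mathbb{B}$. In other words, the paper leverages Prop.~\ref{prop:rough_iterated_integral} to keep the algebra short, whereas your argument bypasses that proposition entirely at the price of a longer term-by-term estimate. Both close under $(2+\tilde\gamma)H>1$, and your moment-exponent bookkeeping does fit inside the stated $\opnorm{\,\cdot\,}_{8p,1}$ bound (indeed in several places with room to spare).
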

	\begin{proof}
		For concision, throughout the proof we write $B \defby B^H$ and $\mathbb{B} \defby \mathbb{B}^H$.
		We fix a $\tau \in [0, s]$. To estimate the iterated integral 
		\begin{align*}
			\int_s^t \Big(\int_{\tau}^{r_2} g(Y_{r_1}) \, \mathrm{d} B_{r_1}^i \Big) f(X_{r_2}) \, \mathrm{d} B^j_{r_2}
		\end{align*}
		our choice of the germ is 
		\begin{equation*}
			\tilde{A}_{s, t} \defby \Big( \int_{\tau}^s g(Y_r) \, \mathrm{d} B^i_r \Big)  \int_s^t f(X_r) \, \mathrm{d} B^j_r 
			+ g(Y_s) \int_s^t B^i_{s, r} f(X_r) \, \mathrm{d} B^j_r.
		\end{equation*}
		{\bfseries Step 1.} To see that $\tilde{A}_{s, t}$ is a correct approximation, we set 
		\begin{multline*}
			\tilde{A}'_{s, t} \defby \Big( \int_{\tau}^s g(Y_r) \, \mathrm{d} B_r^i  \Big) f(X_s) B^j_{s, t} 
			+ g(Y_s) f(X_s) \mathbb{B}^{i, j}_{s, t} \\
			+ \sum_{k, l} \Big( \int_{\tau}^s g(Y_r) \, \mathrm{d} B^i_r \Big) \partial_k f (Y_s) (Y'_s)^{k l} \mathbb{B}_{s, t}^{l, j}.
		\end{multline*}
		By definition of the rough integral, we have 
		\begin{align*}
			\int_s^t \Big(\int_{\tau}^{r_2} g(Y_{r_1}) \, \mathrm{d} B_{r_1}^i \Big) f(X_{r_2}) \, \mathrm{d} B^j_{r_2}
			= \lim_{\substack{\pi \text{ is a partition of } [s, t], \\ \abs{\pi} \to 0}} \sum_{[u, v] \in \pi} \tilde{A}'_{u, v}.
		\end{align*}
		In view of the uniqueness part of the sewing lemma (see Rem.~\ref{rem:sewing_uniq}), 
		our goal is to prove $\abs{\tilde{A}_{s, t} - \tilde{A}'_{s, t}} \lesssim (t-s)^{3H}$. We observe 
		\begin{multline*}
			\tilde{A}_{s, t} - \tilde{A}'_{s, t} 
			= \Big( \int_{\tau}^s g(Y_r) \, \mathrm{d} B_r^i \Big)
			\Big( \int_s^t f(X_r) \, \mathrm{d} B^j_r - f(X_s) B^j_{s, t} -  \sum_{k, l} \partial_k f (Y_s) (Y'_s)^{k l} \mathbb{B}_{s, t}^{l, j}\Big) \\
			+ g(Y_s) \Big( \int_s^t B^i_{s, r} f(X_r) \, \mathrm{d} B^j_r - f(X_s) \mathbb{B}_{s, r}^{i, j} \Big).
		\end{multline*}
		By the remainder estimate of the rough integral \eqref{eq:rough_fund_est}, we have 
		\begin{gather*}
			\abs[\Big]{\int_s^t f(X_r) \, \mathrm{d} B^j_r - f(X_s) B^j_{s, t} -  \sum_{k, l} \partial_k f (Y_s) (Y'_s)^{k l} \mathbb{B}_{s, t}^{l, j}} \lesssim (t-s)^{3H}, \\
			\abs[\Big]{\int_s^t B^i_{s, r} f(X_r) \, \mathrm{d} B^j_r - f(X_s) \mathbb{B}_{s, r}^{i, j}} \lesssim (t-s)^{3H}. 
		\end{gather*}
		Hence, $\tilde{A}_{s, t}$ is a correct approximation. 
		
		{\bfseries Step 2.} Again we do not need the stochastic sewing; the deterministic sewing lemma (Lem.~\ref{lem:det_sewing}) in $L^p_{\omega}$ suffices. We compute 
		\begin{equation*}
			\delta \tilde{A}_{s, u, t} = - \Big( \int_s^u g(Y_r) \, \mathrm{d} B^i_r - g(Y_s) B^i_{s, u} \Big) \int_u^t f(X_r) \, \mathrm{d} B^j_r 
			- g(X)_{s, u} \int_u^t f(X_r) B^i_{u, r} \, \mathrm{d} B^j_r.
		\end{equation*}
		By Prop.~\ref{prop:fbm_rough} and Prop.~\ref{prop:rough_iterated_integral}, 
		\begin{align*}
			\norm[\Big]{\int_s^u g(Y_r) \, \mathrm{d} B^i_r - g(Y_s) B^i_{s, u}}_{L^{2p}_{\omega}} &\lesssim \norm{g}_{\mathcal{C}_x^{\tilde{\gamma}}} (1 + \opnorm{X}_{4p, 1}) (t-s)^{(1+\tilde{\gamma})H}, \\
			\norm[\Big]{\int_u^t f(X_r) \, \mathrm{d} B^j_r}_{L^{2p}_{\omega}} & \lesssim \norm{f}_{\mathcal{C}_x^{\tilde{\gamma}}} (1 + \opnorm{Y}_{4p, 1}) (t-s)^{H}, \\
			\norm{g(X)_{s, u}}_{L^{2p}_{\omega}} &\lesssim \norm{g}_{\mathcal{C}_x^{\tilde{\gamma}}} \opnorm{X}_{4p, 1} (t-s)^{\tilde{\gamma} H}, \\
			\norm[\Big]{\int_u^t f(X_r) B^i_{u, r} \, \mathrm{d} B^j_r}_{L^{2p}_{\omega}} &\lesssim \norm{f}_{\mathcal{C}_x^{\tilde{\gamma}}} 
			(1 + \opnorm{Y}_{8p, 1}) (t-s)^{2H}.
		\end{align*}
		Since $(2+\tilde{\gamma})H > 1$, we are in the setting of the sewing lemma (Lem.~\ref{lem:det_sewing}). 
		The remainder estimate yields 
		\begin{multline*}
			\norm[\Big]{\int_s^t \Big(\int_s^{r_2} g(Y_{r_1}) \, \mathrm{d} B_{r_1}^i \Big) f(X_{r_2}) \, \mathrm{d} B^j_{r_2} - \tilde{A}_{s, t}}_{L^p_{\omega}} \\  
			\lesssim \norm{g}_{\mathcal{C}_x^{\tilde{\gamma}}} \norm{f}_{\mathcal{C}_x^{\tilde{\gamma}}} (1 + \opnorm{X}_{8p, 1}) (1 + \opnorm{Y}_{8p, 1}) (t-s)^{(2+\tilde{\gamma})H}.
		\end{multline*}
		Setting $\tau = s$, we get 
		\begin{multline*}
			\norm[\Big]{\int_s^t \Big(\int_s^{r_2} g(Y_{r_1}) \, \mathrm{d} B_{r_1}^i \Big) f(X_{r_2}) \, \mathrm{d} B^j_{r_2} - g(Y_s) \int_s^t f(X_r) B^i_{s, r} \, \mathrm{d} B^j_r}_{L^p_{\omega}} \\  
			\lesssim \norm{g}_{\mathcal{C}_x^{\tilde{\gamma}}} \norm{f}_{\mathcal{C}_x^{\tilde{\gamma}}} (1 + \opnorm{X}_{8p, 1}) (1 + \opnorm{Y}_{8p, 1}) (t-s)^{(2+\tilde{\gamma})H}.
		\end{multline*}
		By Prop.~\ref{prop:rough_iterated_integral}, 
		\begin{equation*}
			\norm[\Big]{\int_s^t f(X_r) B^i_{s, r} \, \mathrm{d} B^j_r - f(X_s) \mathbb{B}^{i, j}_{s, t}}_{L^p_{\omega}} \lesssim 
			\norm{f}_{\mathcal{C}_x^{\tilde{\gamma}}} (1 + \opnorm{Y}_{4p, 1}) (t-s)^{(2+\tilde{\gamma})H},
		\end{equation*}
		and the proof is complete.
	\end{proof}
	\subsection{Pathwise Uniqueness}\label{subsec:pw_uniq_rough}
	Our final result is the following, which is a restatement of Thm.~\ref{thm:main} in the regime $(1/3,1/2]$.
	\begin{theorem}[Pathwise Uniqueness in the Rough Regime]\label{thm:pathwise_uniqueness_rough}
		Let $T>0$, $d_1,\,d_2 \in \mbN$,  $B^H$ be an fBm with Hurst parameter $H \in (1/3, 1/2]$, $\gamma \in ((1-H)/H, 2)$ and $\sigma \in \mcC^{\gamma}(\mbR^{d_1};\mcM(d_1,d_2))$ be such that $\sigma \sigma^{\top}$ is uniformly elliptic, i.e. there exists an $l \in [1, \infty)$ such that 
		$l^{-1} \leq \sigma(x) \sigma^{\top}(x) \leq l$ for all $x \in \mathbb{R}^{d_1}$.
		Then, pathwise uniqueness holds for \eqref{eq:main_sde_rough} in the sense of Def.~\ref{def:pathwise_uniq_rough}.
	\end{theorem}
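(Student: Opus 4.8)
The plan is to follow the scheme of the proof of Thm.~\ref{thm:pathwise_uniqueness_young}, replacing Young integration by rough integration and the single estimate of Prop.~\ref{prop:fbm_young} by the three estimates Prop.~\ref{prop:fbm_rough}, Prop.~\ref{prop:rough_iterated_integral} and Prop.~\ref{prop:rough_itr_int_2}. Let $(X,\sigma(X))$ and $(Y,\sigma(Y))$ be two weak-rough solutions on the same filtered probability space driven by the same $(\mathcal{F}_t)$-fBm $B^H$ (recall that, by the discussion following Def.~\ref{def:rough_de_sol}, the Gubinelli derivatives are forced to be $\sigma(X)$ and $\sigma(Y)$). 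First I would establish the rough analogue of Lem.~\ref{lem:sol_is_controlled}, namely $(X,\sigma(X)),(Y,\sigma(Y))\in\mathfrak{D}^{1,l}_{B^H}$: the a priori bounds for rough differential equations together with the Gaussian moment bounds for $B^H$ and Kolmogorov's criterion give $\| \llbracket X\rrbracket_{\mathcal{C}^{\alpha}_T} \|_{L^p_{\omega}}<\infty$ for every $\alpha<H$ and $p<\infty$; plugging this into the remainder estimate \eqref{eq:rough_fund_est} yields $\|X_{s,t}\|_{L^p_{\omega}}\lesssim(t-s)^H$, hence $\|\sigma(X)_{s,t}\|_{L^p_{\omega}}\lesssim(t-s)^H$, and applying the sewing lemma (Lem.~\ref{lem:det_sewing}) in $L^p_{\omega}$ to the germ $\sigma(X_s)B^H_{s,t}$ gives $\|R_{s,t}\|_{L^p_{\omega}}\lesssim(t-s)^{2H}$ for the remainder $R_{s,t}=X_{s,t}-\sigma(X_s)B^H_{s,t}$; uniform ellipticity of $\sigma\sigma^{\top}$ supplies the final condition in Def.~\ref{def:control_prob}.

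Next, exactly as in \eqref{eq:diff_linearisation}, for a smooth approximation $\sigma_n\to\sigma$ in $\mathcal{C}^{\gamma'}$ with $\frac{1-H}{H}<\gamma'<\gamma$, the classical chain rule gives, for $Z\defby X-Y$,
\begin{equation*}
	\int_0^t\big(\sigma_n(X_r)-\sigma_n(Y_r)\big)\,\mathrm{d}B^H_r=\sum_k\int_0^t Z^k_r\,\mathrm{d}G^k_n(r),\qquad G^k_n(r)\defby\int_0^r h^k_n(u)\,\mathrm{d}B^H_u,
\end{equation*}
where $h^k_n(u)\defby\int_0^1\partial_k\sigma_n(\theta X_u+(1-\theta)Y_u)\,\mathrm{d}\theta$; by the remainder estimate \eqref{eq:rough_fund_est} applied to $(\sigma_n-\sigma)(X)$ and $(\sigma_n-\sigma)(Y)$, which are controlled by $B^H$ with small norm once $\gamma'>\frac{1-H}{H}$, the left-hand side converges almost surely to $\int_0^t(\sigma(X_r)-\sigma(Y_r))\,\mathrm{d}B^H_r=Z_t$. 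It remains to pass to the limit on the right, and for this I would construct the joint rough path lift \eqref{eq:B_G_lift} of $(B^H,G)$ with $G^k\defby\int_0^{\cdot} h^k\,\mathrm{d}B^H$ and $h^k\defby\int_0^1\partial_k\sigma(\theta X+(1-\theta)Y)\,\mathrm{d}\theta$. For smooth $\sigma_n$ the lift of $(B^H,G_n)$ is given by the usual iterated integrals; applying Prop.~\ref{prop:fbm_rough}, Prop.~\ref{prop:rough_iterated_integral} and Prop.~\ref{prop:rough_itr_int_2} to $\partial_k(\sigma_n-\sigma_m)$, with $\theta X+(1-\theta)Y$ playing the role of the controlled path, integrating in $\theta$, and invoking Kolmogorov's criterion shows that each component of these lifts is Cauchy in the appropriate Hölder topology; the limit is an $\alpha$-Hölder rough path lift of $(B^H,G)$ for any $\alpha<H$. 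Since $X,Y$ — hence $Z$ — are controlled by $B^H$, the path $Z$ is controlled by the joint rough path, and passing to the limit in the displayed identity gives $Z_t=\sum_k\int_0^t Z^k_r\,\mathrm{d}G^k_r$ with $Z_0=0$, a linear rough differential equation whose unique solution is $0$; therefore $X=Y$ almost surely.

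Two points need care, both present in the Young case and resolved the same way. First, the Gubinelli derivative $\theta\sigma(X)+(1-\theta)\sigma(Y)$ of the composition path $\theta X+(1-\theta)Y$ is not a priori elliptic, so the propositions of Sec.~\ref{subsec:rough_int_irreg}--\ref{subsec:rough_iterated} do not apply directly; I would first prove the theorem under the extra hypothesis $\sup_{x,y}|\sigma(x)-\sigma(y)|\le l^{-2}/4$ — which forces $[\theta\sigma(x)+(1-\theta)\sigma(y)][\theta\sigma(x)+(1-\theta)\sigma(y)]^{\top}\ge l^{-1}/4$, hence $(\theta X+(1-\theta)Y,\theta\sigma(X)+(1-\theta)\sigma(Y))\in\mathfrak{D}^{1,l/4}_{B^H}$ — and then remove it by the stopping-time/localisation argument of Steps~2--3 of the proof of Thm.~\ref{thm:pathwise_uniqueness_young}, using the a priori Hölder bound analogous to \eqref{eq:young_sol_a_priori_bd} to bound the number of stopping times on $[0,T]$ and a diagonalisation over a countable dense set of base points to handle the randomness of the intermediate positions. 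Second, the convergence of the joint lift must hold in a topology strong enough for continuity of the rough integral and of the solution map of the linear RDE: the choice $\gamma'>\frac{1-H}{H}$ is precisely what makes the exponents $(1+\tilde\gamma)H$ and $(2+\tilde\gamma)H$ appearing in Prop.~\ref{prop:fbm_rough}, Prop.~\ref{prop:rough_iterated_integral} and Prop.~\ref{prop:rough_itr_int_2} (with $\tilde\gamma=\gamma'-1$) strictly larger than $\tfrac12$ and $1$, so that Kolmogorov's criterion returns limits of the right Hölder regularity. The main obstacle is the combination of the a priori control estimate with the assembly of the three propositions into a genuinely convergent rough path lift; once that is in place, the linear-equation step and the localisation are routine adaptations of the Young case.
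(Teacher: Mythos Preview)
Your proposal is correct and follows essentially the same route as the paper: both assume the low-oscillation condition \eqref{eq:sigma_low_oscillation} (deferring the general case to the stopping-time argument of the Young proof), establish $(X,\sigma(X)),(Y,\sigma(Y))\in\mathfrak{D}^{1,l}_{B^H}$, and use Prop.~\ref{prop:fbm_rough}, \ref{prop:rough_iterated_integral}, \ref{prop:rough_itr_int_2} together with Kolmogorov's criterion to prove convergence of the rough-path lifts along a smooth approximation $\sigma_n\to\sigma$, then conclude by uniqueness for the resulting linear RDE. The only cosmetic difference is that the paper organises the linearisation via the auxiliary pair $\mathbf{G}^k=(G^k,\mathcal{G}^k)$ satisfying the modified Chen relation \eqref{eq:modified_chen} (Lem.~\ref{lem:linearization_rough}) and then separately checks that the resulting integral coincides with the genuine rough integral against the limit lift $\hat{\mathbf{G}}[\sigma]$, whereas you go directly for the full joint lift \eqref{eq:B_G_lift}; the two organisations are equivalent.
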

	Before giving the proof of Thm.~\ref{thm:pathwise_uniqueness_rough}, we require some preparations. Let $\gamma \in (\frac{1-H}{H}, 2)$ be as above and fix an $\alpha \in (1/3, H)$ such that 
	$\gamma + \gamma \alpha > 1$.
	Let $\rho \in C^\infty(\mbR^{d_1};\mcM(d_1,d_2))$ (which we will eventually think of as a smooth approximation to $\sigma$)
	and $(X, X'), (Y, Y') \in \mathscr{D}^{\gamma \alpha}_{B^H}$ (see Def.~\ref{def:controlled_path}). 
	Then, we set 
	\begin{align}\label{eq:G_k_def}
		G_t^k \defby G[\rho]^k_t \defby \int_0^t \Big( \int_0^1 \partial_k \rho(\theta X_r + (1-\theta) Y_r) \, \mathrm{d} \theta \Big) \, \mathrm{d} B_r^H,\quad \text{for}\, k=1,\ldots,d.
	\end{align}
	The integral is understood as a rough integral; since $\rho$ was assumed to be smooth for now, the path 
	\begin{align*}
		t \mapsto \int_0^1 \partial_k \rho(\theta X_t + (1-\theta) Y_t) \, \mathrm{d} \theta
	\end{align*}
	is controlled by $B^H$ with Gubinelli derivative 
	\begin{align*}
		t \mapsto \int_0^1 \nabla \partial_k \rho(\theta X_t + (1-\theta) Y_t) (\theta X_t' + (1-\theta) Y_t') \, \mathrm{d} \theta.
	\end{align*}
	We furthermore set for $k=1,\ldots,d_1$,
	\begin{align}\label{eq:curl_G_k}
		\mathcal{G}_{s, t}^k \defby \mathcal{G}[\rho]_{s, t}^k 
		= \int_s^t \Big( \int_0^1 \partial_k \rho(\theta X_t + (1-\theta) Y_t) \, \mathrm{d} \theta \Big) B_{s, r}^H \otimes \, \mathrm{d} B_r^H,
	\end{align}
	and define the vector of paths
	$\mathbf{G}^k \defby (G^k, \mathcal{G}^k)$. Note that $(\mathbf{G}^k)_{k=1}^d$ does not satisfy Chen's relation (see \eqref{eq:modified_chen} below) so it is not strictly speaking a rough path, however, the same analytic tools may be applied to it. 	By the remainder estimate of rough integration \eqref{eq:rough_fund_est} and assumed regularity of $\rho$, we may define the following finite quantity,
	\begin{align}\label{eq:G_norm}
		\|\mathbf{G}^k\|_{\alpha,\gamma} \defby \sup_{0 \leq s < t \leq T} \left( 
		\frac{\abs{G^k_{s, t}}}{(t-s)^{\alpha}} + \frac{\abs{\mathcal{G}^k_{s, t}}}{(t-s)^{\gamma \alpha}} \right)
		< \infty.
	\end{align}
	Finally, we observe the following \emph{modified} Chen's relation
	\begin{equation}\label{eq:modified_chen}
		\mathcal{G}_{s, t}^k = \mathcal{G}_{s, u}^k + \mathcal{G}_{u, t}^k + B_{s, u}^H \otimes G_{u, t}^k.
	\end{equation}

	More abstractly, we can consider any pair $\mathbf{G} = (G, \mathcal{G})$ satisfying the analytic condition \eqref{eq:G_norm} and 
	the algebraic condition \eqref{eq:modified_chen}.
	Let $Z$ be a path controlled by $B^H$.
	It is not difficult to see (simply by repeating the arguments in the usual rough path setting, e.g. \cite[Thm.~4.10]{FH20}) that 
	the integral 
	\begin{align}\label{eq:rough_int_along_G}
		\int_s^t Z_r \, \mathrm{d} \mathbf{G}_r 
		\defby \lim_{\abs{\pi} \to 0} \sum_{[u,v] \in \pi} (Z_u G_{u, v} + Z'_u \mathcal{G}_{u, v})
	\end{align}
	exists, and we have the quantitative remainder estimate
	\begin{align}\label{eq:mod_rp_remainder}
		\abs[\Big]{\int_s^t Z_r  \, \mathrm{d} \mathbf{G}_r - Z_s G_{s, t} - Z_s' \mathcal{G}_{s, t}} 
		\lesssim_{\gamma, \alpha} \norm{Z}_{\mathscr{D}_{B^H}^{\gamma \alpha}} \norm{\mathbf{G}}_{\alpha, \gamma} (t-s)^{ (1+ \gamma) \alpha}.
	\end{align}
	Furthermore, as in \cite[Thm.~4.17]{FH20}, we have the following stability estimate: by defining the metric 
	\begin{align}\label{eq:mod_rp_metric}
		d_{\alpha,\gamma}(\mathbf{G}, \bar{\mathbf{G}}) 
		\defby \sup_{0 \leq s < t \leq T} \left( \frac{\abs{G_{s, t} - \bar{G}_{s, t}}}{(t-s)^{\alpha}} +  \frac{\abs{\mathcal{G}_{s, t} - \bar{\mathcal{G}}_{s, t}}}{(t-s)^{\gamma \alpha}}\right), 
	\end{align}
	and provided that 
	$\norm{\mathbf{G}}_{\alpha, \gamma} + \norm{\bar{\mathbf{G}}}_{\alpha, \gamma} \leq M$
	for some $M \geq 1$,
	we have 
	\begin{align}\label{eq:modified_rp_stab}
		\abs[\Big]{\int_s^t Z_r \, \mathrm{d} \mathbf{G}_r - \int_s^t Z_r \, \mathrm{d} \bar{\mathbf{G}}_r} 
		\lesssim_{\gamma, \alpha, M} \norm{Z}_{\mathscr{D}^{\gamma \alpha}_{B^H}} 
		d_{\alpha}(\mathbf{G}, \bar{\mathbf{G}}) (t-s)^{\alpha}.
	\end{align}

	With the above in place and continuing to treat $\rho$ as smooth, we obtain the following linearisation,  as laid out in Sec.~\ref{subsec:proof_overview}.
	\begin{lemma}\label{lem:linearization_rough}
		Let $\gamma \in ((1-H)/H, 2)$, $\alpha \in (1/3, H)$ be such that $\gamma + \gamma \alpha > 1$, $\rho \in C^\infty(\mbR^{d_1};\mcM(d_1,d_2))$,  $(X, X'), (Y, Y') \in \mathscr{D}^{\gamma \alpha}_{B^H}$
		and for $k=1, 2, \ldots, d_1$ let us set $\mathbf{G}^k[\rho] = (G^k[\rho], \mathcal{G}^k[\rho])$ as in \eqref{eq:G_k_def} and \eqref{eq:curl_G_k}. Then it holds that 
		\begin{align*}
			\int_0^t \left(\rho(X_r) - \rho(Y_r)\right) \, \mathrm{d} B_r^H 
			= \sum_{k=1}^{d_1} \int_0^t (X^k_r - Y^k_r) \, \mathrm{d} \mathbf{G}_r^k[\rho],
		\end{align*}
		where the integral with respect to $\mathbf{G}^k[\rho]$ is understood in the same sense as \eqref{eq:rough_int_along_G}.
	\end{lemma}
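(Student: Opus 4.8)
The plan is to reduce the stated identity, by a pointwise application of the fundamental theorem of calculus together with linearity of the rough integral, to a single ``associativity of rough integration'' statement, which is then settled with the deterministic sewing lemma (Lem.~\ref{lem:det_sewing}) through its uniqueness part (Rem.~\ref{rem:sewing_uniq}).

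First I would record the pointwise identity, exactly as in \eqref{eq:diff_linearisation},
\begin{equation*}
	\rho(X_r) - \rho(Y_r) = \sum_{k=1}^{d_1} (X^k_r - Y^k_r)\, H^k_r, \qquad
	H^k_r \defby \int_0^1 \partial_k \rho\bigl(\theta X_r + (1-\theta) Y_r\bigr)\, \mathrm{d}\theta .
\end{equation*}
Since $\rho \in C^\infty$ and $(X,X'),(Y,Y') \in \mathscr{D}^{\gamma\alpha}_{B^H}$, composing a smooth function with a controlled path and averaging over $\theta$ gives $H^k \in \mathscr{D}^{\gamma\alpha}_{B^H}$ with Gubinelli derivative $(H^k)'_r = \int_0^1 \nabla \partial_k \rho(\theta X_r + (1-\theta) Y_r)\,(\theta X'_r + (1-\theta) Y'_r)\, \mathrm{d}\theta$, while $Z^k \defby X^k - Y^k \in \mathscr{D}^{\gamma\alpha}_{B^H}$ has derivative $(Z^k)'$, the $k$-th row of $X' - Y'$; in this notation $G^k = \int_0^{\cdot} H^k_r\, \mathrm{d}B^H_r$ and $\mathcal{G}^k_{s,t} = \int_s^t H^k_r\, B^H_{s,r} \otimes \mathrm{d}B^H_r$. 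By uniqueness of the Gubinelli derivative (true roughness of $B^H$; cf. the remarks following Def.~\ref{def:rough_de_sol}) the controlled path $\rho(X) - \rho(Y)$ coincides, Gubinelli derivative included, with $\sum_k Z^k H^k$, so by linearity of the rough integral it remains to prove, for each fixed $k$,
\begin{equation*}
	\int_0^t Z^k_r H^k_r\, \mathrm{d} B^H_r = \int_0^t Z^k_r\, \mathrm{d}\mathbf{G}^k_r .
\end{equation*}
Both sides are limits of the Riemann sums of admissible germs: the left-hand side comes from the rough integral of the controlled path $W \defby Z^k H^k$ (Leibniz rule: $W' = (Z^k)' \otimes H^k + Z^k (H^k)'$), the right-hand side from \eqref{eq:rough_int_along_G} applied to $Z^k$ and $\mathbf{G}^k$, which satisfies \eqref{eq:G_norm} and the modified Chen relation \eqref{eq:modified_chen}. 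Hence, by Rem.~\ref{rem:sewing_uniq}, it suffices to show that the two germs
\begin{equation*}
	A^{L}_{s,t} \defby W_s B^H_{s,t} + W'_s \mathbb{B}^H_{s,t}, \qquad
	A^{R}_{s,t} \defby Z^k_s G^k_{s,t} + (Z^k)'_s \mathcal{G}^k_{s,t}
\end{equation*}
differ by $O((t-s)^{1+\varepsilon})$ for some $\varepsilon > 0$. Applying the remainder estimate \eqref{eq:rough_fund_est} to $G^k = \int H^k\, \mathrm{d}B^H$ yields $G^k_{s,t} = H^k_s B^H_{s,t} + (H^k)'_s \mathbb{B}^H_{s,t} + O((t-s)^{(1+\gamma)\alpha})$, and since the integrand of $\mathcal{G}^k$ vanishes at the base point $s$ (because $B^H_{s,s}=0$) the same kind of estimate gives $\mathcal{G}^k_{s,t} = H^k_s \mathbb{B}^H_{s,t} + O((t-s)^{\kappa_0})$ with $\kappa_0 \defby \min\{(1+\gamma)\alpha, 3\alpha\}$. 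Substituting, and using $\sup_{0\le r\le T}(\abs{Z^k_r} + \abs{(Z^k)'_r}) < \infty$, the $B^H_{s,t}$-terms of $A^L$ and $A^R$ coincide, the two terms $Z^k_s(H^k)'_s\mathbb{B}^H_{s,t}$ cancel, and the remaining second-order terms $((Z^k)'_s \otimes H^k_s)\mathbb{B}^H_{s,t}$ (from $A^L$) and $(Z^k)'_s(H^k_s\mathbb{B}^H_{s,t})$ (the leading part of $(Z^k)'_s\mathcal{G}^k_{s,t}$) agree once the tensor conventions in the definition \eqref{eq:curl_G_k} of $\mathcal{G}^k$, in the pairing \eqref{eq:rough_int_along_G} and in the rough integral against $B^H$ are matched; the leftover is then $O((t-s)^{\kappa_0})$, and since $(1+\gamma)\alpha>1$ (by the choice of $\alpha$, cf. Def.~\ref{def:rough_de_sol}) and $\alpha>1/3$ we have $\kappa_0>1$, so Lem.~\ref{lem:det_sewing} applies and the identity follows.

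I expect this last step---verifying that the second-order corrections of the two germs genuinely coincide---to be the main obstacle. Conceptually it is just the associativity ``$\int (Z^k H^k)\,\mathrm{d}B^H = \int Z^k\,\mathrm{d}\bigl(\int H^k\,\mathrm{d}B^H\bigr)$ with the correct iterated-integral correction'', equivalently the identification of $(Z^k)'_s\mathcal{G}^k_{s,t}$ with $\int_s^t ((Z^k)'_s \cdot B^H_{s,r})\,\mathrm{d}G^k_r$ at leading order. Because $\rho$ is matrix-valued, $Z^k$ scalar while $H^k$ is a $d_1\times d_2$ matrix, and $\mathcal{G}^k$ carries an extra tensor slot, the bookkeeping of which leg of $\mathbb{B}^H$ is contracted against which factor must be carried out carefully and consistently with the conventions fixed in \eqref{eq:B_G_lift}, \eqref{eq:G_k_def} and \eqref{eq:curl_G_k}; it may also help to use that $(B^H,\mathbb{B}^H)$ is geometric (Lem.~\ref{lem:canonical_lift}). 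Everything else---controlledness of $H^k$, $Z^k$ and $W$, the Leibniz rule, and the two remainder expansions---is routine controlled-rough-path calculus, and an entirely analogous computation comparing the Riemann sums $\sum_{[u,v]\in\pi}(Z^k_u G^k_{u,v} + (Z^k)'_u\mathcal{G}^k_{u,v})$ with $\sum_{[u,v]\in\pi}\int_u^v Z^k_r H^k_r\,\mathrm{d}B^H_r$ termwise---the per-cell discrepancy being $\int_u^v R^{Z^k}_{u,r}H^k_r\,\mathrm{d}B^H_r = O((v-u)^{(1+\gamma)\alpha})$, hence summably small---would give the same conclusion.
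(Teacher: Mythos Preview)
Your approach is correct and is essentially the same as the paper's: both arguments compare the natural germs of the two sides and invoke the uniqueness part of the sewing lemma (Rem.~\ref{rem:sewing_uniq}), bounding the discrepancy by the rough integral remainder estimate \eqref{eq:rough_fund_est}. The only difference is organisational. You first split by linearity into a per-$k$ ``associativity'' statement $\int Z^k H^k\,\mathrm d B^H=\int Z^k\,\mathrm d\mathbf G^k$ and then compare the germs $A^L$ and $A^R$, relying on the Leibniz rule (or, as a shortcut, true roughness) to identify the Gubinelli derivative of $\rho(X)-\rho(Y)$ with that of $\sum_k Z^kH^k$. The paper instead compares the full germs
\[
A^1_{s,t}=(\rho(X_s)-\rho(Y_s))B^H_{s,t}+(\nabla\rho(X_s)X'_s-\nabla\rho(Y_s)Y'_s)\mathbb B^H_{s,t},\qquad
A^2_{s,t}=\sum_k\big(Z^k_sG^k_{s,t}+(Z^k)'_s\mathcal G^k_{s,t}\big)
\]
directly, using the two mean-value identities (the paper's \eqref{eq:rho_mv} and \eqref{eq:nabla_rho_mv}) to expand $A^1$ and then recognising $A^2-A^1=\sum_k(Z^k_s\Delta^{1,k}_{s,t}+(Z^k)'_s\Delta^{2,k}_{s,t})$ with $\Delta^{1,k}$, $\Delta^{2,k}$ exactly the rough-integral remainders of $G^k$ and $\mathcal G^k$. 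This has the advantage that the second mean-value identity \eqref{eq:nabla_rho_mv} already performs the index matching you flag as the main obstacle: it shows, componentwise, that $(Z^k)'_s H^k_s\,\mathbb B^H_{s,t}+Z^k_s(H^k)'_s\,\mathbb B^H_{s,t}$ summed over $k$ equals $(\nabla\rho(X_s)X'_s-\nabla\rho(Y_s)Y'_s)\mathbb B^H_{s,t}$, so no separate tensor bookkeeping or appeal to geometricity is needed. Your exponent $\kappa_0=\min\{(1+\gamma)\alpha,3\alpha\}>1$ is the right one; the paper's ``$(t-s)^{3H}$'' should be read in this sense.
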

	\begin{proof}
		The proof is completely pathwise.
		We set 
		\begin{align*}
			A^1_{s, t} &\defby 
			\left( \rho(X_s) - \rho(Y_s) \right) B^H_{s, t}  
			+ \left(\nabla \rho(X_s) X_s' -\nabla \rho(Y_s) Y_s'\right) \mathbb{B}^H_{s, t},  \\
			A^2_{s, t} &\defby \sum_{k=1}^{d_1} (X^k_s - Y^k_s) G_{s, t}^k + \left( (X'_s)^k - (Y'_s)^k \right) \mathcal{G}^k_{s, t},
		\end{align*}  
		where as usual $X^k$ denotes the $k^{\text{th}}$ element of $X \in \mbR^{d_1}$ and $(X')^k$ denotes the $k^{\text{th}}$ row of the matrix $X' \in \mcM(d_1,d_2)$, similarly for $Y,\,Y'$.
		In addition, the second term in $A^1_{s, t}$ should be understood as a $d_1$-dimensional vector whose $i^{\text{th}}$ component is given by 
		\begin{align*}
			\sum_{j, k, l}  \left(\partial_k\rho^{i j}(X_s) (X'_s)^{k l} - \partial_k\rho^{i j}(Y_s) (Y'_s)^{k l}\right) \mathbb{B}_{s, t}^{H, l j}.
		\end{align*}
		Note that 
		\begin{align*}
			\int_0^t \big(\,\rho(X_r) - \rho(Y_r)\big) \, \mathrm{d} B^H_r 
			&= \lim_{\substack{\pi \text{ is a partition of } [0, t], \\\abs{\pi} \to 0}} \sum_{[u, v] \in \pi} A^1_{u, v},  \\
			\sum_{k=1}^{d_1} \int_0^t \big(X^k_r - Y^k_r\big) \, \mathrm{d} \mathbf{G}_r^k[\rho]
			&= \lim_{\substack{\pi \text{ is a partition of } [0, t], \\\abs{\pi} \to 0}} \sum_{[u, v] \in \pi} A^2_{u, v}.
		\end{align*}
		Therefore, by the uniqueness part of the deterministic sewing lemma (Rem.~\ref{rem:sewing_uniq}), it suffices to show $\abs{A^1_{s, t} - A^2_{s, t}} \lesssim (t-s)^{3H}$. 
		It follows from the mean value theorem that we have both
		\begin{align}\label{eq:rho_mv}
			\rho(X_s) - \rho(Y_s) = \sum_{k=1}^{d_1} \int_0^1 \partial_k \rho(\theta X_s + (1-\theta) Y_s) (X^k_s - Y^k_s) \, \mathrm{d} \theta
		\end{align}
		and 
		\begin{multline}
			\label{eq:nabla_rho_mv}
			\nabla \rho(X_s) X_s' - \nabla \rho(Y_s) Y_s' 
			= \sum_{k=1}^{d_1} 
			\Big( \int_0^1 (X^k_s - Y^k_s) \partial_k \nabla \rho(\theta X_s + (1-\theta) Y_s) \\
			\times (\theta X_s' + (1-\theta) Y_s') \, \mathrm{d} \theta 
			+ \int_0^1 (X_s' - Y_s')^k \partial_k \rho(\theta X_s + (1-\theta) Y_s)  \, \mathrm{d} \theta \Big).
		\end{multline}
		By \eqref{eq:rho_mv} and \eqref{eq:nabla_rho_mv}, we compute 
		\begin{align*}
			A^2_{s, t} - A^1_{s, t} 
			= \sum_{k=1}^{d_1} \left( (X^k_s - Y^k_s) \Delta^{1, k}_{s, t} + 
			\left( (X'_s)^k - (Y'_s)^k \right) \Delta^{2, k}_{s, t} \right),
		\end{align*}
		where 
		\begin{align*}
			\Delta^{1, k}_{s, t} \defby &\int_s^t \Big( \int_0^1 \partial_k \rho(\theta X_r + (1-\theta) Y_r) \, \mathrm{d} \theta \Big) \, \mathrm{d} B^H_r
			- \left(  \int_0^1 \partial_k \rho(\theta X_s + (1-\theta) Y_s) \, \mathrm{d} \theta \right) B^H_{s, t}  \\
			&- \Big(  \int_0^1 \partial_k \nabla \rho(\theta X_s + (1-\theta) Y_s) (\theta X_s' + (1-\theta) Y_s') \, \mathrm{d} \theta \Big) \mathbb{B}^H_{s, t}
		\end{align*}
		and 
		\begin{multline*}
			\Delta^{2, k}_{s, t} \defby \int_s^t \Big(  \int_0^1 \partial_k \rho(\theta X_r + (1-\theta) Y_r) \, \mathrm{d} \theta \Big) B^H_{s,r} \otimes \, \mathrm{d} B^H_r \\
			- \Big(  \int_0^1 \partial_k \rho(\theta X_s + (1-\theta) Y_s) \, \mathrm{d} \theta \Big) \mathbb{B}^H_{s, t}.
		\end{multline*}
		By the remainder estimate of the rough integral \eqref{eq:rough_fund_est} and assumed regularity of $\rho$,  
		both $\Delta^{1, k}_{s, t}$ and $\Delta^{2, k}_{s, t}$ are bounded by $(t-s)^{3H}$ up to constant, 
		and the proof is complete.
	\end{proof}
	We are now ready to prove Thm.~\ref{thm:pathwise_uniqueness_rough}, the key step being 
	to use Prop.~\ref{prop:fbm_rough}, \ref{prop:rough_iterated_integral} \& \ref{prop:rough_itr_int_2} to prove the convergence of 
	the rough path lifts from $(B^H, \int \partial_k \sigma_n (X_r) \otimes \, \mathrm{d} B^H_r)$ as in \eqref{eq:B_G_lift}, where 
	the sequence $(\sigma_n)$ of smooth maps approximates $\sigma$ in a suitable topology, specified below.
	%
	\begin{proof}[Proof of Thm.~\ref{thm:pathwise_uniqueness_rough}]
		We assume that \eqref{eq:sigma_low_oscillation} holds, as the general case is handled by stopping time arguments as exhibited in the proof of Thm.~\ref{thm:pathwise_uniqueness_young}.
		Let $(X, B^H)$ and $(Y, B^H)$ be two weak solutions to \eqref{eq:main_sde_rough} defined on the same filtered probability space $(\Omega,\mcF,(\mcF_t)_{t\in [0,T]},\mbP)$  and driven by the same $(\mathcal{F}_t)$-fBm, $B^H$. 
		Since the requirement on $\gamma \in ((1-H/H,2))$ is given by an open interval it is always possible to find a $\gamma' \in ((1-H)/H,\gamma)$. We then let $(\sigma_n)_{n=1}^{\infty}$ be a smooth approximation to $\sigma$ converging in $\mcC^{\gamma'}(\mbR^{d_1};\mcM(d_1,d_2))$ (which exists by the compact embedding between H\"older--Besov spaces).
		By \cite[Lem~7.2]{FH20} or \cite[Eq.~(1.8)]{dareiotisgerencser22}, 
		we have 
		\begin{equation*}
			\lim_{n \to \infty} \norm{((\sigma - \sigma_n)(X_r), \nabla (\sigma - \sigma_n)(X_r) X'_r)}_{\mathscr{D}^{\gamma \alpha}_{B^H}} = 0
		\end{equation*}
		and similarly for $Y$.
		In view of \eqref{eq:rough_fund_est}, we then have 
		\begin{align*}
			X_t - Y_t = \lim_{n \to \infty} \int_0^t \left(\sigma_n(X_r) - \sigma_n(Y_r)\right) \, \mathrm{d} B^H_r,
		\end{align*}
		and so by Lem.~\ref{lem:linearization_rough}, 
		\begin{align*}
			X_t - Y_t = \lim_{n \to \infty} \sum_{k=1}^{d_1} \int_0^t (X^k_r - Y^k_r) \, \mathrm{d} \mathbf{G}^k_r[\sigma_n].
		\end{align*}
		By repeating the argument of Lem.~\ref{lem:sol_is_controlled}, we can show that 
		$(X, \sigma(X)) \in \mathfrak{D}^{1, l}_{B^H}$ and similarly for $Y$.
		Furthermore, by the condition \eqref{eq:sigma_low_oscillation}, as checked in Step 1 of the proof of Thm.~\ref{thm:pathwise_uniqueness_young}, we have 
		\begin{align*}
			\theta X + (1-\theta) Y \in \mathfrak{D}^{1, l/4}_{B^H}, \quad \forall \theta \in [0, 1].
		\end{align*}
		Therefore, by Prop.~\ref{prop:fbm_rough} \& \ref{prop:rough_iterated_integral} (with $f = \partial_k \sigma_n - \partial_k \sigma_m$), 
		and the Kolmogorov criterion for rough paths 
		(\cite[Thm.~3.1]{FH20}), the sequence of the lifts $\mathbf{G}^k[\sigma_n]$ converges in $L^p(\Omega;\mbR)$ to some limit 
		$\mathbf{G}^k[\sigma] = (G^k[\sigma], \mathcal{G}^k[\sigma])$ in the metric $d_{\alpha, \gamma'}$ defined by \eqref{eq:mod_rp_metric}. 
		Namely, for all  $k = 1, 2, \ldots, d_1$ and $p \in [1, \infty)$,
		\begin{equation*}
			\lim_{n \to \infty} \norm{d_{\alpha, \gamma'}(\mathbf{G}^k[\sigma_n], \mathbf{G}^k[\sigma])}_{L^p_\omega} = 0
		\end{equation*}
		The stability estimate \eqref{eq:modified_rp_stab} yields 
		\begin{align*}
			X_t - Y_t = \sum_{k=1}^{d_1} \int_0^t (X^k_r - Y^k_r) \, \mathrm{d} \mathbf{G}^k_r[\sigma] \quad \text{a.s.}
		\end{align*}

		The remainder estimate \eqref{eq:mod_rp_remainder} shows that $X-Y$ is controlled by $(G^k)_{k=1}^{d_1}$ with 
		Gubinelli derivative $X - Y$. 
		Moreover, similarly to $\mathbf{G}^k[\sigma]$, by Prop.~\ref{prop:rough_itr_int_2} (with $f = \partial_k \sigma_n - \partial_k \sigma_m$, $g = \partial_l \sigma_n$ and $f = \partial_k \sigma_n$, $g = \partial_l \sigma_n - \partial_l \sigma_m$) and the Kolmogorov criterion for rough paths, the sequence of the rough paths 
		\begin{align*}
			\hat{\mathbf{G}}[\sigma_n] \defby \Big((G^k[\sigma_n])_{k=1}^{d_1}, \Big(\int G^k[\sigma_n] \otimes \, \mathrm{d} G^l[\sigma_n] \Big)_{k,l=1}^{d_1}\Big) 
		\end{align*}
		converges to some limit $\hat{\mathbf{G}}[\sigma]$.  
		We claim that 
		\begin{align}\label{eq:mod_rp_vs_rp}
			\int_0^t (X^k_r - Y^k_r) \, \mathrm{d} \mathbf{G}^k_r[\sigma]
			= \int_0^t (X^k_r - Y^k_r) \, \mathrm{d} \hat{\mathbf{G}}^k_r[\sigma] \quad \text{a.s.}
		\end{align}
		We note that the integral in the right-hand side is defined as rough integral with respect to $\hat{\mathbf{G}}[\sigma]$, which is well-defined since 
		$X-Y$ is controlled by $(G^k)_{k=1}^{d_1}$ with Gubinelli derivative $X-Y$.
		To see \eqref{eq:mod_rp_vs_rp}, we again rely on the uniqueness part of the sewing lemma (Rem.~\ref{rem:sewing_uniq}). By definition we have
		\begin{align*}
			\int_0^t (X^k_r - Y^k_r) \, \mathrm{d} \mathbf{G}^k_r
			- \int_0^t (X^k_r - Y^k_r) \, \mathrm{d} \hat{\mathbf{G}}^k_r
			= \lim_{\substack{\pi \text{ is a partition of } [0, t], \\ \abs{\pi} \to 0}} 
			\sum_{[u, v] \in \pi} \Delta_{u, v},
		\end{align*}
		where 
		\begin{align*}
			\Delta_{s, t} \defby &\,(\sigma(X_s) - \sigma(Y_s)) \mathcal{G}^k_{s, t}[\sigma] 
			- \sum_{l=1}^{d_1} (X^l_s - Y^l_s) \int_s^t G^l_{s, r}[\sigma] \otimes \, \mathrm{d} G_r^k[\sigma] \\ 
			= &\sum_{l=1}^{d_1} (X^l_s - Y^l_s)  \left( \left( \int_0^1 \partial_l \sigma(\theta X_s + (1-\theta) Y_s) \, \mathrm{d} \theta \right) \mathcal{G}^k_{s, t}[\sigma]- \int_s^t G^l_{s, r}[\sigma] \otimes \, \mathrm{d} G_r^k[\sigma] \right).
		\end{align*}
		By Prop.~\ref{prop:rough_iterated_integral} \& \ref{prop:rough_itr_int_2}, 
		\begin{equation*}
			\norm[\Big]{\Big( \int_0^1 \partial_l \sigma(\theta X_s + (1-\theta) Y_s) \, \mathrm{d} \theta \Big) \mathcal{G}^k_{s, t} - \int_s^t G^l_{s, r} \otimes \, \mathrm{d} G_r^k\,}_{L^p_{\omega}} 
			\lesssim (t-s)^{(1+\gamma') H},
		\end{equation*}
		and as $(1+\gamma)H > 1$ the identity \eqref{eq:mod_rp_vs_rp} is established.
		
		Hence, $X-Y$ solves the linear rough differential equation 
		\begin{align*}
			\mathrm{d} Z_t = \sum_{k=1}^{d_1} Z_t^k \, \mathrm{d} \hat{\mathbf{G}}_t^k, \quad Z_0 = 0,  
		\end{align*}
		and its uniqueness implies that for $\mbP$-a.e. $\omega$ we have  $X(\omega) = Y(\omega)$, which concludes the proof.
	\end{proof}
	%
	\begin{appendix}
		\section{Properties of Heat Semi-Groups}
		We collect some standard, useful results concerning heat semi-groups with positive definite conductances.
		\begin{lemma}\label{lem:semigroup_regularisation}
			Let $c \in (1, \infty)$ and $\Gamma \in \cM_+$ such that $c^{-1} \leq \Gamma \leq c$. 
			Then,
			for $\alpha, \beta \in \R$ with $\beta \geq 0 \vee \alpha$ and $t \in (0, T)$ one has
			\begin{equation*}
				\norm{\cP_{t\Gamma } f}_{\cC^{\beta}_x} \lesssim_{c, \alpha, \beta} (t \wedge 1)^{-\frac{\beta-\alpha}{2}}
				\norm{f}_{\cC^{\alpha}_x}.
			\end{equation*}
		\end{lemma}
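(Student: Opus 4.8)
The plan is to run the standard Littlewood--Paley proof of Schauder estimates, the one point needing attention being that every bound is uniform over the ellipticity class $\{c^{-1}\leq\Gamma\leq c\}$. Fix a dyadic partition of unity $(\Delta_j)_{j\geq-1}$ on $\R^{d_1}$. For $\gamma\notin\N_0$ (and for $\gamma<0$ via \eqref{eq:negative_gamma_norm}) one has $\norm{g}_{\cC^\gamma_x}\simeq\sup_{j\geq-1}2^{j\gamma}\norm{\Delta_j g}_{L^\infty_x}$, while for $\gamma\in\N_0$ the classical norm of Sec.~\ref{subsec:notation} dominates the same right-hand side (this covers the appearance of $\norm{f}_{\cC^\alpha_x}$ below for any admissible $\alpha$). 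Since $\Gamma$ is a symmetric positive definite matrix with $c^{-1}\abs{z}^2\leq\langle z,\Gamma z\rangle\leq c\abs{z}^2$, the operator $\cP_{t\Gamma}$ is the Fourier multiplier with symbol $\xi\mapsto e^{-\frac t2\langle\xi,\Gamma\xi\rangle}$; in particular it commutes with each $\Delta_j$ and is an $L^\infty_x$-contraction (being convolution by a probability density). The low-frequency block is then immediate: $2^{-\beta}\norm{\Delta_{-1}\cP_{t\Gamma}f}_{L^\infty_x}\leq 2^{-\beta}\norm{\Delta_{-1}f}_{L^\infty_x}\lesssim_{\alpha,\beta}\norm{f}_{\cC^\alpha_x}\leq(t\wedge1)^{-\frac{\beta-\alpha}{2}}\norm{f}_{\cC^\alpha_x}$, using $\beta\geq\alpha$ and $t\wedge1\leq1$.

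For $j\geq0$ the gain comes from a uniform kernel estimate. Pick a fixed $\tilde\psi\in C^\infty_c(\R^{d_1})$ supported in an annulus $\{c_0\leq\abs{\eta}\leq c_0^{-1}\}$ and equal to $1$ on the (rescaled) Fourier support of $\Delta_0$, so that $\tilde\psi(2^{-j}\cdot)\equiv1$ on the Fourier support of $\Delta_j$ and hence $\Delta_j\cP_{t\Gamma}f=K_{j,t,\Gamma}\ast\Delta_j f$ with $\widehat{K_{j,t,\Gamma}}(\xi)=\tilde\psi(2^{-j}\xi)\,e^{-\frac t2\langle\xi,\Gamma\xi\rangle}$. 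Substituting $\xi=2^j\eta$ and setting $s\defby t2^{2j}$ gives $K_{j,t,\Gamma}(x)=2^{jd_1}\Phi_{s,\Gamma}(2^jx)$, whence $\norm{K_{j,t,\Gamma}}_{L^1_x}=\norm{\Phi_{s,\Gamma}}_{L^1}$, where $\Phi_{s,\Gamma}(y)\defby(2\pi)^{-d_1}\int\tilde\psi(\eta)\,e^{-\frac s2\langle\eta,\Gamma\eta\rangle}e^{iy\cdot\eta}\dd\eta$. Estimating $\abs{y}^{2N}\Phi_{s,\Gamma}(y)$ by integrating by parts $2N$ times (applying $(-\Delta_\eta)^N$ to the integrand): each derivative falling on $e^{-\frac s2\langle\eta,\Gamma\eta\rangle}$ produces a factor polynomial in $s$ whose $\eta$- and $\Gamma$-dependent coefficients are bounded on the annulus by the upper bound $\Gamma\leq c$, while the exponential is controlled by $e^{-\frac{s}{2c}\abs{\eta}^2}\leq e^{-\frac{c_0^2}{2c}s}$ via the lower bound $\Gamma\geq c^{-1}$; absorbing the polynomial into half of the exponential yields $\abs{(-\Delta_\eta)^N[\tilde\psi\,e^{-\frac s2\langle\cdot,\Gamma\cdot\rangle}]}\lesssim_{N,c}e^{-\frac{c_0^2}{4c}s}$ on the (compact) annulus. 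Choosing $2N>d_1$ and integrating gives $\norm{K_{j,t,\Gamma}}_{L^1_x}\lesssim_{c}e^{-c't2^{2j}}$ with $c'>0$ depending only on $c$ (and on the fixed cutoffs and $d_1$), uniformly over the ellipticity class.

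Combining the pieces, for $j\geq0$,
\[
2^{j\beta}\norm{\Delta_j\cP_{t\Gamma}f}_{L^\infty_x}\leq 2^{j\beta}\norm{K_{j,t,\Gamma}}_{L^1_x}\norm{\Delta_j f}_{L^\infty_x}\lesssim_c 2^{j(\beta-\alpha)}e^{-c't2^{2j}}\norm{f}_{\cC^\alpha_x},
\]
and the elementary bound $\sup_{x>0}x^{\mu}e^{-c'tx}\lesssim_{\mu,c'}(t\wedge1)^{-\mu}$ (checked separately for $t<1$ and $t\geq1$), applied with $x=2^{2j}$ and $\mu=\frac{\beta-\alpha}{2}\geq0$, produces the factor $(t\wedge1)^{-\frac{\beta-\alpha}{2}}$. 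Taking the supremum over $j\geq-1$ of the two cases proves the claim when $\beta\notin\N_0$. When $\beta\in\N_0$, where $\cC^\beta_x$ is the classical space of functions with bounded derivatives up to order $\beta$, one instead argues directly: for $\abs{l}\leq\beta$ write $\partial^l_x\cP_{t\Gamma}f=(\partial^l_x p_{t\Gamma})\ast f$ and, using $\int\partial^l_x p_{t\Gamma}=0$ for $\abs{l}\geq1$ to subtract the relevant low-order Taylor polynomial of $f$, exploit its $\cC^\alpha_x$-regularity together with the uniform bounds $\norm{\abs{\cdot}^{\theta}\,\partial^l_x p_{t\Gamma}}_{L^1}\lesssim_{c,l,\theta}t^{(\theta-\abs{l})/2}$, which follow from the scaling $p_{t\Gamma}(x)=t^{-d_1/2}p_\Gamma(t^{-1/2}x)$ and the Schwartz seminorm bounds on $p_\Gamma$ and its derivatives (depending on $\Gamma$ only through its ellipticity constants). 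The only genuinely delicate point throughout is exactly this uniformity: the lower ellipticity bound yields the exponential/Gaussian decay, the upper bound keeps the polynomial corrections (resp. the seminorms of $p_\Gamma$) under control, and it is this that makes the constant depend on $c$ but not on $\Gamma$ itself.
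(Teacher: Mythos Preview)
Your argument is correct, but the paper takes a much shorter route. Rather than tracking the ellipticity constants through a full Littlewood--Paley proof, the paper reduces to the isotropic case $\Gamma=I_{d_1}$ by the linear change of variables $\Lambda_{\Gamma} f(x)\defby f(\sqrt{\Gamma}\,x)$, for which one has the identity $\cP_{t\Gamma}f=\Lambda_{\Gamma^{-1}}\cP_t\Lambda_{\Gamma}f$; since $\Lambda_{\Gamma}$ and $\Lambda_{\Gamma^{-1}}$ are bounded on $\cC^\gamma_x$ with operator norms depending only on $c$, the general estimate follows immediately from the standard Schauder estimate for the usual heat semigroup $\cP_t$, which the paper simply cites from the literature. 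What your approach buys is self-containment: you actually prove the $\Gamma=I_{d_1}$ case instead of outsourcing it, and you do so directly in the anisotropic setting without the preliminary reduction. What the paper's approach buys is economy: the change of variables is a one-line observation that makes the uniformity in $\Gamma$ automatic, and the remaining work is entirely classical.
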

		\begin{proof}
			We write
			\begin{equation}\label{eq:Lambda}
				\Lambda_{\Gamma} f(x) \defby f(\sqrt{\Gamma} x).
			\end{equation}
			Then $\cP_{t\Gamma} f = \Lambda_{\Gamma^{-1}} \cP_{t} \Lambda_{\Gamma} f$.
			Since
			\begin{equation*}
				\norm{\Lambda_{\Gamma}f }_{\mathcal{C}_x^{\gamma}} \lesssim_{c, \gamma} \norm{f}_{\mathcal{C}_x^{\gamma}}
				\lesssim_{c, \gamma} \norm{\Lambda_{\Gamma}f }_{\mathcal{C}_x^{\gamma}},
			\end{equation*}
			we can assume that $\Gamma = I_d$. Then the claim follows from \cite[Lem.~2.3]{dareiotis_gerenscer_le_23_euler}, note that the inclusion of $t=1$ in the definition of the inhomogeneous H\"older-Besov norm makes no difference to the arguments given therein..
		\end{proof}
		\begin{lemma}\label{lem:semigroup_difference}
			Let $c \in (1, \infty)$ and let $\Gamma_i \in \cM_+$ $(i = 1, 2)$ be such that $c^{-1} \leq \Gamma_i \leq c$. Then,
			for $\alpha, \beta \in \R$ with $\beta \geq \alpha \vee 0$ and $t \in (0, T)$ we have
			\begin{equation*}
				\norm{(\cP_{t \Gamma_1} - \cP_{t\Gamma_2 }) f}_{\cC_x^{\beta}}
				\lesssim_{c, \alpha, \beta} (t \wedge 1)^{-\frac{\beta-\alpha}{2}} \abs{\Gamma_1 - \Gamma_2}
				\norm{f}_{\cC_x^{\alpha}}.
			\end{equation*}
		\end{lemma}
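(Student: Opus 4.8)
The plan is to interpolate between the two conductances and differentiate the heat kernel in its matrix argument. For $\theta \in [0,1]$ put $\Gamma_\theta \defby (1-\theta)\Gamma_2 + \theta\Gamma_1$; since the positive-definite order cone is convex, $c^{-1} \le \Gamma_\theta \le c$ for every $\theta$, so any estimate proved below will be uniform in $\theta$. By the fundamental theorem of calculus,
\begin{equation*}
	(\cP_{t\Gamma_1} - \cP_{t\Gamma_2}) f = \int_0^1 \frac{\mathrm{d}}{\mathrm{d}\theta}\big(\cP_{t\Gamma_\theta} f\big)\,\mathrm{d}\theta = \int_0^1 \big(D_\Gamma \cP_{t\Gamma_\theta} f\big)[\Gamma_1 - \Gamma_2]\,\mathrm{d}\theta,
\end{equation*}
where $D_\Gamma$ denotes the derivative in the matrix argument. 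This isolates the factor $\abs{\Gamma_1 - \Gamma_2}$, so it remains to bound $\norm{(D_\Gamma \cP_{t\Gamma_\theta} f)[M]}_{\cC^\beta_x}$ uniformly in $\theta$ by $\abs{M}(t\wedge 1)^{-\frac{\beta-\alpha}{2}}\norm{f}_{\cC^\alpha_x}$ for $M \in \cM$.

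To compute the derivative, take Fourier transforms: $p_{t\Gamma}$ has symbol $\xi \mapsto \exp(-\tfrac{t}{2}\langle\xi,\Gamma\xi\rangle)$, so $D_\Gamma p_{t\Gamma}[M]$ has symbol $-\tfrac{t}{2}\langle\xi,M\xi\rangle\exp(-\tfrac{t}{2}\langle\xi,\Gamma\xi\rangle)$; equivalently, from the heat equation $\partial_t p_{t\Gamma} = \tfrac{1}{2}\operatorname{tr}(\Gamma D^2 p_{t\Gamma})$ one reads off
\begin{equation*}
	\big(D_\Gamma\cP_{t\Gamma} f\big)[M] = \frac{t}{2}\operatorname{tr}\!\big(M D^2 \cP_{t\Gamma} f\big) = \frac{t}{2}\sum_{i,j} M_{ij}\,\partial_i\partial_j \cP_{t\Gamma} f.
\end{equation*}
Now estimate the right-hand side in $\cC^\beta_x$: since taking two spatial derivatives costs two orders of regularity, $\norm{\partial_i\partial_j \cP_{t\Gamma_\theta} f}_{\cC^\beta_x} \lesssim \norm{\cP_{t\Gamma_\theta} f}_{\cC^{\beta+2}_x}$, and Lemma~\ref{lem:semigroup_regularisation} applies with target regularity $\beta + 2 \ge 0 \vee \alpha$ (valid since $\beta \ge 0$ and $\beta \ge \alpha$), giving $\norm{\cP_{t\Gamma_\theta} f}_{\cC^{\beta+2}_x} \lesssim_{c,\alpha,\beta} (t\wedge1)^{-1-\frac{\beta-\alpha}{2}}\norm{f}_{\cC^\alpha_x}$, uniformly in $\theta$. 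Multiplying by the prefactor $\tfrac{t}{2}\abs{M}$ and integrating over $\theta$ yields
\begin{equation*}
	\norm{(\cP_{t\Gamma_1} - \cP_{t\Gamma_2}) f}_{\cC^\beta_x} \lesssim_{c,\alpha,\beta} t\,(t\wedge1)^{-1}\,(t\wedge1)^{-\frac{\beta-\alpha}{2}}\,\abs{\Gamma_1-\Gamma_2}\,\norm{f}_{\cC^\alpha_x},
\end{equation*}
and since $t \in (0,T)$ one has $t\,(t\wedge1)^{-1} = \max(1,t) \lesssim_T 1$, which proves the claim.

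The only real obstacle is the prefactor $t$ produced by differentiating the kernel in $\Gamma$: it does not decay, and in the argument above it costs a factor $\max(1,t)$, harmless on $(0,T)$ but leaving a mild $T$-dependence in the constant. If a fully $T$-free bound is wanted, one instead runs the estimate through the Littlewood–Paley characterisation of $\cC^\gamma_x$: on the block at frequency $\sim 2^j$ the symbol of $\tfrac{t}{2} M_{ij}\partial_i\partial_j\cP_{t\Gamma_\theta}$ is $\lesssim \abs{M}\,t\,2^{2j}e^{-c' t 2^{2j}}$ (with $c' = c'(c)>0$, using $c^{-1}\le\Gamma_\theta$), and likewise for its derivatives, so standard Fourier-multiplier estimates bound $2^{j\beta}\norm{\Delta_j (D_\Gamma\cP_{t\Gamma_\theta}f)[M]}_{L^\infty_x}$ by $\abs{M}\,t\,2^{j(2+\beta-\alpha)}e^{-c't2^{2j}}\norm{f}_{\cC^\alpha_x}$; the substitution $u = t 2^{2j}$ turns the $t$-dependent factor into $t^{-\frac{\beta-\alpha}{2}} u^{\frac{2+\beta-\alpha}{2}} e^{-c'u}$, and boundedness of $u \mapsto u^{\frac{2+\beta-\alpha}{2}}e^{-c'u}$ on $(0,\infty)$ (the exponent is positive as $\beta \ge \alpha$) kills the $t$-power cleanly, giving $\lesssim_{c,\alpha,\beta}\abs{M}(t\wedge1)^{-\frac{\beta-\alpha}{2}}\norm{f}_{\cC^\alpha_x}$ once one also handles the low-frequency part via $t e^{-c't} \lesssim 1$. (As in the proof of Lemma~\ref{lem:semigroup_regularisation} one could first conjugate by $\Lambda_\Gamma$ to normalise a conductance to the identity, but since the two conductances here genuinely differ this only simplifies bookkeeping, not the structure of the argument.)
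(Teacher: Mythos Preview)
Your argument is correct. The interpolation $\Gamma_\theta = (1-\theta)\Gamma_2 + \theta\Gamma_1$ combined with the identity $(D_\Gamma\cP_{t\Gamma}f)[M] = \tfrac{t}{2}\operatorname{tr}(M D^2\cP_{t\Gamma}f)$ and a single appeal to Lemma~\ref{lem:semigroup_regularisation} at regularity $\beta+2$ gives the claim cleanly; the factor $t(t\wedge 1)^{-1}=\max(1,t)\lesssim_T 1$ is indeed the only residue, and you correctly flag and dispose of it.

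The paper proceeds quite differently. It first imports a pointwise heat-kernel difference bound $\abs{p_{t\Gamma_1}(y)-p_{t\Gamma_2}(y)}\lesssim_c\abs{\Gamma_1-\Gamma_2}(p_{t\Gamma_1/2}(y)+p_{t\Gamma_2/2}(y))$ from \cite{dareiotis_gerenscer_le_23_euler} to get the case $\alpha=\beta=0$, then uses commutation with Littlewood--Paley blocks to cover $\alpha=\beta\notin\mathbb{N}$, treats $\alpha=\beta=1$ by hand, and finally bootstraps the general $\alpha<\beta$ case via the telescoping identity $\cP_{t\Gamma_1}-\cP_{t\Gamma_2}=(\cP_{t\Gamma_1/2}-\cP_{t\Gamma_2/2})\cP_{t\Gamma_1/2}+\cP_{t\Gamma_2/2}(\cP_{t\Gamma_1/2}-\cP_{t\Gamma_2/2})$ together with Lemma~\ref{lem:semigroup_regularisation}. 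Your route is shorter and more self-contained---no external kernel estimate, no integer/non-integer case split---at the price of the mild $T$-dependence in the constant, which is irrelevant for every application in the paper. The paper's route, on the other hand, yields a genuinely $T$-free constant without the supplementary multiplier argument you sketch at the end.
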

		\begin{proof}
			We first claim
			\begin{equation}\label{eq:semigroup_diffrence_L_infty}
				\norm{(\cP_{\Gamma_1 t} - \cP_{\Gamma_2 t}) f}_{L^{\infty}}
				\lesssim_{c} \abs{\Gamma_1 - \Gamma_2} \norm{f}_{L^{\infty}}.
			\end{equation}
			Indeed,
			we have
			\begin{equation*}
				(\cP_{\Gamma_1 t} - \cP_{\Gamma_2 t}) f(x) = \int_{\R^{d_1}} \left(p_{\Gamma_1 t}(y) - p_{\Gamma_2 t}(y)\right) f(x - y) \, \mathrm{d} y.
			\end{equation*}
			By \cite[Prop.~2.7]{dareiotis_gerenscer_le_23_euler},
			\begin{equation}\label{eq:heat_kernel_difference}
				\abs{p_{\Gamma_1 t}(y) - p_{\Gamma_2 t}(y)} \lesssim_c
				\abs{\Gamma_1 - \Gamma_2} (p_{\Gamma_1 t/2}(y) + p_{\Gamma_2 t/2}(y)),
			\end{equation}
			which yields the estimate \eqref{eq:semigroup_diffrence_L_infty}.
			
			Next we prove the estimate \eqref{eq:semigroup_difference} for $\alpha = \beta \notin \N$.
			The proof uses the characterization of Besov spaces by Littlewood--Paley blocks \cite{bahouri_chemin_danchin_11_fourier}.
			The case for $\alpha = \beta = 0$ is proved by \eqref{eq:semigroup_diffrence_L_infty}.
			Let $\{\Delta_j\}_{j=-1}^{\infty}$ be Littlewood--Paley blocks.
			Since $\Delta_j$ and $\cP_{\Gamma_i t}$ commute, the estimate \eqref{eq:semigroup_diffrence_L_infty} yields
			\begin{equation*}
				\norm{\Delta_j (\cP_{\Gamma_1 t} - \cP_{\Gamma_2 t})f}_{L^{\infty}}
				\lesssim_c \abs{\Gamma_1 - \Gamma_2} \norm{\Delta_j f}_{L^{\infty}}.
			\end{equation*}
			Therefore,
			\begin{equation*}
				\norm{(\cP_{\Gamma_1 t} - \cP_{\Gamma_2 t}) f}_{\mathcal{C}^{\beta}_x}
				\lesssim_{c} \abs{\Gamma_1 - \Gamma_2} 
				\norm{f}_{\mathcal{C}^{\beta}_x}.
			\end{equation*}

			Next we prove the estimate \eqref{eq:semigroup_difference} for $\alpha = \beta \in \N$.
			Since we only need the case $\alpha = \beta =1$, we only prove that case. We have
			\begin{equation*}
				\abs{(\cP_{\Gamma_1 t} - \cP_{\Gamma_2 t})f(x) - (\cP_{\Gamma_1 t} - \cP_{\Gamma_2 t})f(y)} 
				\leq \int_{\R^{d_1}} \abs{p_{\Gamma_1 t}(z) - p_{\Gamma_2 t}(z)} \abs{f(x - z) - f(y - z)} \, \mathrm{d} z.
			\end{equation*}
			By \eqref{eq:heat_kernel_difference},
			\begin{multline*}
				\abs{(\cP_{\Gamma_1 t} - \cP_{\Gamma_2 t})f(x) - (\cP_{\Gamma_1 t} - \cP_{\Gamma_2 t})f(y)} \\ 
				\lesssim_c \abs{\Gamma_1 - \Gamma_2} \int_{\R^{d_1}} \abs{p_{\Gamma_1 t/2}(z) + p_{\Gamma_2 t/2}(z)} \abs{f(x - z) - f(y - z)} \, \mathrm{d} z, 
			\end{multline*}
			which is then bounded by  
			$2 \abs{\Gamma_1 - \Gamma_2} \norm{f}_{C^1} \abs{x - y}$.
			
			Finally, we prove the general case. Since
			\begin{equation*}
				\cP_{\Gamma_1 t} - \cP_{\Gamma_2 t} = (\cP_{\Gamma_1 t/2} - \cP_{\Gamma_2 t/2}) \cP_{\Gamma_1 t /2}
				+ \cP_{\Gamma_2 t / 2} (\cP_{\Gamma_1 t/2} - \cP_{\Gamma_2 t/2}),
			\end{equation*}
			we have
			\begin{equation*}
				\norm{(\cP_{\Gamma_1 t} - \cP_{\Gamma_2 t}) f}_{\mathcal{C}^{\beta}_x} 
				\leq \norm{ (\cP_{\Gamma_1 t/2} - \cP_{\Gamma_2 t/2}) \cP_{\Gamma_1 t /2} f}_{\mathcal{C}^{\beta}_x}
				+ \norm{ \cP_{\Gamma_2 t / 2} (\cP_{\Gamma_1 t/2} - \cP_{\Gamma_2 t/2}) f}_{\mathcal{C}^{\beta}_x}.
			\end{equation*}
			As for the first term, the estimate \eqref{eq:semigroup_difference} with $\alpha = \beta$ and Lem.~\ref{lem:semigroup_regularisation}
			imply
			\begin{equation*}
				\norm{ (\cP_{\Gamma_1 t/2} - \cP_{\Gamma_2 t/2}) \cP_{\Gamma_1 t /2} f}_{\mathcal{C}^{\beta}_x}
				\lesssim_c \abs{\Gamma_1 - \Gamma_2} \norm{ \cP_{\Gamma_1 t /2} f}_{\mathcal{C}^{\beta}_x} 
				\lesssim_{c, \alpha, \beta} \abs{\Gamma_1 - \Gamma_2} (t \wedge 1)^{-\frac{\beta - \alpha}{2}} \norm{f}_{\mathcal{C}_x^{\alpha}}.
			\end{equation*}
			The estimate of the second term is similar.
		\end{proof}
		\section{Gaussian and Integral Computations}\label{sec:gaussian}
		We compile the proofs of Lem.~\ref{lem:Y_dot}, \ref{lem:tilde_B_rp}, \ref{lem:gaussian_integral} and \ref{lem:integral_along_tilde_B}. These are almost all proved by elementary estimates and Gaussian analysis, however, since, the authors were not able to find them presented coherently and precisely in our setting elsewhere, they are given with full proofs here.
		\subsection{Proofs of Lem.~\ref{lem:Y_dot} and Lem.~\ref{lem:tilde_B_rp}}\label{subsec:gaussian_sec2}
		\begin{proof}[Proof of Lem.~\ref{lem:Y_dot}]
			For notational concision, we write $Y^s \defby Y^{H, s}$.
			Since $(s, \infty) \ni t \mapsto K_H(t, r)$ is smooth for any $r \in [0, s]$, the process $(s,\infty) \ni t\mapsto Y^s_t$ is smooth, hence we can legitimately take its derivative in this region, which is all that the lemma requires. The estimate on $Y^{s}_{u, t}$ readily follows from that on $\dot{Y}^s_t$, through the 
			identity $Y^{s}_{u, t} = \int_u^t \dot{Y}^s_r \, \mathrm{d} r$.
			Therefore, we focus on the estimate on $\dot{Y}^s_t$.
			
			To this end, we use the Mandelbrot--Van Ness representation \cite{mandelbrot68}: 
			\begin{equation*}
				B^H_t = \int_{-\infty}^t \bar{K}_H(t, r) \, \mathrm{d} \bar{W}_r, \quad \tilde{K}_H(t, r) 
				\defby \bar{c}_H ((t-r)^{H-\frac{1}{2}} - (-r)_+^{H-\frac{1}{2}}),
			\end{equation*}
			where $\bar{W}$ is a two-sided Brownian motion and $\bar{c}_H$ is a suitably chosen constant. We set 
			\begin{equation*}
				\mathcal{F}_t \defby \sigma(B^H_r : r \in [0, t]), \quad \bar{\mathcal{F}}_t \defby \sigma(\bar{W}_r : r \in (-\infty, t]),
			\end{equation*}
			and $\bar{Y}^s_t \defby \mathbb{E}[B^H_t \vert  \bar{\mathcal{F}}_s]$. 
			Since $\mathcal{F}_t \subseteq \bar{\mathcal{F}}_t$ and $Y^s_t = \mathbb{E}[\bar{Y}^s_t \vert \bar{\mathcal{F}}_s]$, we have
			$\dot{Y}^s_t = \mathbb{E}[\dot{\bar{Y}}^s_t \vert \bar{\mathcal{F}}_s]$.
			Therefore, it suffices to estimate $\dot{\bar{Y}}^s_t$, and we observe that 
			\begin{equation*}
				\norm{\dot{\bar{Y}}^s_t}_{L^p_{\omega}}
				\lesssim_p \norm{\dot{\bar{Y}}^s_t}_{L^2_{\omega}}
				= \Big( \int_{-\infty}^s \abs{\partial_t \bar{K}_H(t, r)}^2 \, \mathrm{d} r \Big)^{\frac{1}{2}}
				\lesssim (t-s)^{H-1}. \qedhere
			\end{equation*}
		\end{proof}
		
		\begin{proof}[Proof of Lem.~\ref{lem:tilde_B_rp}]
			Since $\mathbb{B}^{H, s}$ satisfies Chen's relation \eqref{eq:chen_relation} and $Y^{H, s}$ is smooth, it is easy to see that 
			$\tilde{\mathbb{B}}^{H, s}$ also satisfies Chen's relation. 
			To see the regularity estimate, by Kolmogorov's continuity criterion for rough paths \cite[Thm.~3.1]{FH20}, it suffices to show that 
			\begin{align}\label{eq:tilde_bold_B_bound}
				\norm{\tilde{\mathbb{B}}^{H, s}_{u, t}}_{L^p_{\omega}} 
				\lesssim_p (t-u)^{2 H}.
			\end{align}
			But, since we already know the estimates on $\mathbb{B}^{H, s}$ and on $\dot{Y}^{H, s}$ from Lem.~\ref{lem:Y_dot}, the bound \eqref{eq:tilde_bold_B_bound} easily follows. 
			
			To see that $(\tilde{B}^{H, s}, \tilde{\mathbb{B}}^{H, s})$ is a geometric rough path of order $\alpha$, by \cite[Prop.~2.8]{FH20} it suffices to show that 
			the rough path is weakly geometric, i.e. 
			\begin{align*}
				\tilde{\mathbb{B}}^{H, s, i, j}_{u, t} + \tilde{\mathbb{B}}^{H, s, j, i}_{u, t} =  \tilde{B}^{H, s, i}_{u, t} \tilde{B}^{H, s, j}_{u, t}, 
			\end{align*}
			but this algebraic identity is easy to verify, since $(B^{H}, \mathbb{B}^H)$ is geometric.
		\end{proof}
		
		\subsection{Proof of Lem.~\ref{lem:gaussian_integral}}\label{subsec:int_tilde_B}
		Before giving the proof of Lem.~\ref{lem:gaussian_integral}, we require some preparatory lemmas; Lem.~\ref{lem:g_times_iterated_integral}, \ref{lem:rough_vs_stratonovich} \& \ref{lem:rough_vs_strat_2}.
		\begin{lemma}\label{lem:g_times_iterated_integral}
			Let $H \in (1/3, 1/2]$ and $0 \leq s \leq u < t$, and we define $\tilde{B}^{H, s}$ by \eqref{eq:def_tilde_B} and $\tilde{\mathbb{B}}^{H, s}$ by \eqref{eq:tilde_bold_B}.
			Let $g \from \mathbb{R}^{d_2} \to \mathbb{R}$ be such that $g(\tilde{B}^{H, s}_u) \in L^2_{\omega}$. 
			Then we have 
			\begin{align*}
				\mathbb{E}\left[g(\tilde{B}_u^{H, s}) \tilde{\mathbb{B}}^{H, s, i, j}_{u, t} \right] 
				= \frac{1}{2} \mathbb{E}[g(\tilde{B}^{H, s}_u) \tilde{B}_{u, t}^{H, s, i} \tilde{B}_{u, t}^{H, s, j}].
			\end{align*}
		\end{lemma}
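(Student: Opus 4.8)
\textbf{Reduction.} If $i=j$, then since $(\tilde B^{H,s},\tilde{\mathbb{B}}^{H,s})$ is (weakly) geometric by Lem.~\ref{lem:tilde_B_rp} we have $\tilde{\mathbb{B}}^{H,s,i,i}_{u,t}=\tfrac12(\tilde B^{H,s,i}_{u,t})^2$ and the two sides coincide; if $s=u$ then $\tilde B^{H,s}_u=0$ and the claim reduces to $\mathbb{E}[\tilde{\mathbb{B}}^{H,u,i,j}_{u,t}]=\tfrac12\mathbb{E}[\tilde B^{H,u,i}_{u,t}\tilde B^{H,u,j}_{u,t}]$, which follows from weak geometricity together with the sign-flip $W^j\mapsto -W^j$ (which reverses the antisymmetric part while preserving the law). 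So assume $i\neq j$ and $s<u$, set $\mathcal{A}_{u,t}\defby\tfrac12(\tilde{\mathbb{B}}^{H,s,i,j}_{u,t}-\tilde{\mathbb{B}}^{H,s,j,i}_{u,t})$, and note that by weak geometricity it suffices to prove $\mathbb{E}[g(\tilde B^{H,s}_u)\mathcal{A}_{u,t}]=0$.

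\textbf{Decomposition at time $u$ and conditioning on $\mathcal{F}_u$.} On $[u,\infty)$ write $\tilde B^{H,s}_r=Z_r+\tilde B^{H,u}_r$ with $Z_r\defby Y^{H,u}_r-Y^{H,s}_r=\int_s^u K_H(r,w)\,\mathrm{d}W_w$; as in Lem.~\ref{lem:Y_dot}, $Z$ is smooth on $(u,\infty)$ and $\mathcal{F}_u$-measurable with $Z_u=\tilde B^{H,s}_u$, while $\tilde B^{H,u}$ and its lift $\tilde{\mathbb{B}}^{H,u}$ are independent of $\mathcal{F}_u$. Since $\tilde{\mathbb{B}}^{H,s}$ satisfies Chen's relation (Lem.~\ref{lem:tilde_B_rp}), telescoping of Riemann sums gives $\tilde{\mathbb{B}}^{H,s,i,j}_{u,t}=\int_u^t\tilde B^{H,s,i}_{u,r}\,\mathrm{d}\tilde B^{H,s,j}_r$, and unwinding the defining formula \eqref{eq:tilde_bold_B} (equivalently, using the behaviour of the canonical geometric lift under perturbation by the smooth path $Z$) expands this as
\[
\int_u^t Z^i_{u,r}\dot Z^j_r\,\mathrm{d}r+\int_u^t Z^i_{u,r}\,\mathrm{d}\tilde B^{H,u,j}_r+\int_u^t \tilde B^{H,u,i}_{u,r}\dot Z^j_r\,\mathrm{d}r+\tilde{\mathbb{B}}^{H,u,i,j}_{u,t}.
\]
Multiplying the antisymmetrisation (in $i,j$) of this expression by the $\mathcal{F}_u$-measurable factor $g(\tilde B^{H,s}_u)$ and taking $\mathbb{E}[\,\cdot\,|\mathcal{F}_u]$: the two integrals linear in $\tilde B^{H,u}$ have vanishing conditional mean (rough/Wiener integral of a deterministic smooth integrand against the centred process $\tilde B^{H,u}$, independent of $\mathcal{F}_u$), and $\mathbb{E}[\tilde{\mathbb{B}}^{H,u,i,j}_{u,t}\mid\mathcal{F}_u]=\mathbb{E}[\tilde{\mathbb{B}}^{H,u,i,j}_{u,t}]=0$ for $i\neq j$ (by the sign-flip of $W^j$ on $[u,\infty)$). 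Hence
\[
\mathbb{E}[g(\tilde B^{H,s}_u)\mathcal{A}_{u,t}]=\tfrac12\,\mathbb{E}\Big[g(\tilde B^{H,s}_u)\Big(\int_u^t Z^i_{u,r}\dot Z^j_r\,\mathrm{d}r-\int_u^t Z^j_{u,r}\dot Z^i_r\,\mathrm{d}r\Big)\Big].
\]

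\textbf{The Gaussian identity.} Condition on $\sigma(W^k|_{[s,u]}:k\notin\{i,j\})$, so $g(\tilde B^{H,s}_u)$ becomes a function of $(Z^i_u,Z^j_u)$ only, and the families $(Z^i_u,(Z^i_{u,r})_r)$ and $(Z^j_u,(Z^j_{u,r})_r)$ are i.i.d.\ centred Gaussian (same kernel $K_H(\cdot,w)$, independent Brownian components). Condition further on $(Z^i_u,Z^j_u)$: using $\mathrm{Var}(Z^i_u)=\rho_H^2(s,u)>0$, Gaussian regression gives $\mathbb{E}[Z^i_{u,r}\mid Z^i_u]=\phi(r)Z^i_u$ and $\mathbb{E}[Z^j_{u,r}\mid Z^j_u]=\phi(r)Z^j_u$ with the \emph{same} function $\phi(r)=\mathbb{E}[Z^i_{u,r}Z^i_u]/\rho_H^2(s,u)$ and $\phi(u)=0$. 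The two families remain conditionally independent, so, pulling $\tfrac{\mathrm{d}}{\mathrm{d}r}$ through the conditional expectation (justified by smoothness and the moment bounds of Lem.~\ref{lem:Y_dot}),
\[
\mathbb{E}\Big[\int_u^t Z^i_{u,r}\dot Z^j_r\,\mathrm{d}r\;\Big|\;Z^i_u,Z^j_u,(W^k)_{k\neq i,j}\Big]=Z^i_uZ^j_u\int_u^t\phi(r)\dot\phi(r)\,\mathrm{d}r,
\]
which is symmetric in $(i,j)$. Therefore the antisymmetric combination above has zero conditional expectation, and since $g(\tilde B^{H,s}_u)$ is measurable with respect to the conditioning $\sigma$-field, the tower property yields $\mathbb{E}[g(\tilde B^{H,s}_u)\mathcal{A}_{u,t}]=0$. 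All products are integrable by Cauchy--Schwarz, using $g(\tilde B^{H,s}_u)\in L^2_\omega$ and Gaussian moment bounds (again via Lem.~\ref{lem:Y_dot}).

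\textbf{Main obstacle.} The delicate point is the decomposition in the second paragraph: one must carefully unwind \eqref{eq:tilde_bold_B} (or invoke consistency of the canonical lift under perturbation by a smooth path) to see that $\tilde{\mathbb{B}}^{H,s}$ splits into exactly those four pieces, with $\tilde{\mathbb{B}}^{H,u}$ appearing as the single ``purely rough'' term and the cross-terms being Young/Wiener-type integrals of deterministic integrands. Getting these cross-terms right is precisely what makes the conditioning arguments clean; the remaining steps are elementary Gaussian computations.
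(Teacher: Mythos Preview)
Your proof is correct but proceeds by a genuinely different route than the paper's.

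The paper's argument is a short Wiener chaos computation: for $i\neq j$, the iterated integral $\tilde{\mathbb{B}}^{H,s,i,j}_{u,t}$ lies in the $(1,1)$-chaos (first chaos of $W^i$ tensor first chaos of $W^j$), so only the projection of $g(\tilde B^{H,s}_u)$ onto that chaos contributes to the expectation. Since $g(\tilde B^{H,s}_u)$ is a function of $\tilde B^{H,s}_u$ alone, that projection is a scalar multiple of $\tilde B^{H,s,i}_u\tilde B^{H,s,j}_u$, yielding
\[
\mathbb{E}\big[g(\tilde B_u)\tilde{\mathbb{B}}^{i,j}_{u,t}\big]
=\mathbb{E}\big[(\tilde B^i_u\tilde B^j_u)^2\big]^{-1}\,
\mathbb{E}\big[g(\tilde B_u)\tilde B^i_u\tilde B^j_u\big]\,
\mathbb{E}\big[\tilde B^i_u\tilde B^j_u\,\tilde{\mathbb{B}}^{i,j}_{u,t}\big].
\]
Exchangeability $(\tilde B^i,\tilde B^j)\dequal(\tilde B^j,\tilde B^i)$ then gives $\mathbb{E}[\tilde B^i_u\tilde B^j_u\,\tilde{\mathbb{B}}^{i,j}_{u,t}]=\tfrac12\mathbb{E}[\tilde B^i_u\tilde B^j_u\,\tilde B^i_{u,t}\tilde B^j_{u,t}]$ via weak geometricity, and the same projection identity applied to the right-hand side closes the argument.

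Your route avoids Wiener chaos entirely: you decompose $\tilde{\mathbb{B}}^{H,s}_{u,t}$ along $\tilde B^{H,s}=Z+\tilde B^{H,u}$ with $Z$ smooth and $\mathcal{F}_u$-measurable, condition away the linear and purely rough pieces, and then kill the surviving Lévy-area-type term $\int Z^i_{u,r}\dot Z^j_r-\int Z^j_{u,r}\dot Z^i_r$ by a Gaussian regression showing its conditional mean given $Z_u$ is symmetric in $(i,j)$. This is more elementary (no chaos decomposition) but longer, and it shifts the burden onto verifying the perturbation formula for the lift, which you rightly flag as the main point to check; it does follow from the defining formula for $\tilde{\mathbb{B}}^{H,s}$ applied with both base points $s$ and $u$, or from consistency of canonical lifts under translation by smooth paths. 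The paper's approach is slicker and exploits the algebraic structure more directly; yours has the merit of being self-contained once the decomposition is in hand.
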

		\begin{proof}
			For notational concision we write $\tilde{B} \defby \tilde{B}^{H, s}$ and $\tilde{\mathbb{B}} \defby \tilde{\mathbb{B}}^{H, s}$.
			If $i = j$, the identity is trivial, since the rough path of $\tilde{B}$ is geometric. 
			We therefore assume below that $i \neq j$.
			Since $i \neq j$, $(B^{H, i}, Y^{H, s, i})$ and $(B^{H, j}, Y^{H, s, j})$ are independent, and the random variable $\tilde{\mathbb{B}}^{H, s, i, j}$ 
			defined by \eqref{eq:tilde_bold_B} belongs to the second order Wiener chaos. (For the basic properties of the Wiener chaos, see \cite[Chapter~1.1]{nualart06}.)
			The orthogonality in the Wiener chaos implies that  
			\begin{equation*}
				\mathbb{E}[g(\tilde{B}_u)  \tilde{\mathbb{B}}_{u, t}^{i, j}   ] 
				= \mathbb{E}[(\tilde{B}^i_u \tilde{B}^j_u)^2]^{-1} 
				\mathbb{E}[g(\tilde{B}_u) \tilde{B}^i_u \tilde{B}^j_u]
				\mathbb{E}[ \tilde{B}^i_u \tilde{B}^j_u \tilde{\mathbb{B}}^{i, j}_{u, t} ].
			\end{equation*}
			Since $(\tilde{B}^i, \tilde{B}^j) \dequal (\tilde{B}^j, \tilde{B}^i)$, we have 
			\begin{gather*}
				\mathbb{E}[ \tilde{B}^i_u \tilde{B}^j_u \tilde{\mathbb{B}}^{i, j}_{u, t} ]
				= \mathbb{E}[ \tilde{B}^i_u \tilde{B}^j_u \tilde{\mathbb{B}}^{j, i}_{u, t} ], \\
				\mathbb{E}[ \tilde{B}^i_u \tilde{B}^j_u \tilde{\mathbb{B}}^{i, j}_{u, t} ]
				= \frac{1}{2} \mathbb{E}[ \tilde{B}^i_u \tilde{B}^j_u 
				(\tilde{\mathbb{B}}^{i, j}_{u, t} 
				+ \tilde{\mathbb{B}}^{j, i}_{u, t})] 
				= \frac{1}{2} \mathbb{E}[\tilde{B}^i_u \tilde{B}^j_u 
				\tilde{B}^i_{u, t} \tilde{B}^j_{u, t}],
			\end{gather*}
			where  we use the fact that $(\tilde{B}, \tilde{\mathbb{B}})$ is geometric. 
			Therefore, 
			\begin{equation*}
				\mathbb{E}\Big[g( \tilde{B}_u)  \tilde{\mathbb{B}}^{i, j}_{u, t}   \Big] 
				= \frac{1}{2}
				\mathbb{E}[(\tilde{B}^i_u \tilde{B}^j_u)^2]^{-1} 
				\mathbb{E}[g(\tilde{B}_u) \tilde{B}^i_u \tilde{B}^j_u]
				\mathbb{E}[ \tilde{B}^i_u \tilde{B}^j_u \tilde{B}^i_{u, t} \tilde{B}^j_{u, t} ] 
				= \frac{1}{2} \mathbb{E}[g(\tilde{B}_u)  \tilde{B}^i_{u, t} \tilde{B}^j_{u, t}],
			\end{equation*}
			and the proof is complete. 
		\end{proof}
		\begin{lemma}\label{lem:rough_vs_stratonovich}
			Let $H \in (1/3, 1/2]$ and $\tau \geq 0$, and we define $\tilde{B}^{H, \tau}$ by \eqref{eq:def_tilde_B} 
			and define $\tilde{\mathbb{B}}^{H, \tau}$ by \eqref{eq:tilde_bold_B}.
			Let $g \from \mathbb{R}^{d_2} \to \mathbb{R}$ be a bounded measurable function. 
			%
			We set 
			\begin{align*}
				A_{s, t}^1 \defby g(\tilde{B}^{H, \tau}_s) \tilde{B}^{H, \tau, i}_{s, t} \tilde{B}^{H, \tau, j}_{s, t}, 
				\quad A_{s, t}^2 \defby g(\tilde{B}^{H, \tau}_s) \tilde{\mathbb{B}}^{H, \tau, i, j}_{s, t}.
			\end{align*}
			We write $(\mathcal{F}_t)_{t \geq 0}$ for the filtration generated by $(B^H_t)_{t \geq 0}$.
			Then, for $p \in [2, \infty)$ and $\tau \leq v < s < t$ with $t-s \leq s-v$, we have 
			\begin{align*}
				\norm{\mathbb{E}[A^1_{s,t} - 2 A^2_{s, t} \vert \mathcal{F}_v]}_{L^p_{\omega}}
				\lesssim_{p} \norm{g}_{L^{\infty}_x} \Big( \frac{t-s}{s-v} \Big)^{1 - H} (t-s)^{2 H}.
			\end{align*}
		\end{lemma}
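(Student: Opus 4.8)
The plan is to exploit the conditioning on $\mathcal{F}_v$ together with the symmetry relation of Lem.~\ref{lem:g_times_iterated_integral}, after peeling off the $\mathcal{F}_v$‑measurable ``history'' part of $\tilde{B}^{H,\tau}$. Write $\mathbb{E}_v[\,\cdot\,] \defby \mathbb{E}[\,\cdot\,\vert\mathcal{F}_v]$. Since $(\tilde{B}^{H,\tau},\tilde{\mathbb{B}}^{H,\tau})$ is weakly geometric by Lem.~\ref{lem:tilde_B_rp}, one has $\tilde{B}^{H,\tau,i}_{s,t}\tilde{B}^{H,\tau,j}_{s,t} = \tilde{\mathbb{B}}^{H,\tau,i,j}_{s,t}+\tilde{\mathbb{B}}^{H,\tau,j,i}_{s,t}$, so
\[
A^1_{s,t} - 2 A^2_{s,t} = g(\tilde{B}^{H,\tau}_s)\big(\tilde{\mathbb{B}}^{H,\tau,j,i}_{s,t} - \tilde{\mathbb{B}}^{H,\tau,i,j}_{s,t}\big);
\]
in particular the statement is trivial when $i=j$, so we may assume $i\neq j$. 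Recalling that $\mathcal{F}_t$ is generated by $W \defby \mathcal{K}_H^{-1} B^H$, I would decompose, for $r\ge v$,
\[
\tilde{B}^{H,\tau}_r = \int_\tau^r K_H(r,u)\,\mathrm{d}W_u = \bar{Y}_r + \tilde{B}^{H,v}_r, \qquad \bar{Y}_r \defby \int_\tau^v K_H(r,u)\,\mathrm{d}W_u,
\]
so that $\bar{Y}$ is $\mathcal{F}_v$‑measurable and smooth on $(v,\infty)$, while the update rough path $(\tilde{B}^{H,v},\tilde{\mathbb{B}}^{H,v})$ from time $v$ (given by \eqref{eq:def_tilde_B}, \eqref{eq:tilde_bold_B} with shift parameter $v$) is independent of $\mathcal{F}_v$. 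Exactly as in the proof of Lem.~\ref{lem:Y_dot} (now with the subintegral $\int_\tau^v$ in place of $\int_0^v$) one gets $\norm{\dot{\bar{Y}}_r}_{L^p_\omega}\lesssim_p (r-v)^{H-1}$, whence, using $r-v\ge s-v\ge t-s$ for $r\in[s,t]$, the bounds $\norm{\dot{\bar{Y}}_r}_{L^p_\omega}\lesssim_p (s-v)^{H-1}$ and $\norm{\bar{Y}_{s,r}}_{L^p_\omega}\lesssim_p (s-v)^{H-1}(r-s)$; moreover $B^H_{s,r}=\tilde{B}^{H,v}_{s,r}+Y^{H,v}_{s,r}$, $\norm{B^H_{s,r}}_{L^p_\omega}\lesssim_p(r-s)^H$ and Lem.~\ref{lem:Y_dot} give $\norm{\tilde{B}^{H,v}_{s,r}}_{L^p_\omega}\lesssim_p (r-s)^H$ on $[s,t]$ (here $(s-v)^{H-1}(r-s)\le(r-s)^H$ since $t-s\le s-v$).

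Unwinding the defining formula \eqref{eq:tilde_bold_B} for $\tilde{\mathbb{B}}^{H,\tau}$ and for $\tilde{\mathbb{B}}^{H,v}$ and using $Y^{H,\tau}=Y^{H,v}-\bar{Y}$, a short computation (all the $Y^{H,v}$–$\bar{Y}$ cross terms cancel) yields the ``rough path plus smooth path'' identity
\[
\tilde{\mathbb{B}}^{H,\tau,a,b}_{s,t} = \tilde{\mathbb{B}}^{H,v,a,b}_{s,t} + \int_s^t \bar{Y}^a_{s,r}\,\mathrm{d}\tilde{B}^{H,v,b}_r + \int_s^t \tilde{B}^{H,v,a}_{s,r}\,\dot{\bar{Y}}^b_r\,\mathrm{d}r + \int_s^t \bar{Y}^a_{s,r}\,\dot{\bar{Y}}^b_r\,\mathrm{d}r,
\]
the first integral being a Young integral (well defined since $\bar{Y}$ is smooth) and the last two Riemann--Stieltjes. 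Antisymmetrising over $(a,b)=(i,j)$ and writing $g(\tilde{B}^{H,\tau}_s)=g(\bar{Y}_s+\tilde{B}^{H,v}_s)$ splits $A^1_{s,t}-2A^2_{s,t}$ into four pieces. The crucial observation is that the leading one, $g(\bar{Y}_s+\tilde{B}^{H,v}_s)\big(\tilde{\mathbb{B}}^{H,v,j,i}_{s,t}-\tilde{\mathbb{B}}^{H,v,i,j}_{s,t}\big)$, has \emph{vanishing} conditional expectation: by independence of $(\tilde{B}^{H,v},\tilde{\mathbb{B}}^{H,v})$ from $\mathcal{F}_v$, $\mathbb{E}_v$ of it equals $\Phi(\bar{Y}_s)$ with $\Phi(y)\defby\mathbb{E}\big[g(y+\tilde{B}^{H,v}_s)(\tilde{\mathbb{B}}^{H,v,j,i}_{s,t}-\tilde{\mathbb{B}}^{H,v,i,j}_{s,t})\big]$, and applying Lem.~\ref{lem:g_times_iterated_integral} with shift parameter $v$, starting point $s$, and test function $z\mapsto g(y+z)$ (bounded, hence in $L^2_\omega$) to both index pairs gives
\[
\mathbb{E}\big[g(y+\tilde{B}^{H,v}_s)\,\tilde{\mathbb{B}}^{H,v,i,j}_{s,t}\big] = \tfrac12\,\mathbb{E}\big[g(y+\tilde{B}^{H,v}_s)\,\tilde{B}^{H,v,i}_{s,t}\tilde{B}^{H,v,j}_{s,t}\big] = \mathbb{E}\big[g(y+\tilde{B}^{H,v}_s)\,\tilde{\mathbb{B}}^{H,v,j,i}_{s,t}\big],
\]
since the middle expression is symmetric in $i,j$; hence $\Phi\equiv 0$. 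This replaces the genuinely $O((t-s)^{2H})$ Lévy area of the fresh update by zero, leaving only terms built from the much smoother history process $\bar{Y}$.

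For the three remaining pieces I would use only $\mathbb{E}_v\lvert g(\bar{Y}_s+\tilde{B}^{H,v}_s)\rvert\le\norm{g}_{L^\infty_x}$ together with the $\bar{Y}$– and $\tilde{B}^{H,v}$–increment bounds above. The pure history term $g(\cdot)\int_s^t\big(\bar{Y}^j_{s,r}\dot{\bar{Y}}^i_r-\bar{Y}^i_{s,r}\dot{\bar{Y}}^j_r\big)\,\mathrm{d}r$: pull out the $\mathcal{F}_v$‑measurable integral, whose $L^p_\omega$‑norm is $\lesssim (s-v)^{2(H-1)}(t-s)^2\le(t-s)^{1+H}(s-v)^{H-1}$ using $t-s\le s-v$. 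A cross term $g(\cdot)\int_s^t\tilde{B}^{H,v,b}_{s,r}\dot{\bar{Y}}^a_r\,\mathrm{d}r$: move $\mathbb{E}_v$ inside (conditional Fubini, $\dot{\bar{Y}}$ being $\mathcal{F}_v$‑measurable), bound $\lvert\mathbb{E}_v[g(\bar{Y}_s+\tilde{B}^{H,v}_s)\tilde{B}^{H,v,b}_{s,r}]\rvert\le\norm{g}_{L^\infty_x}\,\mathbb{E}\lvert\tilde{B}^{H,v,b}_{s,r}\rvert\lesssim\norm{g}_{L^\infty_x}(r-s)^H$, and integrate against $\norm{\dot{\bar{Y}}^a_r}_{L^p_\omega}\lesssim(s-v)^{H-1}$, giving again $\lesssim\norm{g}_{L^\infty_x}(t-s)^{1+H}(s-v)^{H-1}$. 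The last cross term $g(\cdot)\int_s^t\bar{Y}^a_{s,r}\,\mathrm{d}\tilde{B}^{H,v,b}_r$: integrate by parts, $\int_s^t\bar{Y}^a_{s,r}\,\mathrm{d}\tilde{B}^{H,v,b}_r=\bar{Y}^a_{s,t}\tilde{B}^{H,v,b}_{s,t}-\int_s^t\tilde{B}^{H,v,b}_{s,r}\dot{\bar{Y}}^a_r\,\mathrm{d}r$, the second summand being of the type just treated and the first contributing $\bar{Y}^a_{s,t}\mathbb{E}_v[g(\cdot)\tilde{B}^{H,v,b}_{s,t}]$, of $L^p_\omega$‑norm $\le\norm{\bar{Y}^a_{s,t}}_{L^p_\omega}\norm{g}_{L^\infty_x}(t-s)^H\lesssim\norm{g}_{L^\infty_x}(t-s)^{1+H}(s-v)^{H-1}$. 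Summing and noting $(t-s)^{1+H}(s-v)^{H-1}=\big(\tfrac{t-s}{s-v}\big)^{1-H}(t-s)^{2H}$ gives the claimed bound. The main obstacle is organising the decomposition so that Lem.~\ref{lem:g_times_iterated_integral} can be invoked on the fresh increment after conditioning on $\mathcal{F}_v$; once the leading Lévy‑area term is killed, everything that survives involves the smooth history process $\bar{Y}$, whose derivative is of size $(s-v)^{H-1}$, and the rest of the estimate is routine.
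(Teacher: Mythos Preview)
Your proposal is correct and follows essentially the same strategy as the paper: decompose $\tilde{B}^{H,\tau}=\bar{Y}+\tilde{B}^{H,v}$ (the paper writes $Y^{\tau,v}$ for your $\bar{Y}$), peel off all terms containing the smooth $\mathcal{F}_v$-measurable history process, and kill the remaining pure-$\tilde{B}^{H,v}$ contribution via Lem.~\ref{lem:g_times_iterated_integral}. The only organisational difference is that you first invoke the weakly-geometric identity to rewrite $A^1-2A^2$ as the antisymmetric L\'evy area and then decompose $\tilde{\mathbb{B}}^{H,\tau}$, whereas the paper expands $A^1$ and $A^2$ separately and bounds the history-contaminated cross terms directly in $L^p_\omega$ (without first passing to conditional expectation, using $\norm{\mathbb{E}_v[\,\cdot\,]}_{L^p_\omega}\le\norm{\cdot}_{L^p_\omega}$); both routes lead to the same estimates.
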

		\begin{proof}
			For simplicity, we drop scripts on $H$. Recalling the Volterra representation \eqref{eq:fbm_volterra}, we set 
			\begin{align}\label{eq:Y_tau_v}
				Y^{\tau, v}_t \defby \int_{\tau}^v K_H(t, r) \, \mathrm{d} W_r = Y^{v}_t - Y^{\tau}_t. 
			\end{align}
			Note that $Y^{\tau, v} = \mathbb{E}[\tilde{B}^{\tau} \vert \mathcal{F}_v]$ 
			and 
			$\tilde{B}^{\tau} - Y^{\tau, v} = \tilde{B}^{v}$
			is independent of $\mathcal{F}_v$.
			We have 
			\begin{align*}
				A^1_{s, t} 
				= g(Y_s^{\tau, v} + \tilde{B}^v_s) [Y_{s, t}^{\tau, v, i} Y_{s, t}^{\tau, v, i} 
				+ Y_{s, t}^{\tau, v, i} \tilde{B}^{v, j}_{s, t} + \tilde{B}^{v, i}_{s, t} Y_{s, t}^{\tau, v, j} 
				+ \tilde{B}^{v, i}_{s, t} \tilde{B}^{v, j}_{s, t}].
			\end{align*}
			To estimate the first three terms, observe by Lem.~\ref{lem:Y_dot} that 
			\begin{align}\label{eq:Y_tau_v_estimate}
				\norm{Y_{s, t}^{\tau, v}}_{L^p_{\omega}} \leq  \norm{Y^{v}_{s, t}}_{L^p_{\omega}}
				\lesssim_{p} (s-v)^{H - 1} (t-s),   
			\end{align}
			and by \eqref{eq:rho_lower_bd} that $\norm{\tilde{B}^v_{s, t}}_{L^p_{\omega}} \lesssim_p (t-s)^H$.
			Hence, we obtain 
			\begin{align*}
				\norm{A^1_{s, t} - g(Y_s^{\tau, v} + \tilde{B}^v_s) \tilde{B}^{v, i}_{s, t} \tilde{B}^{v, j}_{s, t}}_{L^p_{\omega}} 
				\lesssim_{p} \norm{g}_{L^{\infty}_x} \Big( \frac{t-s}{s-v} \Big)^{1-H} (t-s)^{2H}.
			\end{align*}
			Similarly, $A^2_{s,t}$ equals to 
			\begin{equation*}
				g(Y_s^{\tau, v} + \tilde{B}_s^v) 
				\Big[ \int_s^t Y_{s, r}^{\tau, v, i} \dot{Y}_r^{\tau, v, j} \, \mathrm{d} r + 
				\int_s^t Y_{s, r}^{\tau, v, i} \, \mathrm{d} \tilde{B}^{v, j}_r 
				+ \int_s^t \tilde{B}^{ v, i}_{s, r} \dot{Y}_r^{\tau, v, j} \, \mathrm{d} r
				+ \tilde{\mathbb{B}}^{v, i, j}_{s, t}
				\Big]
			\end{equation*}
			and 
			\begin{align*}
				\norm[\Big]{A^2_{s, t} - g(Y^{\tau, v}_s + \tilde{B}_s^v)  \tilde{\mathbb{B}}^{v, i, j}_{s, t}}_{L^p_{\omega}} 
				\lesssim_p \norm{g}_{L^{\infty}_x} \Big( \frac{t-s}{s-v} \Big)^{1-H} (t-s)^{2 H}.
			\end{align*}
			To complete the proof, it remains to observe
			\begin{align}\label{eq:tilde_B_iterated_integral}
				\mathbb{E}[g(Y_s + \tilde{B}_s)  \tilde{\mathbb{B}}^{v, i, j}_{s, t}  \vert \mathcal{F}_v ]
				= \frac{1}{2} \mathbb{E}[g(Y_s + \tilde{B}_s) \tilde{B}^i_{s, t} \tilde{B}_{s, t}^j 
				\vert \mathcal{F}_v],
			\end{align}
			which follows from Lem.~\ref{lem:g_times_iterated_integral}.
		\end{proof}
		\begin{lemma}\label{lem:rough_vs_strat_2}
			Let $H \in (1/3, 1)$ and $\tau \geq 0$, and we define $\tilde{B}^{H, \tau}$ by \eqref{eq:def_tilde_B}. 
			Let $g \in C^2(\mathbb{R}^{d_2})$. We then have 
			\begin{align*}
				\int_s^t g(\tilde{B}^{H, \tau}_r) \, \mathrm{d} \tilde{B}^{H, \tau}_r = 
				\lim_{\substack{\pi \text{ is a partition of }[s, t], \\ \abs{\pi} \to 0 }} 
				\sum_{[s', t'] \in \pi} \frac{g(\tilde{B}^{H, \tau}_{s'}) + g(\tilde{B}^{H, \tau}_{t'})}{2} \tilde{B}^{H, \tau}_{s', t'},
			\end{align*} 
			where the convergence is in $L^p_{\omega}$ for any $p \in [1, \infty)$.
		\end{lemma}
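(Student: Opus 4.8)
The statement to prove (Lem.~\ref{lem:rough_vs_strat_2}) asserts that for $g\in C^2(\mathbb{R}^{d_2})$ the rough integral $\int_s^t g(\tilde B^{H,\tau}_r)\,\mathrm{d}\tilde B^{H,\tau}_r$ coincides with the limit of the symmetric (trapezoidal) Riemann sums. The plan is to identify both sides as the output of the deterministic sewing lemma (Lem.~\ref{lem:det_sewing}) applied in the Banach space $E=L^p(\Omega)$, and then invoke the uniqueness part of that lemma in the form recalled in Rem.~\ref{rem:sewing_uniq}.

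First I would fix $p\in[1,\infty)$ (by monotonicity of $L^p$-norms on a probability space it suffices to treat $p$ large, in particular $p\geq 2$) and write $B\defby\tilde B^{H,\tau}$ for concision. Set
\begin{align*}
	A^1_{s,t}&\defby g(B_s)B_{s,t} + \nabla g(B_s)\,\tilde{\mathbb{B}}^{H,\tau}_{s,t}, \\
	A^2_{s,t}&\defby \frac{g(B_s)+g(B_t)}{2}\,B_{s,t},
\end{align*}
where the contraction $\nabla g(B_s)\,\tilde{\mathbb{B}}^{H,\tau}_{s,t}$ is the usual one appearing in the rough integral, i.e. its $i$-th missing index is $\sum_{j}\partial_j g(B_s)\,\tilde{\mathbb{B}}^{H,\tau,j,i}_{s,t}$. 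Since $(B,\tilde{\mathbb{B}}^{H,\tau})$ is an $\alpha$-Hölder (geometric) rough path for any $\alpha<H$ by Lem.~\ref{lem:tilde_B_rp}, and $r\mapsto g(B_r)$ is controlled by $B$ with Gubinelli derivative $\nabla g(B_r)$ (because $g\in C^2$, so $\nabla g$ is $C^1$ and the controlled-path bounds follow from Taylor expansion and the Hölder regularity of $B$), the standard construction of the rough integral (see \cite[Thm.~4.10]{FH20}) shows that the Riemann–Davie sums $\sum_{[s',t']\in\pi}A^1_{s',t'}$ converge (pathwise, hence in $L^p_\omega$ after checking the relevant moment bounds, which hold by Gaussianity and Lem.~\ref{lem:tilde_B_rp}) to $\int_s^t g(B_r)\,\mathrm{d}B_r$, and this limit is the sewing of the germ $A^1$.

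Next I would show that the symmetric Riemann sums of $A^2$ also converge to the same sewing, by proving $\|A^1_{s,t}-A^2_{s,t}\|_{L^p_\omega}\lesssim |t-s|^{1+\varepsilon}$ for some $\varepsilon>0$, which by Rem.~\ref{rem:sewing_uniq} forces the two families of Riemann sums to share a limit whenever at least one converges. Compute
\begin{equation*}
	A^1_{s,t}-A^2_{s,t} = \Big(g(B_s)-\frac{g(B_s)+g(B_t)}{2}\Big)B_{s,t} + \nabla g(B_s)\,\tilde{\mathbb{B}}^{H,\tau}_{s,t} = -\tfrac12\big(g(B_t)-g(B_s)\big)B_{s,t} + \nabla g(B_s)\,\tilde{\mathbb{B}}^{H,\tau}_{s,t}.
\end{equation*}
Taylor expanding, $g(B_t)-g(B_s) = \nabla g(B_s)\cdot B_{s,t} + O(|B_{s,t}|^2)$, so the first term equals $-\tfrac12\sum_{i,j}\partial_j g(B_s)B^j_{s,t}B^i_{s,t} + O(|B_{s,t}|^3)$. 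Since $(B,\tilde{\mathbb{B}}^{H,\tau})$ is geometric, $\tilde{\mathbb{B}}^{H,\tau,i,j}_{s,t}+\tilde{\mathbb{B}}^{H,\tau,j,i}_{s,t} = B^i_{s,t}B^j_{s,t}$, hence $\sum_{i,j}\partial_jg(B_s)\,\tilde{\mathbb{B}}^{H,\tau,j,i}_{s,t} = \tfrac12\sum_{i,j}\partial_j g(B_s)B^j_{s,t}B^i_{s,t}$ (symmetrising the index pair against the symmetric factor $\partial_j g(B_s)B^i_{s,t}$... more carefully one writes $\nabla g(B_s)\tilde{\mathbb{B}}^{H,\tau}_{s,t}$ with its $i$-th component $\sum_j\partial_jg(B_s)\tilde{\mathbb{B}}^{H,\tau,j,i}_{s,t}$ and uses the geometricity relation only after symmetrising in $j$; the first-order pieces cancel exactly). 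Thus $A^1_{s,t}-A^2_{s,t}=O(|B_{s,t}|^3)$, and taking $L^p_\omega$-norms together with the Gaussian bound $\|B_{s,t}\|_{L^{3p}_\omega}\lesssim_p|t-s|^H$ from \eqref{eq:rho_lower_bd} gives $\|A^1_{s,t}-A^2_{s,t}\|_{L^p_\omega}\lesssim |t-s|^{3H}$ with $3H>1$ since $H>1/3$. Therefore the assumptions of Rem.~\ref{rem:sewing_uniq} are met with $E=L^p_\omega$ and $\varepsilon=3H-1>0$, and \eqref{eq:A_1_A_2_equal} yields the claimed identity of limits.

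The main obstacle is the bookkeeping in the cancellation of the first-order terms: one must carefully match the tensor contraction $\nabla g(B_s)\,\tilde{\mathbb{B}}^{H,\tau}_{s,t}$ against the trapezoidal correction $-\tfrac12(g(B_t)-g(B_s))B_{s,t}$ and exploit the symmetry of the symmetric part of $\tilde{\mathbb{B}}^{H,\tau}$ (i.e. weak geometricity, established in Lem.~\ref{lem:tilde_B_rp}) so that only the third-order remainder survives; everything else — the moment bounds for $B$ and $\tilde{\mathbb{B}}^{H,\tau}$, the controlled-path estimates for $g(B)$, and the convergence of the Davie sums — is routine given the results already recorded in Sec.~\ref{subsec:fbm} and Sec.~\ref{sec:rough_integration}. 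A minor point worth stating explicitly is that for $H>1/2$ the statement reduces to the classical fact that for $C^1$ (indeed $C^2$) integrands the Young integral against $B$ equals the trapezoidal limit, which is immediate from \eqref{eq:young_fund_est}, so the interesting range is $H\in(1/3,1/2]$ where one genuinely needs the geometric rough path structure.
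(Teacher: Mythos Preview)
Your argument contains a genuine error at the key cancellation step. You claim that geometricity of $(\tilde B^{H,\tau},\tilde{\mathbb{B}}^{H,\tau})$ forces
\[
\sum_{j}\partial_j g(B_s)\,\tilde{\mathbb{B}}^{H,\tau,j,i}_{s,t}
=\tfrac12\sum_{j}\partial_j g(B_s)\,B^j_{s,t}B^i_{s,t},
\]
but this is false for $d_2\geq 2$. Weak geometricity only says $\tilde{\mathbb{B}}^{j,i}+\tilde{\mathbb{B}}^{i,j}=B^jB^i$, i.e.\ the \emph{symmetric} part of $\tilde{\mathbb{B}}$ is $\tfrac12 B\otimes B$; it says nothing about the antisymmetric (L\'evy area) part. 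Since the weight $\partial_j g(B_s)$ carries no $i$-dependence, you cannot symmetrise in $(i,j)$. Concretely, writing $\tilde{\mathbb{B}}^{j,i}=\tfrac12 B^jB^i+\tfrac12(\tilde{\mathbb{B}}^{j,i}-\tilde{\mathbb{B}}^{i,j})$, one finds
\[
A^2_{s,t}-A^1_{s,t}
=-\tfrac12\sum_j\partial_j g(B_s)\bigl(\tilde{\mathbb{B}}^{j,i}_{s,t}-\tilde{\mathbb{B}}^{i,j}_{s,t}\bigr)+R_{s,t},
\qquad \abs{R_{s,t}}\lesssim_{\norm{g}_{C^2}}\abs{B_{s,t}}^3,
\]
and the L\'evy area piece has $L^p_\omega$-norm of order $(t-s)^{2H}$ only. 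For $H\leq 1/2$ this exponent is $\leq 1$, so the deterministic sewing uniqueness argument of Rem.~\ref{rem:sewing_uniq} does not apply. (Your argument does go through when $d_2=1$, and also when $H>1/2$, which you correctly note is the easy case.)

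The paper handles exactly this obstruction: after isolating the same leftover term (called $A^3_{s,t}$ there, essentially $\tfrac12\nabla g(B_s)B_{s,t}\otimes B_{s,t}-\nabla g(B_s)\tilde{\mathbb{B}}_{s,t}$), it invokes Lem.~\ref{lem:rough_vs_stratonovich} to obtain a \emph{conditional} bound
\[
\norm{\mathbb{E}[A^3_{s,t}\vert\mathcal{F}_v]}_{L^p_\omega}
\lesssim \Big(\frac{t-s}{s-v}\Big)^{1-H}(t-s)^{2H},
\]
together with the crude bound $\norm{A^3_{s,t}}_{L^p_\omega}\lesssim(t-s)^{2H}$, and then applies the uniqueness part of the shifted stochastic sewing lemma (Lem.~\ref{lem:ssl}). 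The extra factor $(\frac{t-s}{s-v})^{1-H}$---coming from the near-independence of the L\'evy area increment from the past---is precisely what compensates for $2H\leq 1$. This probabilistic step is the missing ingredient in your proposal.
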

		\begin{proof}
			For simplicity, we drop scripts on $H$.
			When $H > 1/2$, the claim follows from the remainder estimate of the Young integral \eqref{eq:young_fund_est}. 
			Hence, we assume that $H \leq 1/2$.
			
			We set 
			\begin{equation*}
				A^1_{s, t} \defby g(\tilde{B}^{\tau}_s) \tilde{B}^{\tau}_{s, t} 
				+ \nabla g(\tilde{B}^{\tau}_s) \tilde{\mathbb{B}}^{\tau}_{s, t},
				\quad 
				A^2_{s, t} \defby \frac{g(\tilde{B}^{\tau}_s) + g(\tilde{B}^{\tau}_t)}{2} \tilde{B}^{\tau}_{s, t}.
			\end{equation*}
			The integral $\int_s^t g(\tilde{B}^{\tau}_r) \, \mathrm{d} \tilde{B}^{\tau}_r$ is understood as rough integral: 
			\begin{align}\label{eq:g_integral_by_A_1}
				\int_s^t g(\tilde{B}^{\tau}_r) \, \mathrm{d} \tilde{B}^{\tau}_r
				= \lim_{\substack{\pi \text{ is a partition of } [s, t], \\ \abs{\pi} \to 0}} \sum_{[s', t'] \in \pi} A^1_{s', t'}\quad \text{in } L^p_{\omega}.
			\end{align}

			The proof relies on the uniqueness part of the deterministic and stochastic sewing lemma (Rem.~\ref{rem:sewing_uniq}). 
			We have 
			\begin{align}
				A^2_{s, t} - A^1_{s, t} 
				&= \frac{g(\tilde{B}^{\tau}_t) - g(\tilde{B}^{\tau}_s)}{2} 
				\tilde{B}^{\tau}_{s, t} 
				- \nabla g(\tilde{B}^{\tau}_s) \tilde{\mathbb{B}}^{\tau}_{s, t} \notag\\
				&= \frac{1}{2} \nabla g(\tilde{B}^{\tau}_s) \tilde{B}^{\tau}_{s,t} \otimes \tilde{B}^{\tau}_{s, t} 
				- \nabla g(\tilde{B}^{\tau}_s) \tilde{\mathbb{B}}^{\tau}_{s, t}
				+ R_{s, t}, \label{eq:A^2_minus_A^1_g}
			\end{align}
			where 
			$\abs{R_{s, t}} \lesssim_{\norm{g}_{C^2_x}} \abs{\tilde{B}^{\tau}_{s, t}}^3$.
			Setting $A^3_{s, t} \defby A^2_{s, t} - A^1_{s,t} - R_{s, t}$, we therefore see that 
			\begin{align}\label{eq:A_1_A_2_A_3}
				\lim_{\abs{\pi} \to 0} \sum_{[s', t'] \in \pi} (A^2_{s', t'} - A^1_{s', t'}) = 
				\lim_{\abs{\pi} \to 0} \sum_{[s', t'] \in \pi}  A^3_{s', t'}  \quad \text{in } L^p_{\omega}.
			\end{align}
			To estimate the right-hand side, we will apply the uniqueness part of Lem.~\ref{lem:ssl}.
			Using \eqref{eq:A^2_minus_A^1_g} and Lem.~\ref{lem:rough_vs_stratonovich}, we obtain 
			\begin{align*}
				\norm{A^3_{s,t}}_{L^p_{\omega}} \lesssim_{p, \norm{g}_{C^1_x}} (t-s)^{2H},
				\quad 
				\norm{\mathbb{E}[A^3_{s, t} \vert \mathcal{F}_v]}_{L^p_{\omega}} 
				\lesssim_{p, \norm{g}_{C^1_x}}  \Big( \frac{t-s}{s-v} \Big)^{1-H}(t-s)^{2H}.
			\end{align*}
			Hence, applying the uniqueness part of Lem.~\ref{lem:ssl}, the claim is proven in view of \eqref{eq:g_integral_by_A_1} 
			and \eqref{eq:A_1_A_2_A_3}.
		\end{proof}
		\begin{proof}[Proof of Lem.~\ref{lem:gaussian_integral}]
			For notational concision we write $\tilde{B}^s \defby \tilde{B}^{H, s}$.
			We first note that, since $\tilde{B}^s$ is Gaussian, the process (recall $\rho_H$ from \eqref{eq:def_rho})
			\begin{equation*}
				g (a \tilde{B}_r^s) - c_r \tilde{B}^{s, i}_r, \quad \text{where}\quad c_r
				\defby \rho_H^{-2}(s, r) \mathbb{E} [g (
				a \tilde{B}_r^s) \tilde{B}^{s, i}_r], 
			\end{equation*}
			is orthogonal to $\tilde{B}^{s, i}$ in $L^2_{\omega}$. Therefore, also applying Lem.~\ref{lem:rough_vs_strat_2}, we have
			\begin{align*}
				\mathbb{E} \Big[ \int_u^t g (a \tilde{B}^s_r)
				\, \mathrm{d} \tilde{B}^{s, i}_r \Big] 
				&= \lim_{\substack{\pi \text{ is a partition of } [u, t], \\ \abs{\pi} \to 0}} \sum_{[s', t'] \in \pi} \mathbb{E} \Big[\frac{g(a \tilde{B}_{s'}^s) + g(a \tilde{B}_{t'}^s)}{2} \tilde{B}^{s, i}_{s', t'} \Big] \\
				&= \lim_{\substack{\pi \text{ is a partition of } [u, t], \\ \abs{\pi} \to 0}} \sum_{[s', t'] \in \pi}\mathbb{E} \Big[\frac{c_{s'} \tilde{B}^{s, i}_{s'} + c_{t'} \tilde{B}^{s, i}_{t'}}{2} \tilde{B}^{s, i}_{s', t'} \Big]\\
				&= \frac{1}{2} \mathbb{E}\Big[ \int_u^t c_r 
				\, \mathrm{d} (\tilde{B}^{s, i}_r)^2 \Big] 
				= \frac{1}{2}  \int_u^t c_r \rho_H^2(s, \mathrm{d} r),
			\end{align*}
			and we obtain
			\begin{equation}\label{eq:g_tilde_B_1}
				\mathbb{E} \left[ \int_u^t g (a \tilde{B}^s_r)
				\, \mathrm{d} \tilde{B}^{s, i}_r \right] =   
				\frac{1}{2} \int_u^t \mathbb{E} [g (a
				\tilde{B}_r^s) \tilde{B}_r^{s, i}] \frac{\rho_H^2(s, \mathrm{d} r)}{\rho_H^2(s, r)}.
			\end{equation}
			Gaussian integration by parts yields
			\begin{equation}
				\mathbb{E} [g (a \tilde{B}_r^s) \tilde{B}_r^{s, i}]
				= \rho_H^2(s, r)
				\sum_{j=1}^{d_1} a^{ji} \mathbb{E}[\partial_j  g (
				a \tilde{B}^s_r ) ] 
				= \rho_H^2(s, r)
				\sum_{j=1}^{d_1} a^{ji} [\partial_j \mathcal{P}_{\rho^2_H(s, r) a a^{\mathrm{T}}} g](0). \label{eq:g_tilde_B_2}
			\end{equation}
			Combining \eqref{eq:g_tilde_B_1} and \eqref{eq:g_tilde_B_2}, we obtain the desired identity.
		\end{proof}
		
		\subsection{Proof of Lem.~\ref{lem:integral_along_tilde_B}}\label{subsec:proof_lem:integral_along_tilde_B}
		To prove Lem.~\ref{lem:integral_along_tilde_B},  we follow the strategy of  \cite[Prop.~3.5]{matsuda22}.
		However, unlike \cite{matsuda22} where the computation is based on Mandelbrot-Van Ness representation \cite{mandelbrot68}, 
		in this article we work with the Volterra representation, which requires some technical preparations; given by Lem.~\ref{lem:K_derivative_K} \& \ref{lem:rho_and_tilde_B} below.
		\begin{lemma}\label{lem:K_derivative_K}
			Let $H \in (1/3, 1/2)$ and we define $K_H$ by \eqref{eq:K_H_less_12}. 
			Then, for $0 \leq v < s \leq t$, we have 
			\begin{align*}
				\abs[\Big]{\int_0^v K_H(s, r) \partial_t K_H(t, r) \, \mathrm{d} r } \lesssim (s-v)^{2H - 1}.
			\end{align*}
		\end{lemma}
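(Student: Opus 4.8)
The plan is to avoid manipulating the explicit kernel $K_H$ directly and instead recognise the integral as a covariance of the \emph{history process}. Fix a standard Brownian motion $W$ on some probability space and, as in \eqref{eq:fbm_volterra}, set $B_t \defby \int_0^t K_H(t,r)\,\mathrm{d}W_r$, a one-dimensional fBm with Hurst parameter $H$, so that $\mathbb{E}[B_s B_t] = \tfrac12(s^{2H}+t^{2H}-(t-s)^{2H})$ for $0\le s\le t$. For the given $v$, let $Y_t \defby \int_0^v K_H(t,r)\,\mathrm{d}W_r$ for $t\ge v$, i.e.\ the process $Y^{H,v}$ of \eqref{eq:def_Y}, which by Lem.~\ref{lem:Y_dot} is $C^1$ on $(v,\infty)$. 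The first step is to justify $\dot Y_t = \int_0^v \partial_t K_H(t,r)\,\mathrm{d}W_r$ (differentiation under the Itô integral): for $t$ in a compact subset of $(v,\infty)$ the kernel $K_H(\cdot,r)$ is $C^1$ with $|\partial_t K_H(\cdot,r)| \lesssim r^{\frac12-H}$ on $[0,v]$, which lies in $L^2([0,v])$ since $H<1$, so the difference quotients converge in $L^2([0,v])$ and the claim follows by continuity of the stochastic integral. The Itô isometry then gives
\begin{equation*}
  \int_0^v K_H(s,r)\,\partial_t K_H(t,r)\,\mathrm{d}r = \mathbb{E}\!\left[Y_s\,\dot Y_t\right].
\end{equation*}

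Next I would decompose $Y_s = Y_v + Y_{v,s} = B_v + Y_{v,s}$, using $Y_v = \int_0^v K_H(v,r)\,\mathrm{d}W_r = B_v$, and estimate the two resulting terms. For the increment term, Cauchy--Schwarz and Lem.~\ref{lem:Y_dot} give
\begin{equation*}
  \bigl|\mathbb{E}[Y_{v,s}\dot Y_t]\bigr| \le \|Y_{v,s}\|_{L^2_\omega}\,\|\dot Y_t\|_{L^2_\omega} \lesssim (s-v)^H (t-v)^{H-1} \le (s-v)^{2H-1},
\end{equation*}
the last step because $t\ge s$ and $H-1<0$. For the term $\mathbb{E}[B_v\dot Y_t]$, the point is that it is an explicit derivative of the fBm covariance: by the Itô isometry twice, $\mathbb{E}[B_v Y_t] = \int_0^v K_H(v,r)K_H(t,r)\,\mathrm{d}r = \mathbb{E}[B_v B_t] = \tfrac12(v^{2H}+t^{2H}-(t-v)^{2H})$, and a Fubini argument (justified by $\|B_v\dot Y_r\|_{L^1_\omega}\lesssim v^H(r-v)^{H-1}$, which is integrable near $v$) together with $Y_t-Y_{t_0}=\int_{t_0}^t\dot Y_r\,\mathrm{d}r$ shows $\tfrac{\mathrm{d}}{\mathrm{d}t}\mathbb{E}[B_v Y_t] = \mathbb{E}[B_v\dot Y_t]$; hence $\mathbb{E}[B_v\dot Y_t] = H\bigl(t^{2H-1}-(t-v)^{2H-1}\bigr)$. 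Since $2H-1\in(-1/3,0)$ one has $0<t^{2H-1}\le (t-v)^{2H-1}$, so $\bigl|\mathbb{E}[B_v\dot Y_t]\bigr|\le H(t-v)^{2H-1}\le H(s-v)^{2H-1}$, again using $t\ge s$. Combining the two bounds via the triangle inequality yields the claim.

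The only genuinely technical point is the interchange of differentiation in $t$ with the (stochastic or ordinary) integral; everything else reduces to Cauchy--Schwarz, the a priori moment bounds already recorded in Lem.~\ref{lem:Y_dot}, and the closed form of the fBm covariance. In particular, no estimate on $K_H$ itself beyond its defining formula \eqref{eq:K_H_less_12} is required, which is precisely the reason for routing the argument through the history process $Y^{H,v}$.
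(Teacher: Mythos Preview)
Your proof is correct and takes a genuinely different route from the paper. The paper argues by direct kernel analysis: it computes $\partial_t K_H(t,r)$ explicitly from \eqref{eq:K_H_less_12} and then bounds the two resulting integrals \eqref{eq:derivative_K_est_1} and \eqref{eq:derivative_K_est_2} by elementary manipulations of power functions. You instead recognise the integral as the covariance $\mathbb{E}[Y_s^{H,v}\dot Y_t^{H,v}]$, split $Y_s^{H,v}=B_v+Y^{H,v}_{v,s}$, control the increment term by Cauchy--Schwarz together with the moment bounds of Lem.~\ref{lem:Y_dot}, and compute the $B_v$-term exactly as a derivative of the closed-form fBm covariance. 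Your approach is more conceptual and shorter, but it leans on Lem.~\ref{lem:Y_dot}, whose proof in the paper goes through the Mandelbrot--Van Ness representation; the paper's direct approach keeps everything at the level of the Volterra kernel and is self-contained in that sense. It is also worth noting that you have essentially inverted the logical flow of the appendix: in the paper, Lem.~\ref{lem:K_derivative_K} is the tool used inside Lem.~\ref{lem:rho_and_tilde_B} precisely to bound $\mathbb{E}[Y_s^{H,v}\dot Y_t^{H,v}]$, whereas you bound that covariance directly and read off the kernel estimate as a corollary.
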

		\begin{proof}
			Recalling \eqref{eq:K_H_less_12} and observing 
			\begin{equation*}
				\frac{\partial}{\partial t} K_H(t, s) = c_H \Big( H - \frac{1}{2} \Big) \Big( \frac{t}{s} \Big)^{H- \frac{1}{2}} 
				(t-s)^{H-\frac{3}{2}},
			\end{equation*}
			it suffices to prove 
			\begin{gather}\label{eq:derivative_K_est_1}
				\int_0^v \Big( \frac{s}{r} \Big)^{H-\frac{1}{2}} (s-r)^{H-\frac{1}{2}} \Big( \frac{t}{r} \Big)^{H-\frac{1}{2}} (t-r)^{H-\frac{3}{2}} \, \mathrm{d} r
				\lesssim (s-v)^{2H - 1}, \\
				\int_0^v \Big( \frac{t}{r} \Big)^{H - \frac{1}{2}} (t-r)^{H - \frac{3}{2}} r^{\frac{1}{2} - H} 
				\Big( \int_r^s u^{H - \frac{3}{2}} (u-r)^{H - \frac{1}{2}} \, \mathrm{d} u \Big) \, \mathrm{d} r
				\lesssim (s-v)^{2H - 1}. \label{eq:derivative_K_est_2}
			\end{gather}
			The estimate \eqref{eq:derivative_K_est_1} is straightforward. Indeed, noting that $r/t \leq 1$, $r/s \leq 1$ and $s-r \leq t-r$, the 
			left-hand side is bounded by 
			$(s-v)^{2H - 1}$ up to constant.
			
			To prove \eqref{eq:derivative_K_est_2}, using again $r/t \leq 1$ and also for $r \in [0, v]$
			\begin{align*}
				(t-r)^{H - \frac{3}{2}} = (t-r)^{2H - 1} (t-r)^{-H-\frac{1}{2}} \leq (s-v)^{2H - 1} (t-r)^{-H-\frac{1}{2}},
			\end{align*}
			the left-hand side of \eqref{eq:derivative_K_est_2} is bounded by $(s-v)^{2H-1} I$, where
			\begin{align*}
				I \defby \int_0^v (t-r)^{-H-\frac{1}{2}} r^{\frac{1}{2} - H} \Big( \int_r^t u^{H-\frac{3}{2}} (u-r)^{H-\frac{1}{2}} \, \mathrm{d} u \Big)  \, \mathrm{d} r.
			\end{align*}
			Therefore, it suffices to show $I \lesssim 1$.
			Applying the bound 
			\begin{align*}
				u^{H - \frac{3}{2}} = u^{-1} u^{H - \frac{1}{2}} \leq r^{-1} (u-r)^{H - \frac{1}{2}}, \quad u \in [r, t],
			\end{align*}
			we get 
			\begin{align*}
				I &\leq \int_0^v (t-r)^{-H-\frac{1}{2}} r^{\frac{1}{2} - H} r^{-1} \Big( \int_r^t  (u-r)^{2 H-1} \, \mathrm{d} u \Big)  \, \mathrm{d} r \lesssim \int_0^v (t-r)^{H - \frac{1}{2}} r^{-\frac{1}{2} - H} \, \mathrm{d} r \\
				&\leq \int_0^t (t-r)^{H - \frac{1}{2}} r^{-\frac{1}{2} - H} \, \mathrm{d} r 
				= \int_0^1 (1 - s)^{H - \frac{1}{2}} s^{-\frac{1}{2} - H} \, \mathrm{d} s. \qedhere 
			\end{align*}
		\end{proof}
		\begin{lemma}\label{lem:rho_and_tilde_B}
			Let $H \in (1/3, 1/2]$, and we define $\tilde{B}^{H, v}$ by \eqref{eq:def_tilde_B} and $\rho_H(v, t)$ by \eqref{eq:def_rho}. 
			Let $v < s < t$. 
			\begin{enumerate}
				\item For $i \in \{1, 2, \ldots, d_2\}$, we have 
				\begin{align*}
					\left|\mathbb{E}\left[\tilde{B}_s^{H, v, i} \tilde{B}_{s, t}^{H, v, i}\right]\right| + \left|\mathbb{E}\left[\tilde{B}_t^{H, v, i} \tilde{B}_{s, t}^{H, v, i}\right]\right| \lesssim (t-s)^{2H} + (s-v)^{2H - 1} (t-s).
				\end{align*}
				\item We have $\abs{\rho_H^2(v, t) - \rho_H^2(v, s)} \lesssim (t-s)^{2H} + (s-v)^{2H - 1} (t-s) $.
			\end{enumerate}
		\end{lemma}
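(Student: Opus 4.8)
The plan is the following. Part (2) will be deduced immediately from part (1): writing $\rho_H^2(v,t) = \mathbb{E}[(\tilde B^{H,v,i}_t)^2]$ and $\rho_H^2(v,s) = \mathbb{E}[(\tilde B^{H,v,i}_s)^2]$ as in \eqref{eq:def_rho}, we have
\begin{equation*}
	\rho_H^2(v,t) - \rho_H^2(v,s) = \mathbb{E}\big[\tilde B^{H,v,i}_{s,t}\,(\tilde B^{H,v,i}_s + \tilde B^{H,v,i}_t)\big],
\end{equation*}
so $|\rho_H^2(v,t) - \rho_H^2(v,s)|$ is bounded by the left-hand side of the estimate in (1). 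For the same reason it suffices to treat one fixed coordinate $i$ in (1); and when $H = 1/2$ the process $\tilde B^{H,v}$ is a Brownian motion (by the convention after \eqref{eq:def_tilde_B}) and the bound is elementary, so from now on I take $H \in (1/3,1/2)$ and abbreviate $\tilde B^v \defby \tilde B^{H,v,i}$.

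Next I would reduce (1) to a single kernel estimate. By the It\^o isometry applied to the Volterra representation \eqref{eq:fbm_volterra} (extending the integrands to $[v,t]$ by zero),
\begin{equation*}
	\mathbb{E}[\tilde B^v_s \tilde B^v_{s,t}] = \int_v^s K_H(s,r)\big(K_H(t,r) - K_H(s,r)\big)\,\mathrm{d} r,
\end{equation*}
and, inserting $K_H(t,r) = K_H(s,r) + (K_H(t,r)-K_H(s,r))$ in the analogous formula for $\mathbb{E}[\tilde B^v_t \tilde B^v_{s,t}]$,
\begin{equation*}
	\mathbb{E}[\tilde B^v_t \tilde B^v_{s,t}] = \mathbb{E}[\tilde B^v_s \tilde B^v_{s,t}] + \int_v^s \big(K_H(t,r)-K_H(s,r)\big)^2\,\mathrm{d} r + \int_s^t K_H(t,r)^2\,\mathrm{d} r .
\end{equation*}
The last term equals $\rho_H^2(s,t) \le (t-s)^{2H}$ by \eqref{eq:rho_lower_bd}, and the middle one is $\le \int_0^s (K_H(t,r)-K_H(s,r))^2\,\mathrm{d} r = (t-s)^{2H} - \int_s^t K_H(t,r)^2\,\mathrm{d} r \le (t-s)^{2H}$ by the identity recorded just after \eqref{eq:rho_lower_bd}. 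Hence (1), and therefore the whole lemma, follows once one shows
\begin{equation*}
	\Big| \int_v^s K_H(s,r)\big(K_H(t,r) - K_H(s,r)\big)\,\mathrm{d} r \Big| \lesssim (t-s)^{2H} + (s-v)^{2H-1}(t-s) .
\end{equation*}

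To prove this, put $w \defby \max\{v,\, 2s-t\}$, so that $v \le w < s$ and $s - w = \min\{s-v,\, t-s\}$, and split the integral over $[v,s] = [v,w] \cup [w,s]$. On $[w,s]$ I would apply Cauchy--Schwarz, using $\int_w^s K_H(s,r)^2\,\mathrm{d} r = \rho_H^2(w,s) \le (s-w)^{2H} \le (t-s)^{2H}$ together with $\int_w^s (K_H(t,r)-K_H(s,r))^2\,\mathrm{d} r \le (t-s)^{2H}$, to bound this piece by $(t-s)^{2H}$. The interval $[v,w]$ is nonempty only when $w = 2s-t$, in which case $s - r \ge s - w = t-s > 0$ for all $r \in [v,w]$, so that $u \mapsto K_H(u,r)$ is smooth on a neighbourhood of $[s,t]$ for such $r$; writing $K_H(t,r)-K_H(s,r) = \int_s^t \partial_u K_H(u,r)\,\mathrm{d} u$ and using Fubini,
\begin{equation*}
	\int_v^w K_H(s,r)\big(K_H(t,r)-K_H(s,r)\big)\,\mathrm{d} r = \int_s^t \Big( \int_v^w K_H(s,r)\, \partial_u K_H(u,r)\,\mathrm{d} r \Big)\,\mathrm{d} u .
\end{equation*}
For each $u \in [s,t]$ one then splits $\int_v^w = \int_0^w - \int_0^v$ and invokes Lemma~\ref{lem:K_derivative_K} (valid since $0 \le w < s \le u$ and $0 \le v < s \le u$) to get
\begin{equation*}
	\Big| \int_v^w K_H(s,r)\, \partial_u K_H(u,r)\,\mathrm{d} r \Big| \lesssim (s-w)^{2H-1} + (s-v)^{2H-1} = (t-s)^{2H-1} + (s-v)^{2H-1},
\end{equation*}
and integrating over $u \in [s,t]$ contributes $\lesssim (t-s)^{2H} + (s-v)^{2H-1}(t-s)$. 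Combining the two pieces gives the claimed bound.

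The steps I am not spelling out are routine: the elementary covariance identities above, the application of Fubini and the integrability near $r = 0$ of $K_H(s,\cdot)\,\partial_u K_H(u,\cdot)$ (here $K_H(s,\cdot)$ is merely square-integrable, but $\partial_u K_H(u,\cdot)$ is bounded near the origin, and this integrability is in any case part of Lemma~\ref{lem:K_derivative_K}). The one substantive ingredient is Lemma~\ref{lem:K_derivative_K}; the only real difficulty in the present argument is the bookkeeping of the domain splitting, arranged so that on the ``near'' block $[w,s]$ (where both $K_H(s,\cdot)$ and $K_H(t,\cdot)-K_H(s,\cdot)$ may be large) the block is short enough for Cauchy--Schwarz to suffice, while the ``far'' block $[v,w]$ is handled through the smoothing gained from differentiating $K_H$ in its first variable.
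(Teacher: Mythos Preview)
Your proof is correct. Both your argument and the paper's rely on Lemma~\ref{lem:K_derivative_K} as the key technical input, but the reductions are organised differently. The paper uses the probabilistic decomposition $B^H = Y^{H,v} + \tilde{B}^{H,v}$ and independence to write $\mathbb{E}[\tilde{B}^{H,v,i}_s \tilde{B}^{H,v,i}_{s,t}] = \mathbb{E}[B^{H,i}_s B^{H,i}_{s,t}] - \mathbb{E}[Y^{H,v,i}_s Y^{H,v,i}_{s,t}]$; the first term is handled by the explicit fBm covariance and the second by $Y^{H,v,i}_{s,t} = \int_s^t \dot{Y}^{H,v,i}_u\,\mathrm{d}u$ together with Lemma~\ref{lem:K_derivative_K} applied once over $[0,v]$. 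Part~(2) is then proved separately by the same device. Your route stays at the level of the kernel integral $\int_v^s K_H(s,r)(K_H(t,r)-K_H(s,r))\,\mathrm{d}r$ and splits the domain at $w = \max\{v,2s-t\}$: Cauchy--Schwarz controls the near block $[w,s]$, while on the far block $[v,w]$ you differentiate in the first variable and invoke Lemma~\ref{lem:K_derivative_K} twice (over $[0,w]$ and $[0,v]$). Your algebraic reduction of~(2) to~(1) via $\rho_H^2(v,t)-\rho_H^2(v,s) = \mathbb{E}[\tilde{B}^{H,v,i}_{s,t}(\tilde{B}^{H,v,i}_s+\tilde{B}^{H,v,i}_t)]$ is a small economy the paper does not exploit. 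The paper's decomposition is arguably cleaner conceptually (it reuses structure already set up and avoids the explicit domain splitting), while yours is more self-contained at the kernel level; neither has a real advantage in strength or generality.
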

		\begin{proof}
			When $H = 1/2$ the claim is trivial, so we suppose that $H < 1/2$.
			
			\textbf{Proof of 1.} By \eqref{eq:rho_lower_bd}, we have 
			$\mathbb{E}[\abs{B^{H, v}_{s, t}}^2] \lesssim (t-s)^{2H}$.
			Hence, it suffices to estimate $\mathbb{E}[\tilde{B}^{H, v, i}_s \tilde{B}^{H, v, i}_{s, t}]$.
			Recall $Y^{H, v}$ from \eqref{eq:def_Y}. Since $Y^{H, v}$ and $\tilde{B}^{H, v}$ are independent and $B^H = Y^{H, v} + \tilde{B}^{H, v}$, 
			we have 
			\begin{align*}
				\mathbb{E}\left[B_s^{H, i} B_{s, t}^{H, i}\right] = \mathbb{E}\left[Y_s^{H, v, i} Y_{s, t}^{H, v, i}\right] 
				+ \mathbb{E}\left[\tilde{B}_s^{H, v, i} \tilde{B}_{s, t}^{H, v, i}\right].
			\end{align*}  
			As for $B^H$, the computation is explicit, and it is easy to see that the left-hand side is bounded by $(t-s)^{2H}$ up to constant.
			Therefore, it suffices to prove $\abs{\mathbb{E}[Y_s^{H, v, i} Y_{s, t}^{H, v, i}]} \lesssim (s-v)^{2H - 1} (t-s)$. 
			However, this follows from the bound 
			\begin{align*}
				\left|\mathbb{E}\left[Y^{H, v, i}_s \dot{Y}^{H, v, i}_t\right]\right| \lesssim (s-v)^{2H - 1},
			\end{align*}
			which in turn follows from Lem.~\ref{lem:K_derivative_K}.
			
			\textbf{Proof of 2.} We have 
			\begin{align*}
				\rho^2_H(v, t) - \rho^2_H(v, s) +  \mathbb{E}\Big[\Big(Y^{H, v, i}_t\Big)^2\Big] - \mathbb{E}\Big[\Big(Y^{H, v, i}_s\Big)^2\Big] 
				= \mathbb{E}\Big[\Big(B_t^{H, i}\Big)^2\Big] - \mathbb{E}\Big[\Big(B_s^{H, i}\Big)^2\Big].  
			\end{align*}
			Since 
			\begin{align*}
				0 \leq \mathbb{E}\Big[\Big(B_t^{H, i}\Big)^2\Big] - \mathbb{E}\Big[\Big(B_s^{H, i}\Big)^2\Big]
				= t^{2H} - s^{2H} \leq (t-s)^{2H},
			\end{align*}
			it suffices to prove $\abs{\mathbb{E}[(Y^{H, v, i}_t)^2] - \mathbb{E}[(Y^{H, v, i}_s)^2]} 
			\lesssim  (s-v)^{2H - 1} (t-s)$. However, this follows from the bound 
			\begin{align*}
				\abs[\Big]{\frac{\mathrm{d}}{\mathrm{d} t} \mathbb{E}\Big[\Big(Y^{H, v, i}_t\Big)^2\Big]} = 2 \Big|\mathbb{E}\Big[Y^{H, v, i}_t \dot{Y}^{H, v, i}_t\Big]\Big| \lesssim (t-v)^{2H - 1},
			\end{align*}
			which follows from Lem.~\ref{lem:K_derivative_K}.
		\end{proof}
		The following is the final preparatory lemma before we are able to give the proof of Lem.~\ref{lem:integral_along_tilde_B}.
		\begin{lemma}\label{lem:integral_stratonovich}
			Let $H \in (1/3, 1/2]$ and $B^H$ be an $(\mathcal{F}_t)$-fBm. 
			We fix a $\tau \in [0, T]$ and define  $\tilde{B}^{H, \tau}$ by  \eqref{eq:def_tilde_B}.
			Let $\xi$ be an $\mathcal{F}_{\tau}$-measurable random variable valued in some measurable space $E$.
			Suppose that $F \from E \times \mathbb{R}^{d_2} \to \mathbb{R}$ satisfies 
			\begin{align*}
				\abs{F(x, y_1) - F(x, y_2)} \leq M(x) 
				\abs{y_1 - y_2}^{\gamma}
			\end{align*}  
			for all $x \in E$ and $y_1, y_2 \in \mathbb{R}^{d_2}$,
			with $\gamma \in (\frac{1}{2H} - 1, 1]$. 
			We set 
			\begin{align*}
				A_{s, t} \defby (F(\xi, B^H_s) + F(\xi, B^H_t)) \tilde{B}_{s, t}^{H, \tau, i}.
			\end{align*}
			For $\tau \leq v < s < u < t$ with $t-s \leq s-v$, we then have 
			\begin{align}
				\label{eq:delta_A_stratonovich}
				\norm{\delta A_{s, u, t}}_{L^p_{\omega}} 
				&\lesssim_{p, \gamma} \norm{M(\xi)}_{L^{2p}_{\omega}} (t-s)^{(1 + \gamma) H},\\
				\label{eq:delta_A_conditional_stratonovich}
				\norm{\mathbb{E}[\delta A_{s, u, t} \vert \mathcal{F}_v]}_{L^p_{\omega}} 
				&\lesssim_{p, \gamma} \norm{M(\xi)}_{L^{2p}_{\omega}} \Big( \frac{t-s}{s-v} \Big)^{(3-\gamma)H} 
				(t-s)^{(1 + \gamma) H}.
			\end{align}
		\end{lemma}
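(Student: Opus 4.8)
The plan is to use this lemma only later as the input to a (shifted) stochastic sewing argument, so here I only need the two germ estimates, and I would attack $\delta A_{s,u,t}$ directly. First I would dispose of $H=1/2$: there $Y^{H,\tau}=0$ and, by the convention recalled after \eqref{eq:def_rho}, $\tilde B^{H,\tau}$ has the increments of $B^H$, so $A_{s,t}=(F(\xi,B^H_s)+F(\xi,B^H_t))B^{H,i}_{s,t}$ and both bounds reduce to the elementary martingale/Gaussian computation of the classical It\^o case. So assume $H\in(1/3,1/2)$. Using $\tilde B^{H,\tau,i}_{s,t}=\tilde B^{H,\tau,i}_{s,u}+\tilde B^{H,\tau,i}_{u,t}$ one gets the identity
\[
\delta A_{s,u,t}=\big(F(\xi,B^H_t)-F(\xi,B^H_u)\big)\,\tilde B^{H,\tau,i}_{s,u}+\big(F(\xi,B^H_s)-F(\xi,B^H_u)\big)\,\tilde B^{H,\tau,i}_{u,t},
\]
which is what has to be estimated.

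For \eqref{eq:delta_A_stratonovich} I would argue directly: the hypothesis gives $\abs{F(\xi,B^H_t)-F(\xi,B^H_u)}\le M(\xi)\abs{B^H_{u,t}}^{\gamma}$ and similarly for the other difference, and then H\"older's inequality (isolating $M(\xi)$ in $L^{2p}_\omega$ and splitting the remaining Gaussian product with Cauchy--Schwarz), together with $\norm{B^H_{u,t}}_{L^q_\omega}\lesssim_q(t-u)^H$ and $\norm{\tilde B^{H,\tau,i}_{s,u}}_{L^q_\omega}\lesssim_q(u-s)^H$ — the latter from the variance bound $\mathbb E[(\tilde B^{H,\tau,i}_{s,u})^2]\le(u-s)^{2H}$, proved exactly as the upper bound of \eqref{eq:rho_lower_bd} — yields $\norm{\delta A_{s,u,t}}_{L^p_\omega}\lesssim_{p,\gamma}\norm{M(\xi)}_{L^{2p}_\omega}(t-s)^{(1+\gamma)H}$ since $(t-u)\vee(u-s)\le t-s$.

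The substantial work is \eqref{eq:delta_A_conditional_stratonovich}. I would condition on $\mathcal F_v$ and use the Volterra decomposition relative to $v$: write $\tilde B:=\tilde B^{H,v}$ (independent of $\mathcal F_v$), $B^H=Y^{H,v}+\tilde B$, and $\tilde B^{H,\tau}_r=Y^{\tau,v}_r+\tilde B_r$ with $Y^{\tau,v}:=Y^{H,v}-Y^{H,\tau}$ being $\mathcal F_v$-measurable (cf.\ \eqref{eq:Y_tau_v}). Substituting into the identity above and expanding, $\mathbb E[\delta A_{s,u,t}\mid\mathcal F_v]$ splits into: (i) terms in which a $Y^{\tau,v}$-increment factors out of the conditional expectation — bounded using $\norm{Y^{\tau,v}_{a,b}}_{L^p_\omega}\lesssim(s-v)^{H-1}(b-a)$ (from Lem.~\ref{lem:Y_dot}) together with the heat-kernel bounds of Lem.~\ref{lem:semigroup_regularisation} and Lem.~\ref{lem:semigroup_difference} on $\mathcal P_{\rho_H^2(v,\cdot)}F(\xi,\cdot)$ and the increment bound for $\rho_H^2(v,\cdot)$ of Lem.~\ref{lem:rho_and_tilde_B}(2); and (ii) terms that, given $\mathcal F_v$, are Gaussian expectations of the form $\mathbb E_{\tilde B}\big[(F(\xi,p+\tilde B_b)-F(\xi,q+\tilde B_a))\,\tilde B^i_{c,d}\big]$. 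For these I would apply Gaussian integration by parts, rewriting such an expectation as a sum of covariances $\mathbb E[\tilde B^i_a\tilde B^i_{c,d}]$ (controlled by Lem.~\ref{lem:rho_and_tilde_B}(1)) times derivative-expectations, the latter being $\partial_i\mathcal P_{\rho_H^2(v,a)}F(\xi,\cdot)$ evaluated at shifts of $Y^{H,v}$ and estimated by Lem.~\ref{lem:semigroup_regularisation}; as $F$ is only H\"older one first regularises $F\rightsquigarrow\mathcal P_\varepsilon F$ and lets $\varepsilon\downarrow 0$, or equivalently interpolates H\"older norms. The crucial step is then to \emph{re-group} the resulting terms so that what appears is differences of $\partial_i\mathcal P_{\rho_H^2(v,\cdot)}F(\xi,Y^{H,v}_\bullet)$ over nearby times and points (so that, for instance, an isolated covariance $\mathbb E[\tilde B^{H,v,i}_{u,t}\tilde B^{H,v,i}_{s,u}]$, which is only $O((t-s)^{2H})$ and hence too large on its own, only ever appears paired against such a difference coming from the other half of $\delta A$); those differences carry the extra factors $(t-s)/(s-v)$ via Lem.~\ref{lem:semigroup_difference}, Lem.~\ref{lem:Y_dot} and Lem.~\ref{lem:rho_and_tilde_B}. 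A careful power count, using $t-s\le s-v$ and $\gamma>\tfrac1{2H}-1$ (the exponent $(3-\gamma)H$ appearing as a $(\tfrac{t-s}{s-v})^{2H}$-gain from covariance ratios plus a $(\tfrac{t-s}{s-v})^{(1-\gamma)H}$-gain from the H\"older/interpolation estimates), then gives $\norm{\mathbb E[\delta A_{s,u,t}\mid\mathcal F_v]}_{L^p_\omega}\lesssim_{p,\gamma}\norm{M(\xi)}_{L^{2p}_\omega}(\tfrac{t-s}{s-v})^{(3-\gamma)H}(t-s)^{(1+\gamma)H}$.

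The main obstacle is precisely step (ii): organising the Gaussian integration-by-parts expansion so that the a priori too-large contributions cancel against one another across the two halves of $\delta A$, and keeping the exponents tight enough that the gain is exactly $(3-\gamma)H$ — which is what will force the threshold $\gamma>\tfrac1{2H}-1$ when this lemma is fed into the sewing argument for Lem.~\ref{lem:integral_along_tilde_B}.
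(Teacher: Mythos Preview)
Your proposal is correct and follows essentially the same route as the paper: compute $\delta A_{s,u,t}$ as you do, dispatch \eqref{eq:delta_A_stratonovich} by H\"older, and for \eqref{eq:delta_A_conditional_stratonovich} use the Volterra splitting at $v$, Gaussian regression to introduce the coefficients $a_0(r)=\mathcal P_{\rho^2(v,r)}F(\xi,Y^v_r)$ and $a_i(r)=\partial_i\mathcal P_{\rho^2(v,r)}F(\xi,Y^v_r)$, and then regroup so that only differences $a_\bullet(t)-a_\bullet(u)$ appear against the covariances of Lem.~\ref{lem:rho_and_tilde_B} --- the paper carries this out by citing \cite[Eq.~(3.8)--(3.12)]{matsuda22} rather than rederiving it. One small remark: the regularisation $F\rightsquigarrow\mathcal P_\varepsilon F$ you mention is not actually needed, since for $r>v$ the quantities $a_0,a_i$ are already well-defined as expectations and $\mathcal P_{\rho^2(v,r)}F$ is smooth.
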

		\begin{proof}
			We essentially repeat the argument of \cite[Prop.~3.5]{matsuda22}.
			For simplicity, we drop scripts on $H$.
			We first consider the estimate \eqref{eq:delta_A_stratonovich}. 
			We compute 
			\begin{align*}
				\delta A_{s, u, t} 
				= F(\xi, B)_{u, t} \tilde{B}^{\tau, i}_{s, u} 
				- F(\xi, B)_{s, u} \tilde{B}^{\tau, i}_{u, t},
			\end{align*}
			and 
			\begin{align*}
				\abs{\delta A_{s, u, t}} 
				\leq M(\xi) \big[\abs{B_{u, t}}^{\gamma} \abs{\tilde{B}^{\tau}_{s, u}}
				+ \abs{B_{s, u}}^{\gamma} \abs{\tilde{B}^{\tau}_{u, t}} \big].
			\end{align*}
			In view of \eqref{eq:rho_lower_bd}, it is easy to see the estimate \eqref{eq:delta_A_stratonovich}.
			
			We turn to the estimate \eqref{eq:delta_A_conditional_stratonovich}.
			We recall $Y^{v}$ from \eqref{eq:def_Y} and $Y^{\tau, v}$ from \eqref{eq:Y_tau_v}, and we recall the decomposition 
			$B = Y^v + \tilde{B}^v$ and $\tilde{B}^{\tau} = Y^{\tau, v} + \tilde{B}^v$, 
			where $Y^v$ and $Y^{\tau, v}$ are measurable with respect to $\mathcal{F}_v$ and $\tilde{B}^v$ is independent of $\mathcal{F}_v$.
			We set 
			\begin{equation*}
				a_0(s) \defby \mathbb{E}[F(x, y_s + \tilde{B}^{v}_s)] \vert_{x = \xi, y = Y^v}, \quad
				a_i(s) \defby  \rho^{-2}(v, s)
				\mathbb{E}[F(x, y_s + \tilde{B}^v_s) \tilde{B}^{v, i}_s] \vert_{x = \xi, y = Y^v}
			\end{equation*}
			with $\rho = \rho_H$ defined by \eqref{eq:def_rho}.
			As in \cite[Prop.~3.5]{matsuda22}, especially \cite[Eq.~(3.8) and (3.11)]{matsuda22}, we have 
			$\mathbb{E}[\delta A_{s, u, t} \vert \cF_v]
			= D^0_{s, u, t} + D^i_{s, u, t}$,
			where
			\begin{align*}
				D^0_{s, u, t} &\defby (a_0(t) - a_0(u))  Y_{s, u}^{\tau, v, i} + (a_0(s) - a_0(u))  Y_{u, t}^{\tau, v, i}, \\
				D^i_{s, u, t} &\defby (a_i(t) - a_i(u)) \mathbb{E}[\tilde{B}_t^{v, i} \tilde{B}_{s, t}^{v, i}]
				+ (a_i(s) - a_i(u)) \mathbb{E}[\tilde{B}_s^{v, i} \tilde{B}_{s, t}^{v, i}] \\
				&\phantom{\defby} -(a_i(s) - a_i(u)) \mathbb{E}[\tilde{B}_s^{v, i} \tilde{B}_{s, u}^{v, i}]
				-(a_i(t) - a_i(u)) \mathbb{E}[\tilde{B}_t^{v, i} \tilde{B}_{u, t}^{v, i}].
			\end{align*}

			The estimate as in \cite[Eq.~(3.9)]{matsuda22} gives
			\begin{equation*}
				\abs{D^0_{s, u, t}} 
				\lesssim_{\gamma} M(\xi) \big[ \rho^{(\gamma-1)}(v, s) \abs{Y^{\tau, v}_{s,u}}\abs{Y^{\tau, v}_{u,t}} 
				+ \rho^{\gamma -1}(v, s)[\rho(v, t) - \rho(v, s)] (\abs{Y_{s, u}^{\tau, v}} + \abs{Y^{\tau, v}_{u, t}}) \big].
			\end{equation*}
			To estimate $\rho(v, t) - \rho(v, s)$, we observe that 
			\begin{align*}
				\rho(v, t) - \rho(v, s) &= (\rho(v, t) + \rho(v, s))^{-1} [\rho^2(v, t) - \rho^2(v, s)] \\
				&\leq 2 \rho^{-1}(v, s) [\rho^2(v, t) - \rho^2(v, s)] \\
				&\lesssim \rho^{-1}(v, s) [(t-s)^{2H} + (s-v)^{2H - 1} (t-s)],
			\end{align*}
			where the last inequality follows from Lem.~\ref{lem:rho_and_tilde_B}.
			Using the bound on $\rho$ from \eqref{eq:rho_lower_bd} and that on $Y^{\tau, v}$ from \eqref{eq:Y_tau_v_estimate}, 
			we obtain
			\begin{equation*}
				\norm{D^0_{s, u, t}}_{L^p_{\omega}} 
				\lesssim_{p, \gamma} \norm{M(\xi)}_{L^{2p}_{\omega}} [(s - v)^{(\gamma + 1)H - 2} (t - s)^2
				+ (s - v)^{(\gamma - 1) H - 1} (t - s)^{1 + 2H}]. 
			\end{equation*}
			Since 
			$(s-v)^{(\gamma + 1)H -2} (t-s)^2 \leq (s-v)^{(\gamma - 1)H - 1} (t-s)^{1 + 2H}$,
			we have 
			\begin{align*}
				\norm{D^0_{s, u, t}}_{L^p_{\omega}} 
				\lesssim_{p, \gamma} \norm{M(\xi)}_{L^{2p}_{\omega}} 
				(s - v)^{(\gamma - 1) H - 1} (t - s)^{1 + 2H}. 
			\end{align*}

			To estimate $D^i_{s, u, t}$, as in \cite{matsuda22} (see the two estimates before (3.12) therein), we have 
			\begin{align*}
				\abs{a_i(t) - a_i(u)} \lesssim_{\gamma} M(\xi) \rho^{\gamma-1}(v, u) (\abs{Y^v_{u, t}} + \rho(v, t) - \rho(v, u)).  
			\end{align*}
			Therefore, estimating $Y^v$ and $\rho(v, t) - \rho(v, u)$ as before, we obtain 
			\begin{align*}
				\norm{a_i(t) - a_i(u)}_{L^p_{\omega}} 
				&\lesssim_{p, \gamma} \norm{M(\xi)}_{L^{2p}_{\omega}} 
				[\rho^{\gamma-2}(v, u) (u-v)^{H - 1} (t-u) + \rho^{\gamma-3}(v, u) (t-u)^{2H} ] \\
				&\lesssim_{p, \gamma} \norm{M(\xi)}_{L^{2p}_{\omega}} (u-v)^{(\gamma - 3)H} (t-u)^{2H}.
			\end{align*}
			Combining the estimate of $\mathbb{E}[\tilde{B}^{v, i}_u \tilde{B}^{v, i}_{u, t}]$ from Lem.~\ref{lem:rho_and_tilde_B}-(i), we obtain
			\begin{equation*}
				\norm{D^i_{s, u, t}}_{L^p_{\omega}} \lesssim_{p, \gamma} \norm{M(\xi)}_{L^{2p}_{\omega}}
				(s - v)^{(\gamma - 3) H} (t - s)^{4H}.
			\end{equation*}
			Now the estimate \eqref{eq:delta_A_conditional_stratonovich} is proven.
		\end{proof}
		\begin{proof}[Proof of Lem.~\ref{lem:integral_along_tilde_B}]
			In view of Lem.~\ref{lem:ssl} and Lem.~\ref{lem:integral_stratonovich}, 
			it suffices to show 
			\begin{align}\label{eq:f_integral_by_A_2}
				\int_s^t F(\xi, B^H_{\tau, r}) \, \mathrm{d} \tilde{B}^{H, \tau}_r 
				= \lim_{\abs{\pi} \to 0} \sum_{[s', t'] \in \pi} A_{s', t'},
			\end{align}
			where 
			$A_{s, t} \defby 2^{-1}(F(\xi, B^H_{\tau, s}) + F(\xi, B^H_{\tau, t})) \tilde{B}^{H, \tau}_{s, t}$.
			But the proof goes in the same way as Lem.~\ref{lem:rough_vs_strat_2}, so we omit the proof.
		\end{proof}
		%
	\end{appendix}

\bibliography{ref}{}
\bibliographystyle{alpha}

\end{document}